\documentclass{article}
\usepackage{graphicx} 
\usepackage{amsmath, amssymb, amsthm, tikz, setspace, subcaption, cite}

\usepackage[margin = 2cm]{geometry}
\usepackage[colorlinks = true]{hyperref}
\usepackage{setspace}

\newcommand{\rvline}{\hspace*{-\arraycolsep}\vline\hspace*{-\arraycolsep}}
\title{Structural and Spectral Properties of Prime Order Element Graph of Finite Abelian Groups}
\author{Tapa Manna\thanks{Email: \texttt{mannatapa24@gmail.com}}, Supriyo Dutta\thanks{Email: \texttt{dosupriyo@gmail.com}}, Baby Bhattacharya\thanks{Email: \texttt{babybhatt75@gmail.com}} \\ 
\small{Department of Mathematics}\\
\small{National Institute of Technology Agartala}\\
\small{Jirania, West Tripura, India}}
\date{}

\newtheorem{thm}{Theorem}[section]

\newtheorem{lemma}{Lemma}[section]

\newtheorem{definition}{Definition}[section]

\newtheorem{corollary}{Corollary}[section]

\DeclareMathOperator{\lcm}{lcm}
\DeclareMathOperator{\diag}{diag}

\begin{document}
	\maketitle
	\begin{abstract}
		Given a finite group $G$, the \emph{Prime Order Element (POE) Graph} $\Gamma(G)$ consists of the group elements as the vertices, and two vertices $x$ and $y$ are adjacent if and only if $o(xy)$ is prime. This paper presents a thorough structural and spectral analysis of the POE graphs associated with the finite Abelian groups of different types. The order of a finite Abelian group may be a prime or a product of primes, which influences the structure of POE graphs. The POE graph is connected when the order of the Abelian group is a square-free integer. The POE graphs of the other Abelian groups have multiple connected components. Some of these components are isomorphic to the POE graph of a lower-order group. We study various graph-theoretic properties of the components, including regularity and bipartiteness. Arranging the elements of the group in a number of particular orders, we observe the block structure in the adjacency matrix of POE graphs. It assists us in investigating the spectral properties of POE graphs. We explicitly derive the characteristic polynomials governing both integral and irrational eigenvalues, and compute the eigenvalues with multiplicity in terms of the structure of the graphs.

		\noindent\textbf{Keywords:} Prime order element graph of a group, Adjacency matrix, Block matrix, Connected Components Equitable partition, Spectral properties of graphs.
		
		\noindent\textbf{AMS Subject Classification: 05C25, 05C50, 05C40, 05C75, 15A18, 20K01}
	\end{abstract}
	
	\tableofcontents
	
	\newpage 
	\section{Introduction}
	\label{introduction}
		
		The relation between the elements in a group has played a crucial role in group theory. This is a well-investigated topic over the last several decades, and the investigation enriches group theory as well as graph theory. In algebraic graph theory, defining graphs on groups is an intriguing field of research that provides many significant observations in the study of finite groups \cite{bera2018enhanced}, such as Nilpotency, cyclic subgroup structure, group exponent behavior, etc. The idea of Cayley graphs is an important outcome of that research \cite{magnus2004combinatorial, cayley1878desiderata}. There are multiple other ways to construct the graphs from groups in the literature, for instance, Power Graphs \cite{cameron2011power, kumar2021recent}, Coprime Graph \cite{banerjee2019new}, Gruenberg-Kegel Graphs \cite{maslova2016gruenberg}, Difference Graphs \cite{biswas2024difference}, Comaximal Subgroup Graphs \cite{akbari2017co, das2024co, das2025co, betz2022classifying}, Commuting Graphs \cite{akbari2006diameters}, Super Graphs \cite{arunkumar2022super, arunkumar2024super}, Join Graphs \cite{bahrami2019further}, and many more. Every one of those has its own unique significance. The interface of algebra and graph theory has significant applications in other branches of science and technology \cite{huang2018perfect, ma2020subgroup, wang2023applications, bretto2011cayley, jain2023construction}. But the primary goal of creating these graphs is to get as much information as possible about a group $G$ from the graph that corresponds to it. A comprehensive overview of several graphs originated from the groups may be found in \cite{cameron2021graphs}. In this article, we present the characteristics of the \textit{Prime Order Element} (POE) graphs of finite Abelian groups.
		
		Recall that a group $(G, \circ)$ consists of a non-empty set $G$ and a composition $\circ: G \times G \rightarrow G$, such that, $G$ is closed under $\circ$; the composition $\circ$ is associative; there exists an unique element $e\in G$, such that $e \circ x = x \circ e = x$, for all $x\in G$; also for all $x\in G$, there exists $y\in G$, such that $x \circ y = e$. All the groups considered in this article have finitely many elements. The order of an element $x \in G$ is the smallest positive integer $m$, such that $x^m = x \circ x \circ \dots \circ x (m\text{-times)} = e$. A graph is a combinatorial object $\Gamma = (V(\Gamma), E(\Gamma))$, where $V(\Gamma)$ is the set of vertices and $E(\Gamma) \subset V(\Gamma) \times V(\Gamma)$ is the set of edges.   Throughout the presentation, $p$ denotes an odd prime number. The POE graph of a group is defined as follows \cite{manna2025prime}:
		\begin{definition}\label{POE_graph_definition}
			The Prime Order Element Graph, in sort POE Graph, $\Gamma(G) = (V(\Gamma), E(\Gamma))$, of a group $G$ is a graph, such that, $V(\Gamma) = G$ and there is an edge $(x, y)$ between two distinct vertices $x$ and $y$ if order of $xy$ is a prime number.
		\end{definition}
		
		In our earlier articles \cite{manna2025prime, manna2024forbidden}, we defined POE graph of the finite groups. We explored the graph theoretic properties of POE graphs, including degree, regularity, completeness, and connectedness, etc. They are important for identifying the underlying group properties. In the article, we present the structural and spectral properties of the POE graphs of the finite Abelian groups. Find \autoref{Z_p_n_graphs}, \ref{Z_p_n_POE_graphs} \ref{Z_2_n_POE_graphs} etc. for a number of POE graphs depicted in this article. The following theorem \cite{gallian2021contemporary} presents an useful characteristic of the finite Abelian groups:

		\begin{thm}[\textbf{Fundamental Theorem of Finitely Generated Abelian Groups:}]  
			Every finitely generated abelian group $G$ is isomorphic to a direct sum of groups
			
			\[
			\mathbb{Z}^{r} \oplus \mathbb{Z}_{n_{1}} \oplus \mathbb{Z}_{n_{2}} \oplus \cdots \oplus \mathbb{Z}_{n_{k}},
			\]
			where $r \ge 0$ is a non-negative integer and $n_{1}, n_{2}, \dots, n_{k} \in \mathbb{N}$ satisfy $n_{1} \mid n_{2} \mid \cdots \mid n_{k}$.
		\end{thm}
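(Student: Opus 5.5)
The plan is the standard module-theoretic argument: view a finitely generated abelian group $G$ as a finitely generated $\mathbb{Z}$-module and use the fact that $\mathbb{Z}$ is a principal ideal domain. Since $G$ has finitely many generators $g_1,\dots,g_m$, there is a surjection $\varphi:\mathbb{Z}^m\to G$ sending the $i$-th standard basis vector to $g_i$. Then $G\cong \mathbb{Z}^m/K$ with $K=\ker\varphi$. Everything therefore reduces to understanding a subgroup $K$ of a free abelian group of finite rank, relative to a suitable basis of $\mathbb{Z}^m$.

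The first step is to show that $K$ is itself free of rank at most $m$. I would prove this by induction on $m$, projecting onto the last coordinate. The image of $K$ in $\mathbb{Z}$ is either $0$ or $d\mathbb{Z}$. Lifting a generator of $d\mathbb{Z}$ splits off one basis element, and the remaining part of $K$ lies in $\mathbb{Z}^{m-1}$, where the induction hypothesis applies. Choosing a finite generating set of $K$ then gives a relation matrix $A\in M_{m\times s}(\mathbb{Z})$ whose columns generate $K$. Changing the basis of $\mathbb{Z}^m$ corresponds to left multiplication of $A$ by a matrix in $GL_m(\mathbb{Z})$. Changing the generators of $K$ corresponds to right multiplication by a matrix in $GL_s(\mathbb{Z})$. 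Neither operation changes the isomorphism class of $\mathbb{Z}^m/K$.

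The key step, and the main obstacle, is the Smith normal form. Using integer elementary row and column operations, I will reduce $A$ to a diagonal matrix $\diag(d_1,\dots,d_t,0,\dots,0)$ with $d_1\mid d_2\mid\cdots\mid d_t$ and every $d_i>0$. The procedure goes as follows.
\begin{itemize}
\item Move a nonzero entry of minimal absolute value to position $(1,1)$.
\item Use the division algorithm along the first row and first column. If any remainder is nonzero, it is a smaller nonzero entry, so return to the previous step. Since the absolute value strictly decreases, this terminates.
\item Once the first row and column are cleared, check whether $a_{11}$ divides every other entry. If some $a_{ij}$ is not divisible by it, add row $i$ to row $1$ and repeat; this again produces a smaller remainder.
\item Recurse on the lower-right block.
\end{itemize}
The delicate point is organizing the termination argument cleanly. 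I would use the minimal absolute value of the nonzero entries as a strictly decreasing positive integer measure.

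From the normal form, $G\cong \mathbb{Z}^m/K\cong \mathbb{Z}_{d_1}\oplus\cdots\oplus\mathbb{Z}_{d_t}\oplus\mathbb{Z}^{m-t}$. Next I discard the factors with $d_i=1$, which are trivial, and set $r=m-t$. Relabelling the remaining $d_i$ as $n_1\mid n_2\mid\cdots\mid n_k$ gives the stated form.

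The statement as written asserts only existence, so uniqueness is not needed. If desired, it would follow from two facts:
\begin{itemize}
\item $r$ is the rank of $G/\mathrm{Tor}(G)$;
\item the $n_i$ are determined by counting $|\{x: p^j x=0\}|$ for each prime $p$.
\end{itemize}
For the finite groups used in this paper, $r=0$.
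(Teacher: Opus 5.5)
The paper does not prove this statement; it is quoted as standard background with a citation to Gallian's textbook, and only the finite case $r=0$ is used later. Judged on its own, your proposal is a correct outline of the standard Smith-normal-form proof:
\begin{enumerate}
\item The reduction $G\cong\mathbb{Z}^m/K$ is sound.
\item The splitting $K=\mathbb{Z}k\oplus(K\cap\mathbb{Z}^{m-1})$, with $k$ lifting a generator of the image, is sound. You only need $K$ finitely generated, and this already gives it.
\item $\mathbb{Z}^m/K$ is unchanged under $A\mapsto PAQ$ with $P\in GL_m(\mathbb{Z})$ and $Q\in GL_s(\mathbb{Z})$.
\item The reduction terminates, because every restart produces a nonzero entry of absolute value strictly below the current $|a_{11}|$.
\item Adding row $i$ to row $1$ works because $a_{i1}=0$ at that moment. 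So $a_{11}$ is untouched while $a_{ij}$ enters the first row.
\item The chain $d_1\mid d_2\mid\cdots$ holds, because integer operations on the lower-right block keep all of its entries in $d_1\mathbb{Z}$.
\end{enumerate}

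Compare this with the elementary argument usually given for finite groups. There one splits $G$ into its $p$-primary parts $\{x : p^{a}x=0\}$, with $p^a$ the exact power of $p$ dividing $|G|$. Then, in a $p$-group, one splits off the cyclic subgroup generated by an element of maximal order and inducts on $|G|$. Your route buys more: it covers the free part $\mathbb{Z}^r$, it yields the invariant factors $n_1\mid\cdots\mid n_k$ exactly in the stated form, and it is algorithmic. The elementary route handles only finite groups. However, it lands directly in the prime-power form $\mathbb{Z}_{p_1^{a_1}}\oplus\mathbb{Z}_{p_2^{a_2}}\oplus\cdots$, which is the form the paper actually works with afterwards. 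The two forms are interchanged by $\mathbb{Z}_{ab}\cong\mathbb{Z}_a\oplus\mathbb{Z}_b$ for $\gcd(a,b)=1$. The paper uses this fact implicitly, e.g.\ in $\mathbb{Z}_{2^{n_1}p_2^{n_2}}\cong\mathbb{Z}_{2^{n_1}}\times\mathbb{Z}_{p_2^{n_2}}$.
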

		Putting $r = 0$ in the above theorem, we can say that all the finite Abelian groups are isomorphic to $\mathbb{Z}_{n_{1}} \oplus \mathbb{Z}_{n_{2}} \oplus \cdots \oplus \mathbb{Z}_{n_{k}}$. Order of the group will be $n = n_1 \times n_2 \times \dots n_k$. We can represent $n = p_1^{k_1} p_2^{k_2} \dots p_m^{k_m}$, where $2 \leq p_1 < p_2 < \dots < p_m$ are the prime numbers. Based on the prime factorization of the order of the finite Abelian groups we investigate the properties of the corresponding POE graph. Below we list our fundamental observations in this article:
		\begin{enumerate}
			\item 	
			$\Gamma(\mathbb{Z}_{p^n})$ is the union of $\Gamma(\mathbb{Z}_p)$ and $\frac{p^n-p}{2p}$ number of $(p-1)$-regular graphs each containing $2p$ vertices. 
			\item The integral spectrum of the graph $\Gamma((\mathbb{Z}_p)^n)$ is $\{[0]^{\frac{p^n-1}{2}},[-2]^{\frac{p^n-3}{2}}\}$ and the other non-integral spectrum are the roots of the polynomial $x^2-(p^n-3)x-(p^n-1)$.
			\item 
			$\Gamma(\mathbb{Z}_{2^{m_1}p_2^{m_2}\cdots p_k^{m_k}})$ is the union of the following connected components: 
			\begin{enumerate}
				\item 
				A component which is isomorphic to $\Gamma(\mathbb{Z}_{2p_2p_3\cdots p_k})$;
				\item 
				A component with the elements of order $4, 4p_i$ , $4p_ip_j$ for $i\neq j$, $\dots, 4\prod_{i=2}^k p_i$ where $i, j = 2,3,\cdots , k$ with a total $2\prod_{i=2}^k p_i$ number of elements;
				\item 
				A number of regular connected components each consisting of $4\prod_{i=2}^k p_i$ elements with deg$(\sum_{i=2}^kp_i)-k+2$. The total number of such components are $2^{n_1-2}p_2^{n_2-1}\dots p_k^{n_k-1}-1$.
			\end{enumerate}
			\item 
			If $G\cong {{\mathbb{Z}}_p}^n$, then the adjacency matrix $A(\Gamma(G))$ has eigenvalue $0$ with multiplicity $\frac{p^n-1}{2}$ and $-2$ with multiplicity $\frac{p^n-3}{2}$ and the remaining eigenvalues are the roots of the equation $x^2-(p^n-3)x-(p^n-1) = 0$.
			
			\item 
			The roots of the equations $(x^2-(p_2-1)x-1)=0$ and $(x^2-(p_2-5)x-(2p_2-5))=0$ are the irrational eigenvalues of $\Gamma(\mathbb{Z}_{2p_2})$.

			 \item For $k$ odd primes $p_1,p_2,\dots p_k$, $\Gamma(\mathbb{Z}_{p_1^{n_1}p_2^{n_2}\dots p_k^{n_k}})\cong {\Gamma({\mathbb{Z}_{p_1^{n_1}}}\times {\mathbb{Z}_{p_2^{n_2}}} \times \dots {\mathbb{Z}_{p_k^{n_k}}})}$ which is the union of $\Gamma(\mathbb{Z}_{p_1p_2\dots p_k})$ and $\frac{p_1^{n_1-1}p_2^{n_2-1}\dots p_k^{n_k-1}}{2}$ number of connected regular components each having $2p_1p_2\dots p_k$ vertices with regularity $p_1+p_2+\dots p_k -k$.
			
		\end{enumerate}
		
		The organization of this article is as follows: \autoref{Preliminary} introduces the fundamental definitions of group theory and graph theory that are crucial to our study. In \autoref{Properties_of_first_group}, \autoref{Properties_of_second_group}, \autoref{Properties_of_third_group}, \autoref{Properties_of_fourth_group},
		\autoref{Properties_of_fifth_group} and \autoref{Properties_of_sixth_group}, we discussed the structural and spectral properties of the POE graphs $\Gamma((\mathbb{Z}_{p})^n)$, $\Gamma(\mathbb{Z}_{p^n})$, $\Gamma(\mathbb{Z}_{2^n})$,
		$\Gamma({\mathbb{Z}_{2^{n_1}}}\times {\mathbb{Z}_{2^{n_2}}} \times \dots \times {\mathbb{Z}_{2^{n_k}}})$, $\Gamma(\mathbb{Z}_{2^{n_1}p_2^{n_2}})$ and $\Gamma(\mathbb{Z}_{p_1^{n_1}p_2^{n_2}})$, respectively. Then, we conclude the article.

	\section{Preliminary}
	\label{Preliminary}
		
		A group \cite{dummitbasic, gallian2021contemporary} $(G, \circ)$ consists of a set $G$ and a composition $\circ: G \times G \rightarrow G$, such that, $G$ is closed under $\circ$; the composition $\circ$ is associative; there exists an unique element $e\in G$, such that $e \circ a = a \circ e = a$, for all $a\in G$; also for all $a\in G$, there exists $b\in G$, such that $ab = e$. When there is no confusion about the composition $\circ$ we write $ab$ instead of $a \circ b$. The order of a group $G$ is denoted by $o(G)$ which is the number of elements in $G$.  A group $G$ is said to be a $p$-group if $o(G) = p^n$, where $p$ is a prime and $n$ is a natural number. We say $G$ is an Abelian group if for all $a, b \in G$, $ab = ba$. For any element $x \in G$ we denote $x^m = x \circ x \circ \dots \circ x(m$-times). The order of an element $x \in G$ is the smallest positive integer $m$, such that $x^m=e$. We denote $o(x) = m$. An elementary Abelian group is an Abelian $p$-group whose every non-identity element is of order $p$. Also, for any element $x \in G$, $o(x) | o(G)$. The following Lemma is much useful for subsequent calculations.
		\begin{lemma}\label{lcm_lemma}
			For any two elements $x$ and $y$ in an Abelian group $G$ we have $o(xy) = \l.c.m\{o(x),o(y)\}$.
		\end{lemma}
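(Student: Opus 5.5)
The plan is to prove the divisibility $o(xy)\mid \lcm\{o(x),o(y)\}$ in complete generality, and then to establish the reverse divisibility under the hypothesis $\gcd(o(x),o(y))=1$. The reverse direction cannot hold without some extra hypothesis. For instance, taking $y=x^{-1}$ with $o(x)=m>1$ gives $o(xy)=o(e)=1\neq m$. Likewise, in $\mathbb{Z}_2\oplus\mathbb{Z}_2$ the product of two distinct elements of order $2$ has order $2$, while the product of such an element with itself has order $1$. So I will prove the statement in the form $o(xy)\mid\lcm\{o(x),o(y)\}$, with equality when $\gcd(o(x),o(y))=1$. The later computations should then invoke the equality only for elements whose orders are coprime, for example components lying in different Sylow subgroups. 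Where that is not the situation, one should compute orders coordinatewise in $\mathbb{Z}_{n_1}\oplus\cdots\oplus\mathbb{Z}_{n_k}$, using $o((a_1,\dots,a_k))=\lcm\{o(a_i)\}$. The identity $o((a_1,\dots,a_k))=\lcm\{o(a_i)\}$ follows from the same argument, because the summands have trivial pairwise intersection.

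\emph{Step 1 (divisibility).} Let $m=o(x)$, $n=o(y)$ and $L=\lcm\{m,n\}$. Since $G$ is Abelian, $(xy)^L=x^Ly^L=e\cdot e=e$. Hence $o(xy)\mid L$, by the standard fact that $z^t=e$ if and only if $o(z)\mid t$, which follows from the division algorithm.

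\emph{Step 2 (coprime case).} Assume $\gcd(m,n)=1$, so that $L=mn$, and let $t=o(xy)$. From $(xy)^t=e$ we get $x^t=y^{-t}$, so this element lies in $\langle x\rangle\cap\langle y\rangle$. That intersection is a subgroup of both cyclic groups, so by Lagrange its order divides both $m$ and $n$, and is therefore $1$. Hence $x^t=e$ and $y^t=e$, which gives $m\mid t$ and $n\mid t$, so $mn\mid t$. Combined with Step 1, this yields $t=mn=L$.

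\emph{Step 3 (direct sums).} For $(a_1,\dots,a_k)$ in a direct sum, the equation $(a_1,\dots,a_k)^t=e$ holds exactly when $a_i^t=e$ for every $i$, that is, when every $o(a_i)$ divides $t$. The least such $t$ is $\lcm\{o(a_i)\}$. This coordinatewise form is the one actually needed for the graph computations that follow.

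The main obstacle is not any single computation. It is pinning down the correct hypothesis, since the statement as written is false without one. I would flag this explicitly and make sure every later application falls under Step 2 or Step 3.
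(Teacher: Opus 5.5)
You are right that the statement is false as written, and $y=x^{-1}$ already settles it. The paper gives no proof; the lemma is simply asserted in the preliminaries. Its own results also contradict the literal statement. The edge $1\sim 4$ of $\Gamma(\mathbb{Z}_{25})$ in Figure~\ref{Z_p_n_POE_graphs} exists precisely because $o(1+4)=o(5)=5$, whereas $\lcm\{o(1),o(4)\}=25$. More generally, Lemmas~\ref{degree-lemma-2} and~\ref{same-order-lemma} together say that every $x$ of order $p^k$, $k\ge 2$, in $\mathbb{Z}_{p^n}$ has $p-1$ neighbours $y$ of the same order, i.e.\ $o(xy)=p\neq\lcm\{o(x),o(y)\}$. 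So no proof of the statement as given is possible, and your Steps 1--3 are correct proofs of what they actually claim.

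One small slip: you say the direct-sum formula holds ``because the summands have trivial pairwise intersection,'' but that is the wrong condition. The three subgroups of order $2$ in $\mathbb{Z}_2\oplus\mathbb{Z}_2$ meet pairwise trivially, yet $(1,0)+(0,1)+(1,1)=(0,0)$ has order $1\neq\lcm\{2,2,2\}$. What matters is independence of the summands. Your coordinatewise argument in Step 3 uses exactly that, so nothing breaks.

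Where I would push back is your closing claim that every later application can be made to fall under Step 2 or Step 3. The paper's most frequent use is inside a $p$-group. In Lemma~\ref{connected-lemma} and Lemma~\ref{distance-lemma}, the lemma is invoked to conclude $o(xy)=p^k$ from $o(x)=p^k$ with $k\ge 2$ and $o(y)=p$. These orders are not coprime, and in $\mathbb{Z}_{p^n}$ there is only one coordinate, so neither of your steps applies.

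The repair that is both true and sufficient works prime by prime: if, for some prime $q$, the exponents of $q$ in $o(x)$ and $o(y)$ differ, then the exponent of $q$ in $o(xy)$ equals the larger of the two. To prove it, let $L=\lcm\{o(x),o(y)\}$, suppose the $q$-exponent of $o(x)$ is the larger one, and put $L'=L/q$. Then $o(y)\mid L'$ but $o(x)\nmid L'$, so $(xy)^{L'}=x^{L'}y^{L'}=x^{L'}\neq e$. Together with $o(xy)\mid L$ from your Step 1, this forces $o(xy)$ to carry the full power of $q$ dividing $L$.

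This statement has the following consequences.
\begin{enumerate}
\item It contains your coprime case.
\item It gives $o(xy)=\max\{o(x),o(y)\}$ in an abelian $p$-group whenever $o(x)\neq o(y)$, which is exactly what the arguments for $\Gamma(\mathbb{Z}_{p^n})$ and $\Gamma(\mathbb{Z}_{2^{n_1}}\times\cdots\times\mathbb{Z}_{2^{n_k}})$ use.
\item Combined with your Step 3, it handles the mixed groups of the last two sections.
\end{enumerate}
Equality can fail only at primes where the two exponents coincide, which is the situation in every counterexample above.
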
  
		
		A cyclic group has an element $a$ which is called a generator of the group, such that $G = \{a^n : n \in \mathbb{Z}\}$. All the cyclic groups are the Abelian groups. The cyclic group $\mathbb{Z}_{p^n}$ contains only the elements of order $p^k,k=0,1,2,\dots n$,  which are $\phi(p^k)$ in number where $\phi$ is the Euler's phi function.
		
		A group isomorphism is a bijective mapping $f:G_1\rightarrow G_2$, where $G_1$ and $G_2$ be two groups, satisfying $f(xy)=f(x)f(y)$, for all $x, y \in G_1$. If such function $f$ exists we say $G_1$ and $G_2$ are isomorphic and we denote it as $G_1 \cong G_2$. Any group isomorphic to $ \mathbb{Z}_p \times \mathbb{Z}_p \times \dots \times \mathbb{Z}_p(n$-times) is called an elementary Abelian group, which is also denoted by ${\mathbb{Z}_p}^{(n)}$. Note that, $\circ({\mathbb{Z}_p}^{(n)}) = p^n$.
		
		The direct product of groups is defined as follows: 
		\begin{definition}
			The external direct product of the groups $(G_1, \circ_1), (G_2, \circ_2), \dots , (G_n, \circ_n)$ is $(G, \circ)$ such that $G = G_1 \times G_2 \times \dots \times G_n = \{(g_1,g_2,...,g_n): g_i\in G_i\}$, and for two elements $g^{(1)} = (g^{(1)}_1, g^{(1)}_2,...,g^{(1)}_n)$ and $g^{(2)} = (g^{(2)}_1, g^{(2)}_2,...,g^{(2)}_n) \in G$ the composition $\circ$ is defined by $g^{(1)} \circ g^{(2)} = (g^{(1)}_1 \circ_1 g^{(2)}_1, g^{(1)}_2 \circ_2 g^{(2)}_2,...,g^{(1)}_n \circ_n g^{(2)}_n)$. 
		\end{definition} 
		
		A graph \cite{west2001introduction} is a combinatorial object $\Gamma = (V(\Gamma), E(\Gamma))$, where $V(\Gamma)$ is a set of vertices and $E(\Gamma)$ is a set of edges. Throughout this presentation, all the graphs are simple graphs that do not have loops on the vertices, and multiple, weighted, directed edges between the vertices. The vertices $x$ and $y$ are said to be adjacent if there is an edge $(x, y) \in E(G)$. We denote $x \sim y$. We denote $x \nsim y$ to indicate $(x, y) \notin E(\Gamma)$. The degree of a vertex $v\in \Gamma(G)$ is denoted by $\deg(v)$ and defined by the number of vertices adjacent to $v$. A vertex $v \in \Gamma(G)$ is isolated if $\deg(v)=0$. A $r$-regular graph is a graph $\Gamma$ in which the degree of any vertex is $r$. We say $r$ is the regularity of the $r$-regular graph. A connected $2$-regular graph is a cycle graph. A 3-cycle has three vertices in it.
		
		A complete graph with $n$ vertices is denoted by $K_n = (V(K_n), E(K_n))$, which contains all the edges between any two distinct vertices. The complement of a graph $\Gamma$ with $n$ vertices is denoted by $\overline{\Gamma}$ with the vertex set $V(\Gamma)$ and edges set $E(\overline{\Gamma}) = E(K_n) - E(\Gamma)$. A path in a graph is a sequence of edges that connects a series of distinct vertices. A graph $\Gamma$ is connected if there is a path between any two vertices. A connected component ${\Gamma}_1$ of a graph $\Gamma$ is the maximal set of vertices and edges such that between any two vertices in $\Gamma_1$ there exists a path. The distance between two vertices $x,y$ in a connected graph $\Gamma$ is denoted by $d(x, y)$, which is the length of the shortest path between $x$ and $y$. 
		
		The adjacency matrix of a graph $\Gamma$ with $n$ vertices $v_1, v_2, \dots v_n$ is $A(\Gamma)_{n \times n} = (a_{ij})_{n \times n}$, where 
		\begin{equation}
			a_{i, j} = 
			\begin{cases}
				1, & \text{ if } v_i\sim v_j; \\
				0, & \text{otherwise}.
			\end{cases}
		\end{equation} 
		The spectrum of a matrix $P$ is denoted by $\Lambda(P)$, which is the multi-set of spectrum of $P$. The spectrum of $\Gamma$ is $\Lambda(A(\Gamma)) = \{[\lambda_1]^{m_1}, [\lambda_2]^{m_2}, \dots [\lambda_k]^{m_k}\}$, where $m_i$ denotes the multiplicity of the spectrum $\lambda_i$ in $[\lambda_i]^{m_i}$. The degree matrix of a graph $\Gamma$ is a diagonal matrix $D(\Gamma) = \operatorname{diag}\{\deg(v_i): i = 1, 2, \dots n\}$. 
		
		We say two graphs $\Gamma=(V(\Gamma),E(\Gamma))$ and $\Upsilon = (V(\Upsilon), E(\Upsilon))$ are isomorphic if there is a bijection $f : V(\Gamma) \to V(\Upsilon)$ such that $(x, y) \in E(\Gamma)$ if and only if $(f(x), f(y)) \in E(\Upsilon)$. The union of $\Gamma$ and $\Upsilon$ is a new graph $\Gamma \cup \Upsilon$ such that $V(\Gamma \cup \Upsilon) = V(\Gamma) \cup V(\Upsilon)$ and $E(\Gamma \cup \Upsilon) = E(\Gamma) \cup E(\Upsilon)$. We can arrange the vertices in $\Gamma \cup \Upsilon$ such that $A(\Gamma \cup \Upsilon) = \diag\{A(\Gamma), A(\Upsilon)\}$. If the connected components of a graph $\Gamma$ are $C^{(1)}, C^{(2)}, \dots C^{(l)}$ we write $\Gamma = C^{(1)} \cup C^{(2)} \cup \dots C^{(l)}$. Therefore, $A(\Gamma) = \diag\{A(C^{(1)}), A(C^{(2)}), \dots, A(C^{(l)})\}$. As the spectrum of a block diagonal matrix are the spectrum of the individual block matrices, we have
		\begin{equation}\label{block_diagonal_spectra}
			\Lambda(A(\Gamma)) = \Lambda(A(C^{(1)})) \cup \Lambda(A(C^{(2)})) \cup \dots \cup \Lambda(A(C^{(l)})).
		\end{equation}
		
		Let $G = (V(G), E(G))$ and $H = (V(H), E(H))$ be two graphs. We say $H$ is a subgraph of $G$ if $V(H) \subset V(G)$ and $E(G) \subset E(H)$. Let $C \subset V(G)$. The induced subgraph generated by $C$ is denoted by $\langle C \rangle = (V(\langle C \rangle), E(\langle C \rangle))$ such that $V(\langle C \rangle) = C$ and $E(\langle C \rangle) = \{(u, v): ~\text{if}~ u, v \in C ~\text{and}~ (u, v) \in E(G)\}$.
		
		This following lemma of matrix analysis is useful in this article:
		\begin{lemma} 
			\label{commuting_matrix_eigenvalues}
			Let $A$ and $B$ be two diagonalizable and commuting matrices of order $n$. The eigenvalues of $A$ and $B$ are $\lambda_1, \lambda_1, \dots \lambda_n$, and $\mu_1, \mu_2, \dots \mu_n$, respectively. Then the eigenvalues of $A + B$ are $\lambda_1 + \mu_{i_1}, \lambda_2 + \mu_{i_2}, \dots \lambda_n + \mu_{i_n}$, for some permutation
			$i_1, i_2, \dots i_n$ of $1, 2, \dots n$ \cite[Chapter 1]{horn2012matrix}.
		\end{lemma}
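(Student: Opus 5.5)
The statement is Lemma~\ref{commuting_matrix_eigenvalues}: for commuting diagonalizable $A$ and $B$, the eigenvalues of $A+B$ are sums $\lambda_j+\mu_{i_j}$ for some permutation of the indices. The plan is to reduce everything to simultaneous diagonalization. Once $A$ and $B$ are diagonalized by a single invertible matrix $S$, the spectrum of $A+B$ can be read off the diagonal of $S^{-1}(A+B)S$.

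\textbf{Step 1 (common eigenbasis).} Diagonalize $A$ as $A = P\,\diag\{\lambda_1 I_{m_1},\dots,\lambda_r I_{m_r}\}P^{-1}$, where the $\lambda$'s in this block form are the distinct eigenvalues and $E_1,\dots,E_r$ are the corresponding eigenspaces. Because $AB=BA$, each $E_s$ is $B$-invariant: if $Av=\lambda_s v$, then $A(Bv)=B(Av)=\lambda_s Bv$. Hence $P^{-1}BP$ is block diagonal, $\diag\{B_1,\dots,B_r\}$, with $B_s$ of size $m_s$.

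\textbf{Step 2 (the blocks are diagonalizable).} We need each $B_s$ to be diagonalizable. This holds because the restriction of a diagonalizable operator to an invariant subspace is again diagonalizable. The cleanest argument uses minimal polynomials: the minimal polynomial of $B$ splits into distinct linear factors, and it annihilates $B|_{E_s}$. So the minimal polynomial of $B_s$ divides a polynomial with distinct roots, and $B_s$ is diagonalizable. Choose $Q_s$ with $Q_s^{-1}B_sQ_s$ diagonal and set $S = P\,\diag\{Q_1,\dots,Q_r\}$. Then $S^{-1}BS$ is diagonal, and $S^{-1}AS$ is still $\diag\{\lambda_1 I_{m_1},\dots\}$, because $Q_s$ commutes with the scalar block $\lambda_s I_{m_s}$.

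\textbf{Step 3 (read off the spectrum).} Now $S^{-1}(A+B)S = S^{-1}AS + S^{-1}BS$ is diagonal. Its $j$-th diagonal entry is the $j$-th diagonal entry of $S^{-1}AS$ plus the $j$-th diagonal entry of $S^{-1}BS$. The diagonal of $S^{-1}AS$ lists the eigenvalues of $A$ with multiplicity, and the diagonal of $S^{-1}BS$ lists those of $B$ with multiplicity, since similarity preserves spectra. After relabeling $A$'s eigenvalues as $\lambda_1,\dots,\lambda_n$ in the order they appear, the $B$-entries appear in some order $\mu_{i_1},\dots,\mu_{i_n}$. This ordering is the required permutation, and the eigenvalues of $A+B$ are exactly $\lambda_j+\mu_{i_j}$.

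The only nontrivial point is Step 2, the diagonalizability of the restricted blocks. I would handle it with the minimal-polynomial argument above rather than by counting eigenvector dimensions. Alternatively, one can simply cite the simultaneous diagonalization theorem from \cite[Chapter 1]{horn2012matrix}. In the applications in this paper $A$ and $B$ are real symmetric, so a unitary common diagonalization is also available.
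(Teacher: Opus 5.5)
Your proof is correct. It is the standard simultaneous-diagonalization argument: block-diagonalize $B$ along the eigenspaces of $A$, then diagonalize each block. The paper gives no proof of its own and simply cites Chapter 1 of Horn and Johnson, where the result is proved along these lines.

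One small adjustment: the prescribed ordering $\lambda_1,\dots,\lambda_n$ is matched by permuting the columns of $S$ (equivalently, composing the two index permutations), not by relabeling the eigenvalues. This is immediate.
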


	\section{Properties of $\Gamma({\mathbb{Z}_p}^{(n)})$}
	\label{Properties_of_first_group}
	
		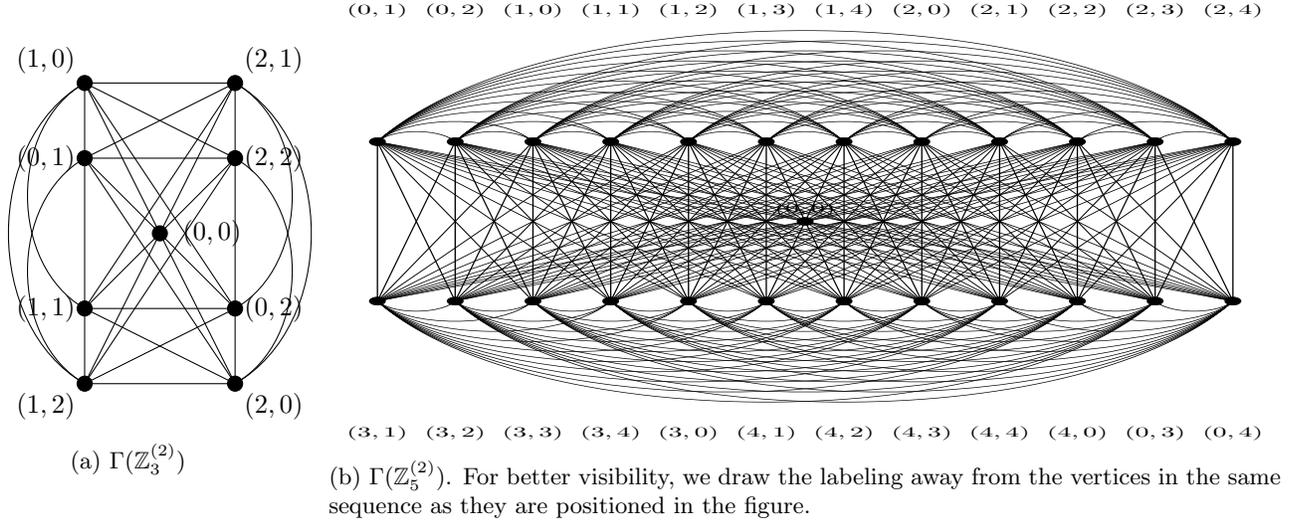
\begin{figure}
		\centering

		\begin{subfigure}[a]{.22\textwidth}
			\centering
			\begin{tikzpicture}
				\node [right] at (.18, 0) {${(0,0)}$};
				\draw [fill, black] (0, 0) circle[radius = 1mm];
				\node [left] at (-1, 1) {${(0,1)}$};
				\draw [fill, black] (-1, 1) circle[radius = 1mm];
				\node [above left] at (-1, 2) {${(1,0)}$};
				\draw [fill, black] (-1, 2) circle[radius = 1mm];
				\node [left] at (-1, -1) {${(1,1)}$};
				\draw [fill, black] (-1, -1) circle[radius = 1mm];
				\node [below left] at (-1, -2) {${(1,2)}$};
				\draw [fill, black] (-1, -2) circle[radius = 1mm];
				\node [above right] at (1, 2) {${(2,1)}$};
				\draw [fill, black] (1, 2) circle[radius = 1mm];
				\node [right] at (1, 1) {${(2,2)}$};
				\draw [fill, black] (1, 1) circle[radius = 1mm];
				\node [right] at (1, -1) {${(0,2)}$};
				\draw [fill, black] (1, -1) circle[radius = 1mm];
				\node [below right] at (1, -2) {${(2,0)}$};
				\draw [fill, black] (1, -2) circle[radius = 1mm];
				\draw [] (0,0) -- (-1,2);
				\draw [] (0,0) -- (-1,1);
				\draw [] (0,0) -- (-1,-1);
				\draw [] (0,0) -- (-1,-2);
				\draw [] (0,0) -- (1,2);
				\draw [] (0,0) -- (1,1);
				\draw [] (0,0) -- (1,-1);
				\draw [] (0,0) -- (1,-2);
				\draw [] (-1,2) -- (1,2);
				\draw [] (-1,2) -- (1,1);
				\draw [] (-1,2) -- (1,-1);
				\draw [] (-1,1) -- (1,2);
				\draw [] (-1,1) -- (1,1);
				\draw [] (-1,1) -- (1,-2);
				\draw [] (-1,-1) -- (1,2);
				\draw [] (-1,-1) -- (1,-1);
				\draw [] (-1,-1) -- (1,-2);
				\draw [] (-1,-2) -- (1,1);
				\draw [] (-1,-2) -- (1,-1);
				\draw [] (-1,-2) -- (1,-2);
				\draw [] (-1,2) -- (-1,1);
				\draw [] (-1,-1) -- (-1,-2);
				\path [-] (-1,2) edge[bend right=60] node {} (-1,-1);
				\path [-] (-1,2) edge[bend right=60] node {} (-1,-2);
				\draw [] (-1,1) -- (-1,-1);
				\path [-] (-1,1) edge[bend right=60] node {} (-1,-2);
				\draw [] (1,2) -- (1,1);
				\path [-] (1,2) edge[bend left=60] node {} (1,-1);
				\path [-] (1,2) edge[bend left=60] node {} (1,-2);
				\draw [] (1,1) -- (1,-1);
				\path [-] (1,1) edge[bend left=60] node {} (1,-2);
				\draw [] (1,-1) -- (1,-2);
			\end{tikzpicture}
			\caption{$\Gamma(\mathbb{Z}_3^{(2)})$}
		\end{subfigure}
		\hspace{.5cm}
		\begin{subfigure}[a]{.72\textwidth}
			\resizebox{\textwidth}{6cm}{
				\begin{tikzpicture}
					\node [above] at (-0.5,0) {${(0,0)}$};
					\draw [fill, black] (-0.5,0) circle[radius = 1mm];
					
					\node [above] at (-6,5) {${(0,1)}$};
					\draw [fill, black] (-6,2) circle[radius = 1mm];
					\node [above] at (-5,5) {${(0,2)}$};
					\draw [fill, black] (-5,2) circle[radius = 1mm];
					\node [above] at (-4,5) {${(1,0)}$};
					\draw [fill, black] (-4,2) circle[radius = 1mm];
					\node [above] at (-3,5) {${(1,1)}$};
					\draw [fill, black] (-3,2) circle[radius = 1mm];
					\node [above] at (-2,5) {${(1,2)}$};
					\draw [fill, black] (-2,2) circle[radius = 1mm];
					\node [above] at (-1,5) {${(1,3)}$};
					\draw [fill, black] (-1,2) circle[radius = 1mm];
					\node [above] at (0,5) {${(1,4)}$};
					\draw [fill, black] (0,2) circle[radius = 1mm];
					\node [above] at (1,5) {${(2,0)}$};
					\draw [fill, black] (1,2) circle[radius = 1mm];
					\node [above] at (2,5) {${(2,1)}$};
					\draw [fill, black] (2,2) circle[radius = 1mm];
					\node [above] at (3,5) {${(2,2)}$};
					\draw [fill, black] (3,2) circle[radius = 1mm];
					\node [above] at (4,5) {${(2,3)}$};
					\draw [fill, black] (4,2) circle[radius = 1mm];
					\node [above] at (5,5) {${(2,4)}$};
					\draw [fill, black] (5,2) circle[radius = 1mm];
					\node [below] at (-6,-5) {${(3,1)}$};
					\draw [fill, black] (-6,-2) circle[radius = 1mm];
					\node [below] at (-5,-5) {${(3,2)}$};
					\draw [fill, black] (-5,-2) circle[radius = 1mm];
					\node [below] at (-4,-5) {${(3,3)}$};
					\draw [fill, black] (-4,-2) circle[radius = 1mm];
					\node [below] at (-3,-5) {${(3,4)}$};
					\draw [fill, black] (-3,-2) circle[radius = 1mm];
					\node [below] at (-2,-5) {${(3,0)}$};
					\draw [fill, black] (-2,-2) circle[radius = 1mm];
					\node [below] at (-1,-5) {${(4,1)}$};
					\draw [fill, black] (-1,-2) circle[radius = 1mm];
					\node [below] at (0,-5) {${(4,2)}$};
					\draw [fill, black] (0,-2) circle[radius = 1mm];
					\node [below] at (1,-5) {${(4,3)}$};
					\draw [fill, black] (1,-2) circle[radius = 1mm];
					\node [below] at (2,-5) {${(4,4)}$};
					\draw [fill, black] (2,-2) circle[radius = 1mm];
					\node [below] at (3,-5) {${(4,0)}$};
					\draw [fill, black] (3,-2) circle[radius = 1mm];
					\node [below] at (4,-5) {${(0,3)}$};
					\draw [fill, black] (4,-2) circle[radius = 1mm];
					\node [below] at (5,-5) {${(0,4)}$};
					\draw [fill, black] (5,-2) circle[radius = 1mm];
					
					\draw [] (-0.5,0) -- (-6,2);
					\draw [] (-0.5,0) -- (-5,2);
					\draw [] (-0.5,0) -- (-4,2);
					\draw [] (-0.5,0) -- (-3,2);
					\draw [] (-0.5,0) -- (-2,2);
					\draw [] (-0.5,0) -- (-1,2);
					\draw [] (-0.5,0) -- (0,2);
					\draw [] (-0.5,0) -- (1,2);
					\draw [] (-0.5,0) -- (2,2);
					\draw [] (-0.5,0) -- (3,2);
					\draw [] (-0.5,0) -- (4,2);
					\draw [] (-0.5,0) -- (5,2);
					\draw [] (-0.5,0) -- (-6,-2);
					\draw [] (-0.5,0) -- (-5,-2);
					\draw [] (-0.5,0) -- (-4,-2);
					\draw [] (-0.5,0) -- (-3,-2);
					\draw [] (-0.5,0) -- (-2,-2);
					\draw [] (-0.5,0) -- (-1,-2);
					\draw [] (-0.5,0) -- (0,-2);
					\draw [] (-0.5,0) -- (1,-2);
					\draw [] (-0.5,0) -- (2,-2);
					\draw [] (-0.5,0) -- (3,-2);
					\draw [] (-0.5,0) -- (4,-2);
					\draw [] (-0.5,0) -- (5,-2);
					\draw [] (-6,2) -- (-6,-2);
					\draw [] (-6,2) -- (-5,-2);
					\draw [] (-6,2) -- (-4,-2);
					\draw [] (-6,2) -- (-3,-2);
					\draw [] (-6,2) -- (-2,-2);
					\draw [] (-6,2) -- (-1,-2);
					\draw [] (-6,2) -- (0,-2);
					\draw [] (-6,2) -- (1,-2);
					\draw [] (-6,2) -- (2,-2);
					\draw [] (-6,2) -- (3,-2);
					\draw [] (-6,2) -- (4,-2);
					\draw [] (-5,2) -- (-6,-2);
					\draw [] (-5,2) -- (-5,-2);
					\draw [] (-5,2) -- (-4,-2);
					\draw [] (-5,2) -- (-3,-2);
					\draw [] (-5,2) -- (-2,-2);
					\draw [] (-5,2) -- (-1,-2);
					\draw [] (-5,2) -- (0,-2);
					\draw [] (-5,2) -- (1,-2);
					\draw [] (-5,2) -- (2,-2);
					\draw [] (-5,2) -- (3,-2);
					\draw [] (-5,2) -- (5,-2);
					\draw [] (-4,2) -- (-6,-2);
					\draw [] (-4,2) -- (-5,-2);
					\draw [] (-4,2) -- (-4,-2);
					\draw [] (-4,2) -- (-3,-2);
					\draw [] (-4,2) -- (-2,-2);
					\draw [] (-4,2) -- (-1,-2);
					\draw [] (-4,2) -- (0,-2);
					\draw [] (-4,2) -- (1,-2);
					\draw [] (-4,2) -- (2,-2);
					\draw [] (-4,2) -- (4,-2);
					\draw [] (-4,2) -- (5,-2);
					\draw [] (-3,2) -- (-6,-2);
					\draw [] (-3,2) -- (-5,-2);
					\draw [] (-3,2) -- (-4,-2);
					\draw [] (-3,2) -- (-3,-2);
					\draw [] (-3,2) -- (-2,-2);
					\draw [] (-3,2) -- (-1,-2);
					\draw [] (-3,2) -- (0,-2);
					\draw [] (-3,2) -- (1,-2);
					\draw [] (-3,2) -- (3,-2);
					\draw [] (-3,2) -- (4,-2);
					\draw [] (-3,2) -- (5,-2);
					\draw [] (-2,2) -- (-6,-2);
					\draw [] (-2,2) -- (-5,-2);
					\draw [] (-2,2) -- (-4,-2);
					\draw [] (-2,2) -- (-3,-2);
					\draw [] (-2,2) -- (-2,-2);
					\draw [] (-2,2) -- (-1,-2);
					\draw [] (-2,2) -- (0,-2);
					\draw [] (-2,2) -- (2,-2);
					\draw [] (-2,2) -- (3,-2);
					\draw [] (-2,2) -- (4,-2);
					\draw [] (-2,2) -- (5,-2);
					\draw [] (-1,2) -- (-6,-2);
					\draw [] (-1,2) -- (-5,-2);
					\draw [] (-1,2) -- (-4,-2);
					\draw [] (-1,2) -- (-3,-2);
					\draw [] (-1,2) -- (-2,-2);
					\draw [] (-1,2) -- (-1,-2);
					\draw [] (-1,2) -- (1,-2);
					\draw [] (-1,2) -- (2,-2);
					\draw [] (-1,2) -- (3,-2);
					\draw [] (-1,2) -- (4,-2);
					\draw [] (-1,2) -- (5,-2);
					\draw [] (0,2) -- (-6,-2);
					\draw [] (0,2) -- (-5,-2);
					\draw [] (0,2) -- (-4,-2);
					\draw [] (0,2) -- (-3,-2);
					\draw [] (0,2) -- (-2,-2);
					\draw [] (0,2) -- (0,-2);
					\draw [] (0,2) -- (1,-2);
					\draw [] (0,2) -- (2,-2);
					\draw [] (0,2) -- (3,-2);
					\draw [] (0,2) -- (4,-2);
					\draw [] (0,2) -- (5,-2);
					\draw [] (1,2) -- (-6,-2);
					\draw [] (1,2) -- (-5,-2);
					\draw [] (1,2) -- (-4,-2);
					\draw [] (1,2) -- (-3,-2);
					\draw [] (1,2) -- (-1,-2);
					\draw [] (1,2) -- (0,-2);
					\draw [] (1,2) -- (1,-2);
					\draw [] (1,2) -- (2,-2);
					\draw [] (1,2) -- (3,-2);
					\draw [] (1,2) -- (4,-2);
					\draw [] (1,2) -- (5,-2);
					\draw [] (2,2) -- (-6,-2);
					\draw [] (2,2) -- (-5,-2);
					\draw [] (2,2) -- (-4,-2);
					\draw [] (2,2) -- (-2,-2);
					\draw [] (2,2) -- (-1,-2);
					\draw [] (2,2) -- (0,-2);
					\draw [] (2,2) -- (1,-2);
					\draw [] (2,2) -- (2,-2);
					\draw [] (2,2) -- (3,-2);
					\draw [] (2,2) -- (4,-2);
					\draw [] (2,2) -- (5,-2);
					\draw [] (3,2) -- (-6,-2);
					\draw [] (3,2) -- (-5,-2);
					\draw [] (3,2) -- (-3,-2);
					\draw [] (3,2) -- (-2,-2);
					\draw [] (3,2) -- (-1,-2);
					\draw [] (3,2) -- (0,-2);
					\draw [] (3,2) -- (1,-2);
					\draw [] (3,2) -- (2,-2);
					\draw [] (3,2) -- (3,-2);
					\draw [] (3,2) -- (4,-2);
					\draw [] (3,2) -- (5,-2);
					\draw [] (4,2) -- (-6,-2);
					\draw [] (4,2) -- (-4,-2);
					\draw [] (4,2) -- (-3,-2);
					\draw [] (4,2) -- (-2,-2);
					\draw [] (4,2) -- (-1,-2);
					\draw [] (4,2) -- (0,-2);
					\draw [] (4,2) -- (1,-2);
					\draw [] (4,2) -- (2,-2);
					\draw [] (4,2) -- (3,-2);
					\draw [] (4,2) -- (4,-2);
					\draw [] (4,2) -- (5,-2);
					\draw [] (5,2) -- (-5,-2);
					\draw [] (5,2) -- (-4,-2);
					\draw [] (5,2) -- (-3,-2);
					\draw [] (5,2) -- (-2,-2);
					\draw [] (5,2) -- (-1,-2);
					\draw [] (5,2) -- (0,-2);
					\draw [] (5,2) -- (1,-2);
					\draw [] (5,2) -- (2,-2);
					\draw [] (5,2) -- (3,-2);
					\draw [] (5,2) -- (4,-2);
					\draw [] (5,2) -- (5,-2);
					\path [-] (-6,2) edge[bend left=60] node {} (-5,2);
					\path [-] (-6,2) edge[bend left=60] node {} (-4,2);
					\path [-] (-6,2) edge[bend left=60] node {} (-3,2);
					\path [-] (-6,2) edge[bend left=60] node {} (-2,2);
					\path [-] (-6,2) edge[bend left=60] node {} (-1,2);
					\path [-] (-6,2) edge[bend left=60] node {} (0,2);
					\path [-] (-6,2) edge[bend left=60] node {} (1,2);
					\path [-] (-6,2) edge[bend left=60] node {} (2,2);
					\path [-] (-6,2) edge[bend left=60] node {} (3,2);
					\path [-] (-6,2) edge[bend left=60] node {} (4,2);
					\path [-] (-6,2) edge[bend left=60] node {} (5,2);
					\path [-] (-5,2) edge[bend left=60] node {} (-4,2);
					\path [-] (-5,2) edge[bend left=60] node {} (-3,2);
					\path [-] (-5,2) edge[bend left=60] node {} (-2,2);
					\path [-] (-5,2) edge[bend left=60] node {} (-1,2);
					\path [-] (-5,2) edge[bend left=60] node {} (0,2);
					\path [-] (-5,2) edge[bend left=60] node {} (1,2);
					\path [-] (-5,2) edge[bend left=60] node {} (2,2);
					\path [-] (-5,2) edge[bend left=60] node {} (3,2);
					\path [-] (-5,2) edge[bend left=60] node {} (4,2);
					\path [-] (-5,2) edge[bend left=60] node {} (5,2);
					\path [-] (-4,2) edge[bend left=60] node {} (-3,2);
					\path [-] (-4,2) edge[bend left=60] node {} (-2,2);
					\path [-] (-4,2) edge[bend left=60] node {} (-1,2);
					\path [-] (-4,2) edge[bend left=60] node {} (0,2);
					\path [-] (-4,2) edge[bend left=60] node {} (1,2);
					\path [-] (-4,2) edge[bend left=60] node {} (2,2);
					\path [-] (-4,2) edge[bend left=60] node {} (3,2);
					\path [-] (-4,2) edge[bend left=60] node {} (4,2);
					\path [-] (-4,2) edge[bend left=60] node {} (5,2);
					\path [-] (-3,2) edge[bend left=60] node {} (-2,2);
					\path [-] (-3,2) edge[bend left=60] node {} (-1,2);
					\path [-] (-3,2) edge[bend left=60] node {} (0,2);
					\path [-] (-3,2) edge[bend left=60] node {} (1,2);
					\path [-] (-3,2) edge[bend left=60] node {} (2,2);
					\path [-] (-3,2) edge[bend left=60] node {} (3,2);
					\path [-] (-3,2) edge[bend left=60] node {} (4,2);
					\path [-] (-3,2) edge[bend left=60] node {} (5,2);
					\path [-] (-2,2) edge[bend left=60] node {} (-1,2);
					\path [-] (-2,2) edge[bend left=60] node {} (0,2);
					\path [-] (-2,2) edge[bend left=60] node {} (1,2);
					\path [-] (-2,2) edge[bend left=60] node {} (2,2);
					\path [-] (-2,2) edge[bend left=60] node {} (3,2);
					\path [-] (-2,2) edge[bend left=60] node {} (4,2);
					\path [-] (-2,2) edge[bend left=60] node {} (5,2);
					\path [-] (-1,2) edge[bend left=60] node {} (0,2);
					\path [-] (-1,2) edge[bend left=60] node {} (1,2);
					\path [-] (-1,2) edge[bend left=60] node {} (2,2);
					\path [-] (-1,2) edge[bend left=60] node {} (3,2);
					\path [-] (-1,2) edge[bend left=60] node {} (4,2);
					\path [-] (-1,2) edge[bend left=60] node {} (5,2);
					\path [-] (0,2) edge[bend left=60] node {} (1,2);
					\path [-] (0,2) edge[bend left=60] node {} (2,2);
					\path [-] (0,2) edge[bend left=60] node {} (3,2);
					\path [-] (0,2) edge[bend left=60] node {} (4,2);
					\path [-] (0,2) edge[bend left=60] node {} (5,2);
					\path [-] (1,2) edge[bend left=60] node {} (2,2);
					\path [-] (1,2) edge[bend left=60] node {} (3,2);
					\path [-] (1,2) edge[bend left=60] node {} (4,2);
					\path [-] (1,2) edge[bend left=60] node {} (5,2);
					\path [-] (2,2) edge[bend left=60] node {} (3,2);
					\path [-] (2,2) edge[bend left=60] node {} (4,2);
					\path [-] (2,2) edge[bend left=60] node {} (5,2);
					\path [-] (3,2) edge[bend left=60] node {} (4,2);
					\path [-] (3,2) edge[bend left=60] node {} (5,2);
					\path [-] (4,2) edge[bend left=60] node {} (5,2);
					\path [-] (-6,-2) edge[bend right=60] node {} (-5,-2);
					\path [-] (-6,-2) edge[bend right=60] node {} (-4,-2);
					\path [-] (-6,-2) edge[bend right=60] node {} (-3,-2);
					\path [-] (-6,-2) edge[bend right=60] node {} (-2,-2);
					\path [-] (-6,-2) edge[bend right=60] node {} (-1,-2);
					\path [-] (-6,-2) edge[bend right=60] node {} (0,-2);
					\path [-] (-6,-2) edge[bend right=60] node {} (1,-2);
					\path [-] (-6,-2) edge[bend right=60] node {} (2,-2);
					\path [-] (-6,-2) edge[bend right=60] node {} (3,-2);
					\path [-] (-6,-2) edge[bend right=60] node {} (4,-2);
					\path [-] (-5,-2) edge[bend right=60] node {} (-4,-2);
					\path [-] (-5,-2) edge[bend right=60] node {} (-3,-2);
					\path [-] (-5,-2) edge[bend right=60] node {} (-2,-2);
					\path [-] (-5,-2) edge[bend right=60] node {} (-1,-2);
					\path [-] (-5,-2) edge[bend right=60] node {} (0,-2);
					\path [-] (-5,-2) edge[bend right=60] node {} (1,-2);
					\path [-] (-5,-2) edge[bend right=60] node {} (2,-2);
					\path [-] (-5,-2) edge[bend right=60] node {} (3,-2);
					\path [-] (-5,-2) edge[bend right=60] node {} (4,-2);
					\path [-] (-5,-2) edge[bend right=60] node {} (5,-2);
					\path [-] (-4,-2) edge[bend right=60] node {} (-3,-2);
					\path [-] (-4,-2) edge[bend right=60] node {} (-2,-2);
					\path [-] (-4,-2) edge[bend right=60] node {} (-1,-2);
					\path [-] (-4,-2) edge[bend right=60] node {} (0,-2);
					\path [-] (-4,-2) edge[bend right=60] node {} (1,-2);
					\path [-] (-4,-2) edge[bend right=60] node {} (2,-2);
					\path [-] (-4,-2) edge[bend right=60] node {} (3,-2);
					\path [-] (-4,-2) edge[bend right=60] node {} (4,-2);
					\path [-] (-4,-2) edge[bend right=60] node {} (5,-2);
					\path [-] (-3,-2) edge[bend right=60] node {} (-2,-2);
					\path [-] (-3,-2) edge[bend right=60] node {} (-1,-2);
					\path [-] (-3,-2) edge[bend right=60] node {} (0,-2);
					\path [-] (-3,-2) edge[bend right=60] node {} (1,-2);
					\path [-] (-3,-2) edge[bend right=60] node {} (2,-2);
					\path [-] (-3,-2) edge[bend right=60] node {} (3,-2);
					\path [-] (-3,-2) edge[bend right=60] node {} (4,-2);
					\path [-] (-3,-2) edge[bend right=60] node {} (5,-2);
					\path [-] (-2,-2) edge[bend right=60] node {} (-1,-2);
					\path [-] (-2,-2) edge[bend right=60] node {} (0,-2);
					\path [-] (-2,-2) edge[bend right=60] node {} (1,-2);
					\path [-] (-2,-2) edge[bend right=60] node {} (2,-2);
					\path [-] (-2,-2) edge[bend right=60] node {} (3,-2);
					\path [-] (-2,-2) edge[bend right=60] node {} (4,-2);
					\path [-] (-2,-2) edge[bend right=60] node {} (5,-2);
					\path [-] (-1,-2) edge[bend right=60] node {} (0,-2);
					\path [-] (-1,-2) edge[bend right=60] node {} (1,-2);
					\path [-] (-1,-2) edge[bend right=60] node {} (2,-2);
					\path [-] (-1,-2) edge[bend right=60] node {} (3,-2);
					\path [-] (-1,-2) edge[bend right=60] node {} (4,-2);
					\path [-] (-1,-2) edge[bend right=60] node {} (5,-2);
					\path [-] (0,-2) edge[bend right=60] node {} (1,-2);
					\path [-] (0,-2) edge[bend right=60] node {} (2,-2);
					\path [-] (0,-2) edge[bend right=60] node {} (3,-2);
					\path [-] (0,-2) edge[bend right=60] node {} (4,-2);
					\path [-] (0,-2) edge[bend right=60] node {} (5,-2);
					\path [-] (1,-2) edge[bend right=60] node {} (2,-2);
					\path [-] (1,-2) edge[bend right=60] node {} (3,-2);
					\path [-] (1,-2) edge[bend right=60] node {} (4,-2);
					\path [-] (1,-2) edge[bend right=60] node {} (5,-2);
					\path [-] (2,-2) edge[bend right=60] node {} (3,-2);
					\path [-] (2,-2) edge[bend right=60] node {} (4,-2);
					\path [-] (2,-2) edge[bend right=60] node {} (5,-2);
					\path [-] (3,-2) edge[bend right=60] node {} (4,-2);
					\path [-] (3,-2) edge[bend right=60] node {} (5,-2);
					\path [-] (4,-2) edge[bend right=60] node {} (5,-2);
				\end{tikzpicture}
			}
			\caption{$\Gamma({\mathbb{Z}_{5}^{(2)}})$. For better visibility, we draw the labeling away from the vertices in the same sequence as they are positioned in the figure.}
		\end{subfigure}
		\caption{POE graphs of $\mathbb{Z}_p^{(n)}$ for $n = 2$ and $p = 3$ and $5$. In both the graphs the vertex $(0, 0)$ represents the identity element of the graph. It is connected to all other vertices.}
		\label{Z_p_n_graphs}
	\end{figure}

		The group ${\mathbb{Z}_p}^{(n)} \equiv \mathbb{Z}_p \times \mathbb{Z}_p \times \dots \mathbb{Z}_p(n$-times) consists of the identity and $p^n-1$ elements of order $p$. The definition of POE graphs indicates that ${\mathbb{Z}_p}^{(n)}$ consists of $p^n$ vertices. As the order of any non-identity elements in ${\mathbb{Z}_p}^{(n)}$ is $p$, then the vertex corresponding to the identity element $e$ is connected to all other vertices. Hence, degree of $e$ in $\Gamma({\mathbb{Z}_p}^{(n)})$ is $(p^n - 1)$. If for any two elements $x, y \in {\mathbb{Z}_p}^{(n)}$ with  $x\neq y^{-1}$, we have $o(xy) = p$, that is, $x\sim y$. Therefore, the degree of any non-identity element in $\Gamma({\mathbb{Z}_p}^{(n)})$ is $(p^n - 2)$. The characteristics of $\Gamma({\mathbb{Z}_p})$ may be derived by putting $n = 1$ in $\Gamma({\mathbb{Z}_p}^{(n)})$. For $p = 3$ and $5$ and $n = 2$ we depict the POE graphs in \autoref{Z_p_n_graphs}.
			
		\begin{thm}{\label{First spectral theorem}}
			The integral spectrum of the graph $\Gamma({\mathbb{Z}_p}^{(n)})$ is $\{[0]^{\frac{p^n-1}{2}},[-2]^{\frac{p^n-3}{2}}\}$ and the other non-integral spectrum are the roots of the polynomial $x^2-(p^n-3)x-(p^n-1)$.
		\end{thm}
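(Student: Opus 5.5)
The plan is to write the adjacency matrix explicitly and diagonalise it using the involution $x \mapsto x^{-1}$. Set $N = p^n - 1$ and order the vertices as $e$ first, followed by the non-identity elements grouped in pairs $\{x, x^{-1}\}$. Since $p$ is odd, $x \neq x^{-1}$ for every $x \neq e$, so these pairs are genuine 2-element sets and $N$ is even.

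By the degree discussion preceding the theorem, the induced graph on the non-identity elements is $K_N$ with the perfect matching $\{x, x^{-1}\}$ removed. The identity is adjacent to everything. Let $M$ be the $N \times N$ permutation matrix of the involution $x \mapsto x^{-1}$, let $J$ be the all-ones matrix and $\mathbf{1}$ the all-ones vector. Then
\[
A(\Gamma) = \begin{pmatrix} 0 & \mathbf{1}^T \\ \mathbf{1} & J - I - M \end{pmatrix}.
\]

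The first step handles the eigenvectors supported on the non-identity vertices and orthogonal to $\mathbf{1}$. $M$ is a symmetric involution with eigenvalue $1$ of multiplicity $N/2$ (vectors constant on each pair) and eigenvalue $-1$ of multiplicity $N/2$ (vectors antisymmetric on each pair). $M$ commutes with $J$, and $\mathbf{1}$ lies in the $+1$-eigenspace of $M$. For a vector $v$ orthogonal to $\mathbf{1}$ with $Mv = \pm v$, the vector $(0, v)$ is an eigenvector of $A$ with eigenvalue $-1 \mp 1$, because $Jv = 0$ and the first row contributes $\mathbf{1}^T v = 0$. This argument is essentially Lemma~\ref{commuting_matrix_eigenvalues} applied to the commuting pieces $J$ and $-I-M$.

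From the $-1$-eigenspace of $M$, which is automatically orthogonal to $\mathbf{1}$, we get eigenvalue $0$ with multiplicity $N/2 = \frac{p^n-1}{2}$. From the $+1$-eigenspace intersected with $\mathbf{1}^\perp$ we get eigenvalue $-2$ with multiplicity $N/2 - 1 = \frac{p^n-3}{2}$.

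The second step accounts for the remaining $2$-dimensional invariant subspace spanned by $(1, \mathbf{0})$ and $(0, \mathbf{1})$. This corresponds to the equitable partition $\{e\} \cup (G \setminus \{e\})$, whose quotient matrix is
\[
\begin{pmatrix} 0 & N \\ 1 & N-2 \end{pmatrix}.
\]
Its characteristic polynomial is $x^2 - (N-2)x - N = x^2 - (p^n-3)x - (p^n-1)$.

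For the count, $\frac{N}{2} + \left(\frac{N}{2} - 1\right) + 2 = N + 1 = p^n$, so the whole spectrum is accounted for. Finally, the discriminant of this quadratic is $(N-2)^2 + 4N = N^2 + 4$. This is not a perfect square for $N \geq 2$, since it lies strictly between $N^2$ and $(N+1)^2$. Hence these two roots are irrational, and the integral spectrum is exactly as stated.

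The only delicate point is making sure the $-2$ multiplicity correctly drops by one because $\mathbf{1}$ lies in the $+1$-eigenspace of $M$. The rest is routine.
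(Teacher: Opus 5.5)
Your proof is correct, and it takes a different route from the paper's.

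\textbf{The paper's route.} The paper works with the characteristic determinant throughout. It takes the Schur complement of the $(1,1)$ entry $-\lambda$, tacitly assuming $\lambda\neq 0$. It writes the resulting matrix of order $p^n-1$ as $\begin{pmatrix} A_1 & B_1\\ B_1 & A_1\end{pmatrix}$ and factors the determinant as $\det(A_1+B_1)\det(A_1-B_1)$. It then multiplies the two factors into the half-size matrix $(\lambda^2+2\lambda)I-2(\lambda+1)J$ and evaluates that determinant from the spectra of $I$ and $J$ via Lemma~\ref{commuting_matrix_eigenvalues}.

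\textbf{Your route.} You split $\mathbb{R}^{p^n}$ into three mutually orthogonal $A$-invariant subspaces:
\begin{itemize}
\item vectors antisymmetric under inversion, giving eigenvalue $0$;
\item inversion-symmetric vectors orthogonal to $\mathbf{1}$, giving eigenvalue $-2$;
\item the two-dimensional space of the equitable partition $\{e\}$, $G\setminus\{e\}$, which gives the quadratic.
\end{itemize}
The paper only brings in equitable partitions later, for $\Gamma(\mathbb{Z}_{2p_2})$.

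\textbf{What each buys.} The paper's computation yields the full characteristic polynomial in closed form in one pass. The cost is the $\lambda\neq 0$ caveat and fiddly bookkeeping of matrix sizes. For example, its $J$ has order $\frac{p^n-1}{2}$, so the nonzero eigenvalue of $2(\lambda+1)J$ is $(p^n-1)(\lambda+1)$, not the printed $2(p^n-1)(\lambda+1)$.

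Your decomposition gives explicit eigenvectors. It also makes the multiplicities, and the loss of one dimension in the $-2$ eigenspace, transparent. Unlike the paper, you actually prove the two remaining eigenvalues are irrational: the discriminant $N^2+4$ lies strictly between $N^2$ and $(N+1)^2$. That is exactly what is needed to call $\{[0]^{\frac{p^n-1}{2}},[-2]^{\frac{p^n-3}{2}}\}$ the whole integral spectrum.
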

			
		\begin{proof}
			We arrange elements in ${\mathbb{Z}_p}^{(n)}$ as $e, x_1, x_2,\dots, x_{\frac{p^n-1}{2}},  x_{\frac{p^n-1}{2}}^{-1}, \dots, x_2^{-1}, x_1^{-1}$. According to the definition of POE graphs, mentioned in  \autoref{POE_graph_definition}, the identity element $e$ is adjacent to all the other elements of ${\mathbb{Z}_p}^{(n)}$. Also, any non-identity element is adjacent to all the elements except itself and its inverse. Therefore, the characteristic equation of $A(\Gamma({\mathbb{Z}_p}^{(n)}) = (a_{i, j})_{p^n \times p^n}$ can be represented by $\det[A(\Gamma({\mathbb{Z}_p}^{(n)})-{\lambda}I_{p^n}] = 0$,
			 $$ \text{or}~ \det
			 \left[\begin{array}{c | c c c c c c c }
			 	-\lambda & 1 & 1 & 1 & \dots & 1 & 1 & 1 \\
			 	\hline 
			 	1 & -\lambda  & 1 & 1 & \dots & 1 & 1 & 0\\
			 	1 & 1 & -\lambda & 1 & \dots  & 1 & 0 & 1 \\
			 	1 & 1 & 1 & -\lambda & \dots & 0 & 1 & 1 \\
			 	\vdots & \vdots & \vdots & \vdots & \ddots & \vdots & \vdots & \vdots \\ 
			 	1 & 1 & 1 & 0 & \dots & -\lambda & 1 & 1\\
			 	1 & 1 & 0 & 1 & \dots & 1 & -\lambda & 1\\ 
			 	1 & 0 & 1 & 1 & \dots  & 1 & 1 & -\lambda\\ 
			 \end{array}\right]_{p^n \times p^n} = 0.
			 $$ 
			 The matrix $A(\Gamma({\mathbb{Z}_p}^{(n)})-{\lambda}I_{p^n}$ is a block matrix consists of $4$ blocks which are given by $A=[\lambda]$, $B= [1 \hspace{5mm} 1 \hspace{5mm}1 \hspace{5mm}1\hspace{5mm} \dots \hspace{5mm}1\hspace{5mm} 1]$, $C= B^T$ and\\
			 \begin{equation}
			 	D=
			 	\begin{cases}
			 		-\lambda & \text{ for } a_{ii};\\
			 		0 & \text{ for } a_{ij},i+j=p^n; \\
			 		1 & \text{ otherwise }.\\
			 	\end{cases}
			 \end{equation}\\
			 Since, $\det(A) \neq 0$ and $\det(D)\neq 0$, then $\det[A(\Gamma({\mathbb{Z}_p}^{(n)})-{\lambda}I_{p^n}]$ can be expressed as  
			 \begin{equation} \label{first block-det equation}
			 	\det{\begin{bmatrix}
			 			A &  B\\
			 			C & D\\
			 	\end{bmatrix}} = \det(A).\det[D - CA^{-1}B] = (-\lambda).\det \left[D - \left(-\frac{1}{\lambda}\right) B^TB \right] = (-\lambda).det (P),
			 \end{equation}
			 where $P = (p_{i, j})_{(p^n - 1) \times (p^n - 1)}$, and
			 \begin{equation}
			 	p_{i, j} =
			 	\begin{cases}
			 		-\lambda+\frac{1}{\lambda} & \text{when}~ i = j ~\text{which are corresponding to } a_{i + 1, i + 1};\\
			 		\frac{1}{\lambda} & \text{ for } i \neq j, ~\text{and}~ i+j=p^n;\\
			 		1+\frac{1}{\lambda} & \text{ otherwise }.\\
			 	\end{cases}
			 \end{equation}\\
			 The matrix $P$ can be further divided into block matrices as $P = \begin{bmatrix}
			 	A_1 & B_1 \\
			 	B_1 & A_1 \\
			 \end{bmatrix}$, where $A_1 = (a'_{i,j})$, and $B_1=(b'_{i,j})$ are matrices of order $\frac{p^n - 1}{2}$ as well as
			 \begin{equation}
			 	 a'_{i,j}=
			 	\begin{cases}
			 		-\lambda+\frac{1}{\lambda} & \text{ for } i = j,\\
			 		1+\frac{1}{\lambda} & \text{ otherwise;}\\
			 	\end{cases}
			 \text{and}~
			 	 b'_{i,j}=
			 	\begin{cases}
			 		\frac{1}{\lambda} & \text{ for } a_{ij},i+j=\frac{p^n}{2},\\
			 		1+\frac{1}{\lambda} & \text{ otherwise.}\\
			 	\end{cases}
			 \end{equation}
			 Writing $A_1$ and $B_1$ in matrix form, we observe that they are commuting matrices. Therefore, 
			 \begin{equation*}
			 	\begin{split}
			 		& \det(P) = \det\begin{pmatrix}
			 			A_1 & \rvline & B_1 \\
			 			\hline
			 			B_1 & \rvline & A_1 \\
			 		\end{pmatrix} = \det(A_1+B_1)\det(A_1-B_1)\\
			 		& =\begin{vmatrix}
			 			(1-\lambda+\frac{2}{\lambda}) & (2+\frac{2}{\lambda}) & \dots & (2+\frac{2}{\lambda}) & (1+\frac{2}{\lambda})  \\
			 			(2+\frac{2}{\lambda}) & (1-\lambda+\frac{2}{\lambda}) & \dots & (1+\frac{2}{\lambda}) & (2+\frac{2}{\lambda})\\
			 			\vdots & \vdots & \ddots & \vdots & \vdots \\
			 			(1+\frac{2}{\lambda}) & (2+\frac{2}{\lambda}) & \dots & (2+\frac{2}{\lambda}) & (1-\lambda+\frac{2}{\lambda})\\
			 		\end{vmatrix} 
			 		\begin{vmatrix}
			 			(-1-\lambda) & 0 & \dots & 0 & 1  \\
			 			0 & (-1-\lambda) & \dots & 1 & 0\\
			 			\vdots & \vdots & \ddots & \vdots & \vdots \\
			 			1 & 0 & \dots & 0 & (-1-\lambda)\\
			 		\end{vmatrix}\\
			 		& = \begin{vmatrix}
			 			({\lambda}^2-2) & -2(\lambda+1) & -2(\lambda+1) & \dots & \dots & -2(\lambda+1)\\
			 			-2(\lambda+1) & ({\lambda}^2-2) & -2(\lambda+1) & \dots & \dots & -2(\lambda+1)\\
			 			\vdots & \vdots & \vdots & \ddots & \ddots & \vdots \\
			 			-2(\lambda+1) & -2(\lambda+1) & \dots & \dots & \dots & ({\lambda}^2-2)\\
			 		\end{vmatrix} [\because \det(MN) = \det(M) \det(N).]
			 	\end{split}
			 \end{equation*}
			 Now, the equation \eqref{first block-det equation} indicates that 
			 \begin{equation}
			 	\begin{split}
			 		& \det[A(\Gamma({\mathbb{Z}_p}^{(n)})-{\lambda}I_{p^n}] = 
			 		\det \begin{pmatrix}
			 			A & B \\
			 			C & D \\
			 		\end{pmatrix} \\
			 		& = (-\lambda) \begin{vmatrix}
			 			({\lambda}^2-2) & -2(\lambda+1) & -2(\lambda+1) & \dots & -2(\lambda+1)\\
			 			-2(\lambda+1) & ({\lambda}^2-2) & -2(\lambda+1) &  \dots & -2(\lambda+1)\\
			 			\vdots & \vdots & \vdots & \ddots & \vdots\\
			 			-2(\lambda+1) & -2(\lambda+1) & -2(\lambda+1) & \dots & ({\lambda}^2-2)\\
			 		\end{vmatrix}\\
			 		& =(-\lambda) \det[({\lambda}^2-2).I-2.(\lambda+1)(J-I)]\\
			 		& = (-\lambda)\det[({\lambda}^2-2).I-2.(\lambda+1).I-2.(\lambda+1).J]\\
			 		& =(-\lambda) \det[({\lambda}^2 + 2\lambda)I - 2(\lambda+1)J].
			 	\end{split}
			 \end{equation}
			 Here, $I$ is the identity matrix and 
			 $J$ is the matrix with all entries $1$ of order $\frac{p^n-1}{2}$. The spectrum of $({\lambda}^2+2\lambda)I$ are $({\lambda}^2+2\lambda)$ with multiplicity $\frac{p^n-1}{2}$. The spectrum of $2(\lambda+1)J$ are $2(p^n-1)(\lambda+1)$ with multiplicity $1$, and $0$ with multiplicity $\frac{p^n-1}{2}-1 = \frac{p^n-3}{2}$. They are commuting Hermitian matrices. Using \autoref{commuting_matrix_eigenvalues}, we state that the spectrum of $({\lambda}^2 + 2\lambda)I - 2(\lambda+1)J$ are the sum of corresponding spectrum. Simplifying we get
			 $$\det[({\lambda}^2+2.\lambda).I-2.(\lambda+1).J] = {\lambda}^{\frac{p^n-1}{2}}.(\lambda+2)^{\frac{p^n-3}{2}}.({\lambda}^2-(p^n-3){\lambda}-(p^n-1)).$$
			 
			 Combining all the cases, we observe that the spectrum of $A(\Gamma({\mathbb{Z}_p}^{(n)})$ are $\{[0]^{\frac{p^n-1}{2}},[-2]^{\frac{p^n-3}{2}}\}$, and the roots of the equation $x^2 - (p^n-3)x - (p^n-1) = 0$.
			\end{proof}
			
			\begin{corollary}\label{spectra_of_Z_p}
			  The graph $\Gamma(\mathbb{Z}_p)$ has spectrum $\{[0]^{\frac{p-1}{2}}, [-2]^{\frac{p-3}{2}}\}$ and the roots of the equation $x^2-(p-3)x-(p-1)=0$.
			\end{corollary}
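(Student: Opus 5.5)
The plan is to obtain the corollary by specializing \autoref{First spectral theorem} to $n = 1$. Then ${\mathbb{Z}_p}^{(1)} = \mathbb{Z}_p$ and $p^n = p$. The integral part $\{[0]^{\frac{p^n-1}{2}},[-2]^{\frac{p^n-3}{2}}\}$ becomes $\{[0]^{\frac{p-1}{2}},[-2]^{\frac{p-3}{2}}\}$, and the quadratic $x^2-(p^n-3)x-(p^n-1)$ becomes $x^2-(p-3)x-(p-1)$.

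First I will confirm that the hypotheses used in that proof hold for $n=1$. The group $\mathbb{Z}_p$ consists of the identity $e$ and $p-1$ elements of order $p$. Since $p$ is odd, no non-identity element is its own inverse, so the non-identity elements split into $\frac{p-1}{2}$ inverse pairs $\{x_i, x_i^{-1}\}$. The adjacency structure is then exactly the one used in the theorem: $e$ is adjacent to every other vertex, and a non-identity $x$ is adjacent to every vertex except itself and $x^{-1}$, because $x x^{-1} = e$ has order $1$.

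Because the block-determinant manipulation in the proof of \autoref{First spectral theorem} divides by $\lambda$ and treats the case $\frac{p-3}{2}=0$ (that is, $p=3$) only implicitly, I will also give a direct eigenvector check. Put $m = \frac{p-1}{2}$, and write $A$ restricted to the $2m$ non-identity vertices as $J - I - M$, where $M$ is the perfect matching $x_i \leftrightarrow x_i^{-1}$.
\begin{enumerate}
\item Vectors vanishing at $e$ and satisfying $v(x_i^{-1}) = -v(x_i)$ form an $m$-dimensional space. They have coordinate sum $0$, so $J$ kills them and the row of $e$ gives $0$. They also satisfy $(-I-M)v = -v+v = 0$. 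This gives eigenvalue $0$ with multiplicity $m = \frac{p-1}{2}$.
\item Vectors vanishing at $e$, satisfying $v(x_i^{-1}) = v(x_i)$, and having total sum $0$ form an $(m-1)$-dimensional space. On them $(-I-M)v = -2v$, giving eigenvalue $-2$ with multiplicity $\frac{p-3}{2}$.
\item The remaining two dimensions come from the equitable partition $\{e\}$, $G\setminus\{e\}$. Its quotient matrix is $\begin{pmatrix} 0 & 2m \\ 1 & 2m-2\end{pmatrix}$, whose characteristic polynomial is $x^2-(2m-2)x-2m = x^2-(p-3)x-(p-1)$.
\end{enumerate}
The multiplicities add up to $m + (m-1) + 2 = p$, so these are all the eigenvalues.

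The only real obstacle is the degenerate case $p=3$, where the $(-2)$-eigenspace is empty. The direct eigenvector argument covers it: $\Gamma(\mathbb{Z}_3)$ is the path $P_3$, with spectrum $0,\pm\sqrt2$, which matches the statement.
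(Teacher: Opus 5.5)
Your proposal is correct and matches the paper, which gets this corollary simply by putting $n=1$ in \autoref{First spectral theorem}. Your supplementary eigenvector decomposition is a valid self-contained alternative to the paper's Schur-complement computation: antisymmetric vectors vanishing at $e$ give $0$, symmetric sum-zero vectors give $-2$, and the two-cell equitable partition $\{e\}, G\setminus\{e\}$ gives the quadratic. Its advantages are that it never divides by $\lambda$ and handles $p=3$ explicitly; your final count is justified because the three eigenspaces are mutually orthogonal.
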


	\section{Properties of $\Gamma(\mathbb{Z}_{p^n})$}
	\label{Properties_of_second_group}
	
	\begin{figure}
		\centering
		\begin{tikzpicture}
			\node [above] at (-1, 0.5) {${0}$};
			\draw [fill, black] (-1, 0.5) circle[radius = 1mm];
			\node [left] at (-2.5, -1) {${5}$};
			\draw [fill, black] (-2.5, -1) circle[radius = 1mm];
			\node [right] at (.5, -1) {${10}$};
			\draw [fill, black] (.5, -1) circle[radius = 1mm];
			\node [below] at (-2, -2.5) {${15}$};
			\draw [fill, black] (-2, -2.5) circle[radius = 1mm];
			\node [below] at (0, -2.5) {${20}$};
			\draw [fill, black] (0, -2.5) circle[radius = 1mm];
			\draw [] (-1, 0.5) -- (-2.5, -1);
			\draw [] (-1, 0.5) -- (.5, -1);
			\draw [] (-1, 0.5) -- (-2, -2.5);
			\draw [] (-1, 0.5) -- (0, -2.5);
			\draw [] (-2.5, -1) -- (-2, -2.5);
			\draw [] (-.5, -1) -- (0, -2.5);
			\draw [] (-2.5, -1) -- (.5, -1);
			\draw [] (0, -2.5) -- (.5, -1);
			\draw [] (-2, -2.5) -- (0, -2.5);
			
			\node [left] at (2, -1) {${6}$};
			\draw [fill, black] (2, -1) circle[radius = 1mm];
			\node [left] at (3, 0.5) {${4}$};
			\draw [fill, black] (3, 0.5) circle[radius = 1mm];
			\node [left] at (3, -0.5) {${9}$};
			\draw [fill, black] (3, -0.5) circle[radius = 1mm];
			\node [left] at (3, -1.5) {${14}$};
			\draw [fill, black] (3, -1.5) circle[radius = 1mm];
			\node [left] at (3, -2.5) {${24}$};
			\draw [fill, black] (3, -2.5) circle[radius = 1mm];
			\node [right] at (7, -1) {${19}$};
			\draw [fill, black] (7, -1) circle[radius = 1mm];
			\node [right] at (6, 0.5) {${21}$};
			\draw [fill, black] (6, 0.5) circle[radius = 1mm];
			\node [right] at (6, -0.5) {${16}$};
			\draw [fill, black] (6, -0.5) circle[radius = 1mm];
			\node [right] at (6, -1.5) {${11}$};
			\draw [fill, black] (6, -1.5) circle[radius = 1mm];
			\node [right] at (6, -2.5) {${1}$};
			\draw [fill, black] (6, -2.5) circle[radius = 1mm];
			\draw [] (2,-1) -- (3,-0.5);
			\draw [] (2,-1) -- (3,0.5);
			\draw [] (2,-1) -- (3,-1.5);
			\draw [] (2,-1) -- (3,-2.5);
			\draw [] (7,-1) -- (6,0.5);
			\draw [] (7,-1) -- (6,-0.5);
			\draw [] (7,-1) -- (6,-1.5);
			\draw [] (7,-1) -- (6,-2.5);
			\draw [] (3,0.5) -- (6,-0.5);
			\draw [] (3,0.5) -- (6,-1.5);
			\draw [] (3,0.5) -- (6,-2.5);
			\draw [] (3,-0.5) -- (6,0.5);
			\draw [] (3,-0.5) -- (6,-2.5);
			\draw [] (3,-0.5) -- (6,-1.5);
			\draw [] (3,-1.5) -- (6,0.5);
			\draw [] (3,-1.5) -- (6,-0.5);
			\draw [] (3,-1.5) -- (6,-2.5);
			\draw [] (3,-2.5) -- (6,0.5);
			\draw [] (3,-2.5) -- (6,-0.5);
			\draw [] (3,-2.5) -- (6,-1.5);
			
			\node [left] at (8, -1) {${2}$};
			\draw [fill, black] (8, -1) circle[radius = 1mm];
			\node [right] at (13, -1) {${23}$};
			\draw [fill, black] (13, -1) circle[radius = 1mm];
			\node [left] at (9, 0.5) {${3}$};
			\draw [fill, black] (9, 0.5) circle[radius = 1mm];
			\node [left] at (9, -0.5) {${8}$};
			\draw [fill, black] (9, -0.5) circle[radius = 1mm];
			\node [left] at (9, -1.5) {${13}$};
			\draw [fill, black] (9, -1.5) circle[radius = 1mm];
			\node [left] at (9, -2.5) {${18}$};
			\draw [fill, black] (9, -2.5) circle[radius = 1mm];
			\node [right] at (12, 0.5) {${22}$};
			\draw [fill, black] (12, 0.5) circle[radius = 1mm];
			\node [right] at (12, -0.5) {${17}$};
			\draw [fill, black] (12, -0.5) circle[radius = 1mm];
			\node [right] at (12, -1.5) {${12}$};
			\draw [fill, black] (12, -1.5) circle[radius = 1mm];
			\node [right] at (12, -2.5) {${7}$};
			\draw [fill, black] (12, -2.5) circle[radius = 1mm];
			\draw [] (8,-1) -- (9,0.5);
			\draw [] (8,-1) -- (9,-0.5);
			\draw [] (8,-1) -- (9,-1.5);
			\draw [] (8,-1) -- (9,-2.5);
			\draw [] (13,-1) -- (12,0.5);
			\draw [] (13,-1) -- (12,-0.5);
			\draw [] (13,-1) -- (12,-1.5);
			\draw [] (13,-1) -- (12,-2.5);
			\draw [] (9,0.5) -- (12,-0.5);
			\draw [] (9,0.5) -- (12,-1.5);
			\draw [] (9,0.5) -- (12,-2.5);
			\draw [] (9,-0.5) -- (12,0.5);
			\draw [] (9,-0.5) -- (12,-1.5);
			\draw [] (9,-0.5) -- (12,-2.5);
			\draw [] (9,-1.5) -- (12,0.5);
			\draw [] (9,-1.5) -- (12,-0.5);
			\draw [] (9,-1.5) -- (12,-2.5);
			\draw [] (9,-2.5) -- (12,0.5);
			\draw [] (9,-2.5) -- (12,-0.5);
			\draw [] (9,-2.5) -- (12,-1.5);
		\end{tikzpicture}
		\caption{The POE graph $\Gamma({\mathbb{Z}_{5^2}})$, which has three components. The leftmost component is isomorphic to $\Gamma({\mathbb{Z}})$. The other two components are are isomorphic to each other.}
		\label{Z_p_n_POE_graphs}
	\end{figure}
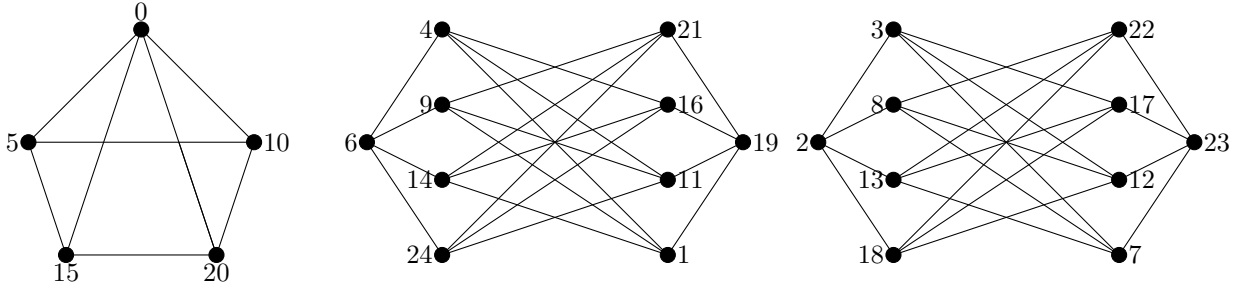
			
	The POE graphs $\Gamma(\mathbb{Z}_{p^n})$ are disconnected when $n \geq 2$. For $n = 1$ we get  $\Gamma(\mathbb{Z}_{p})$, which is a connected graph. Also, we can prove that $\Gamma(\mathbb{Z}_p)$ is always a connected component of $\Gamma(\mathbb{Z}_{p^n})$. An example of the POE graphs of these graphs is drawn in \autoref{Z_p_n_POE_graphs}. Some other significant properties of these are established below.
	
		\begin{lemma}\label{distance-lemma}
			Let $G$ be an Abelian $p$-group with $p > 2$ and with at least two distinct elements of prime orders $y_1$ and $y_2$ with $y_1 y_2 \neq e$. Given any element $x \in G$ the distance between $x$ and $x^{-1}$ in the POE graph $\Gamma(G)$ is
			\begin{equation}
				d(x, x^{-1}) = 
				\begin{cases}
					2, & \text{ if }~ $o(x) = p$ ;\\
					3, & \text{otherwise}.
				\end{cases}
			\end{equation}
		\end{lemma}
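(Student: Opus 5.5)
The plan is to translate adjacency into a condition on products and then build explicit short paths. We take $x\neq e$: for $x=e$ we have $x=x^{-1}$ and the distance is $0$, so the case $o(x)=1$ has to be read as excluded. Since $G$ is an Abelian $p$-group, the only prime that can occur as the order of a product $uv$ is $p$. Let $\Omega$ be the set of elements of order $p$. Then $u\sim v$ exactly when $u\neq v$ and $uv\in\Omega$, so every neighbour of $u$ has the form $u^{-1}\omega$ with $\omega\in\Omega$. Because $x\cdot x^{-1}=e\notin\Omega$, we have $x\nsim x^{-1}$, and therefore $d(x,x^{-1})\geq 2$ in every case. The proof then has a lower-bound part and an explicit path construction in each of the two cases.

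\emph{Case $o(x)=p$.} Look for a path $x\sim y\sim x^{-1}$ with $y=x^{-1}\omega$, $\omega\in\Omega$. The first edge holds automatically, since $xy=\omega$. The second needs $yx^{-1}=x^{-2}\omega\in\Omega$. Every element of $G$ generated by $x$ and $\omega$ has order dividing $p$, so this holds exactly when $\omega\neq x^2$. Distinctness of the three vertices needs $\omega\neq e$ (so $y\neq x^{-1}$) and $\omega\neq x^2$ (so $y\neq x$). The hypothesis gives two distinct elements $y_1,y_2\in\Omega$, so at least one of them differs from $x^2$. Alternatively one can simply take $\omega=x$, which works because $x\neq x^2$. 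This gives $d(x,x^{-1})=2$.

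\emph{Case $o(x)=p^k$ with $k\geq 2$.} First rule out length $2$. A common neighbour $y$ of $x$ and $x^{-1}$ would be $y=x^{-1}\omega$ with $x^{-2}\omega\in\Omega$. Since $p$ is odd, $o(x^{-2})=p^k>p=o(\omega)$. In an Abelian $p$-group the product of two elements of distinct orders has order equal to the larger one; this is the content of \autoref{lcm_lemma} in the relevant situation, or it can be checked directly with $p$-power orders. Hence $o(x^{-2}\omega)=p^k$, which is not prime, and $d(x,x^{-1})\geq 3$.

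For the upper bound, fix any $\omega\in\Omega$ and use the path
\[
x\sim x^{-1}\omega\sim x\omega\sim x^{-1}.
\]
The three products along it are $\omega$, $\omega^2$ and $\omega$, all of order $p$ because $p$ is odd. It remains to check that the four vertices are pairwise distinct. Each required inequality reduces to $x^2\neq e$, $x^2\neq\omega^{\pm1}$ or $\omega\neq e$. These all hold because $o(x^2)=p^k>p$.

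I expect the main obstacle to be the lower bound $d\geq 3$ in the second case. It rests on the order computation $o(x^{-2}\omega)=p^k$, and that in turn uses $p$ odd (so that squaring preserves the order) together with the correct rule for orders of products in an Abelian group. \autoref{lcm_lemma} as stated is only literally valid for elements of coprime orders or, as here, distinct $p$-power orders, so this step needs care. The second hypothesis, $y_1y_2\neq e$, only serves to guarantee that $\Omega$ is large enough for the choice of $\omega$ in the first case; with the choice $\omega=x$ it is not even needed.
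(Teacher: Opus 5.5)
Your proof is correct and is essentially the paper's. Your choice $\omega=x$ gives the paper's path $x\sim e\sim x^{-1}$, and your length-3 path is the paper's $x\sim x^{-1}y_1\sim xy_2\sim x^{-1}$ with $y_1=y_2=\omega$; your lower bound rests on the same order fact, except that the paper gets $o(y)=p$ from $y^2=(xy)(x^{-1}y)$ and contradicts $o(xy)=p$, while you compute $o(x^{-2}\omega)=p^k$ directly. Taking $y_1=y_2$ (legitimate because $p$ is odd) shows, as you note, that the hypothesis on $y_1,y_2$ is superfluous, and you also rightly exclude $x=e$ and check distinctness of the path's vertices, both of which the paper leaves implicit.
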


		\begin{proof}
			As $G$ is an Abelian $p$-group with $p >2$, there is no element of order $2$ in $G$. Given any element $x \neq e$ in $G$, $x x^{-1} = e$. Hence, $o(x x^{-1}) = 1$. Therefore, there is no edge between $x$ and $x^{-1}$.
			
			Let $o(x)=p$, then $o(x \circ e) = o(x) = p$. Therefore, $x\sim e$. Also, $o(x^{-1})=p$ implies $x^{-1}\sim e$. Thus, there exists a path $x\sim e \sim x^{-1}$ that is $d(x,x^{-1})=2$.
			
			Let $x$ be an element in $G$ such that $o(x) \neq p$. Therefore, $o(x) = p^k$, where $k \geq 2$. Note that, $o(x \circ e) = o(x) = p^k$. Hence, there is no edge between $x$ and $e$. Given that $G$ has at least two distinct elements $y_1$ and $y_2$ with same prime orders, such that, $o(y_1)=o(y_2)=p$ and $y_1y_2\neq e$. Then, $o(y_1y_2)=p$. Also, $x.x^{-1}y_1 = y_1$ and $x^{-1}.xy_2=y_2$. This implies that there always exists a path of length $3$ between $x$ and $x^{-1}$ which is $x\sim x^{-1}y_1\sim xy_2 \sim x^{-1}$, as $G$ is an Abelian group. 
			
			Let there exists a path between $x$ and $x^{-1}$ of length $2$, say $x\sim y\sim x^{-1}$. Note that, $y$ is not the identity element. Under this assumption, $\circ(xy) = \circ(x^{-1}y) = p$. It indicates that, $\circ(y^2) = 1$  or $p$ as $y^2 = yxx^{-1}y$. If $\circ(y^2) = 1$, then $\circ(y) = 1$ or $2$ which is a contradiction as $y \neq e$ and $G$ has no element of order $2$. If $\circ(y^2)=p$, then $\circ(y)=p$, which is again a contradiction because by \autoref{lcm_lemma} we have $\lcm(o(x), o(y)) = p^k \neq p$. Therefore, there is no path between $x$ and $x^{-1}$ whose length is $2$. Hence, $d(x,x^{-1})=3$.
		\end{proof}
	
		It is noteworthy that the \autoref{distance-lemma} indicated for the fact that $x$ and $x^{-1}$ always belong to same connected component of the POE graph $\Gamma(G)$, where $x$ belongs to the Abelian group $G$.
		
		In the group $\mathbb{Z}_{p^n}$, the elements of order $p$ are $p^{n - 1}, 2p^{n - 1}, \dots (p - 1)p^{n - 1}$. Therefore, following  \autoref{distance-lemma}, we state that any element $x$ and $x^{-1}$ always belong to same connected component and $d(x, x^{-1}) \leq 3$. But, $\Gamma(\mathbb{Z}_{p^n})$ is not a connected graph.
		
		\begin{lemma}\label{degree-lemma}
			In the POE graph $\Gamma(\mathbb{Z}_p)$,
			$$\deg(v) = \begin{cases}
				p - 1, & \text{ if }~ v = 0; \\ 
				p - 2, & \text{otherwise}.
			\end{cases}$$
		\end{lemma}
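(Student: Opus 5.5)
The plan is a direct count of neighbours using the order structure of $\mathbb{Z}_p$, which is the case $n=1$ of the discussion preceding \autoref{First spectral theorem}. Since $p$ is prime, every non-identity element of $\mathbb{Z}_p$ has order exactly $p$, and the identity $0$ has order $1$. By \autoref{POE_graph_definition}, two distinct vertices $x,y$ are adjacent exactly when $o(x+y)$ is prime. In $\mathbb{Z}_p$ the only possible orders are $1$ and $p$, so this happens precisely when $x+y\neq 0$.

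First, take $v=0$. For any $y\neq 0$ we have $o(0+y)=o(y)=p$, which is prime. Hence $0$ is adjacent to all $p-1$ other vertices, and $\deg(0)=p-1$.

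Next, take $v\neq 0$ and a vertex $y\neq v$. The vertex $y$ fails to be adjacent to $v$ only when $v+y=0$, that is, $y=-v$. Since $p$ is odd, $-v\neq v$, because $2v=0$ would force $v=0$. So $-v$ is a genuine vertex distinct from $v$. Removing $v$ itself and $-v$ from the $p$ vertices leaves $p-2$ neighbours, so $\deg(v)=p-2$.

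The result also follows from the paper's general computation for ${\mathbb{Z}_p}^{(n)}$ by setting $n=1$. There is no real obstacle here. The one point needing care is that $p$ is odd, so that $v$ and $-v$ are distinct; otherwise $-v=v$ would be excluded only once and the count would change.
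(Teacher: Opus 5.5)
Your proof is correct and uses the same direct neighbour count as the paper: the identity is adjacent to all $p-1$ non-identity elements, and each non-identity $v$ is adjacent to every vertex except itself and its inverse $-v$. Your remark that $p$ odd guarantees $-v\neq v$ makes explicit a point the paper leaves to its standing convention that $p$ is an odd prime.
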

		\begin{proof}
			The additive group $\mathbb{Z}_p$ contains $p-1$ elements of prime order, which we collect in a set $S = \{x_1, x_2, \dots x_{p-1}\}$. Clearly, $\mathbb{Z}_p=S\cup \{e\}$. Note that, the identity $e$ is adjacent to all the elements of $S$ that is $\deg(e) = p-1$. For any two non-identity elements $x_i,x_j\in S$, $o(x_ix_j)=p$, where $x_ix_j\neq e$. For $i=j,o(x_i^2)=p$. Since, we define the POE graph as a simple graph we do not consider the self-loop in it. Thus, an element $x_i$ of order $p$ from $S$ is adjacent to all the elements except $x_i^{-1}$ and $x_i$ itself. Hence, $\deg(x_i)=p-2$.
		\end{proof}
		
		\begin{lemma}\label{same-order-lemma}
			In an Abelian $p$-group $G$, if two non-identity elements $x$ and $y$ are adjacent in the POE graph $\Gamma(G)$ then $o(x) = o(y)$.
		\end{lemma}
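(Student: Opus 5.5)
The plan is a direct argument by contradiction on the exponents of the orders. I will avoid relying on \autoref{lcm_lemma} in full generality, since $o(xy)=\lcm\{o(x),o(y)\}$ can fail when the orders coincide (for instance $y=x^{-1}$). I only need the special case of distinct orders, and I will prove that case directly.

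Let $x,y\in G$ be non-identity elements with $x\sim y$ in $\Gamma(G)$. Since $G$ is a $p$-group, write $o(x)=p^a$ and $o(y)=p^b$ with $a,b\geq 1$. Suppose, for contradiction, that $o(x)\neq o(y)$. Without loss of generality take $a>b$, so $b\leq a-1$.

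The key step is to show that $o(xy)=p^a$. Because $G$ is Abelian, $(xy)^{p^a}=x^{p^a}y^{p^a}=e$, so $o(xy)$ divides $p^a$. Also
\[
(xy)^{p^{a-1}}=x^{p^{a-1}}\,y^{p^{a-1}}=x^{p^{a-1}},
\]
where the second equality holds because $p^b$ divides $p^{a-1}$. The right-hand side is not $e$, since $o(x)=p^a$. Hence $o(xy)$ divides $p^a$ but not $p^{a-1}$, which forces $o(xy)=p^a$.

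Now use the adjacency. Since $x\sim y$, the order $o(xy)$ is prime. The only prime of the form $p^a$ is $p$ itself, so $a=1$. Then $b<a=1$ gives $b=0$, i.e. $y=e$, contradicting that $y$ is a non-identity element. Therefore $o(x)=o(y)$.

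The only delicate point is the computation of $o(xy)$ when the orders differ. It needs commutativity and the fact that every element order is a power of the same prime $p$, so that $o(y)$ divides $p^{a-1}$. Everything else is immediate, and the argument does not even need $p$ to be odd.
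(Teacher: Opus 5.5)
Your proof is correct and is essentially the paper's argument. Both use commutativity to raise $xy$ to a power of $p$ that kills the factor of smaller order. The paper takes $o(x)=p^a<p^b=o(y)$ and goes from $(xy)^p=e$ to $(xy)^{p^a}=e$, hence $y^{p^a}=e$; you take $a>b$ and show $(xy)^{p^{a-1}}=x^{p^{a-1}}\neq e$, so $o(xy)=p^a$ with $a\geq 2$ is not prime.
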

		\begin{proof}
			As $G$ is an Abelian $p$-group, $o(G) = p^n$. As $x, y \in G$, $o(x)$ and $o(y)$ divides $p^n$. Thus, $o(x) = p^a$ and $o(y) = p^b$, where we assume $a < b$ for simplicity. It is given that $x\sim y$ in $\Gamma(G)$. Therefore, $o(xy) = p$ or $(xy)^p = e$ or $x^p=(y^{-1})^p$ or $(x^p)^{p^{a-1}}=((y^{-1})^p)^{p^{a-1}}$ or $x^{p^a}=(y^{-1})^{p^a}$. Hence, $(y^{-1})^{p^a}=e$ or $o(y^{-1}) = p^a = o(y)$; which is a contradiction. Consequently, $\circ(x)=\circ(y)$.
		\end{proof}
		
		The above Lemma indicates that in a connected component of $\Gamma(G)$ all the elements have equal order. But it does not indicate that all the elements of equal order belong to same connected component. For example, in \autoref{Z_p_n_POE_graphs} all the elements of order $25$ are distributed into two connected components.
		
		Following the above Lemma, in the POE graph $\Gamma(\mathbb{Z}_{p^n})$, if $x\sim y$, then $o(x) = o(y)$.
		
		\begin{lemma}\label{connected-lemma}
			The graph $\Gamma(\mathbb{Z}_{p^n})$ is not a connected graph.
		\end{lemma}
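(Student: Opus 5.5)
The plan is to use \autoref{same-order-lemma} to separate vertices of different orders into different components. Implicitly we need $n \geq 2$, since $\Gamma(\mathbb{Z}_p)$ is connected, as the text before \autoref{distance-lemma} notes. So assume $n \ge 2$. Then $\mathbb{Z}_{p^n}$ contains the identity $0$, elements of order $p$ (for instance $p^{n-1}$), and elements of order $p^n$ (for instance $1$). The goal is to show that no path joins $0$ to $1$.

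First, I identify the neighbours of $0$. Since $o(0+y) = o(y)$, we have $0 \sim y$ exactly when $o(y) = p$. Next, consider any edge $u \sim v$ between two non-identity vertices. By \autoref{same-order-lemma}, applied to the Abelian $p$-group $\mathbb{Z}_{p^n}$, we get $o(u) = o(v)$. Hence along any path starting at $0$, the vertex following $0$ has order $p$. Every later step either stays among non-identity vertices, and so preserves order by the lemma, or returns to $0$. By induction on the length of the path, every vertex on a path from $0$ lies in the set $\{0\} \cup \{y : o(y) = p\}$. In fact this set is exactly the subgroup $\langle p^{n-1}\rangle \cong \mathbb{Z}_p$.

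Since $o(1) = p^n \neq p$ when $n \ge 2$, the vertex $1$ is not reachable from $0$. Therefore $\Gamma(\mathbb{Z}_{p^n})$ is disconnected. As a by-product, the component containing $0$ is the induced subgraph on $\langle p^{n-1}\rangle$, which is isomorphic to $\Gamma(\mathbb{Z}_p)$ via $kp^{n-1} \mapsto k$. This is because for elements of this subgroup the order of a sum is the same in $\mathbb{Z}_{p^n}$ as in the subgroup.

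I expect no step to be a real obstacle. The only care needed is to confirm that \autoref{same-order-lemma} indeed applies to any two adjacent non-identity vertices. For a direct alternative, one can note that if $o(xy) = p$ with $o(x) = p^a$ and $o(y) = p^b$, $a \ne b$, then \autoref{lcm_lemma} gives $o(xy) = p^{\max(a,b)}$. If $\max(a,b) = 1$, then one of the two elements is the identity; otherwise $o(xy)$ is not prime. The other point to handle is making the hypothesis $n \ge 2$ explicit.
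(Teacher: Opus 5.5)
Your proof is correct and follows essentially the paper's argument: both show that $\{0\}\cup\{y : o(y)=p\}$ is closed under adjacency, so no element of order $p^k$ with $k\ge 2$ can be reached from the identity; you get the closure step from \autoref{same-order-lemma}, while the paper gets it directly from \autoref{lcm_lemma}. Stating the hypothesis $n\ge 2$ explicitly and using the order formula only in the valid form $o(xy)=p^{\max(a,b)}$ for $a\neq b$ are genuine improvements on the paper's version, since \autoref{lcm_lemma} as stated fails for pairs such as $x$ and $x^{-1}$.
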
 
		
		\begin{proof}
			The group $\mathbb{Z}_{p^n}$ contains the elements of order $1,p,p^2,p^3, \dots p^n$. In the graph $\Gamma(\mathbb{Z}_{p^n})$, the identity is always adjacent to only the elements of order $p$. Also, the elements of order $p$ is not adjacent to any element of order $p^k,(k\geq 2)$ since for any two elements $x,y$ in an Abelian group, $o(xy) = \lcm(o(x),o(y))$, using \autoref{lcm_lemma}. Clearly, identity with the elements of order $p$ makes a separate connected component from the elements of order $p^k$ with $k \geq 2$ in the graph $\Gamma(\mathbb{Z}_{p^n})$. Thus, the graph becomes disconnected. 
		\end{proof}
		
		The proof of \autoref{connected-lemma} indicates that a connected component of $\Gamma(\mathbb{Z}_{p^n})$ is isomorphic to $\Gamma(\mathbb{Z}_p)$. Also, \autoref{same-order-lemma} indicates that a connected component can contain the elements of equal order. We denote the connected components of elements of different orders as $C^{(1)}, C^{(2)}, \dots$, which are not isomorphic to $\Gamma(\mathbb{Z}_p)$. Therefore $\Gamma(\mathbb{Z}_{p^n}) = \Gamma(\mathbb{Z}_p) \cup_{i = 1}^{\frac{p^{n - 1} - 1}{2}} C^{(i)}$.
		
		\begin{lemma}\label{degree-lemma-2}
			For any element $x\in \mathbb{Z}_{p^n}$, for $o(x)=p^k$ with $k\geq 2$, the degree of the vertex $x$ in $\Gamma(\mathbb{Z}_{p^n})$ is $p-1$.
		\end{lemma}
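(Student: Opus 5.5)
Work additively in $\mathbb{Z}_{p^n}$ and count the neighbours of $x$ directly. Fix $x$ with $o(x)=p^k$, $k\geq 2$. By \autoref{same-order-lemma}, every neighbour $y$ of $x$ has $o(y)=p^k$. Since $x+y\neq 0$ forces $o(x+y)\geq p$, adjacency $x\sim y$ means $o(x+y)=p$. So $x+y$ is one of the $p-1$ elements of order $p$, namely $z\in\{p^{n-1},2p^{n-1},\dots,(p-1)p^{n-1}\}$.

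The plan is to exhibit the neighbourhood as the set $\{z-x : z \text{ of order } p\}$, i.e. the map $z\mapsto z-x$ from the elements of order $p$ into $\mathbb{Z}_{p^n}$. I will check three things.

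\textbf{Step 1 (injectivity).} The map is a translate of an injective map, so the $p-1$ values $y=z-x$ are pairwise distinct.

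\textbf{Step 2 (each $y$ is a genuine neighbour).} We need $y\neq x$, so that there is no loop. If $y=x$, then $2x=z$ has order $p$. Since $p$ is odd, $o(2x)=o(x)=p^k\neq p$, a contradiction. We also need $y\neq 0$, which holds because $z\neq x$ as their orders differ. Then $o(x+y)=o(z)=p$ is prime, so $x\sim y$.

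\textbf{Step 3 (nothing else).} Conversely, every neighbour has the form $z-x$ by the observation above. Hence $\deg(x)=p-1$.

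As a consistency check, I would also confirm that $o(z-x)=p^k$, using \autoref{lcm_lemma} with $k\geq2>1$. This agrees with \autoref{same-order-lemma}, although the counting above does not need it.

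The only delicate point is excluding $y=x$, which is where the hypotheses $p$ odd and $k\geq 2$ enter; everything else is routine. This is exactly why the degree is $p-1$ here, in contrast to $p-2$ in \autoref{degree-lemma}: for an element of order $p$, the value $z=2x$ is realised and must be discarded, whereas for $k\geq2$ it is not.
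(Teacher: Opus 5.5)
Your argument is correct and is essentially the paper's proof: the neighbours of $x$ are exactly the elements $y$ with $x+y$ of order $p$, and $z\mapsto z-x$ is a bijection from the $p-1$ elements of order $p$ onto this set. The paper only notes $o(x^2)=p^k$ without saying what it is for, whereas you spell out that it rules out the loop $y=x$; that is the one place where $k\geq 2$ and $p$ odd are used.
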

		\begin{proof}
			Let $x\in C^{(l)}$ where $C^{(l)}$ be any connected component of elements of order $p^k$ with $k\geq 2$. Also, $S=\{x_1,x_2,\dots,x_{p-1}\}$ is the set of all elements of order $p$ in the group $\mathbb{Z}_{p^n}$. Thus, $o(x)=p^k$ and $o(x^2)=p^k$. If $x\sim y$, then $o(xy) = p$. We obtain, $xy=x_i$ for any one of  $i = 1, 2, \dots, (p - 1)$. Since, for every element $x_i$ there exists a unique $y$ such that $xy=x_i$, then the degree of each element $x$ of order $p^k(k\geq 2)$ is $p-1$.
		\end{proof}
		
		\begin{lemma}\label{triangle-lemma}
			Let $C^{(l)}$ be a connected component of elements of order $p^k$ with $k\geq 2$, then there is no $3$-cycle in $C^{(l)}$.
		\end{lemma}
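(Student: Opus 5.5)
We argue by contradiction, working additively in $\mathbb{Z}_{p^n}$. Suppose $x, y, z$ are three distinct vertices of $C^{(l)}$ forming a $3$-cycle $x\sim y\sim z\sim x$. By \autoref{same-order-lemma} all three have order $p^k$ with $k\geq 2$. Adjacency means each of $x+y$, $y+z$, $z+x$ has order $p$. By \autoref{lcm_lemma} none of these sums can be $0$, since that would give order $1$. In $\mathbb{Z}_{p^n}$ the elements of order dividing $p$ form the unique subgroup $H=\langle p^{n-1}\rangle$ of order $p$. So the conditions become $x+y,\ y+z,\ z+x \in H\setminus\{0\}$.

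The key step is to combine these memberships using the group operation. We have $(x+y)-(y+z)+(z+x) = 2x$, and $H$ is a subgroup, so $2x\in H$. Because $p$ is odd, $2$ is invertible modulo $p^n$, and $H$ is closed under multiplication by any integer. Hence $x = 2^{-1}(2x)\in H$, so $o(x)\in\{1,p\}$. This contradicts $o(x)=p^k$ with $k\geq 2$. Therefore no $3$-cycle exists in $C^{(l)}$.

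The only delicate point is the use of $p$ odd, which lets us divide by $2$; the paper fixes $p$ as an odd prime throughout, so this is available. An equivalent way to see the same fact: the map $x\mapsto -x$ shifts by elements of $H$, and $C^{(l)}$ is bipartite between $x+H$ and $-x+H$. These two cosets are distinct because $2x\notin H$. Adjacent vertices lie in opposite cosets, so there can be no odd cycle, and in particular no $3$-cycle. I would present the short contradiction argument as the main proof and mention this bipartite reformulation, since it sets up later block-matrix spectral computations.
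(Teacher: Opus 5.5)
Your proof is correct and is essentially the paper's argument: the paper also combines the three adjacency conditions as $(x_1x_2)(x_2x_3)^{-1}(x_3x_1)=x_1^2$, concludes $o(x_1^2)\in\{1,p\}$, and contradicts $o(x_1)=p^k$ with $k\geq 2$. Your explicit use of the order-$p$ subgroup $H$ and of the invertibility of $2$ modulo $p^n$ justifies the last step (from $2x\in H$ to $x\in H$) more carefully than the paper's ``$o(x_1)=1$ or $2$ or $p$.''
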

		\begin{proof}
			Let there be a $3$-cycle in $C^{(l)}$ which is $x_1\sim x_2\sim x_3\sim x_1$. Then, $\circ(x_1x_2)=\circ(x_2x_3)=\circ(x_3x_1)=p$ that is $\circ(x_1x_2x_2^{-1}x_3^{-1}x_3x_1) = o(x_1^2) =1$ or  $p$. It indicates $\circ(x_1)=1 \mbox{ or } 2 \mbox{ or } p$ which is a contradiction. Therefore, there is no $3$-cycle in $C^{(l)}$.
		\end{proof}
		
		\begin{lemma}\label{adjacency-lemma}
			For $x,y_1,y_2 \in C^{(l)}$, if $x \sim y_1$, $x\sim y_2$, $x^{-1}\sim y_1^{-1}$, and $x^{-1}\sim y_2^{-1}$ hold then $y_1\sim y_2^{-1}$.
		\end{lemma}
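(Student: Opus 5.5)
The plan is to reduce the claim to a statement about the subgroup $H=\{g\in\mathbb{Z}_{p^n}: g^p=e\}$, which consists of the identity together with the $p-1$ elements of order $p$. A vertex pair $u\neq v$ is adjacent in $\Gamma(\mathbb{Z}_{p^n})$ exactly when $uv\in H\setminus\{e\}$. I will assume, as the statement implicitly requires, that $y_1\neq y_2$; otherwise the conclusion would say $y_1\sim y_1^{-1}$, which is impossible by \autoref{distance-lemma}.

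First I note that the hypotheses $x^{-1}\sim y_1^{-1}$ and $x^{-1}\sim y_2^{-1}$ are redundant. Since the group is Abelian, $x^{-1}y_i^{-1}=(xy_i)^{-1}$, and an element and its inverse have the same order. So only $x\sim y_1$ and $x\sim y_2$ are needed, giving $o(xy_1)=o(xy_2)=p$.

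Next I write
\[
y_1y_2^{-1}=(xy_1)(xy_2)^{-1},
\]
which holds by commutativity. The right-hand side is a product of two elements of $H$, and $H$ is a subgroup, so $y_1y_2^{-1}\in H$. Hence $o(y_1y_2^{-1})\in\{1,p\}$. For a direct argument one can instead use \autoref{lcm_lemma}: $o\big((xy_1)(xy_2)^{-1}\big)$ divides $\lcm(p,p)=p$. Order $1$ would mean $y_1=y_2$, which is excluded. Therefore $o(y_1y_2^{-1})=p$.

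Finally I must check that $y_1$ and $y_2^{-1}$ are distinct vertices, so that the edge is genuine and not a loop. If $y_1=y_2^{-1}$, then $y_1y_2^{-1}=y_1^2$. By \autoref{same-order-lemma} every vertex of $C^{(l)}$ has order $p^k$ with $k\ge 2$, and $p$ is odd, so $o(y_1^2)=p^k\neq p$, contradicting the previous step. Hence $y_1\sim y_2^{-1}$.

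I expect the only delicate point to be this bookkeeping of degenerate cases ($y_1=y_2$ and $y_1=y_2^{-1}$), together with noting that the extra hypotheses are automatic. The core of the argument is the one-line factorization above.
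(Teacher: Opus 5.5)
Your proof is correct and follows the paper's argument. The paper writes $y_1y_2^{-1}=(y_1x)(x^{-1}y_2^{-1})$ as a product of two elements of order $p$, concludes $o(y_1y_2^{-1})\in\{1,p\}$, and rules out order $1$ via $y_1\neq y_2$; your factorization $(xy_1)(xy_2)^{-1}$ does exactly this. Your extra bookkeeping is valid but does not appear in the paper: you note that the hypotheses on $x^{-1}$ are automatic, and you check $y_1\neq y_2^{-1}$ so that the edge is not a loop.
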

		\begin{proof}
			As $x \sim y_1$ and $x\sim y_2$ we have $\circ(xy_1)=p$ and $\circ(xy_2)=p$. Also, $\circ(y_1x)=p$ and $\circ(x^{-1}y_2^{-1})=p$ that is $\circ(y_1xx^{-1}y_2^{-1}) = \circ(y_1y_2^{-1}) =1 \mbox{ or } p$. If $\circ(y_1y_2^{-1})=1$, then $y_1=y_2$, which is a contradiction. Hence, $\circ(y_1y_2^{-1})=p$. Consequently, we have, $y_1\sim y_2^{-1}$.
		\end{proof}
		\begin{lemma}\label{bipartite-lemma}
			Each connected component $C^{(l)}$ is bipartite.
		\end{lemma}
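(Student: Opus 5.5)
We need to show each component $C^{(l)}$ (elements of order $p^k$, $k\ge 2$, in $\mathbb{Z}_{p^n}$) is bipartite. Write the group additively, so $x\sim y$ iff $x+y$ is a nonzero multiple of $p^{n-1}$; equivalently $y\equiv -x \pmod{p^{n-1}}$ and $y\neq -x$. The plan is to use the quotient map $\pi:\mathbb{Z}_{p^n}\to\mathbb{Z}_{p^{n-1}}$, $x\mapsto x \bmod p^{n-1}$, and to split the vertices of $C^{(l)}$ according to the value of $\pi$.

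\textbf{Step 1: adjacency flips the residue.} If $x\sim y$ then $\pi(y)=-\pi(x)$, since $p^{n-1}\mid x+y$. Along any path $x=v_0\sim v_1\sim\dots\sim v_m$ we therefore get $\pi(v_i)=(-1)^i\pi(x)$. Hence the whole component lies in the set $\{z:\pi(z)=\pm\pi(x)\}$.

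\textbf{Step 2: the two residues are distinct.} Here $o(x)=p^k$ with $k\ge 2$, so $x$ is not a multiple of $p^{n-1}$, which gives $\pi(x)\neq 0$. Since $p$ is odd, $\mathbb{Z}_{p^{n-1}}$ has no element of order $2$, so $\pi(x)\neq-\pi(x)$.

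\textbf{Step 3: define the bipartition.} Set
\[
X=\{z\in C^{(l)}:\pi(z)=\pi(x)\},\qquad Y=\{z\in C^{(l)}:\pi(z)=-\pi(x)\}.
\]
By Step 1 these two sets cover $C^{(l)}$, and by Step 2 they are disjoint. Step 1 also shows that every edge joins $X$ to $Y$: an edge inside $X$ would force $\pi(x)=-\pi(x)$, which Step 2 rules out. So $C^{(l)}$ is bipartite.

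An equivalent check is that any closed walk starting at $x$ returns to residue $(-1)^m\pi(x)$, which must equal $\pi(x)$; this forces $m$ to be even, so there are no odd cycles. This generalizes \autoref{triangle-lemma}. Consistent with \autoref{distance-lemma}, $x$ and $x^{-1}$ land in opposite parts, at odd distance $3$. The parts are in fact cosets of the order-$p$ subgroup: each of $X$ and $Y$ has $p$ elements, which matches the $2p$-vertex, $(p-1)$-regular description of the component.

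The main obstacle is only Step 2, namely ensuring that the two residue classes do not coincide. This is exactly where $p>2$ and $k\ge 2$ are used. Without them the argument fails, as it should: the component $\Gamma(\mathbb{Z}_p)$ contains triangles.
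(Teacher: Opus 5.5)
Your proof is correct, and it rests on the same algebraic fact as the paper's. If $x\sim y$ then $x+y$ lies in the subgroup $H=p^{n-1}\mathbb{Z}_{p^n}$ of order $p$. On the other hand, $2x\notin H$ when $o(x)=p^k$ with $k\ge 2$ and $p$ odd.

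The difference is in how this fact is used. The paper argues by contradiction. It takes an odd cycle $x_1\sim x_2\sim\cdots\sim x_m\sim x_1$ and telescopes the alternating product $(x_1x_2)(x_2x_3)^{-1}(x_3x_4)\cdots(x_mx_1)=x_1^2$ (i.e.\ $2x_1$) to get $o(x_1^2)\in\{1,p\}$. It then appeals to the fact that a graph with no odd cycle is bipartite. You instead pass to the quotient $\mathbb{Z}_{p^n}/H\cong\mathbb{Z}_{p^{n-1}}$ and write down the $2$-colouring directly. Your closing remark about closed walks is exactly the paper's argument rephrased in your language.

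The paper's route is slightly shorter but only gives the existence of some bipartition. Yours names the parts: the vertices of $C^{(l)}$ lying in $x+H$ and those lying in $-x+H$. This is the split $V_1=\{x_i\}$, $V_2=\{x_i^{-1}\}$ that the proof of \autoref{2nd spectral theorem} uses later.

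Your side claim that each part is a full coset with $p$ elements is not needed for the lemma. It is true, but it requires a short reachability check, which works because $p\ge 3$.
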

		\begin{proof}
			Let $A_l$ contains an odd cycle of length $m$ such that $x_1\sim x_2\sim \dots \sim x_m$. Thus, $o(x_1x_2)=p,o(x_2x_3)=p,\dots , o(x_mx_1)=p$. This implies that
			\begin{equation}
				o(x_1x_2x_2^{-1}x_3^{-1}x_3x_4x_4^{-1}x_5^{-1}
					\dots x_{m-1}^{-1}x_m^{-1}x_mx_1)= 1 \mbox{ or } p.
			\end{equation}
			It implies, $o(x_1^2) = 1$ or $p$, that is $o(x_1)=1$, or $2$, or $p$, which leads us to a contradiction.
			Therefore, the connected component $C^{(l)}$ contains no odd cycle. Any graph having no odd cycle is bipartite.
		\end{proof}
		
			\begin{thm}\label{structure theorem 1}
			The POE Graph $\Gamma(\mathbb{Z}_{p^n})$ is the union of an isomorphic copy of $\Gamma(\mathbb{Z}_p)$ and $\frac{p^n-p}{2p}$ number of $(p-1)$-regular bipartite graphs each containing $2p$ number of vertices, where $p$ in an odd prime and $n > 2$.
		\end{thm}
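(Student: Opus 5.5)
The plan is to work additively in $\mathbb{Z}_{p^n}$ and describe every connected component explicitly in terms of cosets of the unique subgroup of order $p$. Let $H=\{0,p^{n-1},2p^{n-1},\dots,(p-1)p^{n-1}\}$ be that subgroup. Then $H\setminus\{0\}$ is exactly the set of elements of order $p$. By definition, $x\sim y$ if and only if $x\neq y$ and $x+y\in H\setminus\{0\}$.

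First, the elements of $H$ induce the component $\Gamma(\mathbb{Z}_p)$. This follows from \autoref{connected-lemma} and the remark after it: the identity and the elements of order $p$ are adjacent only among themselves. Moreover, $H\cong\mathbb{Z}_p$ as groups, and the adjacency rule depends only on the group structure. So the induced graph on $H$ is isomorphic to $\Gamma(\mathbb{Z}_p)$, which is connected by \autoref{degree-lemma}.

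Next, take $x$ with $o(x)=p^k$, $k\ge 2$. I will show its component is exactly
\[
C_x=(x+H)\cup(-x+H).
\]
The neighbours of $x+h$ are the elements $-x-h+h'$ with $h'\in H\setminus\{0\}$. All of these lie in $-x+H$, namely every element of $-x+H$ except $-x-h$. Symmetrically, the neighbours of $-x+h$ form $(x+H)\setminus\{x-h\}$.

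The two cosets are distinct: $x+H=-x+H$ would force $2x\in H$, hence $o(2x)\le p$, but $o(2x)=o(x)\ge p^2$ since $p$ is odd. Therefore the induced graph on $C_x$ is bipartite with parts $x+H$ and $-x+H$, and it is isomorphic to $K_{p,p}$ with a perfect matching removed. This recovers \autoref{degree-lemma-2} (regularity $p-1$) and \autoref{bipartite-lemma}. No edge leaves $C_x$, since all neighbours were listed above. For $p\ge 3$ the graph $K_{p,p}$ minus a perfect matching is connected, so $C_x$ is a full connected component with $2p$ vertices.

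Finally, I count the components. The sets $C_x$ partition $\mathbb{Z}_{p^n}\setminus H$, because each is a union of $H$-cosets closed under negation. Hence there are
\[
\frac{p^n-p}{2p}
\]
such components. The only delicate points are the connectedness of $K_{p,p}$ minus a matching, which needs $p\ge3$ (given), and the distinctness of the two cosets. I expect no real obstacle; the argument also works for $n\ge 2$.
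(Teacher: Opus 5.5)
Your proof is correct. It reaches the same components as the paper, but by a different mechanism.

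The paper argues locally around a vertex $x$ of order $p^k$, $k\ge 2$:
\begin{enumerate}
\item it takes the $p-1$ neighbours $y_i$ of $x$ from \autoref{degree-lemma-2};
\item it rules out $y_i\sim y_j$ using \autoref{triangle-lemma};
\item it obtains $y_i\sim y_j^{-1}$ from \autoref{adjacency-lemma};
\item it gets bipartiteness separately from the odd-cycle argument of \autoref{bipartite-lemma};
\item it counts components by dividing $p^n-p$ by $2p$.
\end{enumerate}
Written additively, $y_i=x_i-x$ and $y_i^{-1}=x-x_i$. So the paper's vertex set $\{x,x^{-1},y_i,y_i^{-1}\}$ is exactly your $(x+H)\cup(-x+H)$, with the same bipartition.

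Your coset description gains several things:
\begin{itemize}
\item A single computation of the neighbourhood of $x+h$ yields regularity, bipartiteness, the exact vertex set and the absence of outgoing edges all at once.
\item It makes explicit two points the paper leaves tacit. First, the $2p$ listed elements are distinct; this is your observation $2x\notin H$, which uses that $p$ is odd. Second, no further vertex can join the component; in the paper this rests on an unstated degree count.
\item It identifies each such component as $K_{p,p}$ minus a perfect matching. That is precisely the adjacency structure the paper then assumes in \autoref{2nd spectral theorem}.
\item It carries over verbatim to any abelian $p$-group with $p$ odd, replacing $H$ by the subgroup of elements of order dividing $p$.
\end{itemize}

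The paper's lemma-by-lemma route is less economical for this theorem. It fits the paper's incremental framework, though: several of its ingredients, such as \autoref{same-order-lemma} and \autoref{adjacency-lemma}, are stated for general abelian $p$-groups and are reused in later sections.
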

		\begin{proof}
			Let $S=\{x_1,x_2,\dots x_{p-1}\}$ be the set of all elements of order $p$ in $\mathbb{Z}_{p^n}$. 
			
			Now, by \autoref{connected-lemma} and \autoref{same-order-lemma}, the graph $\Gamma(\mathbb{Z}_{p^n})$ is disconnected and it is the union of $\Gamma(\mathbb{Z}_{p})$ and $C^{(1)},C^{(2)},\dots$ where $C^{(l)}$'s are the connected components of the elements of equal order, which is  $p^k$ for some $k\geq 2$. But, \autoref{same-order-lemma} does not claim that all the elements of order $p^k$ is in one connected component.  Now, from \autoref{degree-lemma-2} we say that the degree of each element of order $p^k$ with $k\geq 2$ is $p-1$. Therefore, the connected components $C^{(1)},C^{(2)},\dots$ are $(p-1)$-regular. The connected component isomorphic to $\Gamma(\mathbb{Z}_p)$ contains $p$ elements. Thus, the other components contains $p^n-p$ number of elements in total. \autoref{degree-lemma} indicates that the elements $x$ and $x^{-1}$ belongs to the same connected components. Let $x\in C^{(l)}$ be an element and $o(x)=p^k$ for some $k\geq 2$. \autoref{degree-lemma-2} suggests that there exist $p-1$ elements adjacent to $x$. Let $x\sim y_i$ for $i=1,2,\dots,p-1$, where $xy_i=x_i$. Also, $x^{-1}\sim y_i^{-1}$ for $i=1,2,\dots p-1$ and $x^{-1}y_i^{-1}=x_i$. It is proved in \autoref{triangle-lemma} that there is no 3-cycle in $C^{(l)}$. Hence, $y_i\nsim y_j$ and $y_i^{-1}\nsim y_j^{-1}$ for $i\neq j$. By \autoref{adjacency-lemma}, $y_i\sim y_j^{-1}$, for $i\neq j$. Combining all these, we write the adjacency relations in $\Gamma(\mathbb{Z}_{p^n})$ as follows:
			$$ \begin{cases}
				x\sim y_i, i=1,2,\dots, p-1 ;\\
				x^{-1}\sim y_i^{-1}, i=1,2,\dots,p-1 ;\\
				y_i\sim x \mbox{ and } y_i\sim y_j^{-1}, i\neq j ;\\
				y_i^{-1}\sim x^{-1} \mbox{ and } y_i^{-1}\sim y_j^{-1}, i\neq j.\\
			\end{cases}$$
			From the above relation between elements, it is clear that the connected component $C^{(l)}$ contains $x,x^{-1},y_i,y_i^{-1}$ for $i=1,2,\dots p-1$. Therefore, every component $C^{(l)}$ contains $2p$ elements. Also, it is proved in \autoref{bipartite-lemma}, that each connected component $C^{(l)}$ is bipartite. Combining all these we get the required result. 
		\end{proof}

		\begin{thm}\label{2nd spectral theorem}
			The spectrum of $\Gamma(\mathbb{Z}_{p^n})$ are $\{[0]^{\frac{p-1}{2}}, [-2]^{\frac{p-3}{2}}, [p-1]^{\frac{p^n-p}{2p}}, [1-p]^{\frac{p^n-p}{2p}}, [1]^{(p-1)\frac{(p^n-p)}{2p}}$, $[-1]^{(p-1)\frac{(p^n-p)}{2p}}\}$ and rest of two irrational spectrum are the roots of the equation $x^2-(p-3)x-(p-1)=0$.  
		\end{thm}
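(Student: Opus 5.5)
The plan is to combine the component decomposition of \autoref{structure theorem 1} with the block-diagonal spectral identity \eqref{block_diagonal_spectra}. We have
\[
\Gamma(\mathbb{Z}_{p^n}) = \Gamma(\mathbb{Z}_p) \cup_{l=1}^{\frac{p^n-p}{2p}} C^{(l)},
\]
so the spectrum is the union of the spectrum of the $\Gamma(\mathbb{Z}_p)$ component with the spectra of the components $C^{(l)}$. The first piece is supplied directly by \autoref{spectra_of_Z_p}: it gives $[0]^{\frac{p-1}{2}}$, $[-2]^{\frac{p-3}{2}}$, and the two irrational roots of $x^2-(p-3)x-(p-1)=0$. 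What remains is to show that every $C^{(l)}$ has spectrum $\{[p-1]^1,[1-p]^1,[1]^{p-1},[-1]^{p-1}\}$, and then to multiply each multiplicity by the number $\frac{p^n-p}{2p}$ of such components.

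To identify $C^{(l)}$ as a graph, I would use the adjacency relations listed in the proof of \autoref{structure theorem 1}. The component has vertex set $\{x,y_1,\dots,y_{p-1}\}\cup\{x^{-1},y_1^{-1},\dots,y_{p-1}^{-1}\}$, and there are no edges inside either half, by \autoref{triangle-lemma}, \autoref{bipartite-lemma}, and the argument via $o(x_1^2)$. Take the bipartition $U=\{x,y_1^{-1},\dots,y_{p-1}^{-1}\}$ and $W=\{x^{-1},y_1,\dots,y_{p-1}\}$; note that $x\sim y_i$ places $y_i$ on the side opposite $x$. The edges are $x\sim y_i$, $x^{-1}\sim y_i^{-1}$, and $y_i\sim y_j^{-1}$ for $i\neq j$, while $x\nsim x^{-1}$ and $y_i\nsim y_i^{-1}$. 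This makes $C^{(l)}$ the complete bipartite graph $K_{p,p}$ with a perfect matching removed, namely the matching $\{x,x^{-1}\},\{y_i^{-1},y_i\}$. Equivalently, $C^{(l)}$ is the bipartite double cover $K_p\times K_2$. I also need to check that there are no extra edges of the form $y_i\sim y_i^{-1}$. These are excluded because $o(y_i\,y_i^{-1})=1$ is not prime, and this is consistent with the $(p-1)$-regularity.

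For the spectrum, I would order the vertices as $x,y_1^{-1},\dots,y_{p-1}^{-1}$ followed by $x^{-1},y_1,\dots,y_{p-1}$. Then
\[
A(C^{(l)})=\begin{pmatrix}0 & J-I\\ J-I & 0\end{pmatrix},
\]
where $J$ is the $p\times p$ all-ones matrix. Its eigenvalues are $\pm\mu$ as $\mu$ ranges over the spectrum of $J-I$, which is $\{[p-1]^1,[-1]^{p-1}\}$ by the same commuting-matrix argument as in \autoref{First spectral theorem} via \autoref{commuting_matrix_eigenvalues}. Concretely, applying $\det\begin{pmatrix}M&N\\N&M\end{pmatrix}=\det(M+N)\det(M-N)$ with $M=-\lambda I$ and $N=J-I$ yields $\pm(p-1)$ once each and $\pm1$ each with multiplicity $p-1$.

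Assembling the pieces then gives the stated multiset, and the total count is $p+\frac{p^n-p}{2p}\cdot 2p=p^n$ as a check. The main obstacle is the identification step: showing that the edge set of $C^{(l)}$ is exactly $K_{p,p}$ minus a perfect matching. The proof of \autoref{structure theorem 1} lists the edges somewhat loosely; in particular, it writes $y_i^{-1}\sim y_j^{-1}$, which should read $y_i^{-1}\sim y_j$. I would re-derive the edges cleanly from the facts that $x y_i$ ranges over all elements of order $p$ and that $y_iy_j^{-1}=(xy_i)(xy_j)^{-1}$ has order $p$ for $i\ne j$.
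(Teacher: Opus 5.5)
Your proposal is correct and follows the paper's own route. You split $\Gamma(\mathbb{Z}_{p^n})$ into components via \autoref{structure theorem 1}, take the $\Gamma(\mathbb{Z}_p)$ part from \autoref{spectra_of_Z_p}, and get each $2p$-vertex component's spectrum from $\begin{pmatrix}0&J-I\\ J-I&0\end{pmatrix}$ using $\det(M+N)\det(M-N)$ with $M=-\lambda I$, $N=J-I$. Your explicit bipartition $U=\{x,y_1^{-1},\dots,y_{p-1}^{-1}\}$, $W=\{x^{-1},y_1,\dots,y_{p-1}\}$ (not the halves $\{x,y_i\}$ and $\{x^{-1},y_i^{-1}\}$ you name first, which contain the edges $x\sim y_i$) and your correction of $y_i^{-1}\sim y_j^{-1}$ to $y_i^{-1}\sim y_j$ make the identification with $K_{p,p}$ minus a perfect matching cleaner than in the paper.
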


		\begin{proof}
			\autoref{connected-lemma} indicates that the graph $\Gamma(\mathbb{Z}_{p^n})$ is the union of $\Gamma(\mathbb{Z})$ and $\frac{p^n-p}{2p}$ regular connected components $C^{(l)}$ with regularity $(p - 1)$. Each $C^{(l)}$ consists of $2p$ vertices. The spectrum of $\Gamma(\mathbb{Z}_{p^n})$ is the union of the spectrum of all the connected components. The spectrum of $\Gamma(\mathbb{Z}_p)$ can be determined by \autoref{First spectral theorem}.
			
			Let $C^{(l)}$ be a regular connected component consists of $2p$ vertices and regularity $p-1$. \autoref{structure theorem 1} proves that all the connected component $C^{(l)}$ are bipartite graphs with $2p$ vertices of degree $p-1$. The vertices of $C^{(l)}$ can be expressed as $V_1\cup V_2$, where $V_1=\{x_i: i=1,2,\cdots,p\}$ and $V_2=\{x_i^{-1}: i=1,2,\cdots,p\}$. The adjacency matrix for the component $C^{(l)}$ is $A(C^{(l)}) = (a_{i,j}^{(l)})_{2p \times 2p}$, where
			\begin{equation}
				a_{i,j}^{(l)} = 
				\begin{cases}
					1,  & \mbox{ if } i=1,2,\cdots,p;  j=p+1,\cdots,2p; \mbox{ and } i\neq j ; \\
					1,  & \mbox{ if }i=p+1,p+2,\cdots,2p; \mbox{ and } j=1,2,\cdots,p ; \\
					0, & \mbox{otherwise.}
				\end{cases}
			\end{equation}
			The Characteristic polynomial of $C^{(l)}$ is\\
			\begin{center}
				$|A(C^{(l)})-{\lambda}I| = \det\begin{bmatrix}
					-\lambda & 0 & \dots & 0 & 0 & \rvline & 0 & 1 & \dots & 1 & 1\\
					0 & -\lambda & \dots & 0 & 0 & \rvline & 1 & 0  & \dots & 1 & 1\\
					\vdots & \vdots & \ddots & \vdots & \vdots & \rvline & \vdots & \vdots & \ddots & \vdots & \vdots\\
					0 & 0 & \dots & -\lambda & 0 & \rvline & 1 & 1 & \dots & 0 & 1\\
					0 & 0 & \dots & 0 & -\lambda & \rvline & 1 & 1  & \dots & 1 & 0\\
					\hline
					0 & 1  & \dots & 1 & 1 & \rvline & -\lambda & 0 & \dots & 0 & 0\\
					1 & 0  & \dots & 1 & 1 & \rvline & 0 & -\lambda & \dots & 0 & 0\\
					\vdots & \vdots & \ddots & \vdots & \vdots & \rvline & \vdots & \vdots & \ddots & \vdots & \vdots\\
					1 & 1 & \dots & 0 & 1 & \rvline & 0 & 0 & \dots & -\lambda & 0\\
					1 & 1 & \dots & 1 & 0 & \rvline & 0 & 0 & \dots & 0 & -\lambda\\
				\end{bmatrix}$.
			\end{center}
			Therefore, $|A(C^{(l)})-{\lambda}I|$ can be expressed as $\det \begin{bmatrix}
				A & B\\
				B & A
			\end{bmatrix}$
			where $A = -\lambda I$ and $B = J - I$ are the commuting matrices of order $p$.

			Thus, 
			\begin{equation}
				|A_i-{\lambda}I|= |A+B|\cdot |A-B|=\begin{vmatrix}
					-\lambda & 1 & \dots & 1 & 1\\
					1 & -\lambda & \dots & 1 & 1\\
					\vdots & \vdots & \ddots & \vdots & \vdots \\
					1 & 1 & \dots & -\lambda & 1 \\
					1 & 1 & \dots & 1 & -\lambda 
				\end{vmatrix} \cdot \begin{vmatrix}
					-\lambda & -1 & \dots & -1 & -1\\
					-1 & -\lambda & \dots & -1 & -1\\
					\vdots & \vdots & \ddots & \vdots & \vdots \\
					-1 & -1 & \dots & -\lambda & -1 \\
					-1 & -1 & \dots & -1 & -\lambda  
				\end{vmatrix}.
			\end{equation}
			Note that, the set of spectrum of $A$ and $B$ are $\Lambda(A) = \{p,0,0,\cdots, 0(p-1 \mbox{ times })\}$, and $\Lambda(B) = \{(-\lambda-1),(-\lambda-1),\cdots, (-\lambda-1) (p \mbox{ times })\}$. Applying \autoref{commuting_matrix_eigenvalues} we have the set of spectrum of $A + B = J - (\lambda + 1) I$.

			which is $\Lambda(A + B) = \{(-\lambda+p-1),(-\lambda-1),(-\lambda-1),\cdots,(-\lambda-1)\}$. Similarly, the set of spectrum of $A-B$ is $\Lambda(A - B) = \{(-\lambda-p+1),(-\lambda+1),(-\lambda+1),\cdots,(-\lambda+1)\}$. Therefore, the spectrum of $A_i$ are $(p-1),(1-p), 1(p-1 \mbox{ times }),-1(p-1 \mbox{ times })$. 
			
			As a component of $\Gamma(\mathbb{Z}_{p^n})$ is isomorphic to $\Gamma(\mathbb{Z}_{p})$, applying \autoref{spectra_of_Z_p} we say that the graph $\Gamma(\mathbb{Z}_{p^n})$ has spectrum $0$ with multiplicity $\frac{p-1}{2}$, $-2$ with multiplicity $\frac{p-3}{2}$, and the roots of $x^2-(p-3)x-(p-1)$. The remaining spectrum of $\Gamma(\mathbb{Z}_{p^n})$ are determined by \autoref{structure theorem 1} which comes from the spectrum of $A(C^{(l)})$. In combination, the other spectrum of $\Gamma(\mathbb{Z}_{p^n})$ are $\{[0]^{\frac{p-1}{2}}, [-2]^{\frac{p-3}{2}}, [p-1]^{\frac{p^n-p}{2p}}, [1-p]^{\frac{p^n-p}{2p}}, [1]^{(p-1)\frac{(p^n-p)}{2p}}$, $[-1]^{(p-1)\frac{(p^n-p)}{2p}}\}$, and the remaining spectrum are the roots of the equation $x^2-(p-3)x-(p-1)=0$. Combining all these, we get the required result.
		\end{proof}

	\section{Properties of $\Gamma(\mathbb{Z}_{2^n})$}
	\label{Properties_of_third_group}

		The group $\mathbb{Z}_{2^n} = \{0, 1, 2, \dots (2^n - 1)\}$ contains only one element of order $2$, which is $2^{n-1}$. Also, $\mathbb{Z}_{2^n}$ is a cyclic group of order $2^n$. The POE graphs corresponding this group are unions of $K_2$ graphs and isolated vertices. Two examples of them are depicted in \autoref{Z_2_n_POE_graphs}. Now, we have the following properties:
		
		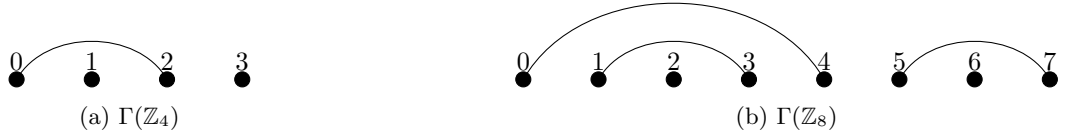
\begin{figure}
			\centering
			\begin{subfigure}[b]{.49\textwidth}
				\centering
				\begin{tikzpicture}
					\node [above] at (0, 0) {${0}$};
					\draw [fill, black] (0, 0) circle[radius = 1mm];
					\node [above] at (1, 0) {${1}$};
					\draw [fill, black] (1, 0) circle[radius = 1mm];
					\node [above] at (2, 0) {${2}$};
					\draw [fill, black] (2, 0) circle[radius = 1mm];
					\node [above] at (3, 0) {${3}$};
					\draw [fill, black] (3, 0) circle[radius = 1mm];
					\path [-] (0,0) edge[bend left=60] node {} (2,0);
				\end{tikzpicture}
				\caption{$\Gamma({\mathbb{Z}_4})$}
			\end{subfigure} 
			\begin{subfigure}[b]{.49\textwidth}
				\centering
				\begin{tikzpicture}
					\node [above] at (5, 0) {${0}$};
					\draw [fill, black] (5, 0) circle[radius = 1mm];
					\node [above] at (6, 0) {${1}$};
					\draw [fill, black] (6, 0) circle[radius = 1mm];
					\node [above] at (7, 0) {${2}$};
					\draw [fill, black] (7, 0) circle[radius = 1mm];
					\node [above] at (8, 0) {${3}$};
					\draw [fill, black] (8, 0) circle[radius = 1mm];
					\node [above] at (9, 0) {${4}$};
					\draw [fill, black] (9, 0) circle[radius = 1mm];
					\node [above] at (10, 0) {${5}$};
					\draw [fill, black] (10, 0) circle[radius = 1mm];
					\node [above] at (11, 0) {${6}$};
					\draw [fill, black] (11, 0) circle[radius = 1mm];
					\node [above] at (12, 0) {${7}$};
					\draw [fill, black] (12, 0) circle[radius = 1mm];
					\path [-] (5,0) edge[bend left=60] node {} (9,0);
					\path [-] (6,0) edge[bend left=60] node {} (8,0);
					\path [-] (10,0) edge[bend left=60] node {} (12,0);
				\end{tikzpicture}
				\caption{$\Gamma({\mathbb{Z}_8})$}
			\end{subfigure} 
			\caption{POE graphs associated to $\mathbb{Z}_4$ and $\mathbb{Z}_8$. They are union of $K_2$ graphs and isolated vertices.}
			\label{Z_2_n_POE_graphs}
		\end{figure}
		
		\begin{lemma}{\label{order 2^n}}
			The graph $\Gamma(\mathbb{Z}_{2^n})$ is the union of $\frac{2^n-2}{2}$ different $K_2$ graphs and two isolated vertices corresponding to the elements of order $4$.
		\end{lemma}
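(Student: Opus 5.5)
The plan is to work additively in $\mathbb{Z}_{2^n}$ and assume $n \geq 2$, so that elements of order $4$ exist. In a cyclic $2$-group every element has order a power of $2$, so the only prime that can occur as an element order is $2$. Moreover $\mathbb{Z}_{2^n}$ has a unique element of order $2$, namely $2^{n-1}$, as noted just before the statement. By \autoref{POE_graph_definition}, two distinct vertices $x, y$ are therefore adjacent if and only if $x + y = 2^{n-1}$ in $\mathbb{Z}_{2^n}$.

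Next I will study the map $\sigma(x) = 2^{n-1} - x$ on $\mathbb{Z}_{2^n}$. It is an involution: $\sigma(\sigma(x)) = x$. For every vertex $x$, the only candidate neighbour is $y = \sigma(x)$, since $y$ is determined uniquely by $x + y = 2^{n-1}$. Hence every vertex has degree at most $1$, and the degree is exactly $1$ unless $\sigma(x) = x$, because loops are excluded. The graph is thus a disjoint union of copies of $K_2$ (the $2$-element orbits $\{x, \sigma(x)\}$) and isolated vertices (the fixed points of $\sigma$).

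The key step is to identify the fixed points. The equation $\sigma(x) = x$ means $2x = 2^{n-1}$ in $\mathbb{Z}_{2^n}$, that is, $2x \equiv 2^{n-1} \pmod{2^n}$. This is equivalent to $x \equiv 2^{n-2} \pmod{2^{n-1}}$, so $x \in \{2^{n-2}, 3\cdot 2^{n-2}\}$. These two elements have order $2^n/\gcd(x, 2^n) = 2^n/2^{n-2} = 4$, and they are exactly the $\phi(4) = 2$ elements of order $4$ in $\mathbb{Z}_{2^n}$. Conversely, any $x$ of order $4$ satisfies $2x$ of order $2$, so $2x = 2^{n-1}$ and $x$ is a fixed point.

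Counting finishes the argument: the remaining $2^n - 2$ vertices are partitioned by $\sigma$ into $\frac{2^n-2}{2}$ pairs, each forming a $K_2$. The main (mild) obstacle is just the careful solution of the congruence $2x \equiv 2^{n-1} \pmod{2^n}$ and the edge case $n=1$, where there are no order-$4$ elements and the statement should be read for $n \geq 2$. As a sanity check, for $\mathbb{Z}_8$ the pairs are $\{0,4\}, \{1,3\}, \{5,7\}$ and the isolated vertices are $2$ and $6$, which matches \autoref{Z_2_n_POE_graphs}.
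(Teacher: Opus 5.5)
Your proof is correct and follows essentially the same route as the paper. In both, adjacency reduces to $x+y=2^{n-1}$, each vertex has at most one partner, the order-$4$ elements are exactly the vertices without a partner, and the remaining $2^n-2$ vertices pair up into $\frac{2^n-2}{2}$ copies of $K_2$.

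Your version is more careful in two places. Solving $2x\equiv 2^{n-1}\pmod{2^n}$ correctly identifies the isolated vertices as $2^{n-2}$ and $3\cdot 2^{n-2}$, whereas the paper lists $2^{n-2}$ and $2^2$, which is a slip. Your restriction to $n\ge 2$ is also genuinely needed, since $\Gamma(\mathbb{Z}_2)\cong K_2$ does not match the formula.
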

		
		\begin{proof}
			The group $\mathbb{Z}_{2^n}$ has only one element of order $2$ which is $2^{n-1}$. There is no other element in $\mathbb{Z}_{2^n}$ of prime order. Also, there are two items of order $4$ in $\mathbb{Z}_{2^n}$, which are $2^{n - 2}$ and $2^2$. 
			
			If $x$ be an element of order $4$ then there is no element $y \neq x$ such that $xy = 2^{n - 1}$. Hence, $x$ is not adjacent to any element. Therefore, there are two isolated vertices correspond to the two items of order $4$.
			
			As $2^{n-1}$ is its self-inverse, it is adjacent to the identity $e$. No other element is adjacent to $2^{n-1}$ or $e$. Now, the composition of any two elements $x, y\in G$ is prime if and only if  $xy=2^{n-1}$. For each $x\in G$, there exists $y\in G$ such that $xy=2^{n-1}$ or $y=2^{n-1}x^{-1}$. Since, the inverse of each element is unique, then for every element $x \neq 2^{n-1} \in G$, there exists a unique $y$ such that $xy=2^{n-1}$. Therefore, there is an edge between $x$ and $y$. Also, they are not adjacent to any other element. Eventually, there are $\frac{2^n-2}{2} = 2^{n-1}-1$ pairs of $x$ and $y$ which forms $(2^{n-1}-1)$ $K_2$ graphs. Hence, we have the result.
		\end{proof}
		
		We can rearrange the isolated vertices and the vertices corresponding to the other elements of the group as follows:
		$$x_1, x_1^{-1}, x_2, x_2^{-1},x_3,x_3^{-1},\dots,x_{2^{n-1}-1},x_{2^{n-1}-1}^{-1},2^{n-2},2^2.$$
		It produces the adjacency matrix of $\Gamma(\mathbb{Z}_{2^n})$ in the form $A({\Gamma}(\mathbb{Z}_{2^n})) = (a_{i, j})_{2^n \times 2^n}$, where
		\begin{equation}
			a_{i, j} = 
			\begin{cases}
				1, & \text{ for } i = j+1, \text{or}~ j=i+1, ~\text{and}~ i, j \notin \{2^{n}-1, 2^n\};\\
				0, & \text{ otherwise.}\\
			\end{cases}
		\end{equation}
		Now we have the following result regarding the spectra of $\Gamma(\mathbb{Z}_{2^n})$.
		\begin{thm}
			If $G\cong \mathbb{Z}_{2^n}$ then the spectrum of $A(\Gamma(G))$ is $\{[0]^2,[1]^{2^{n-1}-1},[-1]^{2^{n-1}-1}\}$. 
		\end{thm}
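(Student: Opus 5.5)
The plan is to read the spectrum off the component decomposition given by Lemma~\ref{order 2^n}, using the block diagonal identity \eqref{block_diagonal_spectra}, so that no determinant needs to be expanded.

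First I will confirm the structure that Lemma~\ref{order 2^n} provides. Two distinct vertices $x,y$ are adjacent exactly when $x+y=2^{n-1}$, because $2^{n-1}$ is the only element of prime order in $\mathbb{Z}_{2^n}$. The map $x\mapsto 2^{n-1}-x$ is an involution on $\mathbb{Z}_{2^n}$. Its fixed points satisfy $2x=2^{n-1}$, so they are exactly the two elements of order $4$, namely $2^{n-2}$ and $3\cdot 2^{n-2}$. These two vertices are isolated. The remaining $2^n-2$ elements split into $2^{n-1}-1$ pairs $\{x,\,2^{n-1}-x\}$, and each pair forms a component isomorphic to $K_2$.

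Next I will order the vertices as in the displayed arrangement: consecutive pairs $x_i,\,2^{n-1}-x_i$ for $i=1,\dots,2^{n-1}-1$, followed by the two order-$4$ elements. The one point needing care is that the paper's entrywise formula for $a_{i,j}$ should be read as placing $1$'s only at the positions $(2i-1,2i)$ and $(2i,2i-1)$. Read this way, the matrix is genuinely block diagonal. With this ordering,
$$A(\Gamma(\mathbb{Z}_{2^n}))=\diag\left\{\begin{pmatrix}0&1\\1&0\end{pmatrix},\dots,\begin{pmatrix}0&1\\1&0\end{pmatrix},[0],[0]\right\},$$
where the $2\times 2$ block appears $2^{n-1}-1$ times.

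By \eqref{block_diagonal_spectra}, the spectrum of this matrix is the union of the spectra of its blocks. The characteristic polynomial of $A(K_2)$ is $\lambda^2-1$, so each $K_2$ block contributes the eigenvalues $1$ and $-1$. Each isolated vertex contributes the eigenvalue $0$. Collecting these gives $\{[0]^2,[1]^{2^{n-1}-1},[-1]^{2^{n-1}-1}\}$.

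As a final check, the multiplicities sum to $2+2(2^{n-1}-1)=2^n$, which is the number of vertices. The only delicate step is the identification of the components, specifically that the isolated vertices are exactly the two fixed points of the involution. The spectral computation itself is immediate.
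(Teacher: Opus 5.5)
Your proof is correct and is essentially the paper's argument: both start from the decomposition in Lemma~\ref{order 2^n} into $2^{n-1}-1$ copies of $K_2$ plus two isolated vertices and then read off the spectrum via \eqref{block_diagonal_spectra}. Your involution $x\mapsto 2^{n-1}-x$ justifies that decomposition more cleanly: it names the isolated vertices correctly as $2^{n-2}$ and $3\cdot 2^{n-2}$, and it reads the adjacency matrix correctly as a perfect matching rather than the tridiagonal pattern the paper's entrywise formula literally describes. Like the paper, you tacitly need $n\ge 2$, since $\Gamma(\mathbb{Z}_2)$ is a single $K_2$ with spectrum $\{[1]^1,[-1]^1\}$.
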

		
		\begin{proof}
			The POE graph $A(G)$ is an union of $\frac{2^n-2}{2}$ different $K_2$ graphs and two isolated vertices. Equation \eqref{block_diagonal_spectra} indicates that the spectra of $A({\Gamma}(G))$ is the union of the spectrum of $K_2$ graphs with multiplicity $\frac{2^n-2}{2}$ and $0$ with multiplicity $2$. Thus, the spectrum of $A(\Gamma(G))$ are$\{[0]^2,[1]^{2^{n-1}-1},[-1]^{2^{n-1}-1}\}$.
		\end{proof}	
%
%

\section{Properties of ${\Gamma(\mathbb{Z}_{2^{n_1}} \times \mathbb{Z}_{2^{n_2}}\times \dots \times \mathbb{Z}_{2^{n_k}}})$}
\label{Properties_of_fourth_group}

The group $\mathbb{Z}_{2^{n_1}} \times \mathbb{Z}_{2^{n_2}}\times \dots \times \mathbb{Z}_{2^{n_k}}$ is direct product of the groups $\mathbb{Z}_{2^{n}}$ for different values of $n$. The POE graphs $\Gamma({\mathbb{Z}_{2^{n_1}} \times \mathbb{Z}_{2^{n_2}}\times \dots \times \mathbb{Z}_{2^{n_k}}})$ are disconnected when at least one of the $n_i$ are different from $1$. A POE graph of this class is depicted in \autoref{Z_2_n_i_product_POE_Graph}. We consider the following properties of these graphs:

 \begin{figure}
	\centering
	\begin{tikzpicture}
		\node [above] at (0, 0) {${(0,0)}$};
		\draw [fill, black] (0, 0) circle[radius = 1mm];
		\node [below] at (0, -2) {${(1,0)}$};
		\draw [fill, black] (0, -2) circle[radius = 1mm];
		\node [above] at (2, 0) {${(0,4)}$};
		\draw [fill, black] (2, 0) circle[radius = 1mm];
		\node [below] at (2, -2) {${(2,4)}$};
		\draw [fill, black] (2, -2) circle[radius = 1mm];
		
		\draw [] (0,0) -- (2,-2);
		\draw [] (0,0) -- (0,-2);
		\draw [] (0,0) -- (2,0);
		\draw [] (2,0) -- (0,-2);
		\draw [] (2,0) -- (2,-2);
		\draw [] (0,-2) -- (2,-2);
		
		\node [above] at (4, 0) {${(1,2)}$};
		\draw [fill, black] (4, 0) circle[radius = 1mm];
		\node [above] at (6, 0) {${(0,6)}$};
		\draw [fill, black] (6, 0) circle[radius = 1mm];
		\node [below] at (4, -2) {${(0,2)}$};
		\draw [fill, black] (4, -2) circle[radius = 1mm];
		\node [below] at (6, -2) {${(1,6)}$};
		\draw [fill, black] (6, -2) circle[radius = 1mm];
		\draw [] (4,0) -- (6,0);
		\draw [] (4,-2) -- (6,-2);
		\draw [] (4,0) -- (4,-2);
		\draw [] (6,0) -- (6,-2);	
		
		\node [left] at (8, -1) {${(1,7)}$};
		\draw [fill, black] (8, -1) circle[radius = 1mm];
		\node [above left] at (10, -1) {${(0,1)}$};
		\draw [fill, black] (10, -1) circle[radius = 1mm];
		\node [left] at (10, 0) {${(0,3)}$};
		\draw [fill, black] (10, 0) circle[radius = 1mm];
		\node [left] at (10, -2) {${(1,3)}$};
		\draw [fill, black] (10, -2) circle[radius = 1mm];
		\node [right] at (12, 0) {${(0,5)}$};
		\draw [fill, black] (12, 0) circle[radius = 1mm];
		\node [above right] at (12, -1) {${(0,7)}$};
		\draw [fill, black] (12, -1) circle[radius = 1mm];
		\node [right] at (12, -2) {${(1,5)}$};
		\draw [fill, black] (12, -2) circle[radius = 1mm];
		\node [right] at (14, -1) {${(1,1)}$};
		\draw [fill, black] (14, -1) circle[radius = 1mm];
		\draw [] (8,-1) -- (10,-1);
		\draw [] (10,0) -- (10, -1);
		\draw [] (10, -2) -- (10, -1);
		\draw [] (12,0) -- (12,-1);
		\draw [] (12,-2) -- (12,-1);
		\draw [] (12,-1) -- (14,-1);
		\draw [] (10,0) -- (12,-2);
		\draw [] (12,0) -- (10,-2);
		
		\path [-] (8,-1) edge[bend left=60] node {} (12,0);
		\path [-] (8,-1) edge[bend right=60] node {} (12,-2);
		\path [-] (14,-1) edge[bend right=60] node {} (10,0);
		\path [-] (14,-1) edge[bend left=60] node {} (10,-2);
	\end{tikzpicture}
	\caption{The POE graph $\Gamma({\mathbb{Z}_2} \times {\mathbb{Z}_8})$. The graph is disconnected. Note that none of the connected components are isomorphic to $\Gamma(\mathbb{Z}_2)$ or $\Gamma(\mathbb{Z}_2)$, which are depicted in \autoref{Z_2_n_POE_graphs}.}
	\label{Z_2_n_i_product_POE_Graph}
\end{figure}
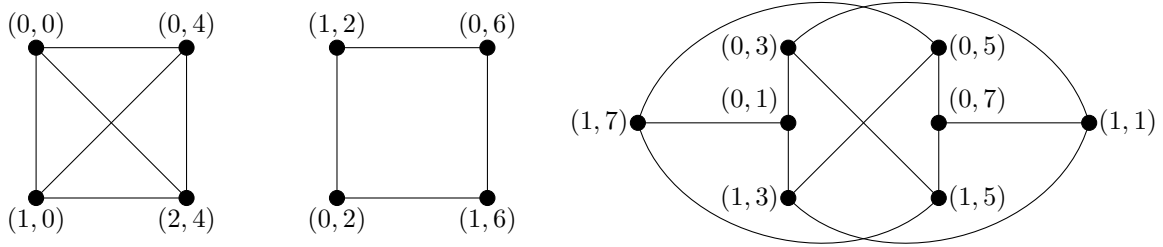

\begin{lemma}{\label{Zero 2-group}}
	The graph $\Gamma({\mathbb{Z}_{2^{n_1}} \times \mathbb{Z}_{2^{n_2}}\times \dots \times \mathbb{Z}_{2^{n_k}}})$ is a disconnected graph and one connected component is isomorphic to $\Gamma({\mathbb{Z}_2^k})$.
\end{lemma}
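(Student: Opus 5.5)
The plan is to identify the component explicitly as the subgroup of elements of order dividing $2$. Write $G = \mathbb{Z}_{2^{n_1}} \times \mathbb{Z}_{2^{n_2}}\times \dots \times \mathbb{Z}_{2^{n_k}}$ and set $H = \{x \in G : x^2 = e\}$. Since $G$ is Abelian, $H$ is a subgroup. Each factor $\mathbb{Z}_{2^{n_i}}$ has exactly one element of order $2$, namely $2^{n_i-1}$ (as used in the proof of \autoref{order 2^n}), so $H = \{0, 2^{n_1-1}\} \times \dots \times \{0, 2^{n_k-1}\} \cong {\mathbb{Z}_2}^{(k)}$. Since $o(G)$ is a power of $2$, every element has $2$-power order, so the only prime that can occur as $o(xy)$ is $2$. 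Hence for distinct $x, y$ we have $x \sim y$ if and only if $xy \neq e$ and $(xy)^2 = e$.

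The first step is to show that $H$ is closed under adjacency. Suppose $x \in H$ and $x \sim y$. Then $(xy)^2 = e$, and commutativity gives $x^2y^2 = e$. Since $x^2 = e$, this forces $y^2 = e$, so $y \in H$. Thus no edge leaves $H$, and $H$ is a union of connected components. I will argue this directly rather than through \autoref{lcm_lemma}, since that lemma is not valid in general (for instance $o(x x^{-1}) = 1$).

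Next I will describe the induced subgraph $\langle H \rangle$. Adjacency depends only on the product $xy$, and $H$ is a subgroup, so $\langle H \rangle$ coincides with $\Gamma(H)$. Transporting along the group isomorphism $H \cong {\mathbb{Z}_2}^{(k)}$ gives $\langle H \rangle \cong \Gamma({\mathbb{Z}_2}^{(k)})$. This graph is connected: for distinct $x, y \in H$ we have $xy \neq e$ (because $y \neq x = x^{-1}$) and $(xy)^2 = e$, so $o(xy) = 2$. Hence $\langle H \rangle$ is the complete graph $K_{2^k}$, and therefore it is exactly one connected component.

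Finally, for disconnectedness: if some $n_i \geq 2$, then $G \setminus H$ is nonempty; for example, the element with $1$ in the $i$-th coordinate and $0$ elsewhere has order $2^{n_i} \geq 4$. So $\Gamma(G)$ has at least one other component and is disconnected. I expect the main obstacle to be this hypothesis rather than any computation. When all $n_i = 1$ we have $G = H$, and the graph is the connected graph $K_{2^k}$. The statement therefore has to be read under the standing assumption of this section, made just before the lemma, that at least one $n_i$ differs from $1$, and I will state this assumption explicitly in the proof.
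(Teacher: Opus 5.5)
Your proof is correct and follows the same route as the paper: both identify the component containing the identity with the $2^k$ elements of order dividing $2$, and map it coordinatewise onto ${\mathbb{Z}_2}^{(k)}$. Your version is tighter on three points the paper glosses over. You prove closure under adjacency via $(xy)^2 = x^2y^2$ rather than the generally false \autoref{lcm_lemma}; you check that the bijection actually preserves adjacency; and you note that disconnectedness requires some $n_i \geq 2$.
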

\begin{proof}
	Recall that the group $\mathbb{Z}_{2^n}$ has only one element of order $2$ which is $2^{n-1}$. Therefore, the group $\mathbb{Z}_{2^{n_1}} \times \mathbb{Z}_{2^{n_2}}\times$ $\dots \times \mathbb{Z}_{2^{n_k}}$ contains $(2^{k} - 1)$ elements of order $2$ which are of the form $(\varepsilon_1 2^{n_1-1},\dots,\varepsilon_k 2^{n_k-1})$, where $\varepsilon_i\in\{0,1\}$. Let $C^{(0)}$ be the component containing the identity element. Since, the identity element is adjacent to the elements of prime order, then all these elements belong to $C^{(0)}$. Using \autoref{lcm_lemma} we conclude that no element of higher order belong to $C^{(0)}$. Define a mapping $\varphi: C^{(0)}\longrightarrow {\mathbb{Z}_2^k}$ by $\varphi\big((\varepsilon_1 2^{n_1-1},\dots,\varepsilon_k 2^{n_k-1})\big)
	=
	(\varepsilon_1,\dots,\varepsilon_k)$. One can easily prove that $\varphi$ is a bijective function and hence $C^{(0)}\cong {\mathbb{Z}_2^k}$.
\end{proof}

\begin{lemma}{\label{First 2-group}}
	The number of elements of order $2^t$ for $t\geq 2$ in the group ${\mathbb{Z}_{2^{n_1}} \times \mathbb{Z}_{2^{n_2}}\times \dots \times \mathbb{Z}_{2^{n_k}}}$ is $2^c - 2^d$, where $c = {\sum_{i=1}^{k}}min\{n_i,2^t\}$ and $d = {\sum_{i=1}^{k}}min\{n_i,2^{t-1}\}$.
\end{lemma}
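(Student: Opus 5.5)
The plan is to count elements whose order divides a given power of $2$, and then take a difference. For $s\ge 0$ write $N(s)$ for the number of elements $g$ of $G=\mathbb{Z}_{2^{n_1}}\times\dots\times\mathbb{Z}_{2^{n_k}}$ with $g^{2^s}=e$, that is, with $o(g)\mid 2^s$. Every element of $G$ has order a power of $2$. So an element has order exactly $2^t$ precisely when its order divides $2^t$ but not $2^{t-1}$. Hence the required count is $N(t)-N(t-1)$, and it suffices to compute $N(s)$ in closed form.

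\emph{Step 1 (one cyclic factor).} In the cyclic group $\mathbb{Z}_{2^{n}}$ there is, for each divisor $d$ of $2^{n}$, exactly one subgroup of order $d$. The elements with $o(x)\mid 2^s$ are exactly the elements of the subgroup of order $2^{\min\{n,s\}}$. Equivalently, they are the multiples of $2^{\,n-\min\{n,s\}}$, or the elements counted by $\sum_{j\le \min\{n,s\}}\phi(2^j)$, as recalled in the Preliminary section. Hence there are exactly $2^{\min\{n,s\}}$ of them.

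\emph{Step 2 (product).} By \autoref{lcm_lemma}, or directly from the componentwise definition of the direct product, $o(g_1,\dots,g_k)=\lcm\{o(g_1),\dots,o(g_k)\}$. Since all these orders are powers of $2$, we have $o(g)\mid 2^s$ if and only if $o(g_i)\mid 2^s$ for every $i$. The coordinates can be chosen independently, so
\[
N(s)=\prod_{i=1}^{k}2^{\min\{n_i,s\}}=2^{\sum_{i=1}^k\min\{n_i,s\}}.
\]

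\emph{Step 3 (subtract).} The number of elements of order $2^t$ is therefore
\[
2^{\sum_i\min\{n_i,t\}}-2^{\sum_i\min\{n_i,t-1\}}.
\]
This is the stated $2^c-2^d$, provided the minima are read as $\min\{n_i,t\}$ and $\min\{n_i,t-1\}$, i.e.\ comparing the exponent $n_i$ with the exponent $t$. I will flag this in the write-up. The literal reading $\min\{n_i,2^t\}$ compares an exponent with an order, and it already fails for $\mathbb{Z}_8$ with $t=2$: it would give $2^3-2^2=4$, whereas $\mathbb{Z}_8$ has only the two elements $2$ and $6$ of order $4$.

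The only step needing care is Step 1, namely justifying that exactly $2^{\min\{n,s\}}$ elements of $\mathbb{Z}_{2^n}$ are killed by $2^s$. I would do this concretely: $2^s x\equiv 0 \pmod{2^n}$ if and only if $2^{\,n-\min\{n,s\}}\mid x$. Everything else is bookkeeping.
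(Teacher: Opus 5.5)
Your proposal is correct and follows the paper's own argument. Both count, coordinatewise, the elements of $\mathbb{Z}_{2^{n_i}}$ whose order is at most (equivalently, divides) $2^t$, multiply the counts $2^{\min\{n_i,t\}}$, and subtract the analogous product for $2^{t-1}$. Your flag about the exponents is also right: the paper's proof itself ends with $2^{\sum_i \min(n_i,t)}-2^{\sum_i \min(n_i,t-1)}$, so the $2^t$ and $2^{t-1}$ inside the minima in the statement are typos, as your $\mathbb{Z}_8$ example shows.
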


\begin{proof}
	 Let $x=(x_1,x_2,\dots, x_k)$ be an element of order $2^t$ for $t\geq 2$ in the group ${\mathbb{Z}_{2^{n_1}} \times \mathbb{Z}_{2^{n_2}}\times \dots \times \mathbb{Z}_{2^{n_k}}}$. Now, one can observe that  $o(x)=2^t(t\geq 2)$ if and only if $o(x_i)\leq 2^t (i=1,2,\dots, k)$ and for at least one $x_i, o(x_i)=2^t$. In the group $\mathbb{Z}_{2^{n_i}}$, 
	\begin{enumerate}
		\item if $n_i=t$, all elements already have order $\leq 2^t(t\geq 2)$. 
		\item if $n_i\geq t$, the elements of order $\leq 2^t$ that are the elements of order either $1,2, 2^2, \dots $ or $2^t$ that is the total number of elements in a subgroup of order $2^t$ of the group $\mathbb{Z}_{2^{n_i}}$.
	\end{enumerate} 
	
	Therefore, the number of choices for each co-ordinate of $x=(x_1,x_2,\dots , x_k)$ is $$
	\begin{cases}
		2^{n_i}, & n_i\leq t, \\
		2^t, & n_i \geq t.
	\end{cases}
	$$
	Hence, the total number of elements of order $2^t(t\geq 2)$ in ${\mathbb{Z}_{2^{n_1}} \times \mathbb{Z}_{2^{n_2}}\times \dots \times \mathbb{Z}_{2^{n_k}}}$ is $$\left (\prod_{i=1}^{k} \left (\begin{cases}
		2^{n_i}, & n_i \leq t, \\
		2^t, & n_i \geq t.
	\end{cases}\right )\right )- \left (\prod_{i=1}^{k} \left (\begin{cases}
	2^{n_i}, & n_i \leq t-1, \\
	2^{t-1}, & n_i \geq t-1.
	\end{cases}\right )\right ) = 2^{\sum_{i=1}^{k} min(n_i,t)} -2^{\sum_{i=1}^{k} min(n_i,t-1)}.$$ 
\end{proof}

\begin{corollary}{\label{Second 2-group}}
	The number of elements of order $4$ is ${2^{\sum_{i=1}^{k}min\{n_i,2\}}-2^k}$.
\end{corollary}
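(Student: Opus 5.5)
The corollary is the case $t=2$ of \autoref{First 2-group}, so the plan is to specialise that count and then simplify the second term. I read the exponents in \autoref{First 2-group} as they appear in its proof, namely $c=\sum_{i=1}^{k}\min\{n_i,t\}$ and $d=\sum_{i=1}^{k}\min\{n_i,t-1\}$. The $\min\{n_i,2^t\}$ printed in the statement does not match the final displayed formula of the proof, so I follow the proof's version.

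First, the number of elements of order $4$ equals the number of elements satisfying $x^4=e$ minus the number satisfying $x^2=e$. With $t=2$, the first count is $\prod_i 2^{\min\{n_i,2\}} = 2^{\sum_i \min\{n_i,2\}}$. This holds because in $\mathbb{Z}_{2^{n_i}}$ the solutions of $4x_i=0$ form the unique subgroup of order $2^{\min\{n_i,2\}}$.

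The second count uses $d=\sum_i\min\{n_i,1\}$. Every $n_i\ge 1$, so $\min\{n_i,1\}=1$ for each $i$, and therefore $d=k$. This matches the description in the proof of \autoref{Zero 2-group}: the elements with $x^2=e$ are exactly the identity together with the $2^k-1$ elements $(\varepsilon_1 2^{n_1-1},\dots,\varepsilon_k 2^{n_k-1})$, which gives $2^k$ in total. Subtracting the two counts yields $2^{\sum_{i=1}^k\min\{n_i,2\}}-2^k$.

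The argument has no real obstacle. The only point needing care is to justify the coordinatewise counting, that is, that $o(x)\mid 4$ holds if and only if $o(x_i)\mid 4$ for every $i$. This follows from \autoref{lcm_lemma}, or directly from the fact that $o(x)=\lcm\{o(x_i)\}$ in a direct product, together with the fact that a cyclic group has exactly one subgroup of each order dividing its order.
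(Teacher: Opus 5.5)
Your proposal is correct and follows the paper's route: the corollary is Lemma~\ref{First 2-group} at $t=2$, read with the exponents $\sum_i\min\{n_i,t\}$ and $\sum_i\min\{n_i,t-1\}$ from its proof (you are right that the printed $\min\{n_i,2^t\}$ would not yield this formula), together with $\sum_i\min\{n_i,1\}=k$. For the coordinatewise step, rely on the direct-product fact $o(x)=\lcm\{o(x_1),\dots,o(x_k)\}$ rather than on Lemma~\ref{lcm_lemma}, which as stated fails for arbitrary $x,y$ (take $y=x^{-1}\neq e$).
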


\begin{lemma}{\label{Third 2-group}}
	The number of connected component containing the elements of order $4$ is $\frac{2^{\sum_{i=1}^{k}min\{n_i,2\}}-2^k}{2^k}$, where ${2^{\sum_{i=1}^{k}min\{n_i,2\}}-2^k}$ is the total number of elements of order $4$ in the group.
\end{lemma}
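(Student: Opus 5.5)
The plan is to show that the connected components formed by elements of order $4$ are exactly the cosets $xH$, where $H$ is the subgroup of elements of order $1$ or $2$. Each such coset has $|H| = 2^k$ elements, so dividing the count of \autoref{Second 2-group} by $2^k$ gives the statement. Write $G = \mathbb{Z}_{2^{n_1}} \times \dots \times \mathbb{Z}_{2^{n_k}}$. By the proof of \autoref{Zero 2-group}, $H = \{e\} \cup \{\text{elements of order } 2\}$ is a subgroup of order $2^k$, and $o(z)$ is prime exactly when $z \in H \setminus \{e\}$.

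\textbf{Step 1 (a coset of an order-$4$ element consists of order-$4$ elements).} Let $o(x)=4$ and $h \in H$. Then $(xh)^2 = x^2h^2 = x^2 \neq e$ and $(xh)^4 = e$, so $o(xh)=4$. Hence the order-$4$ elements form a union of full cosets of $H$. Since $x^2 \in H$, we also have $x^{-1} = x\cdot x^{-2} \in xH$.

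\textbf{Step 2 (components stay inside a coset).} Suppose $x \sim y$. Then $xy = h \in H\setminus\{e\}$, so $y = x^{-1}h \in x^{-1}H = xH$ by Step 1. Every neighbour of a vertex of $xH$ therefore lies in $xH$, and so the component of $x$ is contained in $xH$.

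\textbf{Step 3 (the coset induces a connected graph).} Take distinct vertices $xh_1, xh_2$ of $xH$. They are adjacent iff $x^2h_1h_2 \neq e$, i.e.\ iff $h_1h_2 \neq x^2$. Since $x^2 \neq e$, the map $h \mapsto hx^2$ is a fixed-point-free involution on $H$. So the induced subgraph on $xH$ is $K_{2^k}$ minus a perfect matching, i.e.\ a cocktail-party graph; this matches the $4$-cycle seen in \autoref{Z_2_n_i_product_POE_Graph}. For $2^k \ge 4$ this graph is connected: any two non-adjacent vertices $u, u x^2$ have a common neighbour, namely any third vertex. Hence each coset is exactly one component, and the number of components is (number of order-$4$ elements)$/2^k$, as claimed.

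The main obstacle is the connectivity in Step 3, which genuinely needs $k \ge 2$. For $k = 1$ the group is $\mathbb{Z}_{2^n}$, and the two order-$4$ elements are isolated vertices by \autoref{order 2^n}, giving $2$ components rather than the $1$ the formula predicts. I would therefore state and prove the lemma under the hypothesis $k \ge 2$, which is implicit in the product setting of this section, and remark that $k=1$ is covered separately by \autoref{order 2^n}.
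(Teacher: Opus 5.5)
Your argument is correct for $k\ge 2$, and it takes a genuinely different route from the paper.

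The paper argues by counting. It observes that an element of order $4$ has degree $2^k-2$ and is adjacent only to elements of order $4$. It then asserts, ``using the same method as \autoref{structure theorem 1}'', that each such component has exactly $2^k$ vertices, and divides. That method does not transfer verbatim: it relies on the triangle-free, bipartite structure available for odd $p$, whereas here the components contain triangles as soon as $k\ge 3$. So the component size is asserted rather than derived.

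Your coset decomposition supplies exactly this step. Closure of $xH$ under adjacency bounds the component from above. The description of the induced subgraph as $K_{2^k}$ minus the perfect matching $u\leftrightarrow u^{-1}$ then gives connectivity, recovers the degree $2^k-2$, and identifies the isomorphism type of every component. The paper's version is shorter and stresses the parallel with the odd-prime case; yours is self-contained and also exposes a hidden hypothesis.

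You are right that $k\ge 2$ is needed. For $k=1$ and $n_1\ge 2$ the formula predicts one component, but the two elements of order $4$ in $\mathbb{Z}_{2^{n_1}}$ are isolated vertices, giving two. The paper's own degree count $2^k-2=0$ is incompatible with its claimed component size $2^k=2$ in that case.

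As a further payoff, the same coset argument, now with $x^{-1}H\neq xH$, handles elements of order $2^t$ with $t\ge 3$. There the component is $xH\cup x^{-1}H$, inducing $K_{2^k,2^k}$ minus a perfect matching, again connected only when $k\ge 2$.
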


\begin{proof}
	From the above \autoref{Second 2-group}, the number of elements of order $4$ in the group is ${2^{\sum_{i=1}^{k}min\{n_i,2\}}-2^k}$. Also, the number of order $2$ elements in the group is $2^k-1$. Let $x$ be an element of order $4$ in the group. Then, $$x^4=e \implies (x^2)^2=e.$$ This implies that, $x\circ x$ gives an element of order $2$. Clearly, the degree of each element of order $4$ is $2^{k}-2$. Since in an Abelian p-group, $o(xy)=l.c.m\{o(x),o(y)\}$, the elements of order $4$ can be adjacent with only the element of order $4$. Now, using the same method as \autoref{structure theorem 1}, each connected component of elements of order $4$ contains $2^k$ number of elements. Hence the number of connected components of elements of order $4$ in the graph $\Gamma({\mathbb{Z}_{2^{n_1}} \times \mathbb{Z}_{2^{n_2}}\times \dots \times \mathbb{Z}_{2^{n_k}}})$ is $\frac{2^{\sum_{i=1}^{k}min\{n_i,2\}}-2^k}{2^k}$.
 \end{proof}
 
 \begin{lemma}{\label{Forth 2-group}}
 	The number of connected component containing the elements of order $2^t(t\geq 3)$ is $\frac{2^{\sum_{i=1}^{k}}min\{n_i,2^t\}-2^{\sum_{i=1}^{k}}min\{n_i,2^{t-1}\}}{2^{k+1}}$ where $2^{\sum_{i=1}^{k}}min\{n_i,2^t\}-2^{\sum_{i=1}^{k}}min\{n_i,2^{t-1}\}$ is the total number of elements of order $2^t(t\geq 3)$ in the group.
 \end{lemma}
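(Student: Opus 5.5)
The plan is to describe each connected component explicitly as a union of two cosets of the subgroup of elements of order at most $2$. Write $G=\mathbb{Z}_{2^{n_1}}\times\cdots\times\mathbb{Z}_{2^{n_k}}$ and let $E=\{g\in G: g^2=e\}$. By the proof of \autoref{Zero 2-group}, $E$ is a subgroup with $|E|=2^k$, and it consists of $e$ together with the $2^k-1$ elements of order $2$. Since $G$ is a $2$-group, the only prime order is $2$. So two distinct vertices $x,y$ are adjacent if and only if $xy\in E\setminus\{e\}$. I read the exponents in the statement as $\min\{n_i,t\}$ and $\min\{n_i,t-1\}$, as in \autoref{First 2-group}; only the total count of elements of order $2^t$ is used, and the whole argument reduces to showing that every component has exactly $2^{k+1}$ vertices.

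\textbf{Step 1 (neighbourhoods).} Fix $x$ with $o(x)=2^t$ and $t\geq 3$. The neighbours of $x$ are exactly the elements $y=x^{-1}z$ with $z\in E\setminus\{e\}$ and $y\neq x$. If $y=x$, then $x^2=z\in E$, which is impossible because $o(x^2)=2^{t-1}\geq 4$. Hence $N(x)=x^{-1}(E\setminus\{e\})$ and $\deg(x)=2^k-1$. Every vertex of $xE$ also has order $2^t$, by \autoref{lcm_lemma}, since $o(xe')=\lcm(2^t,o(e'))=2^t$. By the same computation, the neighbours of $xe'$ are $x^{-1}e'(E\setminus\{e\})\subseteq x^{-1}E$, and symmetrically the neighbours of $x^{-1}e'$ lie in $xE$.

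\textbf{Step 2 (the component is $xE\cup x^{-1}E$).} By Step 1, the set $xE\cup x^{-1}E$ is closed under taking neighbours, so the component of $x$ is contained in it. These two cosets are disjoint, because $x^2\notin E$. Within this set, $xe_1\sim x^{-1}e_2$ if and only if $e_1e_2\neq e$, that is, if and only if $e_1\neq e_2$, since every element of $E$ is its own inverse. There are no edges inside a single coset, since products of two elements of $xE$ lie in $x^2E$, which is disjoint from $E$. So the induced graph is $K_{2^k,2^k}$ with a perfect matching removed. This graph is connected whenever $2^k\geq 3$: any two vertices on the same side have a common neighbour, and opposite sides are joined by an edge or by a path of length $3$. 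Hence each component is exactly $xE\cup x^{-1}E$ and has $2^{k+1}$ vertices.

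\textbf{Step 3 (counting).} The sets $xE\cup x^{-1}E$ are unions of $E$-cosets, so they partition the elements of order $2^t$. Dividing the count from \autoref{First 2-group} by $2^{k+1}$ gives the stated number of components.

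The main obstacle is the connectivity in Step 2, and this is where the hypothesis on $k$ matters. For $k=1$ the matching-deleted graph $K_{2,2}$ minus a perfect matching is just two copies of $K_2$, so each component has $2$ vertices rather than $4$; this agrees with \autoref{order 2^n}. I would therefore state explicitly that $k\geq 2$, so that the divisor $2^{k+1}$ is correct. The other point to check carefully is the use of $t\geq 3$ in Steps 1 and 2. For $t=2$ we can have $x^2\in E$, so $xE=x^{-1}E$ and the component collapses to $2^k$ vertices, which is the situation of \autoref{Third 2-group}.
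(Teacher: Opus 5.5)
Your proof is correct and follows essentially the paper's route. The paper gets $\deg(x)=2^k-1$ from $o(x^2)\neq 2$, notes that each component consists of elements of a single order, and invokes the method of Theorem~\ref{structure theorem 1} (collecting $x$, $x^{-1}$, the neighbours $x^{-1}z$ and their inverses $xz$) to get $2^{k+1}$ vertices per component, which is exactly your set $xE\cup x^{-1}E$, before dividing the count from Lemma~\ref{First 2-group}.

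Your coset description makes explicit the closure and connectivity steps that the paper leaves implicit. Both of your caveats are right. The exponents must be read as $\sum_i\min\{n_i,t\}$ and $\sum_i\min\{n_i,t-1\}$. The hypothesis $k\geq 2$ is genuinely needed: for $k=1$ the stated formula undercounts by a factor of $2$, a case the paper's sketch overlooks. For example, in $\mathbb{Z}_8$ the elements of order $8$ form the two separate edges $\{1,3\}$ and $\{5,7\}$, whereas the formula gives one component.
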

 
 \begin{proof}
 	Following the \autoref{First 2-group}, the number of elements of order $2^t(t\geq 3)$ is $2^{\sum_{i=1}^{k}}min\{n_i,2^t\}-2^{\sum_{i=1}^{k}}min\{n_i,2^{t-1}\}$. If $x$ be an element in the group such that $o(x)=2^t(t\geq 3)$, clearly, $o(x^2)\neq 2$. Hence the degree of each element of order $2^t(t\geq 3)$ is $2^k-1$. Since in an Abelian p-group, $o(xy)=l.c.m.\{o(x),o(y)\}$, every connected component contains the elements of same order. Now, by using the same method as \autoref{structure theorem 1}, each connected component of elements of order $2^t(t\geq 3)$ contains $2^{k+1}$ number of elements. Hence, the number of connected components of elements of order $2^t(t\geq 3)$ in the graph $\Gamma({\mathbb{Z}_{2^{n_1}} \times \mathbb{Z}_{2^{n_2}}\times \dots \times \mathbb{Z}_{2^{n_k}}})$ is $\frac{2^{\sum_{i=1}^{k}}min\{n_i,2^t\}-2^{\sum_{i=1}^{k}}min\{n_i,2^{t-1}\}}{2^{k+1}}$.  
 \end{proof}
 
 \begin{thm}
 	The graph $\Gamma({\mathbb{Z}_{2^{n_1}} \times \mathbb{Z}_{2^{n_2}}\times \dots \times \mathbb{Z}_{2^{n_k}}})$ is a disconnected graph cotaining the following connected components:
 	\begin{enumerate}
 		\item One connected component isomorphic to $\Gamma({\mathbb{Z}_2^k})$.
 		\item The number of connected component containing the elements of order $4$ is $\frac{2^{\sum_{i=1}^{k}min\{n_i,2\}}-2^k}{2^k}$, where ${2^{\sum_{i=1}^{k}min\{n_i,2\}}-2^k}$ is the total number of elements of order $4$ in the group.
 		\item The number of connected component containing the elements of order $2^t(t\geq 3)$ is $\frac{2^{\sum_{i=1}^{k}}min\{n_i,2^t\}-2^{\sum_{i=1}^{k}}min\{n_i,2^{t-1}\}}{2^{k+1}}$, where $2^{\sum_{i=1}^{k}}min\{n_i,2^t\}-2^{\sum_{i=1}^{k}}min\{n_i,2^{t-1}\}$ is the total number of elements of order $2^t(t\geq 3)$ in the group.
 	\end{enumerate}
 \end{thm}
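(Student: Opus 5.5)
The theorem assembles the preceding lemmas, so I will prove it by partitioning the vertex set by element order and describing the components on each order class. Write $G = \mathbb{Z}_{2^{n_1}} \times \dots \times \mathbb{Z}_{2^{n_k}}$ and let $\Omega_2 = \{e\} \cup \{x : o(x) = 2\}$. Then $\Omega_2$ is the elementary Abelian subgroup of order $2^k$.

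The first step is the observation used in \autoref{same-order-lemma}, redone for $p = 2$. If $x \sim y$, then $o(xy) = 2$. In an Abelian $2$-group the only prime order available is $2$, so $x^2 = y^{-2}$. Suppose $x$ and $y$ are non-identity and $o(x) \neq o(y)$; then comparing $o(x^2)$ and $o(y^2)$ gives $o(xy) = \max(o(x), o(y)) > 2$, a contradiction. The one exception is the pair $e$ and an element of order $2$. Hence every connected component lies either inside $\Omega_2$ or inside a single order class $\{x : o(x) = 2^t\}$ with $t \geq 2$.

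For item 1, I note that inside $\Omega_2$ any two distinct elements have product of order $2$. So $\langle \Omega_2 \rangle$ is the complete graph $K_{2^k}$, and the map $\varphi$ of \autoref{Zero 2-group} is a graph isomorphism onto $\Gamma(\mathbb{Z}_2^k)$, since both graphs are complete on $2^k$ vertices. This component is separated from the rest by the first step. Disconnectedness then follows whenever some $n_i \geq 2$, since elements of order $4$ exist.

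For items 2 and 3, I fix $x$ with $o(x) = 2^t \geq 4$. Its neighbours are exactly the elements $y = x^{-1}z$ with $z \in \Omega_2 \setminus \{e\}$ and $y \neq x$. The condition $y = x$ means $z = x^2$, which happens iff $o(x) = 4$. This gives degree $2^k - 2$ when $t = 2$ and $2^k - 1$ when $t \geq 3$.

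The key structural step, which I expect to be the main obstacle, is identifying each component exactly. For $t = 2$, I claim the component of $x$ is the coset $x\Omega_2$. Indeed, $x\Omega_2 = x^{-1}\Omega_2$ because $x^2 \in \Omega_2$. Every neighbour lies in this coset, and any two distinct elements $xa, xb$ of the coset satisfy $(xa)(xb) = x^2ab \in \Omega_2$. The one degenerate case is $x^2 ab = e$, which needs checking, but in any case the coset induces a connected graph. So each component has $2^k$ vertices.

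For $t \geq 3$, I will show the component is $x\Omega_2 \cup x^{-1}\Omega_2$. These two cosets are distinct, since $x^2 \notin \Omega_2$. The graph is bipartite between them: $(xa)(x^{-1}b) = ab$, which is adjacent exactly when $a \neq b$, while products within one coset have order $\geq 4$. This is $K_{2^k,2^k}$ minus a perfect matching, which is connected for $k \geq 2$. For $k = 1$ it degenerates to a matching, and I would handle that case separately by invoking \autoref{order 2^n}. Either way the component has $2^{k+1}$ vertices, mirroring the argument of \autoref{structure theorem 1}.

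Finally, I divide the counts of \autoref{First 2-group} and \autoref{Second 2-group} by these component sizes to recover the numbers in items 2 and 3. In doing so I interpret the exponents as $\sum_i \min\{n_i, t\}$, which is how the proof of \autoref{First 2-group} actually concludes.
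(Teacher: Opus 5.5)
\textbf{Verdict: correct for $k\ge 2$, but wrong at $k=1$.} Your coset analysis is the right idea and works for $k\ge 2$. At $k=1$, however, you assert two false claims, and there the theorem itself fails.

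\textbf{The failure at $k=1$.} In the case $t=2$ you leave the products $x^2ab=e$ unchecked and say the coset is connected ``in any case''. In fact $x^2ab=e$ means $xb=(xa)^{-1}$. So $\langle x\Omega_2\rangle$ is $K_{2^k}$ minus the perfect matching $\{xa,x^{-1}a\}$, and this graph is connected only when $2^k\ge 4$. For $k=1$ it is two isolated vertices. Similarly, for $t\ge 3$ you write ``either way the component has $2^{k+1}$ vertices''. But for $k=1$, $K_{2,2}$ minus a perfect matching is $2K_2$, so the components have $2$ vertices, not $4$. The lemma on $\Gamma(\mathbb{Z}_{2^n})$ that you propose to invoke (a union of $K_2$'s and two isolated vertices) says exactly this, so it refutes your claim rather than rescuing it.

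Concretely, in $\Gamma(\mathbb{Z}_8)$:
\begin{itemize}
\item the order-$4$ elements $2$ and $6$ are two isolated vertices, whereas item~2 predicts $(4-2)/2=1$ component;
\item the order-$8$ elements split into the components $\{1,3\}$ and $\{5,7\}$, whereas item~3 predicts $(8-4)/4=1$.
\end{itemize}
So items 2 and 3 are false for $k=1$, and no argument can cover that case. You need to add the hypothesis $k\ge 2$. For the disconnectedness claim you also need some $n_i\ge 2$, since $\Gamma(\mathbb{Z}_2^k)=K_{2^k}$ is connected; you noted this implicitly.

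\textbf{Comparison with the paper.} With $k\ge 2$ and the matching check filled in, your proof is complete and takes a different route from the paper. The paper only assembles its lemmas. There the component sizes $2^k$ and $2^{k+1}$ come from the degrees $2^k-2$ and $2^k-1$ plus an appeal to ``the same method'' as the $\mathbb{Z}_{p^n}$ structure theorem. It never identifies the components or proves they are connected. Your argument identifies each component explicitly:
\begin{itemize}
\item for $t=2$, the cocktail-party graph on $x\Omega_2$;
\item for $t\ge 3$, $K_{2^k,2^k}$ minus a perfect matching on $x\Omega_2\cup x^{-1}\Omega_2$.
\end{itemize}
This is what makes dividing the element counts by the component sizes legitimate. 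It also shows exactly where $k\ge 2$ is needed, which the paper's degree-based argument misses as well.

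\textbf{On the exponents.} Your reading of the exponents as $\sum_i\min\{n_i,t\}$ is necessary, not cosmetic. With the literal $\min\{n_i,2^t\}$ and $\min\{n_i,2^{t-1}\}$, the group $\mathbb{Z}_8\times\mathbb{Z}_8$ would have $2^6-2^6=0$ elements of order $8$ instead of $48$.
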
 
 
 \begin{proof}
 	Following the above \autoref{Zero 2-group}, \autoref{First 2-group}, \autoref{Third 2-group} and \autoref{Forth 2-group}, the theorem is proved. 
 \end{proof}

\section{Properties of $\Gamma(\mathbb{Z}_{2^{n_1}{p_2}^{n_2}})$}
\label{Properties_of_fifth_group}

The group $\mathbb{Z}_{2^{n_1}{p_2}^{n_2}}$ contains the non-identity elements of following orders:

\begin{itemize}
	\item  $2p_2$;
	\item $2^{k_1}$ for $1 \leq k_1 \leq n_1$;
	\item $p_2^{k_2}$ for $1 \leq k_2 \leq n_2$;
	\item $2^{k_1}p_2$ for $2 \leq k_1 \leq n_1$;
	\item $2p_2^{k_2}$ for $2 \leq k_2 \leq n_2$;
	\item $2^{k_1}p_2^{k_2}$ for $2 \leq k_1 \leq n_1-1$, and $2 \leq k_2 \leq n_2-1$.
\end{itemize}
The corresponding graphs are disconnected when any of $n_1$ or $n_2 \neq 1$. An example of is depicted in \autoref{Z_2_2_3_2_POE_Graph}.
 
\begin{figure}
	\centering
	\begin{tikzpicture}
		\node [above] at (1, 2) {${(0,0)}$};
		\draw [fill, black] (1, 2) circle[radius = 1mm];
		\node [above left] at (0, 1) {${(0,3)}$};
		\draw [fill, black] (0, 1) circle[radius = 1mm];
		\node [above right] at (2, 1) {${(0,6)}$};
		\draw [fill, black] (2, 1) circle[radius = 1mm];
		\node [below left] at (0, -1) {${(2,6)}$};
		\draw [fill, black] (0, -1) circle[radius = 1mm];
		\node [below right] at (2, -1) {${(2,3)}$};
		\draw [fill, black] (2, -1) circle[radius = 1mm];
		\node [below] at (1, -2) {${(2,0)}$};
		\draw [fill, black] (1, -2) circle[radius = 1mm];
		\draw [] (1, 2) -- (0, 1);
		\draw [] (1, 2) -- (2, 1);
		\draw [] (0, 1) -- (0, -1);
		\draw [] (2, 1) -- (2, -1);
		\draw [] (0, -1) -- (1, -2);
		\draw [] (2, -1) -- (1, -2);
		\draw [] (1, 2) -- (1, -2);

		\node [left] at (3.5, 0) {${(1,0)}$};
		\draw [fill, black] (3.5, 0) circle[radius = 1mm];
		\node [above] at (4, 1) {${(3,3)}$};
		\draw [fill, black] (4, 1) circle[radius = 1mm];
		\node [below] at (4, -1) {${(3,6)}$};
		\draw [fill, black] (4, -1) circle[radius = 1mm];
		\node [above] at (5, 1) {${(1,3)}$};
		\draw [fill, black] (5, 1) circle[radius = 1mm];
		\node [below] at (5, -1) {${(1,6)}$};
		\draw [fill, black] (5, -1) circle[radius = 1mm];
		\node [right] at (5.5, 0) {${(3,0)}$};
		\draw [fill, black] (5.5, 0) circle[radius = 1mm];
		\draw [] (3.5,0) -- (4,1);
		\draw [] (3.5,0) -- (4,-1);
		\draw [] (5.5,0) -- (5,1);
		\draw [] (5.5,0) -- (5,-1);
		\draw [] (4,1) -- (5,1);
		\draw [] (4,-1) -- (5,-1);
		\draw [] (4,1) -- (4,-1);
		\draw [] (5,1) -- (5,-1);
		
		\node [left] at (-5, 0) {${(3,8)}$};
		\draw [fill, black] (-5, 0) circle[radius = 1mm];
		\node [left] at (-5, 1) {${(1,7)}$};
		\draw [fill, black] (-5, 1) circle[radius = 1mm];
		\node [left] at (-5, 2) {${(1,4)}$};
		\draw [fill, black] (-5, 2) circle[radius = 1mm];
		\node [left] at (-5, -1) {${(3,4)}$};
		\draw [fill, black] (-5, -1) circle[radius = 1mm];
		\node [left] at (-5, -2) {${(3,7)}$};
		\draw [fill, black] (-5, -2) circle[radius = 1mm];
		\node [left] at (-5, -3) {${(1,8)}$};
		\draw [fill, black] (-5, -3) circle[radius = 1mm];
		
		\node [right] at (-3, 0) {${(1,1)}$};
		\draw [fill, black] (-3, 0) circle[radius = 1mm];
		\node [right] at (-3, 1) {${(3,2)}$};
		\draw [fill, black] (-3, 1) circle[radius = 1mm];
		\node [right] at (-3, 2) {${(3,5)}$};
		\draw [fill, black] (-3, 2) circle[radius = 1mm];
		\node [right] at (-3, -1) {${(1,5)}$};
		\draw [fill, black] (-3, -1) circle[radius = 1mm];
		\node [right] at (-3, -2) {${(1,2)}$};
		\draw [fill, black] (-3, -2) circle[radius = 1mm];
		\node [right] at (-3, -3) {${(3,1)}$};
		\draw [fill, black] (-3, -3) circle[radius = 1mm];
		
		\draw [] (-5,2) -- (-3,1);
		\draw [] (-5,2) -- (-3,-1);
		\path [-] (-5,2) edge[bend right=60] node {} (-5,0);
		\draw [] (-5,1) -- (-3,2);
		\draw [] (-5,1) -- (-5,0);
		\draw [] (-5,1) -- (-3,-2);
		\draw [] (-5,0) -- (-3,-3);
		\draw [] (-5,-1) -- (-3,-2);
		\draw [] (-5,-1) -- (-3,2);
		\path [-] (-5,-1) edge[bend right=60] node {} (-5,-3);
		\draw [] (-5,-2) -- (-5,-3);
		\draw [] (-5,-2) -- (-3,-1);
		\draw [] (-5,-2) -- (-3,1);
		\draw [] (-5,-3) -- (-3,0);
		\draw [] (-3,1) -- (-3,0);
		\draw [] (-3,-2) -- (-3,-3);
		\path [-] (-3,2) edge[bend left=60] node {} (-3,0);
		\path [-] (-3,-1) edge[bend left=60] node {} (-3,-3);
		
		\node [left] at (-10, 0) {${(0,8)}$};
		\draw [fill, black] (-10, 0) circle[radius = 1mm];
		\node [left] at (-10, 1) {${(0,7)}$};
		\draw [fill, black] (-10, 1) circle[radius = 1mm];
		\node [left] at (-10, 2) {${(0,4)}$};
		\draw [fill, black] (-10, 2) circle[radius = 1mm];
		\node [left] at (-10, -1) {${(2,4)}$};
		\draw [fill, black] (-10, -1) circle[radius = 1mm];
		\node [left] at (-10, -2) {${(2,7)}$};
		\draw [fill, black] (-10, -2) circle[radius = 1mm];
		\node [left] at (-10, -3) {${(2,8)}$};
		\draw [fill, black] (-10, -3) circle[radius = 1mm];
		
		\node [right] at (-8, 0) {${(0,1)}$};
		\draw [fill, black] (-8, 0) circle[radius = 1mm];
		\node [right] at (-8, 1) {${(0,2)}$};
		\draw [fill, black] (-8, 1) circle[radius = 1mm];
		\node [right] at (-8, 2) {${(0,5)}$};
		\draw [fill, black] (-8, 2) circle[radius = 1mm];
		\node [right] at (-8, -1) {${(2,5)}$};
		\draw [fill, black] (-8, -1) circle[radius = 1mm];
		\node [right] at (-8, -2) {${(2,2)}$};
		\draw [fill, black] (-8, -2) circle[radius = 1mm];
		\node [right] at (-8, -3) {${(2,1)}$};
		\draw [fill, black] (-8, -3) circle[radius = 1mm];
		
		\draw [] (-10,1) -- (-10,0);
		\draw [] (-8,1) -- (-8,0);
		\draw [] (-10,-2) -- (-10,-3);
		\draw [] (-8,-2) -- (-8,-3);
		\path [-] (-10,2) edge[bend right=60] node {} (-10,0);
		\draw [] (-10,2) -- (-8,1);
		\draw [] (-10,2) -- (-8,-1);
		\path [-] (-8,2) edge[bend left=60] node {} (-8,0);
		\draw [] (-8,2) -- (-10,1);
		\draw [] (-8,2) -- (-10,-1);
		\draw [] (-10,1) -- (-8,-2);
		\draw [] (-8,1) -- (-10,-2);
		\draw [] (-10,0) -- (-8,-3);
		\draw [] (-8,0) -- (-10,-3);
		\draw [] (-10,-1) -- (-8,-2);
		\draw [] (-8,-1) -- (-10,-2);
		\path [-] (-8,-1) edge[bend left=60] node {} (-8,-3);
		\path [-] (-10,-1) edge[bend right=60] node {} (-10,-3);
	\end{tikzpicture}
	\caption{The POE graph $\Gamma({\mathbb{Z}_{2^23^2}})$ is a graph with $4$ connected components. }
	\label{Z_2_2_3_2_POE_Graph}
\end{figure}
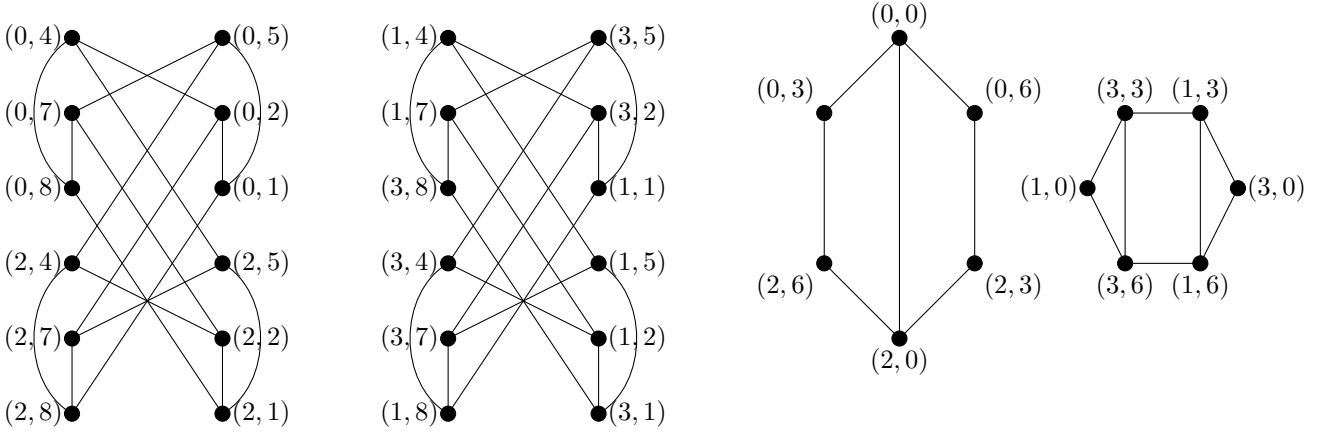

\begin{lemma}\label{Disconnected lemma 2}
	The graph $\Gamma(\mathbb{Z}_{2^{n_1}{p_2}^{n_2}})$ is disconnected and one connected component is isomorphic to $\Gamma(\mathbb{Z}_{2p_2})$.
\end{lemma}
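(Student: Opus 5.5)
Throughout, write $G=\mathbb{Z}_{2^{n_1}p_2^{n_2}}$ and assume, as in the remark preceding the lemma, that $n_1\ge 2$ or $n_2\ge 2$ (otherwise $G=\mathbb{Z}_{2p_2}$ and there is nothing to prove).

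The plan is to locate inside $G$ the unique subgroup $H\cong\mathbb{Z}_{2p_2}$, namely $H=\langle 2^{n_1-1}p_2^{n_2-1}\rangle$. It consists exactly of the elements whose order divides $2p_2$: the identity, the unique involution $u=2^{n_1-1}p_2^{n_2}$, the $p_2-1$ elements of order $p_2$, and the $p_2-1$ elements of order $2p_2$. I will show that $H$ is a union of connected components of $\Gamma(G)$, and that the induced subgraph $\langle H\rangle$ equals $\Gamma(H)\cong\Gamma(\mathbb{Z}_{2p_2})$.

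\textbf{Closure of $H$.} Suppose $x\in H$, $y\in G$ and $x\sim y$, so $o(xy)=q$ with $q\in\{2,p_2\}$. Then $xy\in H$, because $H$ contains every element of prime order of $G$ (the prime divisors of $|G|$ are only $2$ and $p_2$, and $H$ contains all elements of order $2$ and of order $p_2$). Since $H$ is a subgroup, $y=x^{-1}(xy)\in H$. So no edge leaves $H$, and $H$ is a union of components. I would rely on this subgroup argument rather than \autoref{lcm_lemma}: as stated, \autoref{lcm_lemma} is only valid for coprime orders, so I avoid it.

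\textbf{Isomorphism and connectedness.} For $x,y\in H$, adjacency in $\Gamma(G)$ depends only on $o(xy)$, which is the same whether computed in $G$ or in $H$. Hence $\langle H\rangle=\Gamma(H)\cong\Gamma(\mathbb{Z}_{2p_2})$ via any group isomorphism $H\cong\mathbb{Z}_{2p_2}$. To see that $\langle H\rangle$ is a single component, I show it is connected:
\begin{itemize}
\item the identity $e$ is adjacent to $u$ and to every element of order $p_2$;
\item each element $z$ of order $2p_2$ is adjacent to $zu$, which has order $p_2$, provided $z\neq zu$, which holds since $u\neq e$.
\end{itemize}
Every vertex of $H$ is therefore joined to $e$ by a path, so $\langle H\rangle$ is a connected component.

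\textbf{Disconnectedness.} Since $n_1\ge2$ or $n_2\ge2$, we have $|G|>2p_2=|H|$, so there is a vertex outside $H$. By the closure step, no edge joins that vertex to $H$, so $\Gamma(G)$ is disconnected.

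The main obstacle is the closure step; the subgroup argument handles it cleanly, and the remaining steps are routine checks.
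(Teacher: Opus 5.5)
\textbf{The gap.} Your closure, isomorphism and disconnectedness steps are correct, but the second bullet of your connectedness step is false. In $\Gamma(G)$, $z\sim w$ means $o(zw)$ is prime. So $z\sim zu$ would need $o(z\cdot zu)=o(z^2u)$ to be prime. Write $z=uv$ with $o(v)=p_2$. Then $z^2u=v^2u$, which has order $2p_2$. Concretely, in $\mathbb{Z}_6$ with $z=1$ and $u=3$ you get $zu=4$, and $1+4=5$ has order $6$, so $1\nsim 4$.

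Your claim would hold for the Cayley-type rule ``$z^{-1}w$ has prime order''. It fails for the POE rule, which you use correctly everywhere else. The step is load-bearing: no single component can be isomorphic to $\Gamma(\mathbb{Z}_{2p_2})$ unless that graph is connected. The repair is immediate, though. We have $z\sim u$ itself, because $o(zu)=o(v)=p_2$ is prime and $z\neq u$, so $z\sim u\sim e$. Alternatively, $z\sim z^{-1}u=v^{-1}$. This is exactly the paper's path $(e_1,y_1)\sim(e_1,e_2)\sim(x_1,e_2)\sim(x_1,y_1)$.

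\textbf{Comparison with the paper.} With that fix your proof is complete, and it takes a different, cleaner route than the paper's. The paper works in $\mathbb{Z}_{2^{n_1}}\times\mathbb{Z}_{p_2^{n_2}}$ and excludes higher-order neighbours by an order-by-order case analysis based on Lemma~\ref{lcm_lemma}. It then writes down the explicit bijection $f(a,b)=(a2^{n_1-1},bp_2^{n_2-1})$ and checks that it preserves adjacency.

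As you note, that lemma needs coprime orders, and some of the paper's order computations are off. For instance, if $o(x_2)=2$ and $o(y_2)=p_2^{k_2}$ with $k_2\ge 2$, the product of $(x_2,y_2)$ with $(x_1,e_2)$ is $(e_1,y_2)$, of order $p_2^{k_2}$, not the stated value. The conclusion that the order is not prime still survives.

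Your observation that $xy\in H$ forces $y=x^{-1}(xy)\in H$ replaces the whole case analysis. The graph isomorphism is then inherited from the group isomorphism $H\cong\mathbb{Z}_{2p_2}$, with no explicit map or adjacency check needed.

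The argument also generalizes verbatim. In any finite group, the subgroup generated by the prime-order elements is a union of components and induces its own POE graph. This handles the analogous lemmas for products of cyclic $2$-groups and for $\mathbb{Z}_{p_1^{n_1}p_2^{n_2}}$ at once, leaving only connectedness to check. The paper's coordinate-and-order bookkeeping, in exchange, is the framework it reuses to classify the remaining components by element order.

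A small point: when $n_1=n_2=1$ the disconnectedness claim is false rather than vacuous, so your hypothesis is genuinely needed.
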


\begin{proof}
	The group $\mathbb{Z}_{2^{n_1}{p_2}^{n_2}}$ is isomorphic to $\mathbb{Z}_{2^{n_1}}\times \mathbb{Z}_{p_2^{n_2}}$. The vertices of the graph $\Gamma(\mathbb{Z}_{2^{n_1}}\times \mathbb{Z}_{p_2^{n_2}})$ can be labeled by the elements in $\{(x,y):x\in \mathbb{Z}_{2^{n_1}},y\in \mathbb{Z}_{p_2^{n_2}}\}$. There are two types of elements of prime order in the group $\mathbb{Z}_{2^{n_1}}\times \mathbb{Z}_{p_2^{n_2}}$, which are the followings:
	\begin{enumerate}
		\item 
			The elements of order $2$. They are of the form $(x_1,e_2)$, where $o(x_1)=2$ in $\mathbb{Z}_{2^{n_1}}$ that is $o(x_1,e_2)=2$ in $\mathbb{Z}_{2^{n_1}}\times \mathbb{Z}_{p_2^{n_2}}$.
		\item 
			The elements of order $p_2$. They are of the form $(e_1,y_1)$, where $o(y_1)=p_2$ in $\mathbb{Z}_{p_2^{n_2}}$ that is $o(e_1,y_2)=p_2$ in $\mathbb{Z}_{2^{n_1}}\times \mathbb{Z}_{p_2^{n_2}}$.
	\end{enumerate}
	Let $C^{(0)}$ be the connected component of $\Gamma(\mathbb{Z}_{2^{n_1}p_2^{n_2}})$ that contains the identity element $(e_1,e_2)$. Clearly, $(e_1,e_2)$ is adjacent to the elements of order $2$ and $p_2$. Also, the elements of order $2p_2$ is of the form $(x_1,y_1)$, where $o(x_1)=2$ in $\mathbb{Z}_{2^{n_1}}$ and $o(y_1)=p_2$ in $\mathbb{Z}_{p_2^{n_2}}$. Note that, there is a path $(e_1,y_1)\sim (e_1,e_2)\sim (x_1,e_2)\sim (x_1,y_1)$ in the graph $\Gamma(\mathbb{Z}_{2^{n_1}}\times \mathbb{Z}_{p_2^{n_2}})$, where $o(e_1,y_1)=p_2$, $o(x_1,e_2)=2$, $o(x_1,y_1)=2p_2$. Therefore, the elements of order $2,p_2,2p_2$ always belong to $C^0$.
	
	Assume that $C^0$ contains an element $(x_2,y_2)$ of higher order such that $o(x_2)=2^{k_1}$ for $k_1\geq 2$ or $o(y_2)=p_2^{k_2}$ for $k_2\geq 2$. Then, at least any one of the elements of order $2$ or $p_2$ or $2p_2$ is adjacent to the element $(x_2,y_2)$. Now, using \autoref{lcm_lemma}, we have the following cases:
	\begin{itemize}
		\item Let $(x_2,y_2)\sim (x_1,e_2)$, where $o(x_1)=2$ in $\mathbb{Z}_{2^{n_1}}$. Then $o(x_2x_1)=2^{k_1}(k_1\geq 2)$, which implies $o((x_2,y_2)(x_1,e_2))=2^{k_1}p_2^{k_2}$ with $k_1\geq 2 \mbox{ or } k_2\geq 2$. This is a contradiction.
		\item Let $(x_2,y_2)\sim (e_1,y_1)$, where $o(y_1)=p_2$ in $\mathbb{Z}_{p_2^{n_2}}$. Then $o(y_2y_1)=p_2^{k_2}(k_2\geq 2)$, which implies $o((x_2,y_2)(e_1,y_1))=2^{k_1}p_2^{k_2}$ with $k_1\geq 2 \mbox{ or } k_2\geq 2$. This is a contradiction.
		\item Let $(x_2,y_2)\sim (x_1,y_1)$, where $o(x_1,y_1)=2p_2$. Then $o((x_2,y_2)(x_1,y_1))=2^{k_1}p_2^{k_2}$ with $k_1\geq 2 \mbox{ or } k_2\geq 2$. This is again a contradiction. 
	\end{itemize} 
	Therefore, in any case, $C^0$ can not contain an element $(x_2,y_2)$ of higher order. Thus, the graph $\Gamma(\mathbb{Z}_{2^{n_1}{p_2}^{n_2}})$ is disconnected.

 		Define a map $f:V(\Gamma(\mathbb{Z}_{2p_2})) \longrightarrow V(C^{(0)})$ by \[ f(a,b)=\big(a2^{n_1-1},\, bp_2^{n_2-1}\big), \quad a\in\mathbb{Z}_2,\ b\in\mathbb{Z}_{p_2}. \] For $f(a,b)=f(a',b')$, we have
 		$$(a-a')2^{n_1-1}\equiv 0 \pmod{2^{n_1}} \quad \text{and} \quad (b-b')p_2^{n_2-1}\equiv 0 \pmod{p_2^{n_2}}.$$ 
 		It implies that $a,a'\in\mathbb{Z}_2$ and $b,b'\in\mathbb{Z}_{p_2}$, and $a=a'$ and $b=b'$. Therefore, $f$ is injective. Also, for some $k\in\{1,2,\dots,p_2-1\}$, each vertex of $C^{(0)}$ has the form 
 		$$(0,0),\ (2^{n_1-1},0),\ (0,kp_2^{n_2-1}),\ (2^{n_1-1},kp_2^{n_2-1}).$$ 
 		Each of these vertices represents the image under $f$ of $(0,0)$, $(1,0)$, $(0,k)$, and $(1,k)$. Therefore, $f$ is surjective. Hence, $f$ is a bijection. Additionally, for any $(a,b),(a',b')\in V(\Gamma(\mathbb{Z}_{2p_2}))$, \[ o\big((a+a',\,b+b')\big)\text{is prime} \iff o\big(f(a,b)f(a',b')\big)\text{is prime}. \] Consequently, $f$ acts as a graph isomorphism, and $\Gamma(\mathbb{Z}_{2p_2}) \cong C^{(0)}$.
 	\end{proof}

\begin{lemma}\label{order 4 element and its inverse}
	In the graph $\Gamma(\mathbb{Z}_{2^{n_1}p_2^{n_2}})$, an element of order $4$ and its inverse belong to the same connected component. 
\end{lemma}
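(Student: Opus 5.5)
Let $x$ be an element of order $4$ in $G=\mathbb{Z}_{2^{n_1}p_2^{n_2}}\cong \mathbb{Z}_{2^{n_1}}\times\mathbb{Z}_{p_2^{n_2}}$, so $n_1\geq 2$ and $x=(a,e_2)$ with $o(a)=4$ in $\mathbb{Z}_{2^{n_1}}$. The plan is to exhibit an explicit short path from $x$ to $x^{-1}$, in the spirit of \autoref{distance-lemma}. A direct edge is impossible since $xx^{-1}=e$. A path $x\sim e\sim x^{-1}$ is also impossible, since $o(x)=4$ is not prime. The key observation is that $x^2=x^{-2}$ is the unique element $z=(2^{n_1-1},e_2)$ of order $2$, because $\mathbb{Z}_{2^{n_1}}$ has exactly one element of order $2$.

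\textbf{Path of length two.} Take $w=x^{-1}z^{\,}$ composed appropriately; more simply, look for $y$ with $o(xy)$ and $o(x^{-1}y)$ both prime. Let $u=(e_1,c)$ with $o(c)=p_2$, which exists since $n_2\geq 1$, and put $y=x^{-1}u$. Then $xy=u$, which has prime order $p_2$. Also $x^{-1}y=x^{-2}u=zu$, and by \autoref{lcm_lemma} this has order $\lcm(2,p_2)=2p_2$, which is not prime. So this attempt fails. Instead, set $y=xz$. This equals $x^{3}=x^{-1}$, which is degenerate, so a length-two path via a single element is not available. This matches the structure seen in the order-$p^k$ case, where the distance is $3$.

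\textbf{Path of length three.} Imitate the proof of \autoref{distance-lemma} and use two prime-order elements $y_1,y_2$ with $o(y_1y_2)$ prime. Choose $y_1=z$ of order $2$ and $y_2=y_1=z$? That is not allowed, since then $y_1y_2=e$. Choose instead $y_1=(e_1,c)$ and $y_2=(e_1,c')$ of order $p_2$ with $c'\neq c^{-1}$. Such a choice is possible since $p_2\geq 3$; for instance take $c'=c$, giving $y_1y_2=(e_1,c^2)$ of order $p_2$. Consider the path
\[
x\sim x^{-1}y_1\sim xy_2\sim x^{-1}.
\]
We verify each edge with \autoref{lcm_lemma} and commutativity:
\begin{itemize}
\item $x\cdot x^{-1}y_1=y_1$, which has order $p_2$;
\item $x^{-1}y_1\cdot xy_2=y_1y_2$, which has order $p_2$;
\item $xy_2\cdot x^{-1}=y_2$, which has order $p_2$.
\end{itemize}

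\textbf{Distinctness of the vertices.} It remains to check that the four vertices are distinct, so that these are genuine edges and not loops. The element $x^{-1}y_1$ has order $\lcm(4,p_2)=4p_2$, so it differs from $x$ and $x^{-1}$, which have order $4$. Likewise $xy_2$ has order $4p_2$, so it differs from $x$ and $x^{-1}$. Finally, $x^{-1}y_1\neq xy_2$: equality would force $x^{2}=y_1y_2^{-1}=e$ in the $2$-component, contradicting $o(x)=4$. Hence $x$ and $x^{-1}$ lie in the same component.

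The main obstacle is this distinctness bookkeeping together with the choice $y_2=y_1$; everything else is a direct application of \autoref{lcm_lemma}.
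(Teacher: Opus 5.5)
Your argument is correct and is essentially the paper's proof: with $y_1=y_2=(e_1,c)$, your path $x\sim x^{-1}y_1\sim xy_2\sim x^{-1}$ is exactly the paper's path $(x,e_2)\sim(x^{-1},y)\sim(x,y)\sim(x^{-1},e_2)$ with $o(y)=p_2$, and you also check that the four vertices are distinct, which the paper leaves implicit. The exploratory ``length two'' paragraph is not needed for the lemma, and as written it does not actually rule out a path of length two, so you can simply drop it.
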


\begin{proof}
	Let $(x,e_2)$ be an element of order $4$, where $o(x)=4$ in $\mathbb{Z}_{2^{n_1}}$. The inverse of the element is $(x^{-1},e_2)$.
	 Since $(x,e_2)\sim (x^{-1},y)\sim (x,y)\sim (x^{-1},e_2)$ is a path, where $\circ(y)=p_2$, then both the elements are order $4$ and its inverse are in the same connected component.
\end{proof}

\begin{lemma}\label{order 4 element adjacency}
	The elements of order $4$ are adjacent only to the elements of order $4p_2$.
\end{lemma}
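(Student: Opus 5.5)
We work in $\mathbb{Z}_{2^{n_1}}\times\mathbb{Z}_{p_2^{n_2}}$ as in \autoref{Disconnected lemma 2}. Fix an element $(x,e_2)$ of order $4$, so $o(x)=4$ in $\mathbb{Z}_{2^{n_1}}$. Suppose it is adjacent to $(u,v)$. By \autoref{lcm_lemma}, $o((x,e_2)(u,v))=\lcm\{o(xu),o(v)\}$, and this must be a prime, either $2$ or $p_2$. The plan is to show that both possibilities force $o(u,v)=4p_2$.

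\textbf{Step 1.} First we show the product cannot have order $p_2$. If it did, the $2$-component would be trivial, so $xu=e_1$ and $u=x^{-1}$. Also $o(v)=p_2$. Then $o(u,v)=\lcm\{4,p_2\}=4p_2$. So this case already lands in the claimed set.

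\textbf{Step 2.} Next we suppose the product has order $2$. Then $v=e_2$ and $o(xu)=2$, so $xu=z$, where $z=2^{n_1-1}$ is the unique involution of the cyclic group $\mathbb{Z}_{2^{n_1}}$. Hence $u=zx^{-1}$. Since $z$ is the square of $x$ or of $x^{-1}$, we get $u=x$ in one case and $u=x^{-1}z=x$ in the other. In either case $(u,v)=(x,e_2)$ itself, which is not a genuine edge because the graph has no loops.

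\textbf{Step 3.} Combining the two cases, every neighbour of $(x,e_2)$ has the form $(x^{-1},v)$ with $o(v)=p_2$, and so has order $4p_2$. This also shows $\deg(x,e_2)=p_2-1$.

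The main obstacle is Step 2. We must use cyclicity, which gives a unique involution, and the identity $x^2=z$ for an element of order $4$, to rule out partners of order $4$. In a non-cyclic $2$-part this step would fail. The rest is a direct application of \autoref{lcm_lemma}.
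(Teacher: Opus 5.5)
Your proof is correct and follows the same route as the paper: a neighbour of $(x,e_2)$ must produce a product of order $p_2$, which forces it to be $(x^{-1},v)$ with $o(v)=p_2$ and hence of order $4p_2$, while a product of order $2$ is impossible; your Step 2 makes this exclusion explicit via $x^2=z$, whereas the paper justifies it only by remarking that the two elements of order $4$ are mutually inverse. One wording slip: Step 1 does not show that the product cannot have order $p_2$; it shows that in this case the neighbour must have order $4p_2$.
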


\begin{proof}
	The group contains only $\phi(4)=2$ elements of order $4$ and they are not adjacent to each other as they are inverse to each other. Let us denote the component contained with the elements of order $4$ as $C^{(1)}$. Let $\circ(x_1,y_1)=4$, it is adjacent to an element $(x_2,y_2)$ if and only if $x_1x_2=e_1$ and $\circ(y_1y_2)=p_2$. Since $y_1=e_2$, then $(x_2,y_2)=(x_1^{-1},y_2)$, where $\circ(y_2)=p_2$. Thus, the element of order $4$ is adjacent to the elements of order $4p_2$ only.
\end{proof}

\begin{lemma}\label{order $4p_2$ adjacency}
	Any neighbor of an elements of order $4p_2$ is an elements of order either $4$ or $4p_2$.
\end{lemma}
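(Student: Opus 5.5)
Write elements of $\mathbb{Z}_{2^{n_1}p_2^{n_2}}\cong \mathbb{Z}_{2^{n_1}}\times\mathbb{Z}_{p_2^{n_2}}$ as pairs and work coordinatewise, using \autoref{lcm_lemma} in each factor. Let $(x,y)$ have order $4p_2$, so $o(x)=4$ in $\mathbb{Z}_{2^{n_1}}$ and $o(y)=p_2$ in $\mathbb{Z}_{p_2^{n_2}}$. Suppose $(x,y)\sim(u,v)$. Then $o(xu,yv)=\lcm(o(xu),o(yv))$ is a prime, so it equals $2$ or $p_2$. This gives two cases.

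\textbf{Case (a): the product has order $2$.} Then $o(xu)=2$ and $yv=e_2$. So $v=y^{-1}$, which has order $p_2$. Since $o(x)=4$ and $o(xu)=2$, we have $u=x^{-1}w$ with $o(w)=2$. The $2$-part $u$ then has order $4$: if $o(u)$ divided $2$, then $x=wu^{-1}$ would have order dividing $2$, a contradiction. The same argument as \autoref{same-order-lemma}, applied in the cyclic $2$-group $\mathbb{Z}_{2^{n_1}}$, also gives $o(u)=o(x)=4$. Hence $o(u,v)=\lcm(4,p_2)=4p_2$.

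\textbf{Case (b): the product has order $p_2$.} Then $xu=e_1$ and $o(yv)=p_2$. So $u=x^{-1}$ has order $4$. Since $y$ and $yv$ both lie in the unique subgroup of order $p_2$ of $\mathbb{Z}_{p_2^{n_2}}$, so does $v=y^{-1}(yv)$. Thus $o(v)\in\{1,p_2\}$. If $v=e_2$, the neighbor $(x^{-1},e_2)$ has order $4$; otherwise it has order $4p_2$.

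Combining the two cases, every neighbor of an element of order $4p_2$ has order $4$ or $4p_2$, which proves the statement. Both outcomes actually occur, consistent with \autoref{order 4 element adjacency}.

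The only delicate point is in case (a): showing $o(u)=4$ rather than $o(u)\le 2$. This comes down to the fact that $\mathbb{Z}_{2^{n_1}}$ is cyclic, so its elements of order dividing $2$ form the subgroup $\{0,2^{n_1-1}\}$, and $x$ does not lie in it.
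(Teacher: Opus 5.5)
Your proof is correct and takes the same route as the paper: split according to whether the product has order $2$, which forces $v=y^{-1}$ and $u=x^{-1}x^{2}=x$ (the paper's neighbor $(x,y^{-1})$), or order $p_2$, which forces $u=x^{-1}$. Your case (b) is more complete than the paper's, which lists only $(x^{-1},e_2)$ and omits the neighbors $(x^{-1},v)$ with $o(v)=p_2$, $v\neq y^{-1}$; these have order $4p_2$, so the lemma's conclusion is unaffected.
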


\begin{proof}
	Let $\circ(x_1,y_1)=4p_2$. Then, $\circ(x_1)=4$ and $\circ(y_1)=p_2$. Now, $(x_1,y_1)\sim (x_2,y_2)$ if and only if $\circ(x_1x_2)=2$ and $y_1y_2=e_2$ or $x_1x_2=e_2$ and $\circ(y_1y_2)=p_2$. That is either $(x_2,y_2)=(x_1,y_1^{-1})$; or $(x_2,y_2) = (x_1^{-1},e_2)$. Also, for any element $(x_1,y_1)$ of order $4p_2$, it is adjacent to $(x_1^{-1},e_2)$. Therefore, all the elements of order $4p_2$ and $4$ are in one component.
\end{proof}

\begin{lemma}\label{only elements of two orders}
	All the elements of order $4$ and $4p_2$ creates a component and it does not contain any other element.
\end{lemma}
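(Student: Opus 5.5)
We work in $\mathbb{Z}_{2^{n_1}}\times\mathbb{Z}_{p_2^{n_2}}$ (assuming $n_1\ge 2$, so that elements of order $4$ exist). The statement has two halves. \emph{Closure}: no edge leaves the set $W$ of elements of order $4$ or $4p_2$. \emph{Connectivity}: $W$ is connected. Closure is essentially already done. By \autoref{order 4 element adjacency} every neighbour of an element of order $4$ has order $4p_2$, and by \autoref{order $4p_2$ adjacency} every neighbour of an element of order $4p_2$ has order $4$ or $4p_2$. Hence no edge joins $W$ to its complement, so the component of any vertex of $W$ lies inside $W$. For completeness we can recheck this directly with \autoref{lcm_lemma}. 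Write $(x_1,y_1)(x_2,y_2)=(x_1x_2,y_1y_2)$, whose order is $o(x_1x_2)\,o(y_1y_2)$. For this to be prime, one coordinate must be trivial and the other of prime order. If $o(x_1)=4$, then either $x_2=x_1^{-1}$, or $o(x_1x_2)=2$; the latter forces $o(x_2)=4$. Moreover, if $y_1=e_2$ then $y_1y_2=e_2$ or $o(y_2)=p_2$. Either way $(x_2,y_2)\in W$.

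For connectivity, fix the element $a=(x,e_2)$ of order $4$, where $x$ is one of the two elements of order $4$ in $\mathbb{Z}_{2^{n_1}}$. \autoref{order 4 element and its inverse} puts $a$ and $a^{-1}=(x^{-1},e_2)$ in the same component. Next, let $(x_1,y_1)$ be an element of order $4p_2$, so $o(x_1)=4$ and $o(y_1)=p_2$. By the proof of \autoref{order $4p_2$ adjacency} it is adjacent to $(x_1^{-1},e_2)$, and $(x_1^{-1},e_2)$ is $a$ or $a^{-1}$. Thus every element of order $4p_2$ is adjacent to one of the two elements of order $4$. Since these two lie in a common component, all of $W$ lies in one component, and by closure that component is exactly $W$.

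Finally, we record the count $|W|=\phi(4)+\phi(4)\phi(p_2)=2p_2$. This matches item 3(b) of the introduction and is useful later. The main obstacle is bookkeeping rather than depth. The cited lemmas phrase adjacency through the coordinatewise conditions $x_1x_2=e_1$ or $o(x_1x_2)=2$, and these should be justified from \autoref{lcm_lemma} together with the fact that $\mathbb{Z}_{2^{n_1}}$ has a unique element of order $2$. One must also handle the possibility $(x_2,y_2)=(x_1,y_1^{-1})$ correctly. It has order $4p_2$, and there $o(x_1^2)=2$ while $y_1y_1^{-1}=e_2$, so the product has order $2$ and the edge genuinely stays inside $W$.
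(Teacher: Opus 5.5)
Your proof is correct and is essentially the paper's argument. The paper likewise takes the connectivity of the order-$4$ and order-$4p_2$ elements from the preceding lemmas and then proves closure; it phrases closure by running through every other order class and ruling out adjacency coordinatewise, whereas you read closure directly off the two preceding adjacency lemmas, and your explicit connectivity step (each $(x_1,y_1)$ of order $4p_2$ is adjacent to $(x_1^{-1},e_2)$, and the two order-$4$ elements share a component) spells out what the paper leaves implicit.
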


\begin{proof}
	Let the component consists of the elements of order $4$ and $4p_2$ be $C^{(1)}$. Clearly, the elements of prime order, identity and the elements of order $2p_2$ are not in $C^{(1)}$. Also, the elements of order $2^{k_1}(k_1\geq 3)$, $p_2^{k_2}(k_2\geq 2)$, $2^{k_1}p_2(k_1\geq 3)$, $2p_2^{k_2}(k_2\geq 2)$ and $2^{k_1}p_2^{k_2}(k_1,k_2\geq 2)$ cannot be adjacent to the elements of order $4$ and $4p_2$ by the adjacency characterization of $\Gamma(\mathbb{Z}_{2^{n_1}})$ and $\Gamma(\mathbb{Z}_{p_2^{k_2}})$ by \autoref{same-order-lemma}. Hence, we have the result.
\end{proof}

\begin{lemma}\label{adjacency 3}
	The elements of order $2^{k_1}$ are adjacent to only the elements of order $2^{k_1}$ and $2^{k_1}p_2$ for $k_1\geq 3$.
\end{lemma}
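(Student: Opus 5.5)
Work in $\mathbb{Z}_{2^{n_1}}\times\mathbb{Z}_{p_2^{n_2}}$ (as in the proof of \autoref{Disconnected lemma 2}) and write an element of order $2^{k_1}$, $k_1\geq 3$, as $(x_1,e_2)$ with $o(x_1)=2^{k_1}$ in $\mathbb{Z}_{2^{n_1}}$. By \autoref{lcm_lemma}, for any $(x_2,y_2)$ we have
$$o\big((x_1,e_2)(x_2,y_2)\big)=\lcm\{o(x_1x_2),\,o(y_2)\}.$$
This lcm is a prime exactly when one of two cases holds: either $o(x_1x_2)=2$ and $y_2=e_2$, or $x_1x_2=e_1$ and $o(y_2)=p_2$. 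The other combinations give orders $1$ or $2p_2$, which are not prime. So I will split the neighbourhood into these two cases and identify the order of $(x_2,y_2)$ in each.

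\emph{Case 1:} $x_1x_2=z$ with $o(z)=2$, so $x_2=x_1^{-1}z$. Since $k_1\geq 3$, the element $z$ lies in the cyclic subgroup generated by $x_1^{-1}$ and has order strictly smaller than $2^{k_1}$. Hence $o(x_1^{-1}z)=2^{k_1}$, by the standard fact that in a cyclic $2$-group the product of an element of order $2^{k_1}$ with one of smaller order keeps order $2^{k_1}$. This is also the argument behind \autoref{same-order-lemma}, which I would invoke directly on $x_1\sim x_2$ inside $\Gamma(\mathbb{Z}_{2^{n_1}})$. With $y_2=e_2$, the neighbour has order $2^{k_1}$.

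\emph{Case 2:} $x_2=x_1^{-1}$ and $o(y_2)=p_2$. Then $o(x_2,y_2)=\lcm\{2^{k_1},p_2\}=2^{k_1}p_2$.

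These two cases exhaust the neighbours, which proves the statement. As a check that both kinds of neighbour actually occur, Case 1 produces the neighbour $(x_1^{-1}z,e_2)$, where $z=2^{n_1-1}$ is the unique element of order $2$, and Case 2 produces $p_2-1$ neighbours of order $2^{k_1}p_2$. So the degree is $p_2$, matching the count used later.

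The only delicate point is Case 1: ruling out that $x_1^{-1}z$ drops in order. This needs $o(z)<o(x_1)$, which is exactly where $k_1\geq 3$ (indeed $k_1\geq 2$) is used. It also needs $x_1^{-1}z\neq x_1$, that is $x_1^2\neq z$; this holds because $o(x_1^2)=2^{k_1-1}\geq 4$, which excludes a self-loop. For $k_1=2$ this fails and the neighbour disappears, which is consistent with \autoref{order 4 element adjacency}.
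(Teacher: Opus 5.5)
Your proof is correct and follows the same route as the paper. Both arguments split the neighbours of $(x_1,e_2)$ by whether the product has order $2$ (forcing $y_2=e_2$ and $o(x_2)=2^{k_1}$) or order $p_2$ (forcing $x_2=x_1^{-1}$ and $o(y_2)=p_2$); the paper only asserts $o(x_2)=2^{k_1}$ in the first case, so your cyclic-group justification and the self-loop check $x_1^2\neq z$ for $k_1\geq 3$ supply details it leaves implicit.
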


\begin{proof}
	The elements of order $2^{k_1}$, where $k_1\geq 3$ is of the form $(x,e_2)$ where $\circ(x)=2^{k_1}$. If $(x,e_2)\sim (x_1,y_1)$ and the composition generates an element of order $2$, then $\circ(x_1)=2^{k_1}$ and $y_1=e_2$. If the composition of $(x,e_2)$ and $(x_1,y_1)$ gives an element of order $p_2$, then $xx_1=e_1$ and $\circ(y_1)=p_2$ that is $\circ(x_1)=2^{k_1}$ and $\circ(y_1)=p_2$. Thus, the elements of order $2^{k_1}$ can be adjacent to the elements of order $2^{k_1}$ and $2^{k_1}p_2$.
\end{proof}

\begin{definition}
	We define an edge between two elements $x$ and $y$ in the POE graph is called a $p$-edge if $o(xy) = p$.
\end{definition} 

\begin{lemma}\label{adjacency 4}
	There are either $0$ or exactly four elements of order \( 2^{k_1} \) in a connected component of $\Gamma(\mathbb{Z}_{2^{n_1}{p_2}^{n_2}})$. Moreover, a component containing the elements of order \( 2^{k_1} \) contains \( 4(p_2 - 1) \) elements of order \( 2^{k_1} p_2 \), for $k \geq 3$.
\end{lemma}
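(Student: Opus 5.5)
The plan is to determine the connected component of an element of order $2^{k_1}$ explicitly, working in $\mathbb{Z}_{2^{n_1}}\times\mathbb{Z}_{p_2^{n_2}}$ as in \autoref{Disconnected lemma 2}. Let $z$ be the unique element of order $2$ in $\mathbb{Z}_{2^{n_1}}$, and let $P\le \mathbb{Z}_{p_2^{n_2}}$ be the subgroup of order $p_2$, so that $P$ consists of $e_2$ together with the $p_2-1$ elements of order $p_2$. Fix an element $(x,e_2)$ with $o(x)=2^{k_1}$ and $k_1\ge 3$. Put
\[
W=\{x,\;x^{-1},\;zx,\;zx^{-1}\}.
\]
I claim that the component containing $(x,e_2)$ has vertex set $W\times P$. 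Once this is shown, both assertions of the statement follow by counting.

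First I will show that the four elements of $W$ are distinct and all have order $2^{k_1}$. We have $x\neq x^{-1}$ and $x\neq zx$ trivially. The equality $x=zx^{-1}$ would force $x^2=z$, hence $o(x)=4$, which is excluded because $k_1\ge 3$. The remaining coincidences reduce to these by inverting or multiplying by $z$. Each element of $W$ has order $2^{k_1}$ because $o(z)=2<2^{k_1}$ and \autoref{lcm_lemma} applies.

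Next I will list the neighbours of a general vertex $(w,b)$ with $w\in W$ and $b\in P$, using the two kinds of edges.
\begin{itemize}
\item A $2$-edge requires the product to equal $(z,e_2)$, so the only such neighbour is $(zw^{-1},b^{-1})$.
\item A $p_2$-edge requires the product to be $(e_1,c)$ with $o(c)=p_2$, so the neighbours are $(w^{-1},b')$ with $bb'\in P\setminus\{e_2\}$, that is, $b'\in P$ and $b'\neq b^{-1}$.
\end{itemize}
This is the same computation as in \autoref{adjacency 3}. Since $W$ is closed under $w\mapsto w^{-1}$ and $w\mapsto zw^{-1}$, every neighbour of a vertex of $W\times P$ again lies in $W\times P$. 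Hence $W\times P$ is a union of components.

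For connectivity I will exhibit explicit paths. From $(x,b)$ there is a $p_2$-edge to $(x^{-1},b')$ for every $b'\neq b^{-1}$. Going back, all $(x,b'')$ are reached in two steps, because $p_2\ge 3$ leaves enough choices of the intermediate $b'$. Thus all of $\{x,x^{-1}\}\times P$ is connected, and in the same way all of $\{zx,zx^{-1}\}\times P$ is connected. Finally, the $2$-edge $(x,e_2)\sim(zx^{-1},e_2)$ joins these two blocks. So the component is exactly $W\times P$, with $4p_2$ vertices.

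Counting now finishes the proof. The vertices with $b=e_2$ give exactly four elements of order $2^{k_1}$. The vertices with $b\neq e_2$ give $4(p_2-1)$ elements, each of order $\lcm(2^{k_1},p_2)=2^{k_1}p_2$. Any other component cannot contain an element of order $2^{k_1}$: such an element would lie in its own set $W\times P$, and that set has just been shown to be a component. This gives the dichotomy of $0$ or exactly four such elements. The main obstacle I expect is the careful neighbour computation, in particular excluding $b'=b^{-1}$, which would give the identity product rather than an element of order $p_2$. It also matters that the hypothesis $k_1\ge 3$ is used exactly once, to guarantee that $|W|=4$; for $k_1=2$ this fails, consistent with \autoref{only elements of two orders}.
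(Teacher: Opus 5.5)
Your proof is correct and follows essentially the paper's route: starting from $(x,e_2)$, you collect the four elements of order $2^{k_1}$ reached through inverses and $2$-edges, together with their $p_2$-edge neighbours, and show this $4p_2$-vertex set is closed under adjacency. Your explicit description as $W\times P$, using the closure of $W$ under $w\mapsto w^{-1}$ and $w\mapsto zw^{-1}$, is a cleaner and more complete version of the paper's degree-saturation argument, which leaves the $2$-edges of the order-$2^{k_1}p_2$ vertices implicit and writes $i\neq j$ where $b'\neq b^{-1}$ is meant; it also makes visible that $k_1\ge 3$ is needed exactly to get $|W|=4$.
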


\begin{proof}
	Let $C^{(2)}$ be a connected component containing an element of order $2^{k_1}$, where $k_1\geq 3$. Then, it is of the form $(x,e_2)$, where $\circ(x)=2^{k_1}$. Since $(x,e_2)\sim (x^{-1},y_1)\sim (x,y_2)\sim (x^{-1},e_2)$ with $\circ(y_1)=\circ(y_2)=p_2$ and $y_1y_2\neq e_2$, an element of order $2^{k_1}$ and its inverse are in the same component. By the adjacency condition of $\Gamma(\mathbb{Z}_{2^{m_1}})$, mentioned in \autoref{order 2^n}, the elements $(x,e_2)$ and $(x^{-1}, e_2)$ are adjacent to another element of order $2^{k_1}$. Thus, $C^{(2)}$ contains at least $4$ elements of order $2^{k_1}$, which are $(x_1,e_2), (x_2,e_2), (x_1^{-1},e_2), (x_2^{-1},e_2)$. Each of these $4$ elements form $p_2$-edges with $(p_2 - 1)$ elements as follows:
	\begin{center}
		\begin{itemize}
			\item  
				$(x_1,e_2)\sim (x_1^{-1},y_i)$ ;
			\item 
				$(x_2,e_2)\sim (x_2^{-1},y_i)$;
			\item 
				$(x_1^{-1},e_2)\sim (x_1,y_i)$;
			\item 
				$(x_2^{-1},e_2)\sim (x_2,y_i)$.
		\end{itemize}
	\end{center}
	Here, $i=1,2,\cdots (p_2-1)$ and $\circ(y_i)=p_2$. Now, we can conclude that in $C^{(2)}$, we have at least $4+4(p_2-1) = 4p_2$ elements. 
	
	Now, we are going to prove that there is exactly $4p_2$ elements in $C^{(2)}$. Consider the adjacency relations of $(x_1^{-1},y_i),(x_2^{-1},y_i),(x_1,y_i),(x_2,y_i)$ with the other elements, which are as follows:
	\begin{center}
		\begin{itemize}
			\item $(x_1^{-1},y_i)\sim (x_1,y_j)$ for $i\neq j$;
			\item $(x_2^{-1},y_i)\sim (x_2,y_j)$ for $i\neq j$;
			\item $(x_1,y_i)\sim (x_1^{-1},y_j)$ for $i\neq j$;
			\item $(x_2,y_i)\sim (x_2^{-1},y_j)$ for $i\neq j$.
		\end{itemize}
	\end{center}
	Each of the above adjacency relation is determined by a $p_2$-edge between the elements. Clearly, the number of elements of order $2^{k_1}$ and and the elements adjacent to them which are of order $2^{k_1}p_2$ is $4p_2$ with degree $p_2$. If any one element is added in $C^{(2)}$, the degree of the at least one element will be increased which is not possible. Thus, the component $C^{(2)}$ contains exactly $4$ elements of order $2^{k_1}(k_1\geq 3)$ with a total $4p_2$ number of elements.
\end{proof}

\begin{lemma}\label{adjacency 5}
	 A connected component having an element of order \( p_2^{k_2} \),  where \( k_2 \geq 2 \) contains only elements of order \( 2p_2^{k_2} \). The total number of elements in that component is $4p_2$ .
\end{lemma}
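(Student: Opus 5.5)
The idea is to work in $\mathbb{Z}_{2^{n_1}}\times\mathbb{Z}_{p_2^{n_2}}$, as in \autoref{Disconnected lemma 2}, and write down the adjacency rule explicitly. I read the statement as saying that the component contains elements of order $p_2^{k_2}$ together with elements of order $2p_2^{k_2}$ and nothing else. The plan is to identify the component as a product $\{e_1,t\}\times D$, where $t=2^{n_1-1}$ is the unique element of order $2$ in $\mathbb{Z}_{2^{n_1}}$ and $D$ is a connected component of $\Gamma(\mathbb{Z}_{p_2^{n_2}})$ of the type described in \autoref{structure theorem 1}.

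\textbf{Step 1: the adjacency rule.} For two elements $(a,w),(a',w')$, the product $(aa',ww')$ has prime order exactly when one of two things happens. Either $aa'=e_1$ and $o(ww')=p_2$, or $o(aa')=2$ and $ww'=e_2$. This follows from \autoref{lcm_lemma}, since a prime order forces one coordinate of the product to be the identity. Now restrict to $a,a'\in\{e_1,t\}$, the only elements of order dividing $2$ in the cyclic group $\mathbb{Z}_{2^{n_1}}$. Then the rule reads: $(a,w)\sim(a',w')$ iff either $a=a'$ and $w\sim w'$ in $\Gamma(\mathbb{Z}_{p_2^{n_2}})$, or $a\neq a'$ and $w'=w^{-1}$.

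\textbf{Step 2: closure.} Start from $(e_1,y)$ with $o(y)=p_2^{k_2}$, $k_2\ge 2$. Any neighbour $(a',w')$ of a vertex $(a,w)$ with $a\in\{e_1,t\}$ must satisfy $aa'=e_1$ or $o(aa')=2$. In both cases $a'\in\{e_1,t\}$, so Step 1 applies. Let $D$ be the component of $y$ in $\Gamma(\mathbb{Z}_{p_2^{n_2}})$. By \autoref{distance-lemma}, $D$ is closed under inversion, and by \autoref{structure theorem 1} it has exactly $2p_2$ vertices, all of order $p_2^{k_2}$ by \autoref{same-order-lemma}. Hence the set $\{e_1,t\}\times D$ is closed under taking neighbours.

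\textbf{Step 3: connectivity, size and orders.} Each layer $\{a\}\times D$ is connected because $D$ is connected. The edge $(e_1,w)\sim(t,w^{-1})$ joins the two layers. So the component is exactly $\{e_1,t\}\times D$, which has $2\cdot 2p_2=4p_2$ elements. By \autoref{lcm_lemma}, the elements $(e_1,w)$ have order $p_2^{k_2}$ and the elements $(t,w)$ have order $2p_2^{k_2}$. As a consistency check, every vertex has degree $(p_2-1)+1=p_2$, in line with \autoref{adjacency 4}.

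The main obstacle I expect is Step 2. One must argue carefully that no neighbour can have a first coordinate of order $\ge 4$. This is exactly where the uniqueness of the element of order $2$ in $\mathbb{Z}_{2^{n_1}}$ is used: if $aa'$ is to be $e_1$ or $t$, then $a'$ is forced into $\{e_1,t\}$. Once that is settled, the remaining steps reduce to the already established structure of $\Gamma(\mathbb{Z}_{p_2^{n_2}})$.
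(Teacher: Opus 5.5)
Your argument is correct. Your reading of the statement (the component consists of the elements of order $p_2^{k_2}$ together with those of order $2p_2^{k_2}$) is exactly what the paper's proof establishes. You reach it by a different mechanism, however.

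The paper works by order bookkeeping and degree saturation:
\begin{enumerate}
\item It checks that neighbours of elements of order $p_2^{k_2}$ have order $p_2^{k_2}$ or $2p_2^{k_2}$, and conversely.
\item It takes the $2p_2$ elements of order $p_2^{k_2}$ from a component of $\Gamma(\mathbb{Z}_{p_2^{n_2}})$ (its sets $K_1,K_2$) and attaches to each its unique $2$-edge partner (the sets $L_1,L_2$).
\item It rules out further vertices: each of these $4p_2$ vertices already has $p_2$ neighbours inside the set, and no vertex can have more than $p_2$ neighbours, since there are only $p_2$ elements of prime order.
\end{enumerate}
You instead derive the adjacency rule on $\{e_1,t\}\times\mathbb{Z}_{p_2^{n_2}}$ once. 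You read off closure of $\{e_1,t\}\times D$ directly from it, and you prove connectivity via the two layers plus a crossing edge, a point the paper leaves implicit.

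The paper's template has the merit of being uniform with \autoref{adjacency 4} and \autoref{only elements}. Yours identifies the component exactly. Since inversion is an automorphism of $D$, relabel each vertex $(t,w)$ as $(t,w^{-1})$. This turns the crossing edges into $(e_1,w)\sim(t,w)$ and leaves each layer a copy of $D$. So the component is isomorphic to the Cartesian product of $D$ with $K_2$, and by \autoref{2nd spectral theorem} its spectrum is $\{[p_2]^{1},[p_2-2]^{1},[2-p_2]^{1},[-p_2]^{1},[2]^{p_2-1},[0]^{2p_2-2},[-2]^{p_2-1}\}$. The degree-count argument does not deliver this.

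One small repair is needed. For $p_2=3$ the hypothesis of \autoref{distance-lemma} fails in $\mathbb{Z}_{3^{n_2}}$, because its two elements of order $3$ are mutually inverse. Justify the closure of $D$ under inversion instead by the path $w\sim w^{-1}z\sim wz\sim w^{-1}$, with any $z$ of order $p_2$. This works for every odd $p_2$ because $o(z^2)=p_2$, and its vertices are distinct because $o(w)\geq p_2^2$.
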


\begin{proof}
	Let $C^{(3)}$ be a connected component which contains an element $(e_1,y)$ of order $p_2^{k_2}$. Obviously, $\circ(y)=p_2^{k_2}$. Also, $(e_1,y)\sim (x_1,y_1)$ if and only if either $\circ(x_1)=2$ and $yy_1=e_2$ or $x_1=e_1$ and $\circ(yy_1)=p_2$. Thus, by the adjacency condition of $\Gamma(\mathbb{Z}_{p_2^{n_2}})$, $\circ(x_1,y_1)=p_2^{k_2} \mbox{ or } 2p_2^{k_2}$. Let $(x_2,y_2)\in C^{(3)}$ such that $\circ(x_2,y_2)=2p_2^{k_2}$ then $\circ(x_2)=2$ and $\circ(y_2)=p_2^{k_2}$. Therefore, $(x_2,y_2)\sim (x_3,y_3)$ if and only if any one of the following conditions are satisfied:
	\begin{center}
		\begin{itemize}
			\item 
				$\circ(x_2x_3)=2$ and $y_2y_3=e_2$ that is $x_3=e_1$ and $\circ(y_3)=p_2^{k_2}$.
			\item 
				$x_2x_3 = e_2$ and $\circ(y_2y_3)=p_2$ that is $\circ(x_3)=2$, and $\circ(y_3)=p_2^{k_2}$.
		\end{itemize}
	\end{center} 
	Under this situation, we can conclude that the elements of order $2p_2^{k_2}$ are adjacent to only with the elements of order $p_2^{k_2}$ and $2p_2^{k_2}$.
	
	Now, come to the second part of the proof. By the adjacency relations of $\Gamma(\mathbb{Z}_{p_2^{n_2}})$, the elements of order $p_2^{k_2}$ can form a $p_2$-edge with the elements of order $p_2^{k_2}$ only. Thus, \autoref{2nd spectral theorem} indicates that $C^{(3)}$ contains $2p_2$ elements of order $p_2^{k_2}$. Let us distribute the elements of order $p_2^{k_2}$ into two sets, such that
	\begin{itemize}
		\item $K_1=\{(e_1,y_i): i=1,2,\cdots,p_2\}$;
		\item $K_2=\{(e_1,y_i^{-1}): i=1,2,\cdots,p_2\}$,
	\end{itemize}
	where $\circ(y_i)=p_2^{k_2}$ and $(e_1,y_i)\sim (e_1,y_j^{-1})$ for $i\neq j$. There are $(p_2-1)$ $p_2$-edges in the induced subgraph $\langle K_1\cup K_2 \rangle$. Now, the elements of $K_1$ and $K_2$ form the $2$-edges which can be classified as follows:
	\begin{itemize}
		\item 
			$L_1=\{(x,y_i^{-1}): (e_1,y_i)\sim (x,y_i^{-1}) \mbox{ for } i=1,2,\cdots,p_2,\circ(x)=2\}$.
		\item 
			$L_2=\{(x,y_i): (e_1,y_i^{-1})\sim (x,y_i) \mbox{ for } i=1,2,\cdots,p_2,\circ(x)=2\}$.
	\end{itemize}
	Recall that degree of each element in the connected component is $p_2$. The degree of a vertex is at most the total number of prime order element in the underlying group. Now, $(x,y_i)\sim (x,y_j^{-1})(i\neq j)$ and they form $p_2$-edge among them. Thus, the degree of each of the total $4p_2$ elements is $p_2$ which is the maximum degree for the graph. If any element is added in the component, the degree of at least one of those $4p_2$ elements will be increased which is not possible in this case. 
\end{proof}

Now, we are left with the components of the elements of order $2^{k_1}p_2^{k_2}$ for $k_1,k_2\geq 2$.

\begin{lemma}\label{adjacency 6}
	The elements of order $2^{k_1}p_2^{k_2}(k_1,k_2\geq 2)$ can only be adjacent to the elements of order $2^{k_1}p_2^{k_2}$ only where $k_1,k_2\geq 2$.
\end{lemma}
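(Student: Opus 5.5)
Work in $\mathbb{Z}_{2^{n_1}}\times\mathbb{Z}_{p_2^{n_2}}$ as in \autoref{Disconnected lemma 2}, and write a vertex of order $2^{k_1}p_2^{k_2}$ with $k_1,k_2\geq 2$ as $(x_1,y_1)$, where $o(x_1)=2^{k_1}$ and $o(y_1)=p_2^{k_2}$. Suppose $(x_1,y_1)\sim(x_2,y_2)$. Then $o((x_1x_2,y_1y_2))$ is a prime $q$. In the direct product the order of a pair is the $\lcm$ of the orders of its coordinates, and $o(x_1x_2)$ is a power of $2$ while $o(y_1y_2)$ is a power of $p_2$. Hence $q\in\{2,p_2\}$, which leaves exactly two cases.

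\emph{Case $q=2$.} Here $o(x_1x_2)=2$ and $y_1y_2=e_2$. From $y_2=y_1^{-1}$ we get $o(y_2)=p_2^{k_2}$. Now $x_1$ and $x_2$ are adjacent in $\Gamma(\mathbb{Z}_{2^{n_1}})$, both non-identity (since $o(x_1)=2^{k_1}\ge 4$). Applying \autoref{same-order-lemma} to the Abelian $2$-group $\mathbb{Z}_{2^{n_1}}$ gives $o(x_2)=o(x_1)=2^{k_1}$. The paper states that lemma for $p$-groups generally, and its proof does not use oddness. If one prefers a direct argument: $(x_1x_2)^2=e_1$ forces $x_2^2=x_1^{-2}$, so $o(x_2^2)=2^{k_1-1}\geq 2$, and therefore $o(x_2)=2^{k_1}$.

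\emph{Case $q=p_2$.} Symmetrically, $x_1x_2=e_1$ gives $o(x_2)=2^{k_1}$. Also $o(y_1y_2)=p_2$, so $y_1\sim y_2$ in $\Gamma(\mathbb{Z}_{p_2^{n_2}})$. Since $y_1\neq e_2$, and $y_2\neq e_2$ because otherwise $o(y_1)=p_2$, \autoref{same-order-lemma} yields $o(y_2)=p_2^{k_2}$.

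In both cases \autoref{lcm_lemma} gives $o((x_2,y_2))=\lcm(2^{k_1},p_2^{k_2})=2^{k_1}p_2^{k_2}$, with the same exponents $k_1,k_2\geq2$. This proves the statement.

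The only delicate point is the $2$-part in the first case. There one must rule out $x_2$ having a different $2$-power order, and this is handled by the squaring argument above. Everything else is bookkeeping with coordinatewise orders.
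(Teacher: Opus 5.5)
Your proof is correct and follows the same route as the paper: you split according to whether $(x_1,y_1)(x_2,y_2)$ has order $2$ (so $y_2=y_1^{-1}$) or order $p_2$ (so $x_2=x_1^{-1}$) and read off the coordinate orders, and the only difference is that you actually justify $o(x_2)=2^{k_1}$ and $o(y_2)=p_2^{k_2}$, which the paper simply asserts. One harmless nit: when $k_1=2$ one can have $x_2=x_1$, in which case $x_1,x_2$ do not form an edge of $\Gamma(\mathbb{Z}_{2^{n_1}})$, but your direct squaring argument covers that case anyway.
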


\begin{proof}
	Let $C^{(4)}$ be a connected component with the elements of order $2^{k_1}p_2^{k_2}$, where $k_1, k_2 \geq 2$. If $(x_1,y_1)\in C^{(4)}$ then $\circ(x_1)=2^{k_1}$ and $\circ(y_1)=p_2^{k_2}$. Therefore, $(x_1,y_1)\sim (x_2,y_2)$ if and only if one of the following holds:
	\begin{itemize}
		\item 
			$\circ(x_1x_2)=2$ and $y_1y_2=e_2$ that is $\circ(x_2)=2^{k_1}$ and $\circ(y_2)=p_2^{k_2}$;
		\item 
			$x_1x_2=e_1$ and $\circ(y_1y_2)=p_2$ that is $\circ(x_2)=2^{k_1}$ and $\circ(y_2)=p_2^{k_2}$.
	\end{itemize}
	Thus, the elements of order $2^{k_1}p_2^{k_2}$ can only be adjacent to the element of order $2^{k_1}p_2^{k_2}$.
\end{proof}

\begin{lemma}\label{only elements}
	The connected component $C^{(4)}$ contains $4p_2$ elements.
\end{lemma}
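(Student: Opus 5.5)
The plan is to fix a vertex $(x,y)\in C^{(4)}$ with $o(x)=2^{k_1}$, $o(y)=p_2^{k_2}$, $k_1\ge 3$, $k_2\ge 2$, write down its full neighbourhood explicitly, and then find a finite set of $4p_2$ vertices that contains $(x,y)$, is closed under adjacency, and is connected. Let $t=2^{n_1-1}$ be the unique element of order $2$ in $\mathbb{Z}_{2^{n_1}}$, and let $S=\{s_1,\dots,s_{p_2-1}\}$ be the elements of order $p_2$ in $\mathbb{Z}_{p_2^{n_2}}$. Put $P=S\cup\{e_2\}$, which is a subgroup. Using the two cases in the proof of \autoref{adjacency 6}, $(x,y)\sim(x',y')$ happens in exactly one of two ways. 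Either it is a $2$-edge, $xx'=t$ and $yy'=e_2$, so $(x',y')=(tx^{-1},y^{-1})$; this vertex is unique, as in \autoref{order 2^n}. Or it is a $p_2$-edge, $xx'=e_1$ and $yy'\in S$, so $(x',y')=(x^{-1},y^{-1}s)$ with $s\in S$. Hence every vertex has degree $p_2$, matching \autoref{adjacency 4} and \autoref{adjacency 5}.

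Next I will identify the closed set. The first coordinates reachable from $x$ lie in $X=\{x,x^{-1},tx,tx^{-1}\}$. These four elements are distinct because $o(x)\ge 8$, so $x^2\neq e_1,t$. The second coordinates reachable from $y$ lie in $Y=yP\cup y^{-1}P$, which has $2p_2$ elements since $y^2\notin P$. This gives the a priori bound $|C^{(4)}|\le 8p_2$. To halve it, I will use a parity invariant. Define $\varepsilon(a)=+1$ if $a\in\{x,tx\}$ and $-1$ if $a\in\{x^{-1},tx^{-1}\}$. Define $\delta(b)=+1$ if $b\in yP$ and $-1$ if $b\in y^{-1}P$. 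Both kinds of edges flip $\varepsilon$ and also flip $\delta$, so $\varepsilon\delta$ is constant on the component. Therefore $C^{(4)}$ lies inside the set $V_0=\{(a,b)\in X\times Y:\varepsilon(a)=\delta(b)\}$, which has $4p_2$ elements.

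Finally, I will show that $V_0$ is connected; then $C^{(4)}=V_0$ and has exactly $4p_2$ elements. The $p_2$-edges join $(x,ys)$ to $(x^{-1},y^{-1}s')$ for all $s\neq s'$ in $P$. This is the complete bipartite graph minus a perfect matching between two $p_2$-sets, exactly as in the proof of \autoref{structure theorem 1}, and it is connected for $p_2\ge 3$. The same holds for the pair of fibres over $tx$ and $tx^{-1}$. The $2$-edge from $(x,y)$ to $(tx^{-1},y^{-1})$ then links these two pieces, which gives connectivity.

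The main obstacle is the case $k_1=2$, which the statement of \autoref{adjacency 6} formally includes. When $o(x)=4$ we have $tx^{-1}=x$, so the would-be $2$-edge is a loop. Then only $p_2$-edges remain, the first coordinates collapse to $\{x,x^{-1}\}$, and the same invariant argument yields only $2p_2$ vertices of degree $p_2-1$. So the plan is to prove the count $4p_2$ for $k_1\ge 3$ as above, and to treat $k_1=2$ separately: there the argument gives components of size $2p_2$, and the statement would need to be restricted or corrected accordingly.
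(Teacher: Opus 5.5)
Your argument for $k_1\ge 3$ is sound: the closed set $V_0$, the invariant $\varepsilon\delta$, and two copies of $K_{p_2,p_2}$ minus a perfect matching joined by a $2$-edge. One harmless slip: $(x,ys)\sim(x^{-1},y^{-1}u)$ holds iff $u\neq s^{-1}$, not iff $u\neq s$.

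The genuine gap is $k_1=2$. This case is part of the statement, since $C^{(4)}$ is defined for $k_1,k_2\ge 2$, and the count $2^{n_1-2}p_2^{n_2-1}-1$ in the subsequent theorem depends on it. Your analysis of this case is wrong. The $2$-neighbor of $(x,y)$ is $(tx^{-1},y^{-1})$. When $o(x)=4$ you do have $tx^{-1}=x$, but the second coordinate still changes: $y^{-1}\neq y$ because $o(y)=p_2^{k_2}$ is odd and greater than $1$. So $(x,y)\sim(x,y^{-1})$ is a genuine edge, not a loop. Every vertex still has degree $p_2$, and there are no components with $2p_2$ vertices of degree $p_2-1$.

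What actually fails for $k_1=2$ is only your invariant. Since $tx=x^{-1}$, $\varepsilon$ is not well defined, and the $2$-edge fixes the first coordinate while flipping $\delta$. Concretely, in $\mathbb{Z}_4\times\mathbb{Z}_9$ (additively, $t=2$, $S=\{3,6\}$) the neighbors of $(1,1)$ are $(1,8)$, $(3,2)$ and $(3,5)$. The $12=4p_2$ elements of order $36$ form a single $3$-regular component, exactly as in the paper's figure of $\Gamma(\mathbb{Z}_{2^23^2})$. So the statement needs no restriction. As written, your proposal proves it only for $k_1\ge 3$ and asserts something false for $k_1=2$.

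The repair is short. For $k_1=2$ you have $X=\{x,x^{-1}\}$, so $X\times Y$ is closed and already has $4p_2$ elements. The $p_2$-edges give $K_{p_2,p_2}$ minus a perfect matching on $\{x\}\times yP\cup\{x^{-1}\}\times y^{-1}P$, and another on $\{x^{-1}\}\times yP\cup\{x\}\times y^{-1}P$. The edge $(x,y)\sim(x,y^{-1})$ joins the two.

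Better, treat both cases at once. Let $K=\{e_1,t\}\times P$ be the subgroup generated by the prime-order elements, and write $g=(x,y)$. Every neighbor of $g$ lies in $g^{-1}K$, and every neighbor of a vertex of $g^{-1}K$ lies in $gK$, so the component is contained in $gK\cup g^{-1}K$. These cosets are disjoint because $y^2\notin P$; your $\delta$ alone already gives the bipartition, and $\varepsilon$ is unnecessary. This gives the bound $4p_2$ for every $k_1\ge 2$, and your connectivity argument then gives equality. When $k_1\ge 3$, your $V_0$ is exactly $gK\cup g^{-1}K$.
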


\begin{proof}
	Let $(x_1,y_1)\in C^{(4)}$ such that $\circ(x_1,y_1)=2^{k_1}p_2^{k_2}$. By the adjacency relations of $\Gamma(\mathbb{Z}_{p_2^{n_2}})$, the vertex $(x_1,y_1)$ is adjacent to $(p_2-1)$ elements by $p_2$-edges which are of the form 
	$$K_1=\{(x_1^{-1},y_i): i=2,3,\cdots,p_2\}.$$
	Similarly, $(x_1^{-1},y_1^{-1})\in C^{(4)}$, is adjacent to $(p_2-1)$ elements to form $p_2$-edges which are
	$$K_2=\{(x_1,y_i^{-1}):i=2,3,\cdots,p_2\}.$$
	Now, the set $K_1\cup K_2\cup\{(x_1,y_1),(x_1^{-1},y_1^{-1})\}$ consisting $2p_2$ elements. Each element in the set having $(p_2-1)$ number of $p_2$-edges among themselves.
	
	The group contains one element of order $2$ and each element $x$ form a $p$-edge with an unique element $y$ in the group. So, for each $x$, there is a unique $y$ such that $o(xy)=2$. Thus, each element in $C^{(4)}$ has a $2$-edge. For the $2$-edge, $(x_1,y_1)\sim (x_2,y_1^{-1})$ and $(x_1^{-1},y_1^{-1})\sim (x_2^{-1},y_1)$, where $\circ(x_1x_2)=e_1$. The elements in $K_1$ forms $2$-edge with the set of elements
	\begin{center}
		$L_1=\{(x_2,y_i): (x_1,y_i^{-1})\sim(x_2,y_i), i=2,3,\cdots, p_2 \}$.
	\end{center}
	Similarly, The elements in $K_2$ forms $2$-edge with the set of elements
	\begin{center}
		$L_2=\{(x_2^{-1},y_i^{-1}): (x_1^{-1},y_i)\sim(x_2^{-1},y_i^{-1}), i=2,3,\cdots, p_2 \}$.
	\end{center}
	Now, we have another set consisting of $2p_2$ elements $L_1\cup L_2 \cup \{(x_2,y_1^{-1}),(x_2^{-1},y_1)\}$. The elements in $L_1$ and $L_2$ form $p_2$-edges among themselves in the following way
	\begin{itemize}
		\item 
			$(x_2,y_i)\sim (x_2^{-1},y_j^{-1})$ for $i\neq j$;
		\item $(x_2^{-1},y_i^{-1})\sim (x_2,y_j)$ for $i\neq j$.
	\end{itemize}
	Also, $(x_2,y_i)\sim (x_2^{-1},y_j^{-1})$ and $(x_2^{-1},y_i^{-1})\sim (x_2,y_j)$ for $i\neq j$. 
	
	Thus, all $4p_2$ distinct elements have the degree $p_2$ each. If any more element is added, the degree of at least one of the elements of those $4p_2$ elements will be increased which is impossible.
\end{proof}

	\begin{thm}
	The POE graph $\Gamma(\mathbb{Z}_{2^{n_1}p_2^{n_2}})$ is the union of the following connected components: 
	\begin{enumerate}
		\item 
			A connected component with $2p_2$ vertices which is isomorphic to $\Gamma(\mathbb{Z}_{2p_2})$.
		\item 
			A connected component with $2p_2$ vertices containing the elements of order $4$ and the elements of order $4p_2$.
		\item The graph consists of $2^{n_1-2}p_2^{n_2-1}-1$ regular connected components with $4p_2$ vertices in each with regularity $p_2$.
	\end{enumerate}
	
\end{thm}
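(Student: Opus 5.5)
We identify $\mathbb{Z}_{2^{n_1}p_2^{n_2}}$ with $\mathbb{Z}_{2^{n_1}}\times\mathbb{Z}_{p_2^{n_2}}$ and read $(x_1,y_1)\sim(x_2,y_2)$ as: either $o(x_1x_2)=2$ and $y_1y_2=e_2$ (a $2$-edge), or $x_1x_2=e_1$ and $o(y_1y_2)=p_2$ (a $p_2$-edge). The plan is to sort the vertices by the order pair $(o(x),o(y))$ and account for every vertex in exactly one of the three listed types of component. Part 1 is exactly \autoref{Disconnected lemma 2}: the identity component has vertex set the elements of order $1,2,p_2,2p_2$, which is $2p_2$ vertices, and it is isomorphic to $\Gamma(\mathbb{Z}_{2p_2})$.

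For part 2, we combine \autoref{order 4 element and its inverse}, \autoref{order 4 element adjacency}, \autoref{order $4p_2$ adjacency} and \autoref{only elements of two orders}. These show that all elements of order $4$ and $4p_2$ lie in one component and that nothing else does. To count, note there are $\phi(4)=2$ elements of order $4$ and $\phi(4)\phi(p_2)=2(p_2-1)$ of order $4p_2$. This gives $2p_2$ vertices, and it also requires $n_1\ge 2$.

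For part 3, every remaining vertex $(x,y)$ has $o(x)=2^{k_1}$ with $k_1\ge 3$, or $o(y)=p_2^{k_2}$ with $k_2\ge 2$. By \autoref{adjacency 3}, \autoref{adjacency 5} and \autoref{adjacency 6}, the neighbours of such a vertex keep the $2$-part order (when $k_1\ge 3$) and the $p_2$-part order (when $k_2\ge2$). By \autoref{adjacency 4}, \autoref{adjacency 5} and \autoref{only elements}, each such component has exactly $4p_2$ vertices.

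Regularity $p_2$ then comes from a direct count. For fixed $(x_1,y_1)$ the $p_2$-edges go to $(x_1^{-1},y)$ with $o(y_1y)=p_2$, which gives $p_2-1$ choices. The $2$-edges go to $(x,y_1^{-1})$ with $x_1x$ equal to the unique involution of $\mathbb{Z}_{2^{n_1}}$, which gives one choice. These neighbours are never the vertex itself: a loop would need $o(x_1^2)=2$ with $y_1^2=e_2$, i.e. $y_1=e_2$ and $o(x_1)=4$, which is excluded here. The same holds for the $p_2$-edge case, since $p_2$ is odd. So each such vertex has degree $p_2$.

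The number of components is the count of leftover vertices divided by $4p_2$:
\[
\frac{2^{n_1}p_2^{n_2}-2p_2-2p_2}{4p_2}=2^{n_1-2}p_2^{n_2-1}-1.
\]

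The main obstacle is that the cited lemmas on component size are argued informally (``adding a vertex would raise a degree''), and the uniform size $4p_2$ has to be checked in every case. The mixed case $o(x)=4$, $o(y)=p_2^{k_2}$ with $k_2\ge2$ is not covered by the lemmas as stated, so I would handle it directly.

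To do this I would give the explicit vertex set of the component of $(x_1,y_1)$. Let $z$ denote the involution of $\mathbb{Z}_{2^{n_1}}$ and let $S$ be the order-$p_2$ subgroup of $\mathbb{Z}_{p_2^{n_2}}$. Closing under the two edge rules, the component should be
\[
\{(x_1,y_1s),\,(x_1^{-1},y_1^{-1}s),\,(zx_1^{-1},y_1^{-1}s),\,(zx_1,y_1s): s\in S\}.
\]
This set has $4p_2$ vertices exactly when the four $x$-values are distinct and the two cosets $y_1S$ and $y_1^{-1}S$ are distinct. I would verify these distinctness conditions case by case; the failures occur only in the identity and order-$4$ components already handled, which is precisely what separates parts 1 and 2 from part 3.
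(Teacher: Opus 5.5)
Your route is the paper's. Parts 1 and 2 come from the same lemmas, and regularity comes from the same ``one neighbour per prime-order element'' count. The number of components comes from dividing the remaining $2^{n_1}p_2^{n_2}-4p_2$ vertices by $4p_2$, a step the paper leaves implicit. Your loop analysis is sharper than the paper's: its argument assigns degree $p_2$ to elements of order $2p_2$, which in fact carry a loop and have degree $p_2-1$. That is harmless, since those elements lie in component 1.

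\textbf{The size criterion, as written, is wrong.} Your explicit vertex set is $uH\cup u^{-1}H$, where $u=(x_1,y_1)$ and $H=\langle z\rangle\times S$ is a subgroup of order $2p_2$. Two cosets of $H$ are either equal or disjoint. So the set has $4p_2$ elements if and only if $u^2\notin H$, that is, if and only if the four $x$-values are distinct \emph{or} $y_1S\neq y_1^{-1}S$. Your ``and'' should be ``or''.

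Under your conjunction, the case check would flag genuine $4p_2$-components:
\begin{itemize}
\item For $u=(x_1,e_2)$ with $o(x_1)=8$, both cosets equal $S$.
\item For $u=(e_1,y_1)$ with $o(y_1)=p_2^2$, the four $x$-values collapse to $\{e_1,z\}$.
\end{itemize}
Both of these components nevertheless have $4p_2$ vertices. So your claim that failures occur only in components 1 and 2 would be false for the criterion you stated.

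With the disjunction, your claim is exactly right. We have $u^2\in H$ if and only if $x_1^2\in\{e_1,z\}$ and $y_1\in S$, i.e.\ $o(x_1)\mid 4$ and $o(y_1)\mid p_2$. These are precisely the $4p_2$ vertices of components 1 and 2.

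Two more checks would complete the argument. First, the set is closed under both edge rules. Second, two steps from $u$ reach $u\,q_2q_1^{-1}$ for prime-order $q_1,q_2$, and these elements generate $H$. With those, the coset description proves all three parts at once and makes the informally argued size lemmas unnecessary.

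A minor correction: the mixed case $o(x_1)=4$, $o(y_1)=p_2^{k_2}$ with $k_2\ge 2$ is formally covered by \autoref{adjacency 6} and \autoref{only elements}, which allow $k_1=2$. What actually breaks there is the paper's argument. It treats the $2$-edge partner $zx_1^{-1}$ as a new coordinate, but $zx_1^{-1}=x_1$ when $o(x_1)=4$. Handling that case directly is sensible, just for this reason rather than a gap in coverage.
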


\begin{proof}
	\autoref{Disconnected lemma 2} indicates that there is a connected component of the graph which is isomorphic to  $\Gamma(\mathbb{Z}_{2p_2})$. Also, using \autoref{order 4 element and its inverse}, \ref{order 4 element adjacency}, \ref{order $4p_2$ adjacency}, and \ref{only elements of two orders}, we say $\Gamma(\mathbb{Z}_{2^{m_1}p_2^{m_2}})$ has one connected component with $2p_2$ vertices. The elements in it are of order $4$ and $4p_2$. Form the \autoref{adjacency 3}, \ref{adjacency 4}, \ref{adjacency 5}, \ref{adjacency 6}, \ref{only elements}, it is clear that all the other connected components contain $4p_2$ vertices. 
	
	Let $(x,y)$ be a vertex in the graph $\Gamma(\mathbb{Z}_{2^{n_1}p_2^{n_2}})$, such that $o(x,y)\neq 2 \mbox{ or } p_2 \mbox{ or } 4$. Then, $o((x,y)^2) \neq p_2 \mbox{ or } 2$, therefore, there exist unique $(x_1,y_1)$ for every $(x,y)$, such that $o((x,y)(x_1,y_1))=2 \mbox{ or } p_2$. There is one element of order $2$ and $(p_2 - 1)$ elements or order $p_2$. Thus, except the elements of order $p_2,4$ and $2$, every element has degree $p_2$. Therefore, except the component isomorphic to $\Gamma(\mathbb{Z}_{2p_2})$ and the component containing the elements of order $4$, all other components are $p_2$-regular and each of them contains $4p_2$ vertices.   
\end{proof}


We use the idea of equitable partition on graphs to discuss the irrational spectrum of $\Gamma(\mathbb{Z}_{2p_2})$. The idea is defined as follows:
\begin{definition}{\label{equitable_partitions}}
	A partition $\mathcal{P} = \{P_0, P_1, \dots, P_{m-1}\}$ of the vertices of a graph $\Gamma$ is called \emph{equitable} if, for every pair of (not necessarily distinct) indices $i,j \in \{0,1,\dots,m-1\}$, there exists a non-negative integer $a_{ij}$ such that each vertex $v \in P_i$ has exactly $a_{ij}$ neighbors in $P_j$, regardless of the choice of $v$.
	
	The matrix $A({\Gamma}/{\mathcal{P}}) = (a_{ij})_{m \times m}$ is called the \emph{partition matrix} or \emph{quotient matrix}.	
\end{definition}

\begin{thm}\cite{godsil2013algebraic}[Chapter 9, Theorem 9.3.3]\label{equitable_partition_theorem}
	``If $\pi$ is an equitable partition of a graph $X$, then the characteristic polynomial $A(X/\pi)$ of the quotient matrix of the graph $X$ divides the characteristic polynomial $A(X)$ of the graph."
\end{thm}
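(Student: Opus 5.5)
We prove the divisibility statement in the standard way: find a matrix identity relating $A(X)$, the quotient matrix $B = A(X/\pi)$, and the characteristic matrix of $\pi$, and read off invariance of a subspace. Let $\pi = \{P_0, \dots, P_{m-1}\}$ and let $S$ be the $n \times m$ characteristic matrix: $S_{v,j} = 1$ if $v \in P_j$ and $0$ otherwise. The columns of $S$ have disjoint nonempty supports, so they are linearly independent and $S$ has rank $m$.

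\textbf{Step 1: the key identity $A(X)S = SB$.} Compute the $(v,j)$ entry of each side, where $v \in P_i$. On the left, $(A(X)S)_{v,j}$ counts the neighbours of $v$ lying in $P_j$, which by \autoref{equitable_partitions} equals $a_{ij}$. On the right, $(SB)_{v,j} = \sum_k S_{v,k} a_{kj} = a_{ij}$, since only $k = i$ contributes. This is the only place equitability is used, and it is used exactly once.

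\textbf{Step 2: invariant subspace.} The identity says that the column space $U$ of $S$, which has dimension $m$, is $A(X)$-invariant. Moreover, the matrix of $A(X)|_U$ with respect to the basis formed by the columns of $S$ is exactly $B$.

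\textbf{Step 3: block triangular form.} Extend the columns of $S$ to a basis of $\mathbb{R}^n$ and let $Q = [\,S \mid T\,]$ be the resulting invertible matrix. Then
$$Q^{-1}A(X)Q = \begin{pmatrix} B & * \\ 0 & C \end{pmatrix}$$
for some matrix $C$. Consequently
$$\det(xI - A(X)) = \det(xI - B)\,\det(xI - C),$$
so the characteristic polynomial of $B$ divides that of $A(X)$.

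An alternative for Step 3, exploiting symmetry, is to take $U^{\perp}$ as the complement. Since $A(X)$ is symmetric, $U^{\perp}$ is also invariant, which makes the off-diagonal block $*$ vanish. This is not needed for the divisibility claim.

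\textbf{Main obstacle.} The argument has no real difficulty beyond verifying Step 1 carefully. The point to watch is that $B$ may be non-symmetric; the proof never uses symmetry of $B$, only the relation $A(X)S = SB$ together with the full column rank of $S$.
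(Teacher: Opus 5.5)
Your proof is correct. The paper gives no proof of its own; it only cites Godsil--Royle, and your argument (the identity $A(X)S = SB$ for the characteristic matrix $S$, then extending the columns of $S$ to a basis of $\mathbb{R}^n$ to get a block upper-triangular form with $B$ in the corner) is essentially the standard proof given in that source.
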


\begin{thm}
	The roots of the equations $(x^2-(p_2-1)x-1)=0$ and $(x^2-(p_2-5)x-(2p_2-5))=0$ are the irrational spectrum of $\Gamma(\mathbb{Z}_{2p_2})$.
\end{thm}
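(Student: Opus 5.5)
The plan is to use \autoref{equitable_partition_theorem} with a four-cell equitable partition of $\Gamma(\mathbb{Z}_{2p_2})$. This produces a $4\times 4$ quotient matrix whose characteristic polynomial factors into the two quadratics in the statement. I would then show that every eigenvalue outside the quotient is an integer.

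\textbf{Adjacency structure.} Write $\mathbb{Z}_{2p_2}\cong \mathbb{Z}_2\times\mathbb{Z}_{p_2}$ and label the vertices as $e=(0,0)$, $t=(1,0)$, $u_b=(0,b)$ and $w_b=(1,b)$ for $b\neq 0$. Two distinct vertices are adjacent exactly when their sum is $(1,0)$ or $(0,c)$ with $c\neq 0$. From this rule and \autoref{lcm_lemma}, I expect:
\begin{itemize}
\item $e$ is adjacent to $t$ and to all $u_b$;
\item $t$ is adjacent to $e$ and to all $w_b$;
\item $u_b$ is adjacent to $e$, to $w_{-b}$, and to the $p_2-3$ vertices $u_c$ with $c\neq \pm b$;
\item $w_b$ is adjacent to $t$, to $u_{-b}$, and to the $p_2-3$ vertices $w_c$ with $c\neq\pm b$.
\end{itemize}
Hence $\mathcal{P}=\{\{e\},\{t\},U,W\}$, with $U=\{u_b\}$ and $W=\{w_b\}$, is equitable in the sense of \autoref{equitable_partitions}. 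Its quotient matrix is
\begin{equation*}
A(\Gamma/\mathcal{P})=\begin{bmatrix} 0&1&p_2-1&0\\ 1&0&0&p_2-1\\ 1&0&p_2-3&1\\ 0&1&1&p_2-3\end{bmatrix}.
\end{equation*}

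\textbf{Factoring the quotient.} The matrix commutes with the involution swapping $e\leftrightarrow t$ and $U\leftrightarrow W$. On symmetric vectors it reduces to $\begin{bmatrix}1&p_2-1\\1&p_2-2\end{bmatrix}$, with characteristic polynomial $x^2-(p_2-1)x-1$. On antisymmetric vectors it reduces to $\begin{bmatrix}-1&p_2-1\\1&p_2-4\end{bmatrix}$, with characteristic polynomial $x^2-(p_2-5)x-(2p_2-5)$. By \autoref{equitable_partition_theorem}, both quadratics divide the characteristic polynomial of $\Gamma(\mathbb{Z}_{2p_2})$.

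\textbf{Irrationality of the roots.} Both discriminants equal $(p_2-1)^2+4$. For $p_2\ge 3$ this lies strictly between $(p_2-1)^2$ and $(p_2)^2$, so it is not a perfect square, and all four roots are irrational.

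\textbf{Remaining eigenvalues are integers.} This is the main obstacle. The complementary $2p_2-4$ eigenvectors are orthogonal to the cell indicators, so they vanish on $e,t$ and sum to zero on $U$ and on $W$. On that space, $A$ acts on $U$ as $J-I-P$, where $P$ is the permutation $b\mapsto -b$, and the $J$ part vanishes there. The $U$--$W$ edges form the matching $u_b\leftrightarrow w_{-b}$, and $W$ carries the same internal structure as $U$. I would split vectors by the eigenvalues $\pm1$ of $P$ and by the symmetric/antisymmetric combinations across $U$ and $W$. Since all these operators commute, \autoref{commuting_matrix_eigenvalues} should give eigenvalues of the form $-1\mp1\pm1$, which lie in $\{-3,-1,1\}$ and are therefore integers. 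This confirms that the roots of the two quadratics are exactly the irrational spectrum.
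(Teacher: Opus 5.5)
Your proof is correct, and its core is the paper's route. You use the same partition into $\{e\}$, $\{t\}$, the elements of order $p_2$ and the elements of order $2p_2$, the same quotient matrix, and the same divisibility theorem for equitable partitions. Your labeling also avoids the paper's slip of listing $x^{p_2}$ in both $V_2$ and $V_4$.

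The factorization step differs only in method. You reduce the quotient using the $e\leftrightarrow t$, $U\leftrightarrow W$ symmetry, whereas the paper uses the block identity $\det(AD-BC)$ for commuting $C,D$. Both give the same two quadratics.

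Your last two steps genuinely go beyond the paper. The paper stops once the quadratics divide the characteristic polynomial. It never checks that their roots are irrational, or that no other eigenvalue is irrational, so it does not actually justify calling them \emph{the} irrational spectrum. Your discriminant computation $(p_2-1)^2+4$ and your analysis of the complement of the cell-indicator space close exactly that gap.

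One precision for the last step: identifying $U$ and $W$ with $\mathbb{Z}_{p_2}^{*}$ by the index $b$, the $U$--$W$ block acts as $P$, not $I$. So on a pair $(g,\pm g)$ with $\sum_b g(b)=0$ and $Pg=\epsilon g$, the eigenvalue is $-1-\epsilon\pm\epsilon$. This gives $1$ with multiplicity $\frac{p_2-1}{2}$, $-1$ with multiplicity $p_2-2$, and $-3$ with multiplicity $\frac{p_2-3}{2}$. These account for all $2p_2-4$ remaining eigenvalues, so your ``should give'' can be stated definitively.
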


\begin{proof}
	We divide the vertices of $\Gamma(\mathbb{Z}_{2p_2})$ into $4$ partitions as $V_1=\{e\}$,
	$V_2=\{x^{p_2}\}$,
	$V_3=\{x^{2k} : 1 \le k \le p_2-1\}$,
	and
	$V_4=\{x^{2k+1} : 0 \le k \le p_2-1\}$. The elements $a_{i,j}$ as in the \autoref{equitable_partitions} are as follows: 
	\begin{table}[h]
		\centering
		\begin{tabular}{c|cccc}
			\hline
			Partitions & $V_1$ & $V_2$ & $V_3$ & $V_4$ \\
			\hline
			$V_1$ & $0$ & $1$ & $p_2-1$ & $0$ \\
			$V_2$ & $1$ & $0$ & $0$ & $p_2-1$ \\
			$V_3$ & $1$ & $0$ & $p_2-3$ & $1$ \\
			$V_4$ & $0$ & $1$ & $1$ & $p_2-3$ \\
			\hline
		\end{tabular}
	\end{table}

Now, the characteristic polynomial for quotient matrix $A({\Gamma}/{\mathcal{P}})$ with the block division is 
$$\begin{vmatrix}
	-x && 1 && \vline && p_2-1 && 0 \\
	1 && -x && \vline && 0 && p_2-1 \\
	\hline
	1 && 0 && \vline && p_2-3-x && 1 \\
	0 && 1 && \vline && 1 && p_2-3-x		
\end{vmatrix} = 0.$$
We denote it as $\det \begin{bmatrix}
 	A && \vline && B\\
 	\hline 
 	C && \vline && D
 \end{bmatrix}=0$, where $A=\begin{bmatrix}
 -x && 1 \\
 1 && -x
 \end{bmatrix}$, $B=\begin{bmatrix}
 p_2-1 && 0\\
 0 && p_2-1
 \end{bmatrix}$, $C=\begin{bmatrix}
 1 && 0 \\
 0 && 1
 \end{bmatrix}$ and $D=\begin{bmatrix}
 p_2-3-x && 1\\
 1 && p_2-3-x
 \end{bmatrix}$.
Since $C$ and $D$ commute to each other, then,
	$\det \begin{bmatrix}
		A && \vline && B\\
		\hline 
		C && \vline && D\\
	\end{bmatrix} = \det(AD-BC)$, where
	$AD =\begin{bmatrix}
		x^2-(p_2-3)x+1 && p_2-3-2x\\
		p_2-3-2x && x^2-(p_2-3)x+1
		\end{bmatrix}$ and $BC = \begin{bmatrix}
		p_2-1 && 0\\
		0 && p_2-1
		\end{bmatrix}$.
Therefore,
\begin{align*}
	& \begin{vmatrix}
		-x && 1 && \vline && (p_2-1) && 0 \\
		1 && -x && \vline && 0 && (p_2-1) \\
		\hline
		1 && 0 && \vline && p_2-3-x && 1 \\
		0 && 1 && \vline && 1 && p_2-3-x		
	\end{vmatrix} = \begin{vmatrix}
		x^2-(p_2-3)x-p_2+2 && p_2-3-2x\\
		p_2-3-2x && x^2-(p_2-3)x-p_2+2
	\end{vmatrix}\\
	= &  (x^2-(p_2-3)x-p_2+2)^2-(p_2-3-2x)^2 \\
	= & (x^2-(p_2-3)x-p_2+2+p_2-3-2x)(x^2-(p_2-3)x-p_2+2-p_2+3+2x) \\
	= & (x^2-(p_2-1)x-1)(x^2-(p_2-5)x-(2p_2-5)).  
\end{align*}
Following the \autoref{equitable_partition_theorem}, the roots of the equations $(x^2-(p_2-1)x-1)=0$ and $(x^2-(p_2-5)x-(2p_2-5))=0$ are the irrational spectrum of the graph.
\end{proof}

\begin{thm}
	The POE graph $\Gamma(\mathbb{Z}_{2^{n_1}p_2^{n_2}\dots p_k^{n_k}})$ is the union of following connected components:
	\begin{enumerate}
		\item A connected component with $2p_2p_3\dots p_k$ vertices which is isomorphic to $\Gamma(\mathbb{Z}_{2p_2p_3\dots p_k})$.
		\item A connected component with $2p_2p_3\dots p_k$ vertices containing the elements of order $4$ and the elements of order $4p_2$,$4p_3\dots 4p_k$.
		\item The graph consists of $2^{n_1-2}p_2^{n_2-1}\dots p_k^{n_k-1}-1$ regular connected components with $4p_2p_3\dots p_k$ vertices in each with regularity $p_2+p_3+\dots p_k$.
	\end{enumerate}
\end{thm}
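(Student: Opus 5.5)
The plan is to replace the element-by-element case analysis of the previous section with a coset description of the components. Write $G=\mathbb{Z}_{2^{n_1}}\times\mathbb{Z}_{p_2^{n_2}}\times\dots\times\mathbb{Z}_{p_k^{n_k}}$, with $n_1\ge 2$ (otherwise item 2 is empty). Let $S$ be the set of prime-order elements of $G$. It consists of one element $u$ of order $2$ and, for each $i$, the $p_i-1$ elements of order $p_i$. Let $\Omega=\langle S\rangle$. Then $\Omega\cong\mathbb{Z}_2\times\mathbb{Z}_{p_2}\times\dots\times\mathbb{Z}_{p_k}\cong\mathbb{Z}_{2p_2\cdots p_k}$ and $|\Omega|=2p_2\cdots p_k$.

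\textbf{Local description of edges.} We have $x\sim y$ if and only if $y=sx^{-1}$ for some $s\in S$ with $sx^{-1}\neq x$, i.e.\ $x^2\neq s$. So every neighbour of $x$ lies in $x^{-1}\Omega$, and every neighbour of a vertex of $x^{-1}\Omega$ lies in $x\Omega$. Hence the component of $x$ is contained in $x\Omega\cup x^{-1}\Omega$. Two steps take $x$ to $ts^{-1}x$ with $s,t\in S$. I will check that the quotients $ts^{-1}$ generate $\Omega$:
\begin{itemize}
\item quotients of two order-$p_i$ elements give every element of the $p_i$-part;
\item taking $t=u$ and $s$ of order $p_i$ gives $us^{-1}$, which yields $u$ after multiplying by $p_i$-part elements.
\end{itemize}
Therefore the component of $x$ is exactly $x\Omega\cup x^{-1}\Omega$. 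Its size is $|\Omega|$ if $x^2\in\Omega$ and $2|\Omega|=4p_2\cdots p_k$ otherwise.

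\textbf{Classifying components by cosets.} The components correspond to the orbits $\{g,g^{-1}\}$ in $G/\Omega\cong\mathbb{Z}_{2^{n_1-1}}\times\prod_{i\ge2}\mathbb{Z}_{p_i^{n_i-1}}$.
\begin{itemize}
\item The trivial coset gives the component $\Omega$. Because the order of an element does not depend on the ambient group, the induced graph on $\Omega$ equals $\Gamma(\Omega)\cong\Gamma(\mathbb{Z}_{2p_2\cdots p_k})$. This extends the map in \autoref{Disconnected lemma 2}.
\item The quotient has exactly one involution, namely the coset of an element of order $4$. Its component is $x\Omega$ with $x^2\in\Omega$. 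It has $2p_2\cdots p_k$ vertices, namely the elements whose $2$-part has order $4$ and whose $p_i$-parts have order $1$ or $p_i$. These are precisely the elements of order $4,4p_i,4p_ip_j,\dots$, as in item 2.
\item The remaining $|G/\Omega|-2$ cosets pair off into $2^{n_1-2}p_2^{n_2-1}\cdots p_k^{n_k-1}-1$ components, each with $4p_2\cdots p_k$ vertices.
\end{itemize}

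\textbf{Regularity, the expected obstacle.} For $x$ in a component of the third kind, $x^2\notin\Omega$, so $x^2\notin S$. Hence no $s$ is excluded, and $\deg x=|S|=1+\sum_{i\ge2}(p_i-1)=\sum_{i\ge2}p_i-k+2$. The different $s$ give different neighbours, so there is no collapse. The main point to handle carefully is that this count agrees with the regularity written in item 3, $p_2+\dots+p_k$, only when $k=2$. For $k=2$ it gives $p_2$, as in the previous theorem. In general the correct value is the one in the Introduction, $\left(\sum_{i=2}^k p_i\right)-k+2$. I will prove that value and state that it is the intended reading of item 3.

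Everything else is routine bookkeeping:
\begin{itemize}
\item the lcm rule of \autoref{lcm_lemma};
\item the count $|G/\Omega|=2^{n_1-1}\prod_{i\ge2} p_i^{n_i-1}$;
\item distinctness of $x\Omega$ and $x^{-1}\Omega$ when $x^2\notin\Omega$.
\end{itemize}
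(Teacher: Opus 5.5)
Your argument is correct, and it takes a genuinely different route from the paper.

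The paper settles $k=2$ by an order-by-order analysis: which order classes can be adjacent, explicit lists of $2$-edges and $p_2$-edges around a vertex, and a ``maximum degree'' argument that no further vertex can join a component. It then handles general $k$ only by saying that the same technique and induction give the result.

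You instead use $N(x)=Sx^{-1}\setminus\{x\}$. This makes each component a union $x\Omega\cup x^{-1}\Omega$ of cosets of the socle $\Omega=\langle S\rangle$, and you read the components off the inversion orbits on $G/\Omega$. What this buys:
\begin{itemize}
\item It treats all $k$ uniformly.
\item It identifies the vertex sets exactly. This includes the orders $4p_ip_j,\dots$ in the second component, which the statement's list does not mention.
\item It shows that all components of the third kind are isomorphic to one another. Each is the bipartite double of the Cayley graph of $\Omega$ with connection set $S$. The paper instead treats components built on orders $2^{k_1}$, $p_2^{k_2}$ and $2^{k_1}p_2^{k_2}$ in separate lemmas.
\item Most importantly, computing the degree as $|S|$ exposes the error in item~3. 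Every vertex has degree at most $|S|=\sum_{i\ge 2}p_i-k+2$, which is smaller than $p_2+\dots+p_k$ once $k\ge 3$. So the regularity stated there is impossible. The value you prove is the one in the paper's Introduction, and it reduces to $p_2$ for $k=2$.
\end{itemize}

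One step needs an extra sentence. The two-step move $x\to sx^{-1}\to ts^{-1}x$ is available only when $s\neq x^2$ and $t\neq (sx^{-1})^2$.
\begin{itemize}
\item In components of the third kind this never bites, because every vertex $y$ there has $y^2\notin\Omega$. Your generation argument is complete for them.
\item In $\Omega$ the move is blocked at elements with exactly one nontrivial odd-prime coordinate.
\item In the order-$4$ coset it is blocked at the two elements of order $4$, with $s=u$.
\end{itemize}
So what you wrote does not yet justify connectivity of the first two components.

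The repair is short.
\begin{itemize}
\item In $\Omega$, take a vertex $x\notin S\cup\{e\}$ and pass to $sx^{-1}$, where $s\in S$ agrees with $x$ in one nontrivial coordinate. This is a genuine edge, since $x\cdot sx^{-1}=s$ and $sx^{-1}\neq x$. It kills that coordinate, so by induction you reach $S$ and then $e$.
\item In the order-$4$ coset $\{c,c^{-1}\}\times\Omega'$, where $\Omega'$ is the odd part of $\Omega$, use only $s,t$ of odd prime order. The edges $(c,b)\sim(c^{-1},sb^{-1})$ are never loops, since $c\neq c^{-1}$. The quotients $ts^{-1}$ already generate $\Omega'$.
\end{itemize}
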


\begin{proof}
	The group $\mathbb{Z}_{2^{n_1}p_2^{n_2}\dots p_k^{n_k}} \cong {\mathbb{Z}_{2^{n_1}}} \times {\mathbb{Z}_{{p_2}^{n_2}}} \dots \times {\mathbb{Z}_{{p_k}^{n_k}}}$. In the graph $\Gamma({\mathbb{Z}_{2^{n_1}}} \times {\mathbb{Z}_{{p_2}^{n_2}}} \dots \times {\mathbb{Z}_{{p_k}^{n_k}}})$, identity, the element of order $2$ and the elements of other prime orders belong to one connected component. Since there is always a path $(e_1,e_2,\dots, e_k)\sim (x,e_2,e_3,\dots , e_k)\sim (x,y_2,e_3,\dots,e_k)\sim (x,y_2^{-1},y_3,e_4,\dots, e_k)\sim \dots \sim (x,y_2,y_3^{-1},\dots, y_k)$ if $k$ is odd and $(e_1,e_2,\dots, e_k)\sim (x,e_2,e_3,\dots , e_k)\sim (x,y_2,e_3,\dots,e_k)\sim (x,y_2^{-1},y_3,e_4,\dots, e_k)\sim \dots \sim (x,y_2^{-1},\dots, y_k)$, the elements of order $2^{\eta_1} p_2^{\eta_2} \dots p_k^{\eta_k}$ where $\eta_i = 0$ or $1$ belong to the same connected component. Now, by applying the same method as \autoref{Disconnected lemma 2}, the graph is disconnected and one connected component is isomorphic to $\Gamma(\mathbb{Z}_{2p_2p_3\dots p_k})$. Also, using the same technique and applying mathematical induction, from the \autoref{order 4 element and its inverse}, \autoref{order 4 element adjacency},\autoref{order $4p_2$ adjacency},\autoref{only elements of two orders},\autoref{adjacency 3},\autoref{adjacency 4},\autoref{adjacency 5},\autoref{adjacency 6},\autoref{only elements} and generalizing them we can have the required proof.
\end{proof}
		
\section{Properties of $\Gamma(\mathbb{Z}_{{p_1^{n_1}}{p_2^{n_2}}})$}
\label{Properties_of_sixth_group}

We can prove that the group $\mathbb{Z}_{p_1^{n_1}p_2^{n_2}}\cong \mathbb{Z}_{p_1^{n_1}} \times \mathbb{Z}_{p_2^{n_2}}$. The elements of $\mathbb{Z}_{p_1^{n_1}} \times \mathbb{Z}_{p_2^{n_2}}$ can be expressed as $\{(x,y): x\in \mathbb{Z}_{p_1^{n_1}}, y\in \mathbb{Z}_{p_2^{n_2}}\}$. The group $\mathbb{Z}_{p_1^{n_1}p_2^{n_2}}$ contains the elements of following orders:
\begin{itemize}
	\item 
	Elements of the form $(x,e_2)$, where $o(x)=p_1^{k_1}$ for $k_1 = 1, 2, \dots, (n_1 - 1)$ in $\mathbb{Z}_{p_1^{n_1}}$. They are of order $p_1^{k_1}$.
	\item 
	Elements of the form $(e_1,y)$, where $o(y)=p_2^{k_2}$ for $k_2=1,2,\dots, (n_2-1)$ in $\mathbb{Z}_{p_2^{n_2}}$. They are of order $p_2^{k_2}$.
	\item Elements of the form $(x,y)$, where $o(x)=p_1^{k_1}$ for $k_1=0,1,2\dots, (n_1-1)$ or $o(y)=p_2^{k_2}$ for $k_2=0,1,2\dots, (n_2-1)$. They are of order $p_1^{k_1}p_2^{k_2}$.
\end{itemize} 

The group contains $(p_1-1)$ elements of order $p_1$ and $(p_2-1)$ elements of order $p_2$. Hence, the identity $(e_1,e_2)$ is adjacent to all the $(p_1+p_2-2)$ elements. Two vertices \((x_1,y_1) \sim (x_2,y_2)\) in \(\Gamma(\mathbb{Z}_{p_1^{n_1}} \times \mathbb{Z}_{p_2^{n_2}})\) are adjacent if and only if any of the following conditions holds:
\begin{itemize}
	\item \(\circ(x_1 x_2) = p_1\) and \(y_1 y_2 = e_2\); or
	\item \(\circ(y_1 y_2) = p_2\) and \(x_1 x_2 = e_1\).
\end{itemize}

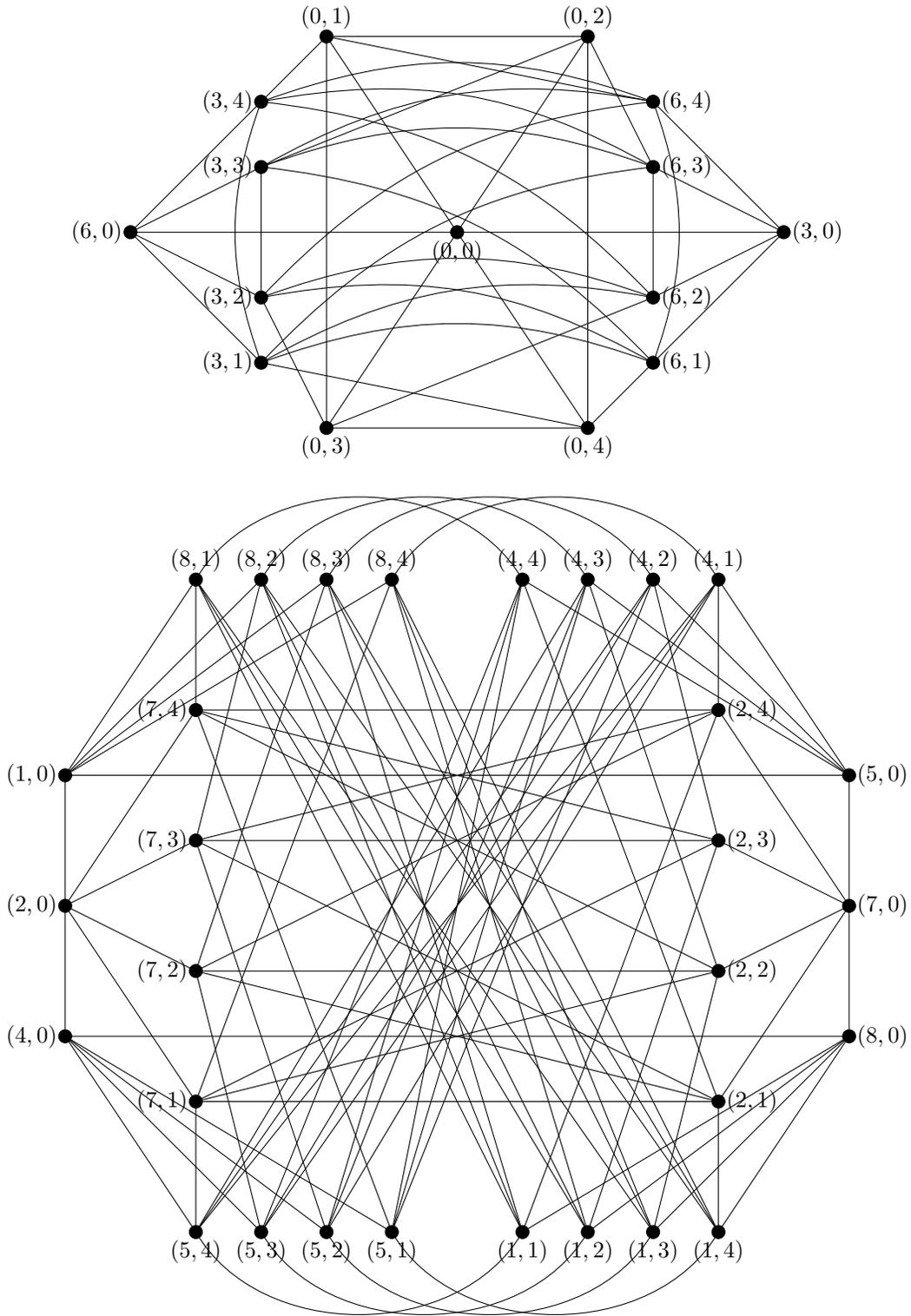
\begin{figure}
	\begin{subfigure}[b]{0.99\textwidth}
		\centering
		\begin{tikzpicture}
			\node [below] at (0, 0) {${(0,0)}$};
			\draw [fill, black] (0,0) circle[radius = 1mm];
			\node [right] at (5, 0) {${(3,0)}$};
			\draw [fill, black] (5,0) circle[radius = 1mm];
			\node [left] at (-5, 0) {${(6,0)}$};
			\draw [fill, black] (-5,0) circle[radius = 1mm];
			\node [above] at (-2, 3) {${(0,1)}$};
			\draw [fill, black] (-2,3) circle[radius = 1mm];
			\node [below] at (-2, -3) {${(0,3)}$};
			\draw [fill, black] (-2, -3) circle[radius = 1mm];
			\node [above] at (2, 3) {${(0,2)}$};
			\draw [fill, black] (2, 3) circle[radius = 1mm];
			\node [below] at (2, -3) {${(0,4)}$};
			\draw [fill, black] (2, -3) circle[radius = 1mm];
			\node [right] at (3, 2) {${(6,4)}$};
			\draw [fill, black] (3,2) circle[radius = 1mm];
			\node [right] at (3, 1) {${(6,3)}$};
			\draw [fill, black] (3,1) circle[radius = 1mm];
			\node [right] at (3, -1) {${(6,2)}$};
			\draw [fill, black] (3,-1) circle[radius = 1mm];
			\node [right] at (3, -2) {${(6,1)}$};
			\draw [fill, black] (3, -2) circle[radius = 1mm];
			
			\node [left] at (-3, 2) {${(3,4)}$};
			\draw [fill, black] (-3,2) circle[radius = 1mm];
			\node [left] at (-3, 1) {${(3,3)}$};
			\draw [fill, black] (-3,1) circle[radius = 1mm];
			\node [left] at (-3, -1) {${(3,2)}$};
			\draw [fill, black] (-3,-1) circle[radius = 1mm];
			\node [left] at (-3,-2) {${(3,1)}$};
			\draw [fill, black] (-3,-2) circle[radius = 1mm];
			\draw [] (0,0) -- (-2,3);
			\draw [] (0,0) -- (2,3);
			\draw [] (0,0) -- (-2,-3);
			\draw [] (0,0) -- (2,-3);
			\draw [] (0,0) -- (5,0);
			\draw [] (0,0) -- (-5,0);
			\path [-] (-3,-2) edge[bend left=20] node {} (3,1);
			\path [-] (-3,-2) edge[bend left=20] node {} (3,-1);
			\path [-] (-3,-2) edge[bend left=20] node {} (3,-2);
			
			\path [-] (-3,-1) edge[bend left=20] node {} (3,2);
			\path [-] (-3,-1) edge[bend left=20] node {} (3,-1);
			\path [-] (-3,-1) edge[bend left=20] node {} (3,-2);
			
			\path [-] (-3,1) edge[bend left=20] node {} (3,2);
			\path [-] (-3,1) edge[bend left=20] node {} (3,1);
			\path [-] (-3,1) edge[bend left=20] node {} (3,-2);
			
			\path [-] (-3,2) edge[bend left=20] node {} (3,2);
			\path [-] (-3,2) edge[bend left=20] node {} (3,1);
			\path [-] (-3,2) edge[bend left=20] node {} (3,-1);
			
			\draw [] (-2,3) -- (2,3);
			\draw [] (-2,3) -- (-2,-3);
			\draw [] (2,3) -- (2,-3);
			\draw [] (2,-3) -- (-2,-3);
			
			\draw [] (-5,0) -- (-3,-2);
			\draw [] (-5,0) -- (-3,-1);
			\draw [] (-5,0) -- (-3,1);
			\draw [] (-5,0) -- (-3,2);
			
			\draw [] (5,0) -- (3,2);
			\draw [] (5,0) -- (3,1);
			\draw [] (5,0) -- (3,-1);
			\draw [] (5,0) -- (3,-2);
			
			\draw [] (-2,3) -- (3,2);
			\draw [] (-2,3) -- (-3,2);
			\draw [] (2,-3) -- (3,-2);
			\draw [] (2,-3) -- (-3,-2);
			
			\draw [] (2,3) -- (3,1);
			\draw [] (2,3) -- (-3,1);
			
			\draw [] (-2,-3) -- (3,-1);
			\draw [] (-2,-3) -- (-3,-1);
			
			\path [-] (-3,2) edge[bend right=20] node {} (-3,-2);
			\path [-] (3,2) edge[bend left=20] node {} (3,-2);
			\draw [] (-3,1) -- (-3,-1);
			\draw [] (3,1) -- (3,-1);
			
		\end{tikzpicture}
	\end{subfigure} 
	
	\begin{subfigure}[b]{0.99\textwidth}
		\centering
		\begin{tikzpicture}
			\node [left] at (-4,2) {${(7,4)}$};
			\draw [fill, black] (-4,2) circle[radius = 1mm];
			\node [left] at (-4,0) {${(7,3)}$};
			\draw [fill, black] (-4,0) circle[radius = 1mm];
			\node [left] at (-4,-2) {${(7,2)}$};
			\draw [fill, black] (-4,-2) circle[radius = 1mm];
			\node [left] at (-4,-4) {${(7,1)}$};
			\draw [fill, black] (-4,-4) circle[radius = 1mm];
			\node [right] at (4,2) {${(2,4)}$};
			\draw [fill, black] (4,2) circle[radius = 1mm];
			\node [right] at (4,0) {${(2,3)}$};
			\draw [fill, black] (4,0) circle[radius = 1mm];
			\node [right] at (4,-2) {${(2,2)}$};
			\draw [fill, black] (4,-2) circle[radius = 1mm];
			\node [right] at (4,-4) {${(2,1)}$};
			\draw [fill, black] (4,-4) circle[radius = 1mm];
			\node [above] at (-4,4) {${(8,1)}$};
			\draw [fill, black] (-4,4) circle[radius = 1mm];
			\node [above] at (-3,4) {${(8,2)}$};
			\draw [fill, black] (-3,4) circle[radius = 1mm];
			\node [above] at (-2,4) {${(8,3)}$};
			\draw [fill, black] (-2,4) circle[radius = 1mm];
			\node [above] at (-1,4) {${(8,4)}$};
			\draw [fill, black] (-1,4) circle[radius = 1mm];
			\node [above] at (1,4) {${(4,4)}$};
			\draw [fill, black] (1,4) circle[radius = 1mm];
			\node [above] at (2,4) {${(4,3)}$};
			\draw [fill, black] (2,4) circle[radius = 1mm];
			\node [above] at (3,4) {${(4,2)}$};
			\draw [fill, black] (3,4) circle[radius = 1mm];
			\node [above] at (4,4) {${(4,1)}$};
			\draw [fill, black] (4,4) circle[radius = 1mm];
			\node [below] at (-4,-6) {${(5,4)}$};
			\draw [fill, black] (-4,-6) circle[radius = 1mm];
			\node [below] at (-3,-6) {${(5,3)}$};
			\draw [fill, black] (-3,-6) circle[radius = 1mm];
			\node [below] at (-2,-6) {${(5,2)}$};
			\draw [fill, black] (-2,-6) circle[radius = 1mm];
			\node [below] at (-1,-6) {${(5,1)}$};
			\draw [fill, black] (-1,-6) circle[radius = 1mm];
			\node [below] at (1,-6) {${(1,1)}$};
			\draw [fill, black] (1,-6) circle[radius = 1mm];
			\node [below] at (2,-6) {${(1,2)}$};
			\draw [fill, black] (2,-6) circle[radius = 1mm];
			\node [below] at (3,-6) {${(1,3)}$};
			\draw [fill, black] (3,-6) circle[radius = 1mm];
			\node [below] at (4,-6) {${(1,4)}$};
			\draw [fill, black] (4,-6) circle[radius = 1mm];
			\node [left] at (-6,1) {${(1,0)}$};
			\draw [fill, black] (-6,1) circle[radius = 1mm];
			\node [left] at (-6,-1) {${(2,0)}$};
			\draw [fill, black] (-6,-1) circle[radius = 1mm];
			\node [left] at (-6,-3) {${(4,0)}$};
			\draw [fill, black] (-6,-3) circle[radius = 1mm];
			\node [right] at (6,1) {${(5,0)}$};
			\draw [fill, black] (6,1) circle[radius = 1mm];
			\node [right] at (6,-1) {${(7,0)}$};
			\draw [fill, black] (6,-1) circle[radius = 1mm];
			\node [right] at (6,-3) {${(8,0)}$};
			\draw [fill, black] (6,-3) circle[radius = 1mm];
			
			\draw [] (-6,-1) -- (-6,1);
			\draw [] (-6,-1) -- (-6,-3);
			\draw [] (-6,-1) -- (-4,2);
			\draw [] (-6,-1) -- (-4,0);
			\draw [] (-6,-1) -- (-4,-2);
			\draw [] (-6,-1) -- (-4,-4);
			\draw [] (-6,1) -- (-4,4);
			\draw [] (-6,1) -- (-3,4);
			\draw [] (-6,1) -- (-2,4);
			\draw [] (-6,1) -- (-1,4);
			\draw [] (-6,1) -- (6,1);
			\draw [] (-6,-3) -- (6,-3);
			\draw [] (-6,-3) -- (-4,-6);
			\draw [] (-6,-3) -- (-3,-6);
			\draw [] (-6,-3) -- (-2,-6);
			\draw [] (-6,-3) -- (-1,-6);
			\draw [] (6,-1) -- (6,-3);
			\draw [] (6,1) -- (1,4);
			\draw [] (6,1) -- (2,4);
			\draw [] (6,1) -- (3,4);
			\draw [] (6,1) -- (4,4);
			\draw [] (6,-1) -- (6,1);
			\draw [] (6,-1) -- (4,2);
			\draw [] (6,-1) -- (4,0);
			\draw [] (6,-1) -- (4,-2);
			\draw [] (6,-1) -- (4,-4);
			\draw [] (6,-3) -- (1,-6);
			\draw [] (6,-3) -- (2,-6);
			\draw [] (6,-3) -- (3,-6);
			\draw [] (6,-3) -- (4,-6);
			\draw [] (-4,2) -- (4,2);
			\draw [] (-4,2) -- (4,0);
			\draw [] (-4,2) -- (4,-2);
			\draw [] (-4,2) -- (-4,4);
			\draw [] (-4,2) -- (-1,-6);
			\draw [] (-4,0) -- (4,0);
			\draw [] (-4,0) -- (4,2);
			\draw [] (-4,0) -- (4,-4);
			\draw [] (-4,0) -- (-3,4);
			\draw [] (-4,0) -- (-2,-6);
			\draw [] (-4,-2) -- (4,2);
			\draw [] (-4,-2) -- (4,-2);
			\draw [] (-4,-2) -- (4,-4);
			\draw [] (-4,-2) -- (-2,4);
			\draw [] (-4,-2) -- (-3,-6);
			\draw [] (-4,-4) -- (4,0);
			\draw [] (-4,-4) -- (4,-2);
			\draw [] (-4,-4) -- (4,-4);
			\draw [] (-4,-4) -- (-1,4);
			\draw [] (-4,-4) -- (-4,-6);
			\draw [] (4,2) -- (4,4);
			\draw [] (4,2) -- (1,-6);
			\draw [] (4,0) -- (2,-6);
			\draw [] (4,0) -- (3,4);
			\draw [] (4,-2) -- (3,-6);
			\draw [] (4,-2) -- (2,4);
			\draw [] (4,-4) -- (4,-6);
			\draw [] (4,-4) -- (1,4);
			\draw [] (-4,4) -- (1,-6);
			\draw [] (-4,4) -- (2,-6);
			\draw [] (-4,4) -- (3,-6);
			\path [-] (-4,4) edge[bend left=60] node {} (1,4);
			
			\draw [] (-3,4) -- (1,-6);
			\draw [] (-3,4) -- (2,-6);
			\draw [] (-3,4) -- (4,-6);
			\path [-] (-3,4) edge[bend left=60] node {} (2,4);
			
			\draw [] (-2,4) -- (1,-6);
			\draw [] (-2,4) -- (4,-6);
			\draw [] (-2,4) -- (3,-6);
			\path [-] (-2,4) edge[bend left=60] node {} (3,4);
			
			\draw [] (-1,4) -- (2,-6);
			\draw [] (-1,4) -- (4,-6);
			\draw [] (-1,4) -- (3,-6);
			\path [-] (-1,4) edge[bend left=60] node {} (4,4);
			
			\draw [] (1,4) -- (-3,-6);
			\draw [] (1,4) -- (-2,-6);
			\draw [] (1,4) -- (-1,-6);
			
			\draw [] (2,4) -- (-4,-6);
			\draw [] (2,4) -- (-2,-6);
			\draw [] (2,4) -- (-1,-6);
			
			\draw [] (3,4) -- (-3,-6);
			\draw [] (3,4) -- (-4,-6);
			\draw [] (3,4) -- (-1,-6);
			
			\draw [] (4,4) -- (-3,-6);
			\draw [] (4,4) -- (-2,-6);
			\draw [] (4,4) -- (-4,-6);
			
			\path [-] (-4,-6) edge[bend right=60] node {} (1,-6);
			\path [-] (-3,-6) edge[bend right=60] node {} (2,-6);
			\path [-] (-2,-6) edge[bend right=60] node {} (3,-6);
			\path [-] (-1,-6) edge[bend right=60] node {} (4,-6);

		\end{tikzpicture}
	\end{subfigure}   
	\caption{The POE graph $\Gamma({\mathbb{Z}_{3^25}})$ consists of the above two connected components. The component at the top is isomorphic to $\Gamma({\mathbb{Z}_{15}})$, as discussed in \autoref{Disconnected_Lemma_2}.}
\end{figure}

\begin{lemma}\label{Disconnected_Lemma_2}
	The graph  $\Gamma(\mathbb{Z}_{p_1^{n_1}p_2^{n_2}})$ is disconnected and one connected component is isomorphic to $\Gamma(\mathbb{Z}_{p_1p_2})$.
\end{lemma}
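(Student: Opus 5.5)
We follow the template of \autoref{Disconnected lemma 2}, working in $\mathbb{Z}_{p_1^{n_1}} \times \mathbb{Z}_{p_2^{n_2}}$ and using the adjacency criterion stated just before the lemma: $(x_1,y_1)\sim(x_2,y_2)$ iff either $o(x_1x_2)=p_1$ and $y_1y_2=e_2$, or $x_1x_2=e_1$ and $o(y_1y_2)=p_2$. We may assume $n_1\ge 2$ or $n_2\ge 2$, since otherwise the group is $\mathbb{Z}_{p_1p_2}$ and there is nothing to prove.

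Let $C^{(0)}$ be the component of the identity. First we show that $C^{(0)}$ contains the subgroup $H=\{(x,y): x^{p_1}=e_1,\ y^{p_2}=e_2\}$, which has $p_1p_2$ elements. The identity is adjacent to every $(x,e_2)$ with $o(x)=p_1$ and every $(e_1,y)$ with $o(y)=p_2$. A general element $(x,y)$ of order $p_1p_2$ is reached by the path $(e_1,e_2)\sim(x^{-1},e_2)\sim(x,y)$. Here the second edge holds because $x^{-1}x=e_1$ and $o(y)=p_2$. Hence $H\subseteq C^{(0)}$.

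Next we show that $C^{(0)}=H$, which gives disconnectedness since $|H|=p_1p_2<p_1^{n_1}p_2^{n_2}$. It suffices to show that no vertex outside $H$ is adjacent to a vertex of $H$. Suppose $h=(x_1,y_1)\in H$ and $(x_2,y_2)\sim h$. In the first case of the criterion, $o(x_1x_2)=p_1$, so $x_2=(x_1x_2)x_1^{-1}$ has order dividing $p_1$, and $y_2=y_1^{-1}$ has order dividing $p_2$. In the second case, $x_2=x_1^{-1}$, and $y_2=(y_1y_2)y_1^{-1}$ has order dividing $p_2$. In both cases $(x_2,y_2)\in H$, using \autoref{lcm_lemma}, or simply the fact that the elements of order dividing $p$ form a subgroup of a cyclic $p$-group. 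So $H$ is closed under adjacency, and $C^{(0)}=H$.

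Finally, we define $f:\mathbb{Z}_{p_1}\times\mathbb{Z}_{p_2}\to H$ by $f(a,b)=(a p_1^{n_1-1},\,b p_2^{n_2-1})$, exactly as in \autoref{Disconnected lemma 2}. This map is an injective group homomorphism between groups of the same order, so it is an isomorphism. A group isomorphism preserves element orders, so $o(uv)$ is prime iff $o(f(u)f(v))$ is prime. Moreover, for $u,v\in H$ all products stay inside $H$, so adjacency computed within $H$ agrees with adjacency in the big graph. Therefore $f$ is a graph isomorphism $\Gamma(\mathbb{Z}_{p_1p_2})\cong\Gamma(\mathbb{Z}_{p_1}\times\mathbb{Z}_{p_2})\to C^{(0)}$.

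The main obstacle is the closure step, because one must exclude vertices of mixed order such as $p_1^{k_1}p_2$ with $k_1\ge 2$, not only the ``pure'' higher-order elements. Phrasing the argument through the subgroup $H$ handles all of these cases uniformly.
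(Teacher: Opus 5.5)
Your proof is correct and follows essentially the same route as the paper. Short paths from the identity place every element of order $p_1$, $p_2$, $p_1p_2$ in $C^{(0)}$; an order argument shows no edge leaves this set; and the map $f(a,b)=(ap_1^{n_1-1},bp_2^{n_2-1})$ gives the isomorphism with $\Gamma(\mathbb{Z}_{p_1}\times\mathbb{Z}_{p_2})$.

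Your closure step via the subgroup $H$ is cleaner than the paper's contradiction argument. Rely on the subgroup fact rather than the lcm lemma, which as stated holds only up to divisibility (e.g.\ $o(xx^{-1})=1$). Also, when $n_1=n_2=1$ the disconnectedness claim fails, so that case is excluded by hypothesis rather than being trivially true.
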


\begin{proof}
Let $o(x_1,e_2)=p_1$ and $o(e_1,y_1)=p_2$, then using \autoref{lcm_lemma} we have $o(x_1,y_1)=p_1p_2$ and $o(x_1^{-1},y_1)=o(x_1,y_1^{-1})=p_1p_2$. All the elements of order $p_1p_2$ are of the form $\{(x,y):o(x)=p_1,o(y)=p_2\}$. Therefore, maintaining the adjacency relation of POE graphs we have $(x_1,e_2)\sim (x_1^{-1},y_1)$ and $(e_1,y_1)\sim (x_1,y_1^{-1})$. Since  $(e_1,e_2)\sim (x_1,e_2)\sim (x_1^{-1},y_1)$ and $(e_1,e_2)\sim (e_1,y_1)\sim (x_1,y_1^{-1})$, all the elements of order $p_1p_2$ are in the same connected component containing the identity $(e_1, e_2)$, which we marked as $C^{(0)}$.
  
Let $C^{(0)}$ contains an element $(x_2,y_2)$ such that $o(x_2,y_2)=p_1^{k_1}p_2^{k_2}$ with $k_1\geq 2$ or $k_2\geq 2$, then $(x_2,y_2)$ is adjacent to any one of the elements of order $p_1$, $p_2$ or $p_1p_2$. Now, applying the \autoref{lcm_lemma} we have the following cases:
\begin{itemize}
	\item 
		If $(x_2,y_2)\sim (x_1,e_2)$, where $o(x_1,e_2)=p_1$, then $o((x_2,y_2)(x_1,e_2))=p_1^{k_1}p_2^{k_2}$, which is not a prime.
 	\item 
 		If $(x_2,y_2)\sim (e_1,y_1)$, where $o(e_1,y_1)=p_2$, then $o((x_2,y_2)(e_1,y_1))=p_1^{k_1}p_2^{k_2}$, which is not a prime.
 	\item 
 		If $(x_2,y_2)\sim (x_1,y_1)$, where $o(x_1,y_1)=p_1p_2$, then $o((x_2,y_2)(x_1,y_1))=p_1^{k_1}p_2^{k_2}$, which is not a prime.
\end{itemize} 
Each of the above cases contradicts to the definition of POE graphs. Therefore, $C^{(0)}$ has no element of order other than $p_1,p_2$ and $p_1p_2$. Hence, the graph $\Gamma(\mathbb{Z}_{p_1^{n_1}}\times \mathbb{Z}_{p_2^{n_2}})$ has multiple components. Therefore, it is disconnected.
 
Now, we prove that the connected component $C^{(0)}$ is isomorphic to $\Gamma(\mathbb{Z}_{p_1p_2})$. We can express the group $\mathbb{Z}_{p_1p_2}$ as $\mathbb{Z}_{p_1}\times \mathbb{Z}_{p_2}$ and the elements of the groups are $(e_1,e_2)$, elements of order $p_1,p_2$ and $p_1p_2$ which are of the same form as they are in $C^{(0)}$. Also, the number of elements of every order is identical in both $C^{(0)}$ and $\Gamma(\mathbb{Z}_{p_1}\times \mathbb{Z}_{p_2})$. By the definition of POE graph, two elements of $C^{(0)}$ and the graph $\Gamma(\mathbb{Z}_{p_1}\times \mathbb{Z}_{p_2})$  are adjacent in the same manner. Hence, the connected component $C$ is isomorphic to $\Gamma(\mathbb{Z}_{p_1}\times \mathbb{Z}_{p_2})$ that is $\Gamma(\mathbb{Z}_{p_1p_2})$ following the same manner as \autoref{Disconnected lemma 2}. \end{proof}

\begin{lemma}\label{adjacency_lemma_3}
	In the graph  $\Gamma(\mathbb{Z}_{p_1^{n_1}p_2^{n_2}})$, the elements of orders $p_1^{k_1}$ are adjacent to the elements of order $p_1^{k_1}$ and $p_1^{k_1}p_2$. Also,the elements of order $p_2^{k_2}$ is adjacent to the elements of order $p_2^{k_2}$ and $p_1p_2^{k_2}$, where $k_1, k_2 = 2, 3, \dots, (n_2-1)$. 
\end{lemma}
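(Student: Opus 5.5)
The plan is to work in $\mathbb{Z}_{p_1^{n_1}} \times \mathbb{Z}_{p_2^{n_2}}$ and use the adjacency criterion stated just before \autoref{Disconnected_Lemma_2}. Recall that $(x_1,y_1)\sim(x_2,y_2)$ holds if and only if one of two conditions is met: either $o(x_1x_2)=p_1$ and $y_1y_2=e_2$, or $x_1x_2=e_1$ and $o(y_1y_2)=p_2$. This criterion itself follows from \autoref{lcm_lemma}, since $o((x_1x_2,y_1y_2))=\lcm\{o(x_1x_2),o(y_1y_2)\}$ is prime exactly when one coordinate is trivial and the other has prime order. An element of order $p_1^{k_1}$ with $k_1\geq 2$ has the form $(x,e_2)$ with $o(x)=p_1^{k_1}$ in $\mathbb{Z}_{p_1^{n_1}}$. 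I will take an arbitrary neighbour $(x',y')$ and split into the two cases of the criterion.

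\textbf{Case 1: a $p_1$-edge.} Here $y'=e_2^{-1}=e_2$ and $o(xx')=p_1$. Both $x$ and $x'$ lie in the cyclic $p_1$-group $\mathbb{Z}_{p_1^{n_1}}$, and $x\sim x'$ in $\Gamma(\mathbb{Z}_{p_1^{n_1}})$, so \autoref{same-order-lemma} gives $o(x')=o(x)=p_1^{k_1}$. The case $x'=x$ is excluded because $o(x^2)=p_1^{k_1}$ is not prime. Hence $o(x',y')=p_1^{k_1}$.

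\textbf{Case 2: a $p_2$-edge.} Here $x'=x^{-1}$, so $o(x')=p_1^{k_1}$, and $o(y')=o(e_2y')=p_2$. By \autoref{lcm_lemma}, $o(x',y')=\lcm\{p_1^{k_1},p_2\}=p_1^{k_1}p_2$.

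Together the two cases show that every neighbour has order $p_1^{k_1}$ or $p_1^{k_1}p_2$, which is what is claimed.

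The statement for elements $(e_1,y)$ with $o(y)=p_2^{k_2}$, $k_2\geq 2$, follows from the same two cases with the coordinates swapped. A $p_2$-edge forces $x'=e_1$ and $o(y')=p_2^{k_2}$, again by \autoref{same-order-lemma}, now applied in $\mathbb{Z}_{p_2^{n_2}}$. A $p_1$-edge forces $y'=y^{-1}$ and $o(x')=p_1$, so the neighbour has order $p_1p_2^{k_2}$.

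If one also wants to show that both kinds of neighbours actually occur, I would exhibit them explicitly. The vertex $(x^{-1},y_1)$ with $o(y_1)=p_2$ is a neighbour of order $p_1^{k_1}p_2$. A neighbour of order $p_1^{k_1}$ is obtained as $(x'',e_2)$, where $xx''$ is an element of order $p_1$; as in \autoref{degree-lemma-2}, there are $p_1-1$ such choices of $x''$.

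The only delicate step is Case 1, where \autoref{same-order-lemma} has to be invoked inside the single coordinate group rather than in the full product. The product group is not a $p$-group, so the lemma does not apply there directly. Apart from this, the range of the indices should be read as $2\le k_1\le n_1$ and $2\le k_2\le n_2$, correcting the index range written in the statement.
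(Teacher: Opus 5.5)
Your proposal is correct and follows the paper's proof: you use the same split into a $p_1$-edge case, where \autoref{same-order-lemma} is applied inside the coordinate group $\mathbb{Z}_{p_1^{n_1}}$, and a $p_2$-edge case, which forces $x'=x^{-1}$ and $o(y')=p_2$, and you treat the $p_2^{k_2}$ half symmetrically. Your index correction to $2\le k_i\le n_i$ is also right; the one thing worth making explicit is that $x'\neq e_1$ in Case~1 (otherwise $o(xx')=p_1^{k_1}$ would not be prime), since \autoref{same-order-lemma} is stated only for non-identity elements.
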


\begin{proof}
	The elements of order $p_1^{k_1}$ of the group $\mathbb{Z}_{p_1^{k_1}}\times \mathbb{Z}_{p_2^{k_2}} = \{(x,y): x\in \mathbb{Z}_{p_1^{n_1}},y\in \mathbb{Z}_{p_2^{k_2}}\}$ can be expressed as follows:
	$$\{(x,e_2): o(x)=p_1^{k_1} ~\text{where}~ k_1 = 2, 3, \dots, (n_1-1)\}.$$
	If $o(x_1,y_1) = p_1^{k_1}$ then $(x_1,y_1) = (x_1,e_2)$ with $o(x_1)=p_1^{k_1}$. If $(x_1,y_1)\sim (x_2,y_2)$, then following the definition of POE graphs, we have either $o((x_1,y_1)(x_2,y_2))=p_1$ or $p_2$. Now, we have the following cases:
	
	\textbf{Case - I:} Let $o((x_1,y_1)(x_2,y_2))=p_1$. Then,
	\begin{equation}
		\begin{split}
			& o((x_1,y_1)(x_2,y_2))=p_1\\
			\implies & o(x_1x_2,y_1y_2)=p_1\\
			\implies & o(x_1x_2)=p_1, y_1y_2=e_2.\\
		\end{split}
	\end{equation}
	Recall that, we considered the elements of order $p_1^{k_1}$ and $o(x_1,y_1)=p_1^{k_1}$. Also, we know that in the element $(x_1,y_1)$, the first component $x_1$ is coming from the group $\mathbb{Z}_{p_1^{n_1}}$ and and the second component is coming from the group $\mathbb{Z}_{p_2^{n_2}}$. Clearly, if $o(x_1,y_1)=p_1^{k_1}$, then $o(x_1)=p_1^{k_1}$ and $o(y_1)=1$ that is $y_1=e_2$. Also, we have $y_2=e_2$, thus, $(x_2,y_2)=(x_2,e_2)$. Now, the order of $((x_1,y_1)(x_2,y_2))$ depends on $o(x_1x_2)$, only. As $(x_1,e_2)$ and $(x_2,e_2)$ can be treated as the elements of the group $\mathbb{Z}_{p_1^{n_1}}$, following the \autoref{same-order-lemma}, $o(x_2,e_2)=p_1^{k_1}$.	Thus, $o(x_1,y_1)=o(x_2,y_2)=p_1^{k_1}$ for $k_1=2,3,\dots, (n_1-1)$.
	
	\textbf{Case - II:} Let $o((x_1,y_1)(x_2,y_2))=p_2.$
	
	\begin{equation}
		\begin{split}
			& o((x_1,y_1)(x_2,y_2))=p_2\\
			\implies & o(x_1x_2,y_1y_2)=p_2\\
			\implies & x_1x_2=e_1,o( y_1y_2)=p_2.\\
		\end{split}
	\end{equation}
	It is assumed that $o(x_1)=p_1^{k_1}$, that is $x_1 \neq e_1$. Clearly, $x_2=x_1^{-1}$ and $o(y_1y_2)=p_2$. Since $y_1=e_2$, $o(y_2)=p_2$. Thus, $o(x_2,y_2)=p_1^{k_1}p_2$ for $k_1=2,3,\dots,n_1-1$. 
	
	Combining the above two cases, we have the result. In a similar manner, the elements of order $p_2^{k_2}$ is adjacent to the elements of order $p_2^{k_2}$ and $p_1p_2^{k_2}$ with $k_2=2,3,\dots, n_2-1$.
\end{proof}

\begin{corollary}\label{adjacency_4}
	The elements of order $p_1^{k_1}p_2$ is adjacent to the elements of order $p_1^{k_1}$ and $p_1^{k_1}p_2$, where $k_1\geq 2$. Also, the elements of order $p_1p_2^{k_2}$ are adjacent to the elements of order $p_2^{k_2}$ and $p_1p_2^{k_2}$, where $k_2\geq 2$. 
\end{corollary}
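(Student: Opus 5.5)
The plan is a direct case analysis on the type of the product that produces an edge, just as in the proof of \autoref{adjacency_lemma_3}. Write the group as $\mathbb{Z}_{p_1^{n_1}}\times\mathbb{Z}_{p_2^{n_2}}$. An element of order $p_1^{k_1}p_2$ with $k_1\geq 2$ has the form $(x_1,y_1)$ with $o(x_1)=p_1^{k_1}$ and $o(y_1)=p_2$. This is because the order of a pair is the lcm of the coordinate orders (\autoref{lcm_lemma}), and the $p_1$-part and $p_2$-part of that lcm come from the respective coordinates.

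Take a neighbour $(x_2,y_2)$. By the adjacency characterization stated before \autoref{Disconnected_Lemma_2}, one of two things holds.

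\textbf{Case 1:} $o(x_1x_2)=p_1$ and $y_1y_2=e_2$. Then $y_2=y_1^{-1}$, so $o(y_2)=p_2$. Both $x_1$ and $x_2$ lie in the cyclic $p_1$-group $\mathbb{Z}_{p_1^{n_1}}$, and $x_1\sim x_2$ in its POE graph. Both are non-identity, since $o(x_1)=p_1^{k_1}$ and $x_2=e_1$ would force $o(x_1)=p_1$. So \autoref{same-order-lemma} gives $o(x_2)=p_1^{k_1}$, and therefore $o(x_2,y_2)=p_1^{k_1}p_2$.

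\textbf{Case 2:} $x_1x_2=e_1$ and $o(y_1y_2)=p_2$. Then $x_2=x_1^{-1}$ has order $p_1^{k_1}$, and $y_2\in\mathbb{Z}_{p_2^{n_2}}$ satisfies $y_1y_2=z$ with $o(z)=p_2$. Here I use the small observation that in a cyclic $p$-group the elements of order dividing $p$ form a subgroup. So $y_2=y_1^{-1}z$ has order $1$ or $p_2$. If $y_2=e_2$, the neighbour is $(x_1^{-1},e_2)$, of order $p_1^{k_1}$. Otherwise the neighbour is $(x_1^{-1},y_2)$, of order $p_1^{k_1}p_2$.

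Combining the two cases, every neighbour of an element of order $p_1^{k_1}p_2$ has order $p_1^{k_1}$ or $p_1^{k_1}p_2$. Both orders actually occur. Taking $x_2=x_1^{-1}$ and $y_2=e_2$ is allowed because $o(y_1)=p_2$, and this gives a neighbour of order $p_1^{k_1}$. Taking $y_2=y_1^{-1}$ together with any $x_2$ such that $o(x_1x_2)=p_1$ gives a neighbour of order $p_1^{k_1}p_2$.

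The statement for elements of order $p_1p_2^{k_2}$ follows by the symmetric argument with the roles of the coordinates swapped. The main point needing care is the order bookkeeping in Case 2, namely showing that $y_2$ cannot acquire a higher $p_2$-power order. The subgroup observation above settles this.
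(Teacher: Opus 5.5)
Your proposal is correct and follows the paper's route. You split on whether the product has order $p_1$, where the same-order lemma preserves $o(x_1)$, or order $p_2$, which forces $x_2=x_1^{-1}$. Your Case 2 is more complete than the paper's: the paper only records the neighbour $(x_1^{-1},e_2)$ there, whereas your subgroup observation also accounts for the neighbours $(x_1^{-1},y_2)$ with $y_2\neq e_2$ and rules out any other orders.
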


\begin{proof}
	Following the \autoref{adjacency_lemma_3}, if an element $(x_1,y_1)$ of order $p_1^{k_1}p_2$ is adjacent to the element $(x_2,y_2)$ of order $p_1^{k_1}$, then $(x_2,y_2)=(x_1^{-1},e_2)$. Here, $o((x_1,y_1)(x_2,y_2))=p_2$. Let $o(x_1,y_1)=p_1^{k_1}p_2$ and $(x_1,y_1)\sim (x_2,y_2)$ with $o((x_1,y_1)(x_2,y_2))=p_1$. Thus,
	\begin{equation}
		\begin{split}
			& o((x_1,y_1)(x_2,y_2))=p_1 \\
			\implies & o(x_1x_2,y_1y_2)=p_1\\
			\implies & o(x_1x_2)=p_1, y_1y_2=e_2\\
			\implies & o(x_1)=o(x_2) , y_2=y_1^{-1}\\
			\implies & o(x_1,y_1) = o(x_2,y_2).\\
		\end{split}
	\end{equation}
	It is clear that, for an element $(x_1,y_1)$ of order $p_1^{k_1}p_2$ is adjacent to the elements of order $p_1^{k_1}$ and $p_1^{k_1}p_2$. 
	
	Similarly, the elements of order $p_1p_2^{k_2}$ are adjacent to the elements of order $p_2^{k_2}$ and $p_1p_2^{k_2}$. 
\end{proof}

By the \autoref{adjacency_lemma_3} and its \autoref{adjacency_4}, for $k_1\geq 2$ the elements of order $p_1^{k_1}$ and $p_1^{k_1}p_2$ belong to the same component according to their adjacency condition. Following the \autoref{Disconnected_Lemma_2}, the graph $\Gamma(\mathbb{Z}_{p_1^{n_1}p_2^{n_2}})$ is disconnected, we can denote the connected components of the graph as $C^{(1)},C^{(2)}, \dots, C^{(l)}$. Let $C^{(1)}$ be a connected component containing the elements of order $p_1^{k_1}$ and $p_1^{k_1}p_2$, where $k_1\geq 2$. 

\begin{lemma}\label{elements_count_1}
	The component $C^{(1)}$ contains at least $2p_1p_2$ vertices with degree $(p_1 + p_2 - 2)$.
\end{lemma}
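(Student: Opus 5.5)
To prove \autoref{elements_count_1}, I would first compute the degree of each vertex of $C^{(1)}$ directly from the adjacency criterion stated above. After that I would exhibit $2p_1p_2$ distinct vertices that all lie in one component.

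\textbf{Degree.} Take $v=(x,y)\in C^{(1)}$, so $o(x)=p_1^{k_1}$ with $k_1\ge 2$ and $y=e_2$ or $o(y)=p_2$. A vertex $w=(x',y')$ is adjacent to $v$ exactly when one of two things happens. Either $o(xx')=p_1$ and $y'=y^{-1}$, or $xx'=e_1$ and $o(yy')=p_2$. These two cases cannot overlap: in the first, $xx'\neq e_1$. In the first case $w=(x^{-1}z,y^{-1})$ with $z$ ranging over the $p_1-1$ elements of order $p_1$. In the second case $w=(x^{-1},y^{-1}u)$ with $u$ ranging over the $p_2-1$ elements of order $p_2$. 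Distinct choices of $z$ or $u$ give distinct $w$.

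We must still rule out $w=v$, since the graph has no loops. In the first case, $w=v$ would force $x^2=z$ to have order $p_1$, which is impossible because $o(x^2)=p_1^{k_1}$ for $p_1$ odd. In the second case, $w=v$ would force $x^2=e_1$, which is again impossible. Hence $\deg v=(p_1-1)+(p_2-1)=p_1+p_2-2$. This part is the same computation as \autoref{degree-lemma-2}, carried out in both coordinates.

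\textbf{Counting vertices.} Fix $x$ with $o(x)=p_1^{k_1}$. By the proof of \autoref{structure theorem 1}, the $\mathbb{Z}_{p_1^{n_1}}$-component of $x$ has $2p_1$ elements, say $x_1,\dots,x_{p_1},x_1^{-1},\dots,x_{p_1}^{-1}$. Every $p_1$-edge $x_i\sim x_j^{-1}$ there lifts to an edge $(x_i,e_2)\sim(x_j^{-1},e_2)$. More generally, $(x_i,y)\sim(x_j^{-1},y^{-1})$ for any $y$, so the first-coordinate structure is reproduced in every fibre.

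For the second coordinate, each $p_2$-edge has the form $(a,b)\sim(a^{-1},b^{-1}u)$. Taking $b=e_2$ connects $(x_i,e_2)$ to all $(x_i^{-1},u)$ with $o(u)=p_2$. Combining these moves, the candidate vertex set is
\[
\{(x_i^{\pm1},y): 1\le i\le p_1,\ y\in\langle u\rangle\},
\]
where $\langle u\rangle$ is the unique subgroup of order $p_2$. This set has $2p_1\cdot p_2$ elements.

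To show the set is connected, I would take $(x_i,y)$ and first use $p_2$-edges to reach $(x_i^{\pm1},e_2)$. This works because $\{e_2\}\cup\{\text{order } p_2\}$ forms the connected graph $\Gamma(\mathbb{Z}_{p_2})$, so the second coordinate can be walked to $e_2$ while the first coordinate flips between $x_i$ and $x_i^{-1}$. From there I would use lifted $p_1$-edges inside the $2p_1$-element component. \autoref{distance-lemma} supplies the path between $x$ and $x^{-1}$.

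\textbf{Main obstacle.} The delicate step is this connectivity argument. Each $p_2$-step inverts the first coordinate, and each $p_1$-step inverts the second coordinate. So I need to verify that every sign pattern $(x_i^{\pm1},y^{\pm1})$ is actually reachable. I would handle this by parity bookkeeping: alternate one $p_1$-step and one $p_2$-step, mirroring the explicit path constructions in \autoref{adjacency 5} and \autoref{only elements}. Since the statement only asks for \emph{at least} $2p_1p_2$ vertices, it suffices to show these specific $2p_1p_2$ vertices lie in the component of $(x,e_2)$. No maximality argument is needed.
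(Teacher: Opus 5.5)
Your proposal is correct and follows essentially the paper's route: you lift the $2p_1$-vertex component of $\Gamma(\mathbb{Z}_{p_1^{n_1}})$ to second coordinate $e_2$, attach to each of its vertices the $p_2-1$ neighbours reached by $p_2$-edges, and count $2p_1+2p_1(p_2-1)=2p_1p_2$ vertices; your two-case degree count also covers the order-$p_1^{k_1}p_2$ vertices, which the paper's proof here does not (it only checks the order-$p_1^{k_1}$ vertices and handles the rest in a later lemma). The parity bookkeeping you flag as the main obstacle is unnecessary, because each $(x_i^{\pm1},y)$ with $y\neq e_2$ is directly adjacent to $(x_i^{\mp1},e_2)$ in the connected lifted copy, so your candidate set is connected by construction.
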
 

\begin{proof}
	Recall from \autoref{distance-lemma} that an element and its inverse belong to the same connected component for the POE graph of an Abelian group.
	
	Let $(x_1,y_1)\in C^{(1)}$ such that $o(x_1,y_1)=p_1^{k_1}$. Then, $(x_1,y_1)=(x_1,e_2)$ and by \autoref{degree-lemma-2}, each element of order $p_1^{k_1}$ is adjacent to $(p_1-1)$ elements of order $p_1^{k_1}$. Since $(x_1,y_1) = (x_1,e_2)$, the elements of order $p_1^{k_1}$ can be treated as the vertices of the graph $\Gamma(\mathbb{Z}_{p_1^{n_1}})$. Thus, following \autoref{degree-lemma-2}, and \autoref{structure theorem 1}, we say $C^{(1)}$ contains $2p_1$ elements of order $p_1^{k_1}$. Also, by \autoref{adjacency_lemma_3}, if an element $(x_1,e_2)$ of order $p_1^{k_1}$ is adjacent to an element $(x_2,y_2)$ of order $p_1^{k_1}p_2$ then $(x_2,y_2)=(x_1^{-1},y_2)$ with $o(y_2)=p_2$. Since the group $\mathbb{Z}_{p_1^{n_1}p_2^{n_2}}$ contains $(p_2-1)$ elements of order $p_2$, then for each vertex in $C^{(1)}$ associated with the elements of order $p_1^{k_1}$, we get $(p_2-1)$ vertices associated with the elements of order $p_1^{k_1}p_2$. By \autoref{same-order-lemma} and \autoref{degree-lemma-2}, each vertex associated with the element of order $p_1^{k_1}$ in $C^{(1)}$ has degree $(p_1+p_2-2)$. Thus, the number of vertices in $C^{(1)}$ is at least $2p_1+2p_1(p_2-1)=2p_1p_2$. 
	\end{proof} 
	 
	 \begin{lemma}\label{break_lemma_p_1p_2}
	 	The set of elements of order $p_1^{k_1}$ in the component $C^{(1)}$ are adjacent to $2p_1(p_2-1)$ number of elements of order $p_1^{k_1}p_2$ to form the $p_2-$edges.
	 \end{lemma}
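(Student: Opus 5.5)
We identify the elements of order $p_1^{k_1}$ in $C^{(1)}$ with the $2p_1$ vertices of a single component of $\Gamma(\mathbb{Z}_{p_1^{n_1}})$. By \autoref{structure theorem 1} and the proof of \autoref{elements_count_1}, these are $(x,e_2)$ and $(x^{-1},e_2)$ together with $(y_i,e_2)$ and $(y_i^{-1},e_2)$ for $i=1,\dots,p_1-1$. Call this set $S$; it has $|S|=2p_1$, and all its elements have the same order $p_1^{k_1}$ by \autoref{same-order-lemma}. The plan is to describe the $p_2$-edges leaving $S$ explicitly, and then count their endpoints by exhibiting an injection.

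First we characterize the $p_2$-neighbours of an element of $S$. By Case II in the proof of \autoref{adjacency_lemma_3}, $(u,e_2)\in S$ is joined by a $p_2$-edge to $(w,z)$ exactly when $uw=e_1$ and $o(z)=p_2$. So the $p_2$-neighbours of $(u,e_2)$ form the set
\[
N(u)=\{(u^{-1},z): o(z)=p_2\}.
\]
This set has exactly $p_2-1$ elements, since $\mathbb{Z}_{p_2^{n_2}}$ has $p_2-1$ elements of order $p_2$. Each of them has order $\lcm(p_1^{k_1},p_2)=p_1^{k_1}p_2$ by \autoref{lcm_lemma}.

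Next we show that the sets $N(u)$ are pairwise disjoint as $(u,e_2)$ ranges over $S$. If $(u^{-1},z)=(u'^{-1},z')$, then $u^{-1}=u'^{-1}$, so $u=u'$, since inversion is a bijection. Hence the $p_2$-neighbours of distinct elements of $S$ are distinct. The union $\bigcup_{(u,e_2)\in S}N(u)$ therefore has exactly $2p_1(p_2-1)$ elements. Every edge joining $S$ to an element of order $p_1^{k_1}p_2$ must be a $p_2$-edge. Indeed, a $p_1$-edge would force $y_1y_2=e_2$ and thus a second coordinate equal to $e_2$, contradicting order $p_1^{k_1}p_2$. So these $2p_1(p_2-1)$ vertices are all the order-$p_1^{k_1}p_2$ neighbours of $S$. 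They all lie in $C^{(1)}$, because they are adjacent to vertices of $C^{(1)}$.

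The step requiring the most care is making sure $S$ really is closed under inversion and has size $2p_1$ inside the product group. Closure under inversion is needed so that the map $(u,e_2)\mapsto u^{-1}$ stays within the first coordinates of order $p_1^{k_1}$ belonging to $C^{(1)}$. For this we invoke \autoref{distance-lemma}, which puts $x$ and $x^{-1}$ in the same component, together with the explicit description of the component in \autoref{structure theorem 1}. Here we use that the subgraph on $\{(x,e_2)\}$ induced by $p_1$-edges is a copy of $\Gamma(\mathbb{Z}_{p_1^{n_1}})$. Once this is in place, the counting above is a direct bijection argument, and the conclusion follows.
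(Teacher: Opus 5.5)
Your proof is correct and is essentially the paper's argument. Both rest on the fact that an order-$p_1^{k_1}p_2$ vertex $(u^{-1},z)$ has exactly one order-$p_1^{k_1}$ neighbour, namely $(u,e_2)$, so the sets of $p_2-1$ $p_2$-neighbours of the $2p_1$ vertices of $S$ are pairwise disjoint; the paper simply lists them as $S_2,\dots,S_5$ by pairing each $y_j$ with $y_j^{-1}$. One small remark: the closure of $S$ under inversion that you single out as the delicate step plays no role in this count, because all you use is injectivity of $u\mapsto u^{-1}$ and $|S|=2p_1$, which you and the paper both take from \autoref{elements_count_1}.
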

	\begin{proof}
	Now, we need to prove that these are the only $2p_1p_2$ number of elements in $C^{(1)}$. By \autoref{adjacency_4}, the elements of order $p_1^{k_1}p_2$ is adjacent to the elements of order $p_1^{k_1}$ and $p_1^{k_1}p_2$. Let $(x_1,e_2)\sim (x_2,y_2)$, where $o(x_1,e_2)=p_1^{k_1}$ and $o(x_2,y_2)=p_1^{k_1}p_2$. Clearly, $x_2=x_1^{-1}$ and $o(y_2)=p_2$. Thus, an element $(x_2,y_2)$ of order $p_1^{k_1}p_2$ can be adjacent to only one element of order $p_1^{k_1}$.
	
	 Let us denote the elements of order $p_1^{k_1}(k_1\geq 2)$ as $$S_1=\{(x_1,e_2),(x_2,e_2),\dots, (x_{p_1},e_2),\dots, (x_1^{-1},e_2), (x_2^{-1},e_2), \dots, (x_{p_1}^{-1},e_2)\}.$$ Let the elements $(x_i,e_2)$ of order $p_1^{k_1}(k_1\geq 2)$ are adjacent to $$ S_2=\{(x_i^{-1},y_1),(x_i^{-1},y_2),\dots, (x_i^{-1},y_{\frac{p_2-1}{2}})\}(i=1,2,\dots , p_1)$$ and $$S_3=\{(x_i^{-1},y_1^{-1}), (x_i^{-1},y_2^{-1}),\dots,
	 (x_i^{-1},y_{\frac{p_2-1}{2}}^{-1}) (i=1,2,\dots , p_1)\}$$ of order $p_1^{k_1}p_2(k_1\geq 2)$.

	  Similarly, the elements of order $p_1^{k_1}(k_1\geq 2)$ of the form $(x_i^{-1},e_2)$ are adjacent to the elements
	  $$S_4=\{(x_i,y_1),(x_i,y_2),\dots, (x_i,y_{\frac{p_2-1}{2}})\}(i=1,2,\dots , p_1)$$ and $$S_5=\{(x_i,y_1^{-1}),(x_i,y_2^{-1}),\dots, (x_i,y_{\frac{p_2-1}{2}}^{-1})\} (i=1,2,\dots , p_1)$$ of order $p_1^{k_1}p_2(k_1\geq 2)$. Here, $|S_2|=|S_3|=|S_4|=|S_5|=\frac{p_1(p_2-1)}{2}$. Hence, $|S_2|+|S_3|+|S_4|+|S_5|=2p_1(p_2-1)$.
	   \end{proof}
	   
	   \begin{lemma}\label{break_lemma_p_1p_2(a)}
	   	The connected component $C^{(1)}$ contains only $2p_12p_2$ number of vertices.
	   \end{lemma}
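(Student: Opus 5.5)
The plan is to show that $C^{(1)}$ equals an explicit set $S$ of exactly $2p_1p_2$ vertices. (I read the count ``$2p_12p_2$'' in the statement as $2p_1p_2$, which is what \autoref{elements_count_1} and \autoref{break_lemma_p_1p_2} suggest.) I will prove that $S$ is closed under adjacency and that it is connected.

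\textbf{Defining $S$.} Fix a vertex $(x_1,e_2)\in C^{(1)}$ with $o(x_1)=p_1^{k_1}$, $k_1\ge 2$. Let $D$ be the connected component of $\Gamma(\mathbb{Z}_{p_1^{n_1}})$ containing $x_1$. By \autoref{structure theorem 1}, $D$ has exactly $2p_1$ vertices, all of order $p_1^{k_1}$. By \autoref{distance-lemma}, $D$ is closed under inversion. Let $H=\{y\in\mathbb{Z}_{p_2^{n_2}}: y^{p_2}=e_2\}$, the subgroup of order $p_2$. Set
$$S=\{(x,y): x\in D,\ y\in H\},$$
so $|S|=2p_1\cdot p_2$. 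This set is exactly the union of the $2p_1$ elements of order $p_1^{k_1}$ found in \autoref{elements_count_1} and the $2p_1(p_2-1)$ elements of order $p_1^{k_1}p_2$ listed in \autoref{break_lemma_p_1p_2}.

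\textbf{Closure.} Take $(x,y)\in S$ and a neighbour $(x',y')$. By the adjacency characterization stated at the start of this section, there are two cases.
\begin{enumerate}
\item $o(xx')=p_1$ and $yy'=e_2$. Then $x\sim x'$ in $\Gamma(\mathbb{Z}_{p_1^{n_1}})$, so $x'\in D$. Also $y'=y^{-1}\in H$.
\item $xx'=e_1$ and $o(yy')=p_2$. Then $x'=x^{-1}\in D$. Since $yy'\in H$ and $y\in H$, we get $y'\in H$.
\end{enumerate}
Hence no edge leaves $S$, and $C^{(1)}\subseteq S$.

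\textbf{Connectivity.} I will show every vertex of $S$ is reachable from $(x_1,e_2)$.
\begin{enumerate}
\item Since $D$ is connected and each edge $x\sim x'$ of $D$ gives a $p_1$-edge $(x,e_2)\sim(x',e_2)$, all $2p_1$ vertices $(x,e_2)$ with $x\in D$ are reached.
\item For each such $x$ and each $y\in H\setminus\{e_2\}$, the $p_2$-edge $(x,e_2)\sim(x^{-1},y)$ exists. Because $D$ is closed under inversion, as $x$ runs over $D$ so does $x^{-1}$. This reaches every $(x,y)$ with $y\neq e_2$.
\end{enumerate}
Thus $S\subseteq C^{(1)}$, so $C^{(1)}=S$ and $|C^{(1)}|=2p_1p_2$.

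\textbf{Degree check.} The same case split counts neighbours: $p_1-1$ in the first case (neighbours of $x$ in $D$, by \autoref{degree-lemma-2}) and $p_2-1$ in the second (choices of $y'$). So every vertex of $C^{(1)}$ has degree $p_1+p_2-2$, consistent with \autoref{elements_count_1}.

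The main obstacle is making the closure step airtight. Its second case uses that $y\in H$ and $o(yy')=p_2$ force $y'\in H$. Its first case needs the component $D$ from \autoref{structure theorem 1} to be closed under $p_1$-edges; that holds by the definition of a connected component once we note that a $p_1$-edge with $yy'=e_2$ projects to an edge of $\Gamma(\mathbb{Z}_{p_1^{n_1}})$.
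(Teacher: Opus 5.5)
Your proof is correct and follows essentially the paper's route. The paper's candidate set $S_1\cup S_2\cup S_3\cup S_4\cup S_5$ is exactly your $D\times H$ (with $S_1=D\times\{e_2\}$ and $S_2\cup\cdots\cup S_5=D\times(H\setminus\{e_2\})$), it is reached from $S_1$ by $p_2$-edges, and the paper likewise finishes by checking that each vertex's $p_1+p_2-2$ neighbours stay inside it, so your product description with the two-case adjacency rule is a cleaner, uniform version of the same closure-plus-reachability argument. One small remark: for $p_1=3$ the literal hypothesis of \autoref{distance-lemma} fails in $\mathbb{Z}_{3^{n_1}}$, since its two elements of order $3$ are mutually inverse, but $D$ is still closed under inversion by the path $x\sim x^{-1}y\sim xy\sim x^{-1}$ with a single $y$ of order $p_1$.
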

	   
	   \begin{proof}
	   	Following the \autoref{break_lemma_p_1p_2}, the connected component $C^{(1)}$ contains all the elements from $S_2,S_3,S_4,S_5$.
	 Now, we need to prove that $V(C^{(1)})=S_1\cup S_2 \cup S_3 \cup S_4 \cup S_5$.
	
	Here, the elements of order $p_1^{k_1}p_2(k_1\geq 2)$ in $C^{(1)}$ are $S_2 \cup S_3 \cup S_4 \cup S_5$. We have already calculated the degree of the elements of order $p_1^{k_1}(k_1\geq 2)$, and thus, we are left with the total number of elements of order $p_1^{k_1}p_2(k_1\geq 2)$ and their degrees.
	
	Each element from $S_2$ is adjacent to an element of order $p_1^{k_1}(k_1\geq 2)$. Let $(x_i,y_m)\in S_2$, then by \autoref{adjacency_4}, the only element of order $p_1^{k_1}(k_1\geq 2)$ is $(x_i^{-1},e_2)\in S_1$. Similarly, if $(x_i^{-1},y_m^{-1})\in S_3$, then, so, it is adjacent to the element $(x_i,e_2)\in S_1$; if $(x_i,y_m)\in S_4$, then it is adjacent to $(x_i^{-1},e_2)\in S_1$; if $(x_i,y_m^{-1})\in S_5$, it is adjacent to $(x_i^{-1},e_2)\in S_1$. Following the \autoref{adjacency_4}, it is clear that, each element from $S_2,S_3,S_4,S_5$ can be adjacent to only one element of order $p_1^{k_1}(k_1\geq 2)$ in $S_1$.
	
	 Let $(x_s^{-1},y_m)\in S_2$, then $(x_s^{-1},y_m)\sim (x_s,e_2)$ and $((x_s^{-1},y_m)(x_s,e_2))=y_m$. This composition gives an element of order $p_2$. Following the  \autoref{adjacency_4}, $(x_s^{-1},y_m)$ is adjacent to $(x_s,y_j)\in S_4 (j=1,2,\dots , \frac{p_2-1}{2})$ and $(x_s,y_j^{-1})\in S_5 (j=1,2,\dots, \frac{p_2-1}{2}) \text{and} j\neq m$. Composition with the elements of $S_4$ and $S_5$ and $(x_s,e_2)$ gives the total $p_2-1$ number of elements of order $p_2$. 
	
	The element $(x_s^{-1},y_m)\sim (x,y_m^{-1})$ in $S_3$ and $S_5$  and by the \autoref{structure theorem 1}, these are $p_1-1$ in total. Thus, the elements from $S_2$ is adjacent to $p_1+p_2-2$ number of elements in $C^{(1)}$.   
	 
	Similarly, we can prove that the elements from $S_3,S_4,S_5$ are adjacent to $p_1+p_2-2$ number of elements and they are adjacent to the elements from $S_1,S_2,S_3,S_4,S_5$. Hence, $V(C^{(1)})=S_1\cup S_2 \cup S_3 \cup S_4 \cup S_5$. Thus, $|V(C^{1})|=2p_1p_2$.
\end{proof}

Let $C^{(2)}$ be a connected component of elements of order $p_2^{k_2}$ and $p_1p_2^{k_2}$ where $k_2\geq 2$.

\begin{corollary}
	The connected component $C^{(2)}$ contains $2p_1p_2$ number of vertices with degree $(p_1+p_2-2)$.
\end{corollary}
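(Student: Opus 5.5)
The plan is to mirror the argument for $C^{(1)}$ in \autoref{elements_count_1}, \autoref{break_lemma_p_1p_2} and \autoref{break_lemma_p_1p_2(a)}, exchanging the roles of the two coordinates and of $p_1$ and $p_2$.

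\textbf{Step 1: the vertices of order $p_2^{k_2}$.} Fix a vertex $(e_1,y)$ of $C^{(2)}$ with $o(y)=p_2^{k_2}$ and $k_2\geq 2$. By \autoref{adjacency_lemma_3}, its neighbours come from two kinds of edges. The $p_2$-edges go to elements $(e_1,y')$ with $o(yy')=p_2$. Viewed inside $\Gamma(\mathbb{Z}_{p_2^{n_2}})$, \autoref{degree-lemma-2} gives $p_2-1$ such neighbours, and \autoref{structure theorem 1} shows that these order-$p_2^{k_2}$ vertices form a block of exactly $2p_2$ elements $(e_1,y_i)$, $(e_1,y_i^{-1})$ for $i=1,\dots,p_2$. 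The $p_1$-edges go to elements $(x,y^{-1})$ with $o(x)=p_1$, and there are $p_1-1$ of these. So each order-$p_2^{k_2}$ vertex has degree $p_1+p_2-2$.

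\textbf{Step 2: the vertices of order $p_1p_2^{k_2}$.} Using \autoref{adjacency_4}, each such vertex $(x,y_i^{\pm1})$ is adjacent to exactly one order-$p_2^{k_2}$ vertex, namely $(e_1,y_i^{\mp1})$. This gives $2p_2(p_1-1)$ distinct vertices of order $p_1p_2^{k_2}$. Splitting them into sets $T_2,\dots,T_5$ as in \autoref{break_lemma_p_1p_2}, with $x$ ranging over the $p_1-1$ elements of order $p_1$, the total count is $2p_2+2p_2(p_1-1)=2p_1p_2$.

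\textbf{Step 3: closure and degree count.} For $(x,y_i)$ with $o(x)=p_1$, the $p_1$-edges are $(x,y_i)\sim(x',y_i^{-1})$ with $o(xx')=p_1$. There are $p_1-1$ choices of $x'$: the $p_1-2$ elements of order $p_1$ other than $x^{-1}$, together with $x'=e_1$, which recovers the order-$p_2^{k_2}$ neighbour. The $p_2$-edges are $(x,y_i)\sim(x^{-1},y')$ with $o(y_iy')=p_2$. By \autoref{degree-lemma-2} there are $p_2-1$ such $y'$, and they lie in the same $\Gamma(\mathbb{Z}_{p_2^{n_2}})$-component, i.e.\ $y'\in\{y_j^{-1}: j\neq i\}$. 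Every neighbour therefore lies in the set already listed, so the set is closed under adjacency, and every vertex in it has degree $(p_1-1)+(p_2-1)=p_1+p_2-2$.

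\textbf{Main obstacle.} The delicate point is Step 3. We must check that the $p_2$-neighbours $y'$ really stay within the same $2p_2$-block from \autoref{structure theorem 1}, rather than spilling into another component of $\Gamma(\mathbb{Z}_{p_2^{n_2}})$. We must also check that the $p_1$-edges close up without introducing new second coordinates. I would handle both by noting that $y_iy'$ lies in the order-$p_2$ subgroup, so $y'\in y_i^{-1}\langle p_2\text{-elements}\rangle$, which is exactly the inverse block. Since there are no further neighbours, maximality and closure follow. This yields a $(p_1+p_2-2)$-regular component $C^{(2)}$ on $2p_1p_2$ vertices.
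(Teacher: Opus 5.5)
Your proposal is correct and takes the route the paper intends. The corollary is stated without proof as the mirror image of \autoref{elements_count_1}, \autoref{break_lemma_p_1p_2} and \autoref{break_lemma_p_1p_2(a)}, with the two coordinates and the primes $p_1,p_2$ interchanged, and that is what you carry out. Your closure step uses the fact that every $p_2$-neighbour $y'$ lies in $y_i^{-1}$ times the subgroup of elements of order dividing $p_2$, which is the inverse block; this justifies closure more cleanly than the paper's corresponding appeal to \autoref{structure theorem 1}.
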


\begin{lemma}\label{same_order_lemma_3}
	For an element $(x_1,y_1)\in \mathbb{Z}_{p_1^{n_1}p_2^{n_2}}$ such that $o(x_1,y_1)=p_1^{k_1}p_2^{k_2}$ is adjacent to $(x_2,y_2)$ such that $o(x_2,y_2)=p_1^{k_1}p_2^{k_2}$ where $k_1\geq 2$, and $k_2\geq 2$.
\end{lemma}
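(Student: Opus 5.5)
The statement (read as: any neighbour of an element of order $p_1^{k_1}p_2^{k_2}$ with $k_1,k_2\geq 2$ again has order $p_1^{k_1}p_2^{k_2}$) is a direct analogue of \autoref{adjacency 6}. The plan is to follow that proof closely: work in $\mathbb{Z}_{p_1^{n_1}}\times\mathbb{Z}_{p_2^{n_2}}$ and use the coordinatewise description of adjacency recorded at the start of this section.

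First, fix $(x_1,y_1)$ with $o(x_1,y_1)=p_1^{k_1}p_2^{k_2}$. Since $p_1$ and $p_2$ are distinct primes, \autoref{lcm_lemma} gives $o(x_1)=p_1^{k_1}$ and $o(y_1)=p_2^{k_2}$. Now suppose $(x_1,y_1)\sim(x_2,y_2)$. By the adjacency characterization, exactly one of two cases holds.

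\textbf{Case I:} $o(x_1x_2)=p_1$ and $y_1y_2=e_2$. Then $y_2=y_1^{-1}$, so $o(y_2)=p_2^{k_2}$. For the first coordinate, $x_1$ and $x_2$ are elements of the $p_1$-group $\mathbb{Z}_{p_1^{n_1}}$ whose product has order $p_1$, so they are adjacent in $\Gamma(\mathbb{Z}_{p_1^{n_1}})$. Note that $x_2\neq e_1$: otherwise $o(x_1)=p_1$, contradicting $k_1\geq 2$. Hence \autoref{same-order-lemma} applies and gives $o(x_2)=o(x_1)=p_1^{k_1}$.

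\textbf{Case II:} $x_1x_2=e_1$ and $o(y_1y_2)=p_2$. Then $x_2=x_1^{-1}$ has order $p_1^{k_1}$. Arguing symmetrically, $y_2\neq e_2$ and $y_2\sim y_1$ in $\Gamma(\mathbb{Z}_{p_2^{n_2}})$, so \autoref{same-order-lemma} gives $o(y_2)=p_2^{k_2}$.

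In both cases \autoref{lcm_lemma} yields $o(x_2,y_2)=\operatorname{lcm}(p_1^{k_1},p_2^{k_2})=p_1^{k_1}p_2^{k_2}$, which is the claim.

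The only delicate point is the application of \autoref{same-order-lemma}, which requires both elements to be non-identity. This has to be checked explicitly, and the hypothesis $k_1,k_2\geq 2$ is exactly what excludes the identity. The same hypothesis rules out the possibility that the partner coordinate is trivial, in the way that occurs for orders $p_1^{k_1}$ or $p_1^{k_1}p_2$ in \autoref{adjacency_lemma_3}.

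If one also wants the existence of such neighbours, it suffices to exhibit them: $(x_1^{-1},y_1^{-1}y)$ with $o(y)=p_2$, and $(x_1^{-1}x,y_1^{-1})$ with $o(x)=p_1$. This shows each such vertex has degree $p_1+p_2-2$.
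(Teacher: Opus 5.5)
Your proposal is correct and follows the paper's proof essentially step for step. Like the paper, you split according to whether the product has order $p_1$ or $p_2$, force the other coordinate to be an inverse, and apply \autoref{same-order-lemma} in the coordinate carrying the prime-order product. Your explicit check that $x_2\neq e_1$ (resp.\ $y_2\neq e_2$), which the paper omits, is exactly what is needed to invoke \autoref{same-order-lemma} legitimately.
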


\begin{proof}
	Let $o(x_1,y_1)=p_1^{k_1}p_2^{k_2}$, that is $o(x_1)=p_1^{k_1}, o(y_1)=p_2^{k_2}$ and $(x_1,y_1)\sim (x_2,y_2)$. Then, we have the following cases:
	
	\textbf{Case-I:} Let $(x_1,y_1)\sim (x_2,y_2)$ and $o((x_1,y_1)(x_2,y_2))=p_1$. This implies 
	\begin{equation}
		\begin{split}
			& o(x_1x_2,y_1y_2)=p_1\\
			\implies & o(x_1x_2)=p_1, y_1y_2=e_2\\
			\implies & o(x_1x_2)=p_1, y_1=y_2^{-1}.\\
			\end{split}
	\end{equation}
 Since the order of $((x_1,y_1)(x_2,y_2))$ depends on the order of $x_1x_2$ only, then, we can treat the elements $x_1,x_2$ as the elements of the group $\mathbb{Z}_{p_1^{n_1}}$. Following \autoref{same-order-lemma} and by the adjacency condition of two vertices associated to the two elements of order $p_1^{k_1}$. In the graph $\Gamma(\mathbb{Z}_{p_1^{n_1}})$, we have, $o(x_1)=o(x_2)$ that is $o(x_2)=p_1^{k_1}(k_1\geq 2)$. Also, $y_1=y_2^{-1} \implies o(y_1)=o(y_2)=p_2^{k_2} \implies o(x_2,y_2)=p_1^{k_1}p_2^{k_2}$. Hence, $o(x_1,y_1)=o(x_2,y_2)=p_1^{k_1}p_2^{k_2}$.
 
\textbf{Case-II:} Let $(x_1,y_1)\sim (x_2,y_2)$ and $o((x_1,y_1)(x_2,y_2))=p_2$. This implies 
\begin{equation}
	\begin{split}
 		& o(x_1x_2,y_1y_2)=p_2\\
 		\implies & x_1x_2=e_1, o(y_1y_2)=p_2\\
 		\implies & x_1=x_2^{-1}, o(y_1y_2)=p_2.\\
 	\end{split}
\end{equation}
Following the same manner in Case-I, we can conclude that $o(y_1)=o(y_2)=p_2^{k_2}(k_2\geq 2)$ and $o(x_1)=o(x_2)=p_1^{k_1}(k_1\geq 2)$. Therefore, $o(x_1,y_1)=o(x_2,y_2)=p_1^{k_1}p_2^{k_2}$.
\end{proof}

By the above \autoref{same_order_lemma_3}, if $o(x_1,y_1)=p_1^{k_1}p_2^{k_2}$, and $(x_1,y_1)\sim (x_2,y_2)$, then either $(x_2,y_2)=(x_2,y_1^{-1})$ with $o(x_1)=o(x_2)$ or $(x_2,y_2)=(x_1^{-1},y_2)$ with $o(y_1)=o(y_2)$ where $k_1,k_2\geq 2$. Let $C^{(3)}$ be a connected component of the elements of order $p_1^{k_1}p_2^{k_2}(k_1,k_2\geq 2)$.

\begin{lemma}\label{structure lemma 2}
The connected component $C^{(3)}$ contains at least $2p_1p_2$ number of vertices with degree $(p_1+p_2-2)$.
\end{lemma}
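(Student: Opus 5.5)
Fix a vertex $(x_1,y_1)\in C^{(3)}$ with $o(x_1)=p_1^{k_1}$, $o(y_1)=p_2^{k_2}$, $k_1,k_2\geq 2$. The plan is to list its neighbours explicitly using the case analysis of \autoref{same_order_lemma_3}, and then close the set of neighbours under the two edge types to get $2p_1p_2$ distinct vertices. By that lemma, every neighbour $(x_2,y_2)$ has one of two forms. For a \emph{$p_1$-edge} we have $y_2=y_1^{-1}$ and $x_1x_2=a$ with $o(a)=p_1$. For a \emph{$p_2$-edge} we have $x_2=x_1^{-1}$ and $y_1y_2=b$ with $o(b)=p_2$.

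\textbf{Degree.} Since $\mathbb{Z}_{p_1^{n_1}}$ is cyclic, it has exactly $p_1-1$ elements $a$ of order $p_1$, and each determines a unique $x_2=ax_1^{-1}$. This $x_2\neq x_1$: otherwise $x_1^2=a$, forcing $o(x_1)\in\{1,2,p_1\}$, contrary to $k_1\geq 2$ and $p_1$ odd. So there are exactly $p_1-1$ $p_1$-edges, exactly as in \autoref{degree-lemma-2}. The same argument in $\mathbb{Z}_{p_2^{n_2}}$ gives exactly $p_2-1$ $p_2$-edges. The two families are disjoint, because a $p_1$-neighbour has second coordinate $y_1^{-1}\neq y_1$ while a $p_2$-neighbour has second coordinate $by_1^{-1}\neq y_1^{-1}$. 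Hence $\deg(x_1,y_1)=p_1+p_2-2$. The argument used nothing about $(x_1,y_1)$ beyond its order, and \autoref{same_order_lemma_3} keeps all neighbours at order $p_1^{k_1}p_2^{k_2}$, so every vertex of $C^{(3)}$ has this degree.

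\textbf{Counting vertices.} The component is built as a product. Let $X\subseteq\mathbb{Z}_{p_1^{n_1}}$ be the vertex set of the component of $x_1$ in $\Gamma(\mathbb{Z}_{p_1^{n_1}})$. By \autoref{structure theorem 1}, $|X|=2p_1$, and $X=X^+\cup X^-$ is bipartite, with $x_1\in X^+$ and $X^-=\{x^{-1}:x\in X^+\}$. Define $Y=Y^+\cup Y^-$ for $y_1$ in $\Gamma(\mathbb{Z}_{p_2^{n_2}})$ in the same way, so $|Y|=2p_2$. The set to exhibit is
\[
T=(X^+\times Y^+)\cup(X^-\times Y^-)\cup(X^+\times Y^-)\cup(X^-\times Y^+),
\]
intersected with the vertices reachable from $(x_1,y_1)$. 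The key observation is that a $p_1$-edge moves the first coordinate along an edge of $X$ and inverts the second coordinate, while a $p_2$-edge inverts the first coordinate and moves the second coordinate along an edge of $Y$. Walking from $(x_1,y_1)$, every vertex reached has the form $(x,y)$ with $x\in X$ and $y\in Y$. The parity of $x$'s side in $X$ is linked to whether $y$ or $y^{-1}$ lies in $Y^+$, so the reachable set is exactly half of $X\times Y$, namely $2p_1p_2$ vertices.

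\textbf{Main obstacle.} For the ``at least'' claim, it suffices to show that all $2p_1\cdot p_2$ such pairs are actually reached. Reachability follows from connectivity of the components of $X$ and $Y$, using the fact that inversion maps $X^+\leftrightarrow X^-$ and $Y^+\leftrightarrow Y^-$. Distinctness needs care with the bookkeeping: one must check that inversion composed with a single step in $Y$ stays inside $Y$. This holds because $Y$ is closed under inversion (\autoref{distance-lemma}). With that checked, the distinctness is immediate since the coordinates are distinct group elements. I expect the parity bookkeeping to be the only delicate point. I would verify it on a generating pair of moves, one $p_1$-step followed by one $p_2$-step, in the style of the explicit sets $S_1,\dots,S_5$ in \autoref{break_lemma_p_1p_2}.
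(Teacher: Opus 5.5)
Your argument is correct, but it is organized differently from the paper's.

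The paper works locally. It takes $(x,y)$ and lists its $p_1$-neighbours $S_1=\{(x_i,y^{-1})\}$ together with those of $(x^{-1},y^{-1})$. It then uses \autoref{adjacency-lemma} to see that these $2p_1$ vertices are closed under $p_1$-edges. Finally it attaches the $p_2-1$ $p_2$-neighbours of each, giving the count $2p_1+2p_1(p_2-1)$. It never checks that the $2p_1(p_2-1)$ attached vertices are new and pairwise distinct.

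You instead identify the component globally as a twisted product of the two bipartite components $X\subseteq\mathbb{Z}_{p_1^{n_1}}$ and $Y\subseteq\mathbb{Z}_{p_2^{n_2}}$. A $p_1$-edge moves the first coordinate along an edge of $X$ and inverts the second, and symmetrically for $p_2$-edges. Hence the invariant ``$u\in X^+\iff v\in Y^+$'' is preserved, and the component sits inside $(X^+\times Y^+)\cup(X^-\times Y^-)$. This gives two things at once:
\begin{itemize}
\item distinctness for free, since $X^+\cap X^-=\emptyset$;
\item the matching upper bound, so you have in effect also proved \autoref{break_lemma_3}.
\end{itemize}
Your degree count is also more careful than the paper's, since you check that the $p_1$- and $p_2$-neighbour families are disjoint.

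Two points to tidy.
\begin{itemize}
\item The set $T$ you write down is literally all of $X\times Y$. You should simply state that the component is $(X^+\times Y^+)\cup(X^-\times Y^-)$.
\item Reachability is cleanest via pairs of like moves rather than a mixed $p_1$-then-$p_2$ step. Two $p_1$-steps send $(u,v)$ to $(u'',v)$ with $u''$ at even distance from $u$ in $X$. Since $X$ is connected and bipartite, every $u''\in X^+$ is reached this way, and likewise in the second coordinate. This covers $X^+\times Y^+$, and one further step covers $X^-\times Y^-$.
\end{itemize}
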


\begin{proof}
	Following the \autoref{same_order_lemma_3}, elements of order $p_1^{k_1}p_2^{k_2}(k_1,k_2\geq 2)$ are adjacent to the elements of same order. Let $(x_i,y_i)\in C^{(3)}$ and $o((x_i,y_i)(x_j,y_j))=p_1$, then $(x_j,y_j)=(x_j,y_i^{-1})$. Then, by the \autoref{same_order_lemma_3}, $(x_i,y_i)\sim (x_j,y_i^{-1})$ when their composition gives an element of order $p_1$. Since the group $\mathbb{Z}_{p_1^{n_1}p_2^{n_2}}$ contains $p_1-1$ number of elements of order $p_1$, then $(x_i,y_i)$ is adjacent to $(x_j,y_i^{-1})$, where $o(x_ix_j)=p_1$. Following the \autoref{adjacency_lemma_3}, it is found that, $(x_i,y_i)$ is adjacent to $p_1-1$ number of such elements. 
	
	Let $(x_1,y_1)\sim (x_2,y_2)$ and $o((x_1,y_1)(x_2,y_2))=p_i(i=1,2)$, then we can say there is a $p_i-edge$ between the two elements $(x_1,y_1)$ and $(x_2,y_2)$.
	
	 Let $(x,y)\in C^{(3)}$ be an element and it is adjacent to the set of elements $S_1$. The element $(x,y)$ has $p_1-edge$ with every element of $S_1$. The set $S_1$ is 
	 \begin{equation}
	 	S_1=\{(x_i,y^{-1}): i=1,2,\dots p_1-1\}.
	 \end{equation}
	By the \autoref{distance-lemma}, $(x,y)$ and its inverse $(x^{-1},y^{-1})$ belong to the same component $C^{(3)}$, $(x^{-1},y^{-1})$ has $p_1-edge$ with the set of elements 
	\begin{equation}
		S_1^{-1}=\{(x_i^{-1},y): i=1,2,\dots p_1-1\}.
	\end{equation}
	Here, $|S_1|=|S_1^{-1}|=p_1-1$. 
	By the \autoref{adjacency-lemma}, $(x_i,y^{-1})\sim (x_j^{-1},y)$, where $i,j=1,2,\dots, p_1-1$, $o(x_ix_j^{-1})=p_1$ and $i\neq j$. Therefore, the elements of $S_1\cup S_1^{-1}\cup \{(x,y),(x^{-1},y^{-1})\}$ has their $p_1-edge$ inside the set. Also, $|S_1\cup S_1^{-1}\cup \{(x,y),(x^{-1},y^{-1})\}|=2p_1$. 
	
	Now, every element from the set $S_1\cup S_1^{-1}\cup \{(x,y),(x^{-1},y^{-1})\}$  is adjacent to $p_2-1$ number of elements with whom they have their $p_2-edge$. We have $2p_1+2p_1(p_2-1)=2p_1p_2$ number of elements in $C^{(3)}$. We only need to show these $2p_1p_2$ elements are the only elements in the component $C^{(3)}$.
	\end{proof}
	
	\begin{lemma}\label{break_lemma_3}
		The connected component $C^{(3)}$ contains only $2p_1p_2$ vertices.
	\end{lemma}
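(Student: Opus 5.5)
The plan is to show that the set $T$ of $2p_1p_2$ vertices built in \autoref{structure lemma 2} is closed under adjacency. Since $C^{(3)}$ is connected and contains $T$, closure forces $V(C^{(3)})=T$.

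First, describe $T$ explicitly. Fix $(x,y)\in C^{(3)}$ with $o(x)=p_1^{k_1}$ and $o(y)=p_2^{k_2}$, where $k_1,k_2\geq 2$. Let $a_1,\dots,a_{p_1-1}$ be the elements of order $p_1$ in $\mathbb{Z}_{p_1^{n_1}}$ and $b_1,\dots,b_{p_2-1}$ the elements of order $p_2$ in $\mathbb{Z}_{p_2^{n_2}}$; set $a_0=e_1$ and $b_0=e_2$. The claim is
$$T=\{(x^{-1}a_i,\,y^{-1}b_j),\ (x a_i,\,y b_j) : \text{exactly one of } i,j \text{ is } 0\}\cup\{(x,y),(x^{-1},y^{-1})\}.$$
This is really two ``layers'': the first coordinate lies in $\{x a_i\}\cup\{x^{-1}a_i\}$ and the second in $\{y b_j\}\cup\{y^{-1}b_j\}$, with the sign pattern tied to which edge type was used. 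Rather than guess the exact shape, I would define $T$ directly as the set of vertices reachable from $(x,y)$ in at most two steps that appear in \autoref{structure lemma 2}. Then I would recompute it in the coordinates above to get a description of the form $\{(u,v): u\in U,\ v\in V\}$ with a parity constraint.

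The key step is a neighbourhood computation. By \autoref{same_order_lemma_3} and the remark after it, every neighbour of $(u,v)$ is either $(u^{-1}a,\,v^{-1})$ with $o(a)=p_1$ (a $p_1$-edge) or $(u^{-1},\,v^{-1}b)$ with $o(b)=p_2$ (a $p_2$-edge). Hence every vertex has degree exactly $p_1+p_2-2$, and the neighbourhoods are given by explicit formulas. Let $P_1=\{e_1,a_1,\dots,a_{p_1-1}\}$ and $P_2=\{e_2,b_j\}$, the subgroups of order $p_1$ and $p_2$. Then each vertex reachable from $(x,y)$ has the form $(x^{\pm1}a,\,y^{\pm1}b)$ with $a\in P_1$, $b\in P_2$. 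Applying either edge type inverts both coordinates and multiplies one of them by an element of a subgroup, so the signs flip simultaneously. Therefore the reachable set lies inside
$$\{(xa,\,yb):a\in P_1,\ b\in P_2\}\cup\{(x^{-1}a,\,y^{-1}b):a\in P_1,\ b\in P_2\},$$
which has at most $2p_1p_2$ elements. These two cosets are disjoint because $x\notin x^{-1}P_1$: otherwise $x^2\in P_1$, which forces $o(x)\mid p_1$, contradicting $k_1\geq 2$ and $p_1$ odd.

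Combining the upper bound $2p_1p_2$ with the lower bound of \autoref{structure lemma 2} gives exactly $2p_1p_2$ vertices. The same computation shows the component is $(p_1+p_2-2)$-regular. The main obstacle is checking that the union of two cosets of $P_1\times P_2$ is genuinely closed under both edge types. I expect this to be a direct verification: for $(xa,yb)$, a $p_1$-edge goes to $(x^{-1}a^{-1}a',\,y^{-1}b^{-1})$, which lies in the second coset. I would also guard against the paper's looser element-counting arguments by relying only on this coset closure.
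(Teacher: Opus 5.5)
Your proof is correct, and its skeleton matches the paper's. Both exhibit a $2p_1p_2$-element set containing $(x,y)$ that is closed under all $p_1$- and $p_2$-edges, and then combine this with the lower bound of \autoref{structure lemma 2}. The set is even the same one. With $x_i=x^{-1}a_i$ and $y_j=y^{-1}b_j$, the paper's $\{(x,y)\}\cup S_1^{-1}\cup T^{-1}\cup\bigcup_i T_i$ is your coset $(x,y)H$, and $\{(x^{-1},y^{-1})\}\cup S_1\cup T\cup\bigcup_i T_i^{-1}$ is $(x^{-1},y^{-1})H$, where $H=P_1\times P_2$.

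The difference lies in the verification. The paper checks the edges of eight vertex families case by case in its adjacency table. You observe once that every neighbour of $g$ has the form $g^{-1}h$ with $h\in H$. That single observation makes closure, the bound $2p_1p_2$, and disjointness (via $x^2\notin P_1$) one-line facts, and it applies unchanged to $C^{(1)}$, $C^{(2)}$ and to any number of odd primes.

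Two small points.
\begin{itemize}
\item Your opening guess for $T$ (``exactly one of $i,j$ is $0$'') is wrong. It has only $2(p_1+p_2-1)$ elements and omits the $2(p_1-1)(p_2-1)$ vertices with $i,j\ge 1$, which are the paper's $T_i$ and $T_i^{-1}$. It does no harm only because you abandon it.
\item You say you want to avoid the paper's looser counting, but you still use it for the lower bound. The coset picture gives that bound too. Two $p_1$-edges take $(xa,yb)$ to $(xac^{-1}c',yb)$, and two $p_2$-edges take it to $(xa,ybd^{-1}d')$, with $c,c'\in P_1\setminus\{e_1\}$ and $d,d'\in P_2\setminus\{e_2\}$. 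Because $p_1,p_2\ge 3$, such quotients exhaust $P_1$ and $P_2$. Hence $(x,y)H\cup(x^{-1},y^{-1})H$ is connected and equals $V(C^{(3)})$ exactly.
\end{itemize}
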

	
	\begin{proof}
	the vertices $(x,y)$ and $(x^{-1},y^{-1})$ makes $p_2$-edge with the elements in $T$ and $T^{-1}$, where 
	\begin{equation}
		T=\{(x^{-1},y_j):j=1,2,\dots, p_2-1\}, \text{and}~ 
		T^{-1}=\{(x,y_j^{-1}):j=1,2,\dots, p_2-1\}.
	\end{equation}
	Also, for $i = 1, 2, \dots, (p_1 - 1)$ the vertices $(x_i,y^{-1})$ and $(x_i^{-1},y)$ are adjacent to the set of elements $T_i$ and $T_i^{-1}$ to make the $p_2$-edge, where	
	\begin{equation}
		T_i=\{(x_i^{-1},y_j^{-1}):j=1,2,\dots, p_2-1\}, ~\text{and}~ 
		T_i^{-1}=\{(x_i,y_j):j=1,2,\dots, p_2-1\}.
	\end{equation}
	
	We need to show that $V(C^{(3)}) = S_1\cup S_2\cup T\cup T^{-1}\cup \{{\cup}_{i=1}^{p_2-1}T_i\} \cup \{{\cup}_{i=1}^{p_2-1}T_i^{-1}\} \cup \{(x,y),(x^{-1},y^{-1})\}$. The elements in $T$, $(x^{-1},y_j)$, are adjacent to $(x,y_k^{-1})$ in $T^{-1}$, where $j\neq k$ and $o((x^{-1},y_j)(x,y_k^{-1}))=p_2$. Also, the elements $(x^{-1},y_j)\sim (x_i^{-1},y_j^{-1})$ and $(x,y_j^{-1})\sim (x_i,y_j)$ for $i=1,2,\dots, p_1-1$ and for each $j$ make $p_1$-edges. The elements $(x_i^{-1},y_j^{-1})$ in $T_i$ are adjacent to $(x_i,y_k)$ in $T_i^{-1}$ for each $i$ and $j\neq k$ create $p_2$-edges. In addition, $(x_i,y_j) \sim (x,y_j^{-1})$ as well as $(x_i^{-1},y_j^{-1})\sim (x_k,y_j)$ and $(x_i^{-1},y_j^{-1})\sim (x^{-1},y_j)$,  for each $j$, where $i\neq k$ to build up $p_1$-edges. 
	From the above calculations, we can see that the elements from $S_1\cup S_2\cup T\cup T^{-1}\cup \{{\cup}_{i=1}^{p_2-1}T_i\} \cup \{{\cup}_{i=1}^{p_2-1}T_i^{-1}\} \cup \{(x,y),(x^{-1},y^{-1})\}$ form all their $p_1$-edges and $p_2$-edges inside the set. Hence, there are no other elements in the component $C^{(3)}$. Since, each one of them has $(p_1+p_2-2)$ edges, the degree of each of the $2p_1p_2$ elements has degree $(p_1+p_2-2)$. The \autoref{adjacency-table} indicates all the adjacency in the component $C^{(3)}$.
	\begin{table}[]
		\centering
		
		\begin{tabular}{|c|c|c|c|c|c|c|c|c|}
			\hline
			. & $(x,y)$ & $(x^{-1},y^{-1})$ & $(x_i,y^{-1})$ & $(x_i^{-1},y)$ & $(x^{-1},y_j)$ & $(x,y_j^{-1})$ & $(x_i,y_j)$ & $(x_i^{-1},y_j^{-1})$\\
			\hline 
			$(x,y)$ & $\nsim$ & $\nsim$ & $p_1$ & $\nsim$ & $p_2$ & $\nsim$ & $\nsim$ & $\nsim$\\
			\hline
			$(x^{-1},y^{-1})$ & $\nsim$ & $\nsim$ & $\nsim$ & $p_1$ & $\nsim$ & $p_2$ & $\nsim$ & $\nsim$\\
			\hline
			$(x_k,y^{-1})$ & $p_1$ & $\nsim$ & $\nsim$ & $p_1(k\neq i)$ & $\nsim$ & $\nsim$ & $\nsim$ & $p_2(k=i)$\\
			\hline 
			$(x_k^{-1},y)$ & $\nsim$ & $p_1$ & $p_1(i\neq k)$ & $\nsim$ & $\nsim$ & $\nsim$ & $p_2$ & $\nsim$\\
			\hline
			$(x^{-1},y_s)$ & $p_2$ & $\nsim$ & $\nsim$ & $\nsim$ & $\nsim$ & $p_2(j\neq s)$ & $\nsim$ & $p_1$\\
			\hline
			$(x,y_s^{-1})$ & $\nsim$ & $p_2$ & $\nsim$ & $\nsim$ & $p_2(j\neq s)$ & $\nsim$ & $p_1$ & $\nsim$\\
			\hline
			$(x_k,y_s)$ & $\nsim$ & $\nsim$ & $\nsim$ & $p_2(k=i)$ & $\nsim$ & $p_1(j=s)$ & $\nsim$ & $\nsim$\\
			\hline
			$(x_k^{-1},y_s^{-1})$ & $\nsim$ & $\nsim$ & $p_2(k=i)$ & $\nsim$ & $p_1(j=s)$ & $\nsim$ & $\nsim$ & $\nsim$\\
			\hline
		\end{tabular}
		\caption{Adjacency relations in the component $C^{(3)}$ in the graph $\Gamma(\mathbb{Z}_{p_1^{n_1}p_2^{n_2}})$ between the elements, which is discussed in \autoref{break_lemma_3}. Here $\nsim$ denotes non-existence of an edge between the vertices. Also, $p_i$ denotes the existence of a $p_i$-edge between the vertices in the corresponding rows and columns. }
		\label{adjacency-table}
	\end{table} 
\end{proof}
 
\begin{thm}
	For two odd primes $p_1$ and $p_2$, $\Gamma(\mathbb{Z}_{p_1^{n_1}p_2^{n_2}})\cong \Gamma(\mathbb{Z}_{p_1^{n_1}}\times\mathbb{Z}_{p_2^{n_2}})$ is the union of $\Gamma(\mathbb{Z}_{p_1p_2})$ and $\frac{p_1^{n_1-1}p_2^{n_2-1}-1}{2}$ number of connected regular components with $2p_1p_2$ vertices and degree $p_1+p_2-2$.
\end{thm}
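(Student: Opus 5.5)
We would prove the statement by a coset argument. It treats all the components of $\Gamma(\mathbb{Z}_{p_1^{n_1}}\times\mathbb{Z}_{p_2^{n_2}})$ uniformly, instead of separately for orders $p_1^{k_1}$, $p_1^{k_1}p_2$, $p_1p_2^{k_2}$ and $p_1^{k_1}p_2^{k_2}$. The results \autoref{break_lemma_p_1p_2(a)}, \autoref{break_lemma_3} and the corollary on $C^{(2)}$ are then special cases and serve as a consistency check.

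Write the group additively as $G=\mathbb{Z}_{p_1^{n_1}}\times\mathbb{Z}_{p_2^{n_2}}$. Let $H=\{g\in G: p_1p_2 g=0\}\cong\mathbb{Z}_{p_1}\times\mathbb{Z}_{p_2}$, and let $S\subset H$ be the set of the $(p_1-1)+(p_2-1)$ elements of prime order. By definition, $u\sim w$ if and only if $u+w\in S$ and $u\neq w$. By \autoref{Disconnected_Lemma_2}, the vertices of $H$ form a component isomorphic to $\Gamma(\mathbb{Z}_{p_1p_2})$, so it remains to study vertices $v\notin H$.

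\textbf{Step 1 (regularity).} For $v\notin H$, the neighbours of $v$ are exactly the elements $s-v$ with $s\in S$. These are pairwise distinct, and none equals $v$. Indeed, $2v\in S\subset H$ would force $v\in H$, because $|G|$ is odd and hence $2$ is invertible modulo the exponent of $G$. Therefore $\deg v=p_1+p_2-2$. This also shows that no vertex outside $H$ is adjacent to a vertex of $H$.

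\textbf{Step 2 (component $=v+H\,\cup\,-v+H$).} Every neighbour of a vertex in $v+H$ lies in $-v+H$, and conversely, so the component of $v$ is contained in $(v+H)\cup(-v+H)$. These two cosets are disjoint, since $2v\notin H$ by the argument in Step 1. Hence the component is bipartite with at most $2p_1p_2$ vertices.

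For the reverse inclusion, a path $v\sim s_1-v\sim v+s_2-s_1\sim\cdots$ reaches every vertex $v+h$ with $h$ in the subgroup generated by the differences $s_2-s_1$, $s_i\in S$. Taking $s_1=(a,0)$ and $s_2=(0,b)$ gives $(-a,b)$, and such elements generate $H$. Each odd step then lands in $-v+H$, so both cosets are covered entirely. This step is the main obstacle, because one must check that every $h\in H$ is reached by a walk of the correct parity and not only up to a sign. We would handle it by writing any $h$ as a sum of differences $s_{2i}-s_{2i-1}$ of elements of $S$, which also covers $h=0$ and the elements of $S$ themselves. The explicit adjacency table \autoref{adjacency-table} already exhibits this for one representative.

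\textbf{Step 3 (count).} The vertices outside $H$ are partitioned into these components, each with exactly $2p_1p_2$ vertices and each $(p_1+p_2-2)$-regular. Their number is therefore
\[
\frac{p_1^{n_1}p_2^{n_2}-p_1p_2}{2p_1p_2}=\frac{p_1^{n_1-1}p_2^{n_2-1}-1}{2}.
\]
Together with the isomorphism $\mathbb{Z}_{p_1^{n_1}p_2^{n_2}}\cong\mathbb{Z}_{p_1^{n_1}}\times\mathbb{Z}_{p_2^{n_2}}$, this gives the statement.
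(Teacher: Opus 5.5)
Your argument is correct, and it proves the theorem by a different route from the paper.

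\textbf{The paper's route.} The paper sorts the vertices outside $H$ by order type: orders $p_1^{k_1}$ with $p_1^{k_1}p_2$, orders $p_2^{k_2}$ with $p_1p_2^{k_2}$, and orders $p_1^{k_1}p_2^{k_2}$. It builds each component by hand from one element and its inverse through the explicit sets $S_1,\dots,S_5$, $T$, $T_i$, importing the $p$-group lemmas coordinatewise. It then closes each component by a saturation argument: the $2p_1p_2$ vertices already found have full degree $p_1+p_2-2$, so nothing else can attach. Only after that does it perform the same count as your Step~3.

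\textbf{Your route.} You identify every such component at once as $(v+H)\cup(-v+H)$. Closure is then automatic from the coset structure. Regularity is the one-line observation $w=s-v$ together with invertibility of $2$, and bipartiteness comes for free.

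\textbf{The step you flag as the main obstacle.} It is lighter than you expect. For $a,b\neq 0$ the single difference $(-a,b)$ has order $p_1p_2$, so it generates $H$ cyclically. Hence the walk alternating $s_1,s_2,s_1,s_2,\dots$ is at $v+m(s_2-s_1)$ after $2m$ steps and at $s_1-v-m(s_2-s_1)$ after $2m+1$ steps, which exhausts both cosets. Each step is a genuine edge because every vertex involved lies outside $H$, by Step~1.

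\textbf{What each approach buys.} The paper's route gives an explicit labelled table of $p_1$- and $p_2$-edges. Yours recovers the order-type grouping simply by reading off orders in $v+H$, and it gives more:
\begin{itemize}
\item Via $v+h\mapsto(h,0)$ and $-v+h'\mapsto(-h',1)$, every component outside $H$ is the bipartite double cover of $\mathrm{Cay}(H,S)\cong K_{p_1}\square K_{p_2}$.
\item Consequently all these components are isomorphic, and their spectra are immediately computable.
\item The argument carries over verbatim to $k$ distinct odd primes, giving $\bigl(\prod_i p_i^{n_i-1}-1\bigr)/2$ such components.
\end{itemize}
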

\begin{proof}
	From \autoref{Disconnected_Lemma_2}, the graph $\Gamma(\mathbb{Z}_{p_1^{n_1}p_2^{n_2}})$ is disconnected and one component is isomorphic to $\Gamma(\mathbb{Z}_{p_1p_2})$. The \autoref{adjacency_lemma_3},  \autoref{elements_count_1}, \autoref{same_order_lemma_3}, \autoref{structure lemma 2} indicate that the other connected components contains $2p_1p_2$ number of elements with each having degree $p_1+p_2-2$. Since one connected component $\Gamma(\mathbb{Z}_{p_1p_2})$ contains $p_1p_2$ number of elements, the remaining components contains $p_1^{n_1}p_2^{n_2}-p_1p_2$. Since each connected components contains $2p_1p_2$ number of elements, $\Gamma(\mathbb{Z}_{p_1^{n_1}p_2^{n_2}})$ has $\frac{p_1^{n_1}p_2^{n_2}-p_1p_1}{2p_1p_2}$ that is $\frac{p_1^{n_1-1}p_2^{n_2-1}-1}{2}$ number of connected components other than $\Gamma(\mathbb{Z}_{p_1p_2})$. Therefore, the graph $\Gamma(\mathbb{Z}_{p_1^{n_1}p_2^{n_2}})$ is the union of $\Gamma(\mathbb{Z}_{p_1p_2})$ and $\frac{p_1^{n_1-1}p_2^{n_2-1}-1}{2}$ number of connected components containing $2p_1p_2$ number of vertices with degree of each vertex $(p_1+p_2-2)$.
\end{proof}

\begin{thm}
	For $k$ odd primes $p_1,p_2,\dots p_k$, $\Gamma(\mathbb{Z}_{p_1^{n_1}p_2^{n_2}\dots p_k^{n_k}})\cong {\Gamma({\mathbb{Z}_{p_1^{n_1}}}\times {\mathbb{Z}_{p_2^{n_2}}} \times \dots {\mathbb{Z}_{p_k^{n_k}}})}$ is the union of $\Gamma(\mathbb{Z}_{p_1p_2\dots p_k})$ and $\frac{p_1^{n_1-1}p_2^{n_2-1}\dots p_k^{n_k-1}}{2}$ number of connected regular components with $2p_1p_2\dots p_k$ vertices and degree $p_1+p_2+\dots p_k-k$.
\end{thm}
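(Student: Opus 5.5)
The plan is to avoid the element-by-element bookkeeping used for $\Gamma(\mathbb{Z}_{p_1^{n_1}p_2^{n_2}})$ and work with one subgroup. Put $G=\mathbb{Z}_{p_1^{n_1}}\times\dots\times\mathbb{Z}_{p_k^{n_k}}$ and $N=p_1p_2\cdots p_k$. Let $S\le G$ be the subgroup of elements whose order divides $N$, so $S\cong \mathbb{Z}_{p_1}\times\dots\times\mathbb{Z}_{p_k}\cong\mathbb{Z}_{N}$. Let $P\subset S$ be the set of prime-order elements, so $|P|=\sum_{i}(p_i-1)=p_1+\dots+p_k-k$. By \autoref{POE_graph_definition}, $x\sim y$ if and only if $xy\in P$ and $x\neq y$. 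Since $P\subset S$, any edge joins a coset $xS$ to the coset $x^{-1}S$. Hence every connected component lies inside a set of the form $xS\cup x^{-1}S$.

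\textbf{Step 1: the component through the identity.} For $x\in S$ we have $x^{-1}S=S$, and $S$ is closed under the relation. So the induced subgraph $\langle S\rangle$ is a union of components, and it equals the POE graph of the group $S$. Thus it is isomorphic to $\Gamma(\mathbb{Z}_{p_1\cdots p_k})$, exactly as in \autoref{Disconnected lemma 2} and \autoref{Disconnected_Lemma_2}. Connectedness of $\Gamma(\mathbb{Z}_N)$ for square-free $N$ is given by the path argument already used in the $2p_2\cdots p_k$ theorem. Concretely, the identity is adjacent to all of $P$, and $P$ generates $S$.

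\textbf{Step 2: the remaining components.} Take $x\notin S$. I first claim $xS\neq x^{-1}S$. Otherwise $x^2\in S$. Since $|G|$ is odd, write $o(x)=2m+1$; then $x=(x^2)^{m+1}\in S$, a contradiction. So the set $U_x=xS\cup x^{-1}S$ has $2N$ elements, and every edge inside it joins the two cosets. This recovers bipartiteness, generalising \autoref{bipartite-lemma}.

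For the degree: the neighbours of $x$ are the elements $x^{-1}s$ with $s\in P$. These are $|P|$ distinct elements, all different from $x$ because $x^{-1}s\in x^{-1}S\neq xS$. Hence $U_x$ induces a $(p_1+\dots+p_k-k)$-regular graph, which generalises \autoref{degree-lemma-2}.

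For connectivity, identify $xa\leftrightarrow (a,0)$ and $x^{-1}b\leftrightarrow (b^{-1},1)$ for $a,b\in S$. Then $xa\sim x^{-1}b$ if and only if $a(b^{-1})^{-1}\in P$. So $\langle U_x\rangle$ is the bipartite double cover $\mathrm{Cay}(S,P)\times K_2$. A bipartite double cover of a connected graph is connected exactly when the base graph is non-bipartite. The base $\mathrm{Cay}(S,P)$ is connected because $P$ generates $S$. It contains the triangle $e,s,s^2$ for any $s$ of order $p_1\ge 3$. Therefore $\langle U_x\rangle$ is connected, and it is a single component on $2N$ vertices.

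\textbf{Step 3: counting.} The sets $U_x$ with $x\notin S$ partition $G\setminus S$. Their number is therefore
\[
\frac{|G|-N}{2N}=\frac{p_1^{n_1-1}\cdots p_k^{n_k-1}-1}{2}.
\]
This count is the step I expect to need care. The quantity $\frac{p_1^{n_1-1}\cdots p_k^{n_k-1}}{2}$ printed in the statement is not an integer, since the numerator is odd. The correct count is the one above, and it agrees with the two-prime theorem. I would state it in this form and note the correction.

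The only genuinely non-routine point is the connectivity in Step 2. There, the Cayley double-cover identification together with the odd triangle in $\mathrm{Cay}(S,P)$ replaces the long adjacency tables used in \autoref{break_lemma_3}.
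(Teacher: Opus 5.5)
Your proof is correct, and it follows a genuinely different and much shorter route than the paper's.

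The paper argues order by order:
\begin{itemize}
\item it sorts vertices by order type ($p_1^{k_1}$, $p_1^{k_1}p_2$, $p_1^{k_1}p_2^{k_2}$, \dots) and shows that adjacency constrains the order type of a neighbour;
\item it builds each component by listing its $p_1$-edges and $p_2$-edges explicitly (the table in Lemma~\ref{break_lemma_3}) and closes it off by a degree-saturation argument;
\item for $k$ primes it only asserts that these lemmas generalise ``by induction''.
\end{itemize}
You replace all of this with one structural observation. Every edge has $xy\in P\subseteq S=\{g: g^{N}=e\}$, so each component lies inside $xS\cup x^{-1}S$. Odd order keeps the two cosets distinct, regularity is just $|v^{-1}P|=|P|$, and connectivity comes from $\langle xS\cup x^{-1}S\rangle\cong\mathrm{Cay}(S,P)\times K_2$ together with the triangle $e,s,s^{2}$.

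Your approach buys several things:
\begin{itemize}
\item an actual proof for every $k$ at once, where the paper only sketches the general case;
\item bipartiteness of all non-identity components for free;
\item the fact that all these components are isomorphic to one explicit graph, whose spectrum can be read off from the characters of $S$: it is $\pm\bigl(\sum_{i\in T}p_i-k\bigr)$ for $T\subseteq\{1,\dots,k\}$, with multiplicity $\prod_{i\notin T}(p_i-1)$ (for $k=1$ this recovers the paper's spectrum of the $2p$-vertex components);
\item a clear view of where oddness is used: when $2$ divides $|G|$, cosets with $x^{2}\in S$ are self-inverse, and that is exactly the paper's exceptional component of elements of order $4$ and $4p_i$.
\end{itemize}
What the paper's bookkeeping offers in exchange is an explicit name for every neighbour of every vertex.

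Your correction of the count is right. Since $\prod_i p_i^{n_i-1}$ is odd, the printed number is not an integer. The value $(|G|-N)/(2N)=\bigl(\prod_i p_i^{n_i-1}-1\bigr)/2$ agrees with the paper's two-prime theorem and with its figure for $\mathbb{Z}_{3^2\cdot 5}$.

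One small presentational point concerns the connectivity of $\Gamma(S)$. For the sum-type adjacency $xy\in P$, the sentence ``$e$ is adjacent to all of $P$ and $P$ generates $S$'' is not by itself a connectivity argument. For a generating set that is not inverse-closed it can fail: for example, $P=\{1\}$ in $\mathbb{Z}_3$ isolates $2$. Either rely on the explicit path you cite, or use the following argument. Along a walk from $e$, each even-indexed vertex is the previous even-indexed one multiplied by some $s^{-1}t$ with $s,t\in P$. These elements generate $S$ because $t^{2}=(t^{-1})^{-1}t$ and $|S|$ is odd.
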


\begin{proof}
	The elements of the group ${\Gamma({\mathbb{Z}_{p_1^{n_1}}}\times {\mathbb{Z}_{p_2^{n_2}}} \times \dots {\mathbb{Z}_{p_k^{n_k}}})}$ are of the form $\{(x_1,x_2,\dots x_k): x_i\in \mathbb{Z}_{p_i^{n_i}} \mbox{ for } i=\{1,2,\dots, k\}\}$. The identity is $(e_1,e_2,\dots, e_k)$ where $e_i$ is the identity of $\mathbb{Z}_{p_i^{n_i}}$. There is a path $(e_1,e_2,\dots, e_k)\sim (x_1,e_2,\dots, e_k)\sim (x_1^{-1},x_2,\dots, e_k)\sim (x_1,x_2^{-1}, x_3,\dots, e_k)\sim \dots (x_1^{-1},x_2,\dots, x_k)$ for $k$ even and $(e_1,e_2,\dots, e_k)\sim (x_1,e_2,\dots, e_k)\sim (x_1^{-1},x_2,\dots, e_k)\sim (x_1,x_2^{-1}, x_3,\dots, e_k)\sim \dots (x_1,x_2^{-1},\dots, x_k)$ for $k$ odd, where $o(x_i)=p_i$ for $i=1,2,\dots ,k$. From these paths, we can see that the elements of order $p_i(i=1,2,\dots, k)$, $p_{i_1}p_{i_2}(i_1\neq i_2)$, $p_{i_1}p_{i_2}p_{i_3}(i_1 \neq i_2 \neq i_3)\dots p_{i_1}p_{i_2}\dots p_{i_k}(i_1\neq i_2 \neq \dots \neq i_k)$ belong to the same connected component. Following the same manner as the \autoref{Disconnected_Lemma_2}, we can prove that the graph ${\Gamma({\mathbb{Z}_{p_1^{n_1}}}\times {\mathbb{Z}_{p_2^{n_2}}} \times \dots {\mathbb{Z}_{p_k^{n_k}}})}$ is disconnected and one connected component is isomorphic to $\Gamma(\mathbb{Z}_{p_1p_2\dots p_k})$. Also, by applying the same methods in the  \autoref{adjacency_lemma_3},\autoref{elements_count_1},\autoref{break_lemma_p_1p_2},\autoref{break_lemma_p_1p_2(a)},\autoref{same_order_lemma_3},\autoref{structure lemma 2} and \autoref{break_lemma_3}, we can divide the elements by its order by their respective adjacency criteria, and then we can prove the required result by generalizing the same in mathematical induction method.
\end{proof}

\section*{Acknowledgment}
	The authors are thankful to Dr. Angsuman Das of Presidency University, Kolkata for some discussions at the initial stage of this work.


\begin{thebibliography}{10}
		
		\bibitem{bera2018enhanced}
		Sudip Bera and AK~Bhuniya.
		\newblock
		\href{https://www.worldscientific.com/doi/abs/10.1142/S0219498818501463}{On
			enhanced power graphs of finite groups}.
		\newblock {\em Journal of Algebra and its Applications}, 17(08):1850146, 2018.
		
		\bibitem{magnus2004combinatorial}
		Wilhelm Magnus, Abraham Karrass, and Donald Solitar.
		\newblock {\em
			\href{https://books.google.com/books?hl=en&lr=&id=1LW4s1RDRHQC&oi=fnd&pg=PP1&dq=Combinatorial+group+theory:+Presentations+of+groups+in+terms+of+generators+and+relations&ots=WtAL8-0aDz&sig=fNcVwy0UXEqiG75XyX1665oDRuU}
			{Combinatorial group theory: Presentations of groups in terms of generators
				and relations}}.
		\newblock Courier Corporation, 2004.
		
		\bibitem{cayley1878desiderata}
		Professor Cayley.
		\newblock \href{https://www.jstor.org/stable/2369306}{Desiderata and
			suggestions: No. 2. The Theory of groups: graphical representation}.
		\newblock {\em American journal of mathematics}, 1(2):174--176, 1878.
		
		\bibitem{cameron2011power}
		Peter~J Cameron and Shamik Ghosh.
		\newblock
		\href{https://www.sciencedirect.com/science/article/pii/S0012365X10000531}{The
			power graph of a finite group}.
		\newblock {\em Discrete Mathematics}, 311(13):1220--1222, 2011.
		
		\bibitem{kumar2021recent}
		Ajay Kumar, Lavanya Selvaganesh, Peter~J Cameron, and T~Tamizh Chelvam.
		\newblock
		\href{https://www.sciencedirect.com/science/article/pii/S0024379506000590}{Recent
			developments on the power graph of finite groups--a survey}.
		\newblock {\em AKCE International Journal of Graphs and Combinatorics},
		18(2):65--94, 2021.
		
		\bibitem{banerjee2019new}
		Subarsha Banerjee.
		\newblock \href{https://arxiv.org/abs/1911.02763}{On a new graph defined on the
			order of elements of a finite group}.
		\newblock {\em arXiv preprint arXiv:1911.02763}, 2019.
		
		\bibitem{maslova2016gruenberg}
		Natalia~V Maslova.
		\newblock
		\href{https://www.researchgate.net/profile/Natalia-Maslova/publication/344783992_On_Gruenberg-Kegel_graphs_of_finite_groups/links/5f900efa299bf1b53e37a95b/On-Gruenberg-Kegel-graphs-of-finite-groups.pdf}{On
			the Gruenberg--Kegel graphs of finite groups}.
		\newblock In {\em Proceedings of the 47th International Youth School-Conference
			Modern Problems in Mathematics and its Applications. Ed. by SF Pravdin AA
			Makhnev}, 2016.
		
		\bibitem{biswas2024difference}
		Sucharita Biswas, Peter~J Cameron, Angsuman Das, and Hiranya~Kishore Dey.
		\newblock
		\href{https://www.sciencedirect.com/science/article/pii/S0097316524000712}{On
			the difference of the enhanced power graph and the power graph of a finite
			group}.
		\newblock {\em Journal of Combinatorial Theory, Series A}, 208:105932, 2024.
		
		\bibitem{akbari2017co}
		Saieed Akbari, Babak Miraftab, and Reza Nikandish.
		\newblock
		\href{https://www.cambridge.org/core/journals/canadian-mathematical-bulletin/article/comaximal-graphs-of-subgroups-of-groups/454379FE8CC7F49ECF11DF8FC6C28920}{Co-maximal
			graphs of subgroups of groups}.
		\newblock {\em Canadian Mathematical Bulletin}, 60(1):12--25, 2017.
		
		\bibitem{das2024co}
		Angsuman Das, Manideepa Saha, and Saba Al-Kaseasbeh.
		\newblock
		\href{https://link.springer.com/article/10.1007/s11587-022-00718-0}{On
			co-maximal subgroup graph of a group}.
		\newblock {\em Ricerche di Matematica}, 73(4):2075--2089, 2024.
		
		\bibitem{das2025co}
		Angsuman Das and Manideepa Saha.
		\newblock
		\href{https://link.springer.com/article/10.1007/s11587-023-00836-3}{On
			co-maximal subgroup graph of a group-II}.
		\newblock {\em Ricerche di Matematica}, 74(1):91--104, 2025.
		
		\bibitem{betz2022classifying}
		Alexander Betz and David~A Nash.
		\newblock \href{https://arxiv.org/abs/2006.11315}{Classifying groups with a
			small number of subgroups}.
		\newblock {\em The American Mathematical Monthly}, 129(3):255--267, 2022.
		
		\bibitem{akbari2006diameters}
		S~Akbari, A~Mohammadian, H~Radjavi, and P~Raja.
		\newblock
		\href{https://www.sciencedirect.com/science/article/pii/S0024379506000590}{On
			the diameters of commuting graphs}.
		\newblock {\em Linear algebra and its applications}, 418(1):161--176, 2006.
		
		\bibitem{arunkumar2022super}
		G~Arunkumar, Peter~J Cameron, Rajat~Kanti Nath, and Lavanya Selvaganesh.
		\newblock
		\href{https://link.springer.com/article/10.1007/s00373-022-02496-w}{Super
			graphs on groups, I}.
		\newblock {\em Graphs and Combinatorics}, 38(3):100, 2022.
		
		\bibitem{arunkumar2024super}
		G~Arunkumar, Peter~J Cameron, and Rajat~Kanti Nath.
		\newblock
		\href{https://www.sciencedirect.com/science/article/pii/S0166218X24004025}{Super
			graphs on groups, II}.
		\newblock {\em Discrete Applied Mathematics}, 359:371--382, 2024.
		
		\bibitem{bahrami2019further}
		Zahra Bahrami and Bijan Taeri.
		\newblock \href{https://journals.tubitak.gov.tr/math/vol43/iss5/2/}{Further
			results on the join graph of a finite group}.
		\newblock {\em Turkish Journal of Mathematics}, 43(5):2097--2113, 2019.
		
		\bibitem{huang2018perfect}
		He~Huang, Binzhou Xia, and Sanming Zhou.
		\href{https://epubs.siam.org/doi/10.1137/17M1129532}{Perfect codes in cayley graphs.} 
		\newblock {\em SIAM Journal on Discrete Mathematics}, 32(1):548--559, 2018.
		
		\bibitem{ma2020subgroup}
		Xuanlong Ma, Gary~L Walls, Kaishun Wang, and Sanming Zhou.
		\href{https://epubs.siam.org/doi/10.1137/19M1258013}{Subgroup perfect codes in cayley graphs.}
		\newblock {\em SIAM Journal on Discrete Mathematics}, 34(3):1909--1921, 2020.
		
		\bibitem{wang2023applications}
		Qiuyan Wang, Xiaodan Liang, Rize Jin, and Yang Yan.
		\href{https://ieeexplore.ieee.org/document/10266353/}{Applications of strongly regular cayley graphs to codebooks.} 
		\newblock {\em IEEE Access}, 11:106980--106986, 2023.
		
		\bibitem{bretto2011cayley}
		Alain Bretto and Alain Faisant.
		\href{https://www.sciencedirect.com/science/article/pii/S0747717111001283}{Cayley graphs and g-graphs: Some applications.} 
		\newblock {\em Journal of Symbolic Computation}, 46(12):1403--1412, 2011.
		
		\bibitem{jain2023construction}
		Rupali~S Jain, B~Surendranath Reddy, and Wajid~M Shaikh.
		\href{https://www.worldscientific.com/doi/10.1142/S1793557123502133?srsltid=AfmBOopQIGEPfroa-qIC9c4fPaKcfIx8BT5DllsiRA79znfManPKo-gt}{Construction of linear codes from the unit graph $G(\mathbb{Z}_ n)$.}
		\newblock {\em Asian-European Journal of Mathematics}, 16(11):2350213, 2023.
		
		\bibitem{cameron2021graphs}
		Peter~J Cameron.
		\newblock \href{https://arxiv.org/abs/2102.11177}{Graphs defined on groups}.
		\newblock {\em arXiv preprint arXiv:2102.11177}, 2021.
		
		\bibitem{manna2025prime}
		Tapa Manna, Angsuman Das, and Baby Bhattacharya.
		\newblock
		\href{https://link.springer.com/article/10.1007/s11587-024-00906-0}{Prime
			order element graph of a group: T. Manna et al.}
		\newblock {\em Ricerche di Matematica}, 74(3):1305--1319, 2025.
		
		\bibitem{manna2024forbidden}
		Tapa Manna, Angsuman Das, and Baby Bhattacharya.
		\newblock \href{https://arxiv.org/abs/2412.19905}{Forbidden Subgraphs of Prime
			Order Element Graph}.
		\newblock {\em arXiv preprint arXiv:2412.19905}, 2024.
		
		\bibitem{gallian2021contemporary}
		Joseph Gallian.
		\newblock {\em
			\href{https://api.taylorfrancis.com/content/books/mono/download?identifierName=doi&identifierValue=10.1201/9781003142331&type=googlepdf}{Contemporary
				abstract algebra}}.
		\newblock Chapman and Hall/CRC, 2021.
		
		\bibitem{dummitbasic}
		DS~Dummit, RM~Foote, Upper Saddle~River Prentice-Hall, WA~Adkins, and
		SH~Weintraub.
		\newblock
		\href{http://agt.cie.uma.es/~magomez/Archivos/Asignaturas/%E1lgebra%20homol%F3gica/apuntes%20web/ash/Endmatter.pdf}{Basic
			Group Theory}.
		
		\bibitem{west2001introduction}
		Douglas~Brent West et~al.
		\newblock {\em
			\href{https://www.academia.edu/download/6479067/igtpref.ps}{Introduction to
				graph theory}}, volume~2.
		\newblock Prentice hall Upper Saddle River, 2001.
		
		\bibitem{horn2012matrix}
		Roger~A Horn and Charles~R Johnson.
		\newblock {\em
			\href{https://books.google.com/books?hl=en&lr=&id=O7sgAwAAQBAJ&oi=fnd&pg=PR11&dq=Matrix+analysis&ots=lXRawikui2&sig=SZdK6fUZb8sgn5X4oCO0s76qvEU}{Matrix
				analysis}}.
		\newblock Cambridge university press, 2012.
		
		\bibitem{godsil2013algebraic}
		Chris Godsil and Gordon~F Royle.
		\newblock {\em
			\href{https://books.google.com/books?hl=en&lr=&id=6TasRmIFOxQC&oi=fnd&pg=PP9&dq=Algebraic+graph+theory&ots=lXoY_XTgds&sig=IdV1DVrup6ypyKSucP1wxZUHyTQ}{Algebraic
				graph theory}}, volume 207.
		\newblock Springer Science \& Business Media, 2013.
		
	\end{thebibliography}
\end{document}